\documentclass[a4paper]{amsart}

\usepackage{a4wide}
\usepackage{enumerate}
\usepackage{mathrsfs}
\usepackage{amsfonts,amssymb,amscd,amsmath,amsthm,xypic}
\usepackage{txfonts}
\usepackage{accents}

\newif\iflineno 
\linenofalse
\iflineno \usepackage[pagewise]{lineno}\fi
\usepackage{ifpdf}

\ifpdf
    \usepackage[pdftex]{hyperref}
   \usepackage[pdftex]{graphicx}
   
\else
   \usepackage[dvips]{hyperref}
   \usepackage[dvips]{graphicx} 
   
\fi

\newtheorem{Thm}[equation]{Theorem}
\newtheorem*{Thmstar}{Theorem}
\newtheorem*{Setup}{Setup}
\newtheorem{Lem}[equation]{Lemma}
\newtheorem{Cor}[equation]{Corollary}
\theoremstyle{definition}
\newtheorem*{wrongproof}{Wrong argument}

\newtheorem{Def}[equation]{Definition}
\newtheorem{DefandClaim}[equation]{Definition and Claim}
\newtheorem{Asm}[equation]{Assumption}
\newtheorem{Rem}[equation]{Remark}
\newtheorem{Fact}[equation]{Fact}
\newtheorem*{Notation}{Notation}

\theoremstyle{remark}

\numberwithin{equation}{section}

\newcommand{\CLOPEN}{\textsc{clopen}}
\DeclareMathOperator{\lh}{lh}
\DeclareMathOperator{\stem}{stem}
\DeclareMathOperator{\suc}{succ}
\DeclareMathOperator{\Leb}{Leb}
\DeclareMathOperator{\ON}{Ord}
\DeclareMathOperator{\cf}{cf}
\DeclareMathOperator{\dom}{dom}
\DeclareMathOperator{\height}{height}
\DeclareMathOperator{\nullset}{nullset}
\DeclareMathOperator{\ordclos}{ordclos}

\newcommand{\ves}{{\varepsilon'}}
\newcommand{\wit}{\n \xi }
\newcommand{\Int}{L}
\newcommand{\ur}{\tau}
\newcommand{\tomek}{B^*}
\newcommand{\DEFEQ}{\coloneqq}
\newcommand{\bL}{\mathbb{L}}
\newcommand{\bJ}{\mathbb{J}}
\newcommand{\BP}{\mathbf{P}}
\newcommand{\BQ}{\mathbf{Q}}
\newcommand{\forces}{\Vdash}
\newcommand{\incomp}{\perp}
\newcommand{\comp}{\parallel}
\newcommand{\on}{\mathord\restriction}
\newcommand{\pure}{\mathrel{\le_0}}
\newcommand{\MA}{{\mathcal A}}
\newcommand{\powerP}{{\mathscr P}}
\newcommand{\CS}{{\textrm{\textup{CS}}}}
\newcommand{\tot}{{\textrm{\textup{tot}}}}
\newcommand{\prep}{\mathbb{R}}
\newcommand{\esm}{\prec}  
\newcommand{\gen}{\textup{\textrm{gen}}}

\newcommand{\al}[1]{\ensuremath{{\aleph_{#1}}} }          
\newcommand{\om}[1]{\ensuremath{{\omega_{#1}}} }          
\newcommand{\qemph}[1]{``\emph{#1}''}
\newcommand{\std}[1]{\ensuremath{\check{#1}}}              
\newcommand{\n}[1]{\underaccent{\tilde}{#1}}
\newcommand{\nd}[1]{\underaccent{\tilde}{\dot{#1}}}

\newcommand{\proofsection}[1]{\smallskip\noindent{\em \bf #1}.}  
\newcommand{\proofclaim}[2]{
\begin{equation}\label{#1}
  \parbox{0.8\textwidth}{#2}
\end{equation}}
\newcommand{\proofclaimnl}[1]{
\begin{equation*}
  \parbox{0.8\textwidth}{#1}
\end{equation*}}

\begin{document}
\iflineno \pagewiselinenumbers\fi

\renewcommand{\thesubsection}{\thesection.\Alph{subsection}}
\subjclass[2000]{Primary 03E35; secondary 03E17, 28E15}
\date{2011-12-27}

\title{Borel Conjecture and dual Borel Conjecture}
\author{Martin Goldstern}
\address{Institut f\"ur Diskrete Mathematik und Geometrie\\
 Technische Universit\"at Wien\\
 Wiedner Hauptstra{\ss}e 8--10/104\\
 1040 Wien, Austria}
\email{martin.goldstern@tuwien.ac.at}
\urladdr{http://www.tuwien.ac.at/goldstern/}
\author{Jakob Kellner}
\address{Kurt G\"odel Research Center for Mathematical Logic\\
 Universit\"at Wien\\
 W\"ahringer Stra\ss e 25\\
 1090 Wien, Austria}
\email{kellner@fsmat.at}
\urladdr{http://www.logic.univie.ac.at/$\sim$kellner/}
\author{Saharon Shelah}
\address{Einstein Institute of Mathematics\\
Edmond J. Safra Campus, Givat Ram\\
The Hebrew University of Jerusalem\\
Jerusalem, 91904, Israel\\
and
Department of Mathematics\\
Rutgers University\\
New Brunswick, NJ 08854, USA}
\email{shelah@math.huji.ac.il}
\urladdr{http://shelah.logic.at/}
\author{Wolfgang Wohofsky}
\address{Institut f\"ur Diskrete Mathematik und Geometrie\\
 Technische Universit\"at Wien\\
 Wiedner Hauptstra{\ss}e 8--10/104\\
 1040 Wien, Austria}
\email{wolfgang.wohofsky@gmx.at}
\urladdr{http://www.wohofsky.eu/math/}
\thanks{
We gratefully acknowledge the following partial support: US National Science
Foundation Grant No. 0600940 (all authors); US-Israel Binational Science
Foundation grant 2006108 (third author); Austrian Science Fund (FWF): P21651-N13 and P23875-N13 and EU FP7 Marie Curie grant PERG02-GA-2207-224747 (second and fourth author);
FWF grant P21968 (first and fourth author); \"OAW Doc fellowship (fourth author).
This is publication 969 of the third author.\\
 We are grateful to the anonymous referee for pointing out several
unclarities in the original version.}

\dedicatory{Dedicated to the memory of Richard Laver}

\begin{abstract}
	 We show that it is consistent that the Borel Conjecture and the dual Borel
   Conjecture hold simultaneously.
\end{abstract}

\maketitle

\section*{Introduction}
\subsection*{History}

A set  $X$ of reals\footnote{In this paper, we use $2^\omega$ as 
the set of reals. ($\omega=\{0,1,2,\ldots\}$.)  
By well-known results both the definition and the theorem 
also work for the unit interval~$ [0,1]$ or the torus $\mathbb R/\mathbb Z$.
Occasionally we also write ``$x$ is a real'' for ``$x\in \omega^\omega$''.} 
is called ``strong measure zero'' (smz), if for
all functions $f:\omega\to\omega$ there are intervals $I_n$ of measure $\leq
1/f(n)$ covering $X$. 
Obviously, a smz set is a null set (i.e., has Lebesgue measure zero), and it
is easy to see that the family of smz sets forms a $\sigma$-ideal and that
perfect sets (and therefore uncountable Borel or analytic sets) are not smz.

At the beginning of the 20th century, Borel~\cite[p.~123]{MR1504785}
conjectured:
\proofclaimnl{Every smz set is countable.}
This statement is known as the ``Borel Conjecture'' (BC). In the 1970s it was proved
that BC is  \emph{independent}, i.e., neither provable nor refutable.

Let us very briefly comment on the notion of independence: A sentence $\varphi$
is called independent of a set $T$ of axioms, if neither $\varphi$ nor $\lnot\varphi$
follows from $T$. (As a trivial example, $(\forall x)(\forall y) x\cdot y= y\cdot x$ is
independent from the group axioms.) The set theoretic (first order) axiom system
ZFC (Zermelo Fraenkel with the axiom of choice) is considered to be the
standard axiomatization of all of mathematics: A mathematical proof is
generally accepted as valid iff it can be formalized in ZFC. Therefore we just
say ``$\varphi$ is independent'' if $\varphi$ is independent of ZFC.
Several mathematical statements are independent, the earliest and most
prominent example is Hilbert's first problem, the Continuum Hypothesis
(CH).

BC is independent as well:  Sierpi\'nski~\cite{sierpinskiCH} showed
that CH implies $\lnot$BC (and, since G\"odel showed the consistency of CH, this
gives us the consistency of $\lnot$BC). Using the method of forcing,
Laver~\cite{MR0422027} showed that BC is consistent.

Galvin, Mycielski and Solovay~\cite{GMS} proved the following conjecture
of Prikry:
\proofclaimnl{$X \subseteq 2^\omega $ is smz
  if and only if  every comeager (dense $G_\delta$) set
  contains a translate of $X$.}
Prikry also defined the following dual notion:
\proofclaimnl{$X \subseteq 2^\omega$ is called ``strongly meager'' (sm) if every set
  of Lebesgue measure 1  contains a translate of $X$.}
The dual Borel Conjecture (dBC) states:
\proofclaimnl{Every sm set is countable.}
Prikry noted that CH implies $\lnot$dBC and conjectured dBC to be consistent
(and therefore independent), which was later proved by
Carlson~\cite{MR1139474}.

Numerous additional results regarding BC and dBC have been proved: The
consistency of variants of BC or of dBC, the consistency of BC or dBC together
with certain assumptions on cardinal characteristics, etc.
See~\cite[Ch.~8]{MR1350295} for several of these results.  In this paper, we
prove the consistency (and therefore independence) of BC+dBC (i.e.,
consistently BC and dBC hold simultaneously).

\subsection*{The problem}

The obvious first attempt to force BC+dBC is to somehow combine Laver's and
Carlson's constructions. However, there are strong obstacles:

Laver's construction is a countable support iteration of Laver forcing. The
crucial points are:
\begin{itemize}
  \item Adding a Laver real makes every old uncountable set $X$ non-smz.
  \item And this set $X$ remains non-smz after another forcing $P$, provided
    that $P$ has the ``Laver property''.
\end{itemize}
So we can start with CH and use a countable support iteration of Laver forcing of
length $\om2$. In the final model, every set $X$ of reals of size~$\al1$
already appeared at some stage $\alpha<\om2$ of the iteration; the next Laver real
makes $X$ non-smz, and the rest of the iteration (as it is a countable support
iteration of proper forcings with the Laver property) has the Laver property,
and therefore $X$ is still non-smz in the final model.

Carlson's construction on the other hand adds $\omega_2$ many 
Cohen reals in a finite support iteration (or
equivalently: finite support product).  The crucial points are:
\begin{itemize}
  \item A Cohen real makes every old uncountable set $X$ non-sm.
  \item And this set $X$ remains non-sm after another forcing $P$, provided
    that $P$ has precaliber~$\al1$.
\end{itemize}
So we can start with CH, and use more or less the same argument as above:
Assume that $X$ appears at $\alpha<\om2$. Then the next Cohen makes $X$ non-sm.  It
is enough to show that $X$ remains non-sm at all subsequent  stages
$\beta<\om2$.  This is guaranteed by the fact that a finite support iteration
of Cohen reals of length~$<\om2$ has precaliber~$\al1$.

So it is unclear how to combine the two proofs: A Cohen real makes all old sets
smz, and it is easy to see that whenever we add Cohen reals cofinally often in
an iteration of length, say, $\om2$, all sets of any intermediate extension
will be smz, thus violating BC.  So we have to avoid Cohen
reals,\footnote{An iteration that forces dBC without adding Cohen reals was given in 
\cite{MR2767969}, using non-Cohen oracle-cc.}
which also
implies that we cannot use finite support limits in our iterations.
So we have
a problem even if we find a replacement for Cohen forcing in Carlson's proof
that makes all old uncountable sets $X$ non-sm and that does not add Cohen
reals: Since we cannot use finite support, it seems hopeless to get
precaliber~$\aleph_1$,
an essential requirement to keep $X$ non-sm.

Note that it is the \emph{proofs} of BC and  dBC that are seemingly
irreconcilable; this is not clear for the models.  Of course Carlson's model,
i.e., the Cohen model, cannot satisfy BC, but it is not clear whether maybe
already the Laver model could satisfy dBC. (It is even still open whether a
single Laver forcing makes every old uncountable set non-sm.) Actually,
Bartoszy\'nski and Shelah~\cite{MR2020043} proved that the Laver model does
satisfy the following weaker variant of dBC (note that the continuum has size
$\al2$ in the Laver model):
\begin{quote}
  Every sm set has size less than the continuum.
\end{quote}

In any case, it turns out that one \emph{can} reconcile Laver's and Carlson's proof,
by ``mixing'' them ``generically'', resulting in the following theorem:
\begin{Thmstar}
If ZFC is consistent, then ZFC+BC+dBC is consistent. 
\end{Thmstar}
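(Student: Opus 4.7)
The plan is to force over a model of CH with a proper countable-support iteration $\langle P_\alpha, \n Q_\alpha : \alpha < \omega_2\rangle$ that simultaneously implements Laver's and Carlson's strategies. A standard $\aleph_2$-bookkeeping argument shows that any uncountable set of reals $X$ in the final model already appears, with size $\aleph_1$, at some intermediate stage $\alpha_0 < \omega_2$; by a careful schedule I can ensure that for every such $X$ some later iterand destroys smz-ness of $X$ (as in Laver's proof) and some later iterand destroys sm-ness of $X$ (as in Carlson's proof). To conclude BC+dBC, the remaining tail of the iteration past each of these ``killing'' stages must \emph{preserve} non-smz-ness and non-sm-ness of $X$.

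For the smz side the iterands should be Laver-like forcings: since Laver forcing, and countable-support iterations of proper forcings with the Laver property, have the Laver property, destroying smz-ness and then preserving non-smz-ness works exactly as in Laver's original argument. For the sm side I need a replacement for Cohen forcing, namely a proper forcing $\BQ^{\text{sm}}$ that destroys sm-ness of a given uncountable $X$ but does \emph{not} add a Cohen real (Cohen reals would make every old uncountable set smz and wreck BC). Because Cohen reals are forbidden I cannot take finite-support limits, so I lose access to precaliber~$\aleph_1$, which is the essential tool in Carlson's preservation of non-sm-ness. A new countable-support preservation theorem for non-sm-ness, tailored to the particular iterands at hand, is therefore indispensable.

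The genuine obstacle is this preservation theorem for non-sm-ness. The strategy suggested by the phrase ``mixing them generically'' in the introduction is to not specify each iterand rigidly: both the Laver-type and the sm-killer iterands should be parameterized by generic data (for instance a sequence of ultrafilters on $\omega$ added by a preparatory forcing, or side generics built together with the iteration itself). This genericity should provide enough combinatorial ``randomness'' at each stage to run a preservation argument for non-sm-ness along the countable-support iteration, replacing precaliber~$\aleph_1$ as the preservation hypothesis; at the same time the iterands must retain the Laver property so that non-smz-ness continues to be preserved in the usual way. Once both preservation theorems are established, the bookkeeping guarantees that in the final model every uncountable set of reals is neither smz nor sm, proving BC+dBC.
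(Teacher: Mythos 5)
Your proposal correctly identifies the obstruction and several ingredients (CH ground model, $\omega_2$-length iteration, bookkeeping, avoiding Cohen reals, parameterizing by generic data such as ultrafilters), but it mislocates both preservation mechanisms, and the missing pieces are exactly where the real work lies.

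First, your plan to preserve non-smz by keeping the Laver property throughout the tail cannot succeed here. The sm-killing iterand (the ``Janus'' forcing) must, as seen from some countable model, behave like Cohen forcing in order to kill strong meagerness; but as seen from $V$ it must be large, and in the construction it is actually taken to be (forcing equivalent to) random forcing in $V$ whenever one is running the BC argument. Random forcing does not have the Laver property, so the tail cannot either. The paper replaces the Laver-property argument by something more pointwise: after the ultralaver stage produces the closed null witness $F$, one picks a specific real $r\in X+F$ which is random over a countable model $M$, and then builds the continuation of the iteration so that it \emph{preserves the randomness of that one real $r$ over $M$} (Lemmas~\ref{lem:extendLDtopreserverandom}, \ref{lem:janusrandompreservation}, and the almost-CS preservation Lemma~\ref{lem:iterate.random}). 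This localized randomness preservation replaces the global Laver property.

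Second, ``genericity provides enough combinatorial randomness to replace precaliber~$\aleph_1$'' is a slogan, not a mechanism, and the gap it papers over is the whole crux. The concrete substitute in the paper is: from the viewpoint of the relevant countable model $M^x$, the rest of the iteration past the Janus stage can be realized as an \emph{almost finite support} iteration of countable Janus forcings and ultralaver forcings, which is $\sigma$-centered (Lemma~\ref{lem:4.17}); and Lemma~\ref{lem:janusnotmeager} shows that a countable Janus forcing followed by any $\sigma$-centered forcing kills strong meagerness of any non-thin set. So $\sigma$-centeredness (as seen by the countable model) is the precaliber replacement. To make this coexist with the randomness-preserving (almost CS) picture needed for BC, one cannot fix the iteration in advance: the preparatory forcing $\prep$ adds the iteration $\bar\BP$ generically, and for each uncountable $X$ and each of the two goals one chooses a \emph{different} extension $\bar P$ of the small model's iteration $\bar P^x$ --- an almost FS extension when proving dBC, an almost CS extension when proving BC --- and then collapses to find a stronger condition $y\le x$ in $\prep$. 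This ``two-faced'' design (the same generic iteration admits both readings) is the engine, and it is not recoverable from bookkeeping plus a general-purpose preservation theorem. Finally, there is also a genuine technical obstacle you do not address: once $X$ is not a ground model set but a name, one must factor $\prep$ at a suitable $\alpha^*$ so that $X$ becomes a $\BP_{\alpha^*}$-name absolute over the remaining quotient; this requires $\aleph_2$-cc of $\prep$ and a factor lemma (Section~\ref{sec:factor}), and a naive bookkeeping over the full two-step forcing does not give it.
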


\subsection*{Prerequisites}

To understand anything of this paper, the reader
\begin{itemize}
	\item should have some experience with finite and countable support
	iteration,  proper forcing, $\al2$-cc, $\sigma$-closed, etc.,
  \item should know what a quotient forcing is,
  \item should have seen some preservation theorem for proper countable support iteration, 
  \item should have seen some tree forcings (such as Laver forcing).
\end{itemize}

To understand everything, additionally the following is required:
\begin{itemize}
	\item The ``case A'' preservation theorem from~\cite{MR1623206},
		 more specifically we build on 
		 the proof
    of~\cite{MR1234283} (or~\cite{MR2214624}).
    \item In particular, some familiarity with the property ``preservation of randoms''
    is recommended.  We will use the fact that random and Laver forcing 
    have this property.
  \item We make some claims about (a rather special case of) 
    ord-transitive models in Section~\ref{subsec:ordtrans}. 
    The readers can either believe these claims, or check them themselves
    (by some  rather straightforward proofs),
    or look up the proofs (of more general 
    settings) in~\cite{MR2115943} or~\cite{kellnernep}.
\end{itemize}

{}From the theory of  strong measure zero and strongly meager,  we only need the
following two results (which are essential for our proofs of BC and dBC,
respectively):
\begin{itemize}
  \item Pawlikowski's result from~\cite{MR1380640} (which we quote as Theorem~\ref{thm:pawlikowski} below), and
  \item Theorem 8 of Bartoszy\'nski and Shelah's~\cite{MR2767969} (which we
	quote as Lemma~\ref{lem:tomek}).
\end{itemize}
We do not need any other results of Bartoszy\'nski and Shelah's
paper~\cite{MR2767969}; in particular we do not use the notion of non-Cohen
oracle-cc (introduced in~\cite{MR2243849}); and the reader does not have to know the original proofs of Con(BC)
and Con(dBC), by Laver and Carlson, respectively.

The third author claims that our construction is more or less the same as a
non-Cohen oracle-cc construction, and that the
extended version presented in~\cite{MR2610747} is even closer to our
preparatory forcing.

\subsection*{Notation and some basic facts on forcing, strongly meager (sm) and 
strong measure zero (smz) sets}

We call a lemma ``Fact'' if we think that no proof is necessary --- either because 
it is trivial, or because it is well known (even without a reference),
or because we give an explicit reference to the literature.

Stronger conditions in forcing notions are smaller, i.e., $q\leq p$ means that
 $q$ is stronger than $p$.

Let $P\subseteq Q$ be forcing notions. (As usual, we abuse notation by not
distinguishing between the underlying set and the quasiorder on it.)
\begin{itemize}
  \item For $p_1,p_2\in P$ we write $p_1\incomp_P p_2$ for
    ``$p_1$ and $p_2$ are incompatible''. Otherwise we write $p_1 \comp_P p_2$.
    (We may just write $\incomp$ or $\comp$ if $P$ is understood.)
  \item\label{def:starorder} $q\leq^* p$ (or: $q\leq^*_P p$) means that $q$ forces that $p$ is in the generic
    filter, or equivalently that 
    every $q'\leq q$ is compatible with $p$. 
    And $q=^* p$ means $q\leq^* p\ \wedge\ p\leq^* q$.
  \item\label{def:separative} $P$ is separative, if $\leq$ is the same as $\leq^*$,
    or equivalently, if for all $q\leq p$ with $q\neq p$ there is an $r\leq p$
    incompatible with $q$. Given any $P$, we can define its ``separative
    quotient'' $Q$ by first replacing (in $P$) $\leq$ by $\leq^*$ and then
    identifying elements $p,q$ whenever $p=^*q$. Then $Q$ is separative
    and forcing equivalent to $P$.
  \item \qemph{$P$ is a subforcing of $Q$}  means that the relation $\le_P$
     is the restriction of $\le_Q$ to~$P$.
  \item \qemph{$P$ is an incompatibility-preserving subforcing of $Q$} 
    means that $P$ is a subforcing of
    $Q$  and that 
    $p_1\incomp_P p_2$ iff $p_1\incomp_Q p_2$ for all $p_1,p_2\in P$.
\end{itemize}
Let additionally $M$ be a countable 
transitive\footnote{We will also use so-called
ord-transitive models, as defined in Section~\ref{subsec:ordtrans}.}
model (of a sufficiently large subset of ZFC) containing~$P$.
  \begin{itemize}
  \item ``$P$ is an $M$-complete subforcing of
    $Q$'' (or: $P\lessdot_M Q$) 
    means that $P$ is a subforcing of $Q$ and: if $A\subseteq P$ is in
    $M$ a maximal antichain, then it is a maximal antichain of~$Q$ as
    well. (Or equivalently: $P$ is an incompatibility-preserving
    subforcing of $Q$ and every predense subset of
    $P$~in $M$ is predense in~$Q$.)
    Note that this means that every $Q$-generic filter $G$ over~$V$
		induces a $P$-generic filter over~$M$, namely  $G^M\DEFEQ G\cap P$
    (i.e., every maximal antichain of
    $P$ in~$M$ meets $G\cap P$ in exactly one point).
    In particular, we can interpret a $P$-name $\tau$ in~$M$ as a $Q$-name.
    More exactly, there is a $Q$-name $\tau'$ such that $\tau'[G]=\tau[G^M]$
    for all $Q$-generic filters $G$. We will usually just identify $\tau$
    and $\tau'$.
  \item Analogously, if $P\in M$ and $i:P\to Q$ is a function, then
    $i$ is called an $M$-complete embedding if it preserves $\leq$
    (or at least $\leq^*$)
    and $\incomp$ and moreover: If $A\in M$ is predense in~$P$, then
    $i[A]$ is predense in $Q$.
\end{itemize}

There are several possible characterizations  of  sm (``strongly 
meager'')  and smz (``strong measure zero'') sets; we will use the following
as definitions: 

A set  $X$ is not sm if there is a measure $1$ set into
which $X$ cannot be translated; i.e., if there is a null set $Z$
such that $(X+t)\cap Z\neq\emptyset$ for all reals $t$, or, in other words,
$Z+X=2^\omega$. To summarize:
\proofclaim{eq:notsm}{$X$ is {\em not} sm iff there is a 
Lebesgue null set $Z$ such that $Z+X=2^\omega$.}

We will call such a $Z$ a ``witness'' for the fact that $X$ is not sm (or say
that $Z$ witnesses that $X$ is not sm).

The following theorem of Pawlikowski~\cite{MR1380640} is central for our
proof\footnote{We thank Tomek Bartoszy\'nski for pointing out Pawlikowski's
result to us, and for suggesting that it might be useful for our proof.}
that BC holds in our model:
\begin{Thm}\label{thm:pawlikowski}
  $X\subseteq 2^\omega$ is smz iff $X+F$ is null for every closed null set $F$.
  \\
  Moreover, for every dense $G_\delta$ set $H$ we can \emph{construct} (in
  an absolute way) a closed null set $F$ such that for every $X
  \subseteq 2^\omega$ with $X+F$ null there is $t\in 2^\omega$ with
  $t+X\subseteq H$.
\end{Thm}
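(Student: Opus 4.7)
The theorem splits into the forward direction of the iff, the ``moreover'' clause, and the backward direction of the iff; the backward direction follows at once from the moreover combined with the Galvin--Mycielski--Solovay theorem (which characterizes smz sets as those that translate into every dense $G_\delta$ set): if $X+F$ is null for every closed null $F$, then for each dense $G_\delta$ $H$ the $F=F(H)$ provided by the moreover yields a translate of $X$ in $H$, so $X$ is smz by GMS.

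For the forward direction, let $X$ be smz and $F$ closed null. Fix $\varepsilon>0$. By compactness and nullity of $F$, choose $\delta_n>0$ so that the open $\delta_n$-neighborhood $U_n$ of $F$ has Lebesgue measure below $\varepsilon/2^n$. Apply the smz property of $X$ with $f(n)\DEFEQ\lceil 1/\delta_n\rceil$ to cover $X$ by basic clopen sets $I_n$ of diameter $\le\delta_n$; each $I_n+F$ lies in a translate of $U_n$, so the outer measure of $X+F$ is at most $\sum_n \mu(U_n)\le\varepsilon$. Since $\varepsilon$ was arbitrary, $X+F$ is null.

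The moreover clause is the technical heart. Write $H=\bigcap_n H_n$ with $H_n$ open dense and $K_n\DEFEQ 2^\omega\setminus H_n$ (closed nowhere dense). Density of $H_n$ together with compactness of $2^\omega$ yields, for each $n$, a canonical (hence absolute) choice of a finite antichain $S_n\subseteq 2^{<\omega}$ with $\bigsqcup_{s\in S_n}[s]\subseteq H_n$ clopen dense, bounded in length by some $L_n$. From the data $(S_n)_{n\in\omega}$ I would build a closed null set $F$ that encodes the combinatorial complements of the $S_n$'s in rapidly growing coordinate blocks of $2^\omega$, with block lengths chosen so that the total measure of $F$ is bounded by $\sum 2^{-n}$. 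The target implication to be proved is: if $X+F$ is null, then $\bigcup_n(X+K_n)\ne 2^\omega$; granted this, any $t\in 2^\omega\setminus\bigcup_n(X+K_n)$ satisfies $t+X\subseteq H$.

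The main obstacle is establishing this implication while keeping $F$ null. Since $K_n$ itself may carry positive Lebesgue measure, the naive approach --- covering each $K_n$ by finitely (or countably) many translates of $F$ --- fails, because countably many translates of a null set remain null. Pawlikowski's resolution is a delicate combinatorial absorption: one gets a positive outer measure for some $X+K_{n_0}$ by countable subadditivity applied to $2^\omega=\bigcup_n(X+K_n)$, and transfers this to a positive outer measure for $X+F$ through shifts that live inside the $X$ factor, using the block-tree structure of $F$ rather than pure translates of $F$ alone; this contradicts nullity of $X+F$. Making this absorption precise, and keeping every choice (antichain, block length, tree structure) canonical enough so that the construction is absolute between transitive models containing $H$, is where the real technical work lies.
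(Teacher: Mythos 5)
The paper does not actually prove this statement: it is quoted directly as Pawlikowski's theorem from~\cite{MR1380640}, so there is no internal proof to compare yours against. That said, the structure you set out is the right one: the backward direction of the iff is indeed an immediate consequence of the ``moreover'' clause together with Galvin--Mycielski--Solovay, and the forward direction is the elementary half. Your forward argument is essentially correct but phrased a bit awkwardly for $2^\omega$; the cleaner version is to note that for $s\in 2^n$ one has $[s]+F=\bigcup\{[s+u]:u\in F\on n\}$, which has measure $|F\on n|/2^n$, and since $F$ is closed null this tends to $0$, so one chooses $n_k$ with $|F\on n_k|/2^{n_k}\le \varepsilon 2^{-k}$ and then applies smz to $\bar n=(n_k)_k$. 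No ``$\delta$-neighborhoods'' are needed.

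The genuine gap is the ``moreover'' clause, which is the entire substance of the theorem and which you explicitly leave at the level of intent (``I would build\dots'', ``this is where the real technical work lies''). What you describe is a wish list, not a construction: you never specify the block lengths, never define $F$, never show it is null, and never carry out the claimed ``combinatorial absorption'' that turns positivity of some $X+K_{n_0}$ into positivity of $X+F$. In particular, the step that replaces translates of $F$ by ``shifts that live inside the $X$ factor, using the block-tree structure of $F$'' is exactly the nontrivial lemma in Pawlikowski's paper, and asserting that it can be done is not the same as doing it. Since the backward direction of the iff also rests on this clause, without it you have proved only the easy forward implication. To complete the argument you would need to reproduce Pawlikowski's actual construction of $F$ from the antichains $S_n$ (including the explicit choice of block lengths ensuring $\mu(F)=0$) and the counting/absorption lemma that makes the implication go through.
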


In particular, we get:
\proofclaim{eq:notsmz}{$X$ is \emph{not} smz iff there is a closed null
set $F$ such that $X+F$ has positive outer Lebesgue measure.}

Again, we will say that the closed null set $F$ ``witnesses'' that 
$X$ is not smz (or call $F$ a witness for this fact).

\subsection*{Annotated contents}
\begin{list}{}{\setlength{\leftmargin}{0.5cm}\addtolength{\leftmargin}{\labelwidth}}
\item[Section~\ref{sec:ultralaver}, p. \pageref{sec:ultralaver}:]
  We introduce the family of ultralaver forcing notions 
  and prove some properties.
\item[Section~\ref{sec:janus}, p. \pageref{sec:janus}:]
  We introduce the family of Janus forcing notions 
  and prove some properties.
\item[Section~\ref{sec:iterations}, p. \pageref{sec:iterations}:]
  We define ord-transitive models and mention some basic properties.
  We define the ``almost finite'' and ``almost countable'' support
  iteration over a model.
  We show that in many respects they behave like finite
  and countable support, respectively.
\item[Section~\ref{sec:construction}, p. \pageref{sec:construction}:]
  We introduce the
  preparatory forcing notion $\prep$ which adds a generic forcing
  iteration~$\bar \BP$. 
\item[Section~\ref{sec:proof}, p. \pageref{sec:proof}:] Putting 
 everything together, we show that $\prep*\BP_{\om2}$
 forces BC+dBC, i.e., that an uncountable $X$ is neither
 smz nor sm. We show this under the assumption $X\in V$,
 and then introduce a 
 factorization
 of  $\prep*\bar \BP$ that this assumption does not result in loss
 of generality.
\item[Section~\ref{sec:alternativedefs}, p. \pageref{sec:alternativedefs}:] We briefly comment on alternative ways some notions could be defined.
\end{list}

An informal overview of the proof, including two illustrations, can be found
at~\url{http://arxiv.org/abs/1112.4424/}.

\section{Ultralaver forcing}\label{sec:ultralaver}

In this section, we define the family of  \emph{ultralaver forcings} $\bL_{\bar
D}$, variants  of Laver forcing which depend on a system $\bar D$ of
ultrafilters.  

In the rest of the paper, we will use the following properties of $\bL_{\bar
D}$.  
(And we will use \emph{only} these properties. So readers who are willing to
take these properties for granted could skip to Section~\ref{sec:janus}.)
  \begin{enumerate}
  \item
     $\bL_{\bar D}$ is $\sigma$-centered, hence ccc.\label{item:sigmacentered}
     \\
    (This is Lemma~\ref{lem:newscentered}.)
   \item $\bL_{\bar D}$ is separative. \\
           (This is Lemma~\ref{lem:LDMsep}.)  
  \item\label{item:absolutepositive} 
    \emph{Ultralaver kills smz:}
    There is a canonical $\bL_{\bar D}$-name $\bar{\n\ell}$ for a
    fast growing real in~$\omega^\omega$ called the ultralaver real. From this
    real, we can define  (in an absolute way) a closed null  set $F$
    such that
    $X+F$ is positive for all uncountable $X$ in~$V$ (and therefore
    $F$ witnesses that $X$ is not smz, according to Theorem~\ref{thm:pawlikowski}).
    \\
    (This is Corollary~\ref{cor:absolutepositive}.)
  \item 
    Whenever $X$ is uncountable, then $\bL_{\bar D} $ forces that
    $X$ is not ``thin''.
    \\
   (This is  Corollary~\ref{cor:LDnotthin}.)
  \item 
    If $(M,\in)$ is a countable model of ZFC$^*$ and if $\bL_{\bar D^M}$ is an
    ultralaver forcing in $M$,
      then for any ultrafilter system $\bar D$ extending $\bar D^M$, 
     $\bL_{\bar D^M} $ is an $M$-complete subforcing of the ultralaver forcing
      $\bL_{\bar D}$. 
   \\
   (This is Lemma~\ref{lem:LDMcomplete}.)
    \\
    Moreover, the real $\bar{\n\ell}$ of
    item~(\ref{item:absolutepositive}) is so ``canonical'' that we get:
    If (in $M$) $\bar{\n\ell}^M$ is the  $\bL_{\bar D^M}$-name for the 
    $\bL_{\bar D^M}$-generic real,
    and if (in $V$) $\bar{\n\ell}$ is the $\bL_{\bar D}$-name for the
    $\bL_{\bar D}$-generic real, and if $H$ is $\bL_{\bar D}$-generic over
    $V$ and thus $H^M\DEFEQ H\cap \bL_{\bar D^M}$ is the induced 
    $\bL_{\bar D^M}$-generic filter over $M$, then
    $\bar{\n\ell}[H]$ is  equal to 
    $ \bar{\n \ell}^M[H^M]$.
    \\
    Since the closed null set $F$ 
       is constructed from $\bar{\n\ell}$
    in an absolute way, the same holds for $F$, i.e., the 
    Borel codes  $F[H]$ and $F[H^M]$ are the same.
  \item
    Moreover, given $M$ and $\bL_{\bar D^M}$ as above, and a random real
    $r$ over~$M$, we can choose $\bar D$ extending $\bar D^M$
    such that $\bL_{\bar D}$
    forces that randomness of~$r$ is
    preserved (in a strong way that can be preserved in a 
    countable support iteration). 
    \\
    (This is Lemma~\ref{lem:extendLDtopreserverandom}.)
  \end{enumerate}

\subsection{Definition of ultralaver}

\begin{Notation} We use the following fairly standard notation:

  A \emph{tree} is a nonempty
  set $p \subseteq \omega^{<\omega}$ which is closed under initial segments
  and has no maximal elements.\footnote{Except for the 
  proof of Lemma~\ref{lem:LDMcomplete},
  where we also allow trees with maximal elements, and even empty trees.}
   The elements  (``nodes'') of a
  tree are partially ordered by $\subseteq$.

   For each sequence $s\in \omega^{<\omega}$ we write $\lh(s)$ for the
   length of $s$.

  For any tree $p \subseteq \omega^{<\omega}$ and any $s\in p$ we write 
  $\suc_p(s)$ for one of the following two sets: 
  \[ \{ k\in \omega: s^\frown k \in p \} \text{ \ \  or \ \  } \{ t\in p:
  (\exists k\in \omega)\;\,  t=s^\frown k \} \]
  and we rely on the context to help the reader decide which set we
  mean. 
  
  A \emph{branch} of $p$  is either of the following: 
  \begin{itemize}
  \item A function $f:\omega\to \omega$ with $f\on n\in p$ for all
    $n\in \omega$.
  \item A maximal chain in the partial order $(p,\subseteq)$.  (As 
    our trees do not have maximal elements, each
    such chain $C$ determines a branch $\bigcup C$ in the first sense,
    and conversely.)
  \end{itemize}
  We write $[p]$ for the set of all branches of~$p$. 
  
  For any tree $p\subseteq
  \omega^{<\omega}$ and any $s\in p$ we write $p^{[s]}$ for the set
  $\{t\in p: t \supseteq s \text{ or } t \subseteq s\}$, and we write
  $[s]$ for either of the following sets:
  \[ \{ t\in p:  s \subseteq t  \} \text{ \ \  or \ \  } \{ x \in [p]: s
  \subseteq x \}. \]

  The stem of a tree $p$ is the
  shortest $s\in p $ with $|\suc_p(s)|>1$. (The trees we
  consider will never be branches, i.e., will always have finite stems.)

\end{Notation}

\begin{Def}\label{def:LD}
  \begin{itemize}
  \item
    For trees $q,p$ we  write $q\le p$ if  $q \subseteq p$ (``$q$ is stronger
    than~$p$''), and 
    we say that  \qemph{$q$ is a pure extension of~$p$} ($q\pure p$) if
    $q\le p$ and $\stem(q)=\stem(p)$.  
  \item 
    A filter system  $\bar D$ is a family $(D_s)_{s\in
    \omega^{<\omega}}$ of filters on~$\omega$. (All our filters will 
    contain the Fr\'echet filter of cofinite sets.) We write $D_s^+$
    for the collection of $D_s$-positive sets (i.e., sets whose complement
    is not in $D_s$).
  \item  We define $\bL_{\bar D} $
    to be the set of all trees $p$ such that
      $\suc_p(t)\in D_t^+$ for all $t\in p$ above the stem.
  \item
    The generic filter is determined by the generic branch $ \bar\ell
    = (\ell_i)_{i\in \omega}\in \omega^\omega$, called the \emph{generic real}:
    $\{\bar\ell\} = \bigcap_{p\in G} [p]$ or equivalently, $ \bar\ell =
    \bigcup_{p\in G} \stem(p)$.
  \item An ultrafilter system is a filter system consisting of
    ultrafilters.  (Since all our filters contain the Fr\'echet
    filter, we only consider nonprincipal ultrafilters.)  
  \item
    An \emph{ultralaver forcing} is a forcing $\bL_{\bar D}$ defined from an
    ultrafilter system.  The generic real for an ultralaver forcing is
    also called the \emph{ultralaver real}. 
  \end{itemize}
\end{Def}

Recall that a forcing notion $(P,\le)$ is \emph{$\sigma$-centered} if 
$P = \bigcup_n P_n$, where for all $n,k\in \omega$ and for all 
$p_1,\ldots, p_k\in P_n$ there is $q\le p_1,\ldots, p_k$.

\begin{Lem}\label{lem:newscentered}
All ultralaver forcings $\bL_{\bar D}$ are $\sigma$-centered (hence ccc).
\end{Lem}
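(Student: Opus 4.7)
The plan is to exhibit an explicit countable partition of $\bL_{\bar D}$ witnessing $\sigma$-centeredness, indexed by the stem. Since $\omega^{<\omega}$ is countable, I set $P_s \DEFEQ \{p\in \bL_{\bar D}:\stem(p)=s\}$ for each $s\in \omega^{<\omega}$, giving $\bL_{\bar D}=\bigcup_{s\in \omega^{<\omega}}P_s$.

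The heart of the proof is to verify that each $P_s$ is centered. Given $p_1,\dots,p_k\in P_s$, I would define $q \DEFEQ p_1\cap\cdots\cap p_k$ and check that $q\in \bL_{\bar D}$ with $\stem(q)=s$, whence $q\le p_i$ for every $i$. The tree $q$ certainly contains $s$ (it lies in every $p_i$) and is closed under initial segments. For any $t\in q$ with $t\supseteq s$, note that
\[
\suc_q(t) = \suc_{p_1}(t)\cap\cdots\cap\suc_{p_k}(t).
\]
Each $\suc_{p_i}(t)$ belongs to $D_t^+$ by the definition of $\bL_{\bar D}$, and \emph{since $D_t$ is an ultrafilter} we actually have $D_t^+=D_t$. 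Ultrafilters are closed under finite intersections, so $\suc_q(t)\in D_t=D_t^+$. In particular $\suc_q(t)$ is non-empty, so $q$ has no maximal elements above $s$, and $\suc_q(s)\in D_s$ is infinite, so indeed $\stem(q)=s$. Thus $q\in P_s$ is a common extension.

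The only potential subtlety, and the reason this argument fails for ordinary Laver forcing, is precisely the step $D_t^+=D_t$: for a non-ultrafilter $D_t$ (e.g.\ the Fr\'echet filter, as in Laver forcing) the positive sets are not closed under finite intersection, and the intersection of two $D_t$-positive sets can be finite or even empty. The ultrafilter hypothesis is what makes centeredness free. Once $\sigma$-centeredness is established, the ccc conclusion is immediate, as any $\sigma$-centered poset trivially has the countable chain condition.
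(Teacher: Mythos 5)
Your proof is correct and matches the paper's approach exactly: the paper's one-line argument (``Every finite set of conditions sharing the same stem has a common lower bound'') is precisely what you have fleshed out, centering by stem and taking intersections. You have also correctly identified the key role of the ultrafilter hypothesis ($D_t^+=D_t$, hence closure under finite intersection), which is why the argument works for ultralaver forcing but not for plain Laver forcing.
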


\begin{proof}
Every finite set of conditions sharing the same stem has a common lower bound. 
\end{proof}
\begin{Lem}\label{lem:LDMsep} 
    $\bL_{\bar D}$ is separative.\footnote{See page~\pageref{def:separative} for 
the definition.}
\end{Lem}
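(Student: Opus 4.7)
The plan is to show that whenever $q \le p$ in $\bL_{\bar D}$ with $q \ne p$, some $r \le p$ is incompatible with~$q$; the natural candidate is $r := p^{[t]}$ for a well-chosen node $t \in p \setminus q$. First I would note that since $q \subseteq p$, the stem $\stem(q)$ must extend $\stem(p)$, and every initial segment of $\stem(q)$ belongs to~$q$. So any element of $p\setminus q$ has length strictly greater than $\lh(\stem(p))$, and in particular every node of $p$ that is an initial segment of $\stem(p)$ already lies in $q$.

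Next I would select $t \in p \setminus q$ with $t \supseteq \stem(p)$. If $\stem(q) = \stem(p)$, such a $t$ exists immediately because $q \subsetneq p$ differ only above the common stem. If instead $\stem(q) \supsetneq \stem(p)$, I would use the fact that $\suc_p(\stem(p)) \in D^+_{\stem(p)}$ is infinite (the filters contain the Fr\'echet filter) to pick some $k \in \suc_p(\stem(p))$ different from the value taken by $\stem(q)$ at coordinate $\lh(\stem(p))$; then $t := \stem(p){}^\frown k$ is incomparable with $\stem(q)$, and hence not in $q$ since every node of $q$ is comparable with $\stem(q)$.

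With $t$ fixed, set $r := p^{[t]} = \{s \in p : s\subseteq t \text{ or } s\supseteq t\}$. Since $t \supseteq \stem(p)$, the tree $r$ has stem exactly $t$, and for every $s \in r$ with $s \supseteq t$ we have $\suc_r(s) = \suc_p(s) \in D_s^+$, so $r \in \bL_{\bar D}$ and $r \le p$. To see $r \incomp q$, suppose for contradiction that some $r' \in \bL_{\bar D}$ satisfies $r' \le r$ and $r' \le q$. Every branch of $r$ extends $t$ as an initial segment, so the same holds for every branch of $r' \subseteq r$; as $r'$ has branches (having no maximal elements and being nonempty), this forces $t \in r'$. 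But $r' \subseteq q$ and $t \notin q$, a contradiction.

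The only step that requires a small argument is the selection of the witness $t$ in the case $\stem(q) \supsetneq \stem(p)$; the rest is routine tree bookkeeping. The proof uses nothing beyond the nonprincipality of the filters in $\bar D$ and the definition of $\bL_{\bar D}$.
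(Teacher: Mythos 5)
Your proof is correct and takes essentially the same approach as the paper's (which simply picks any $s\in p\setminus q$ and asserts $p^{[s]}\perp q$), with the details of why $p^{[s]}$ is a condition and why it is incompatible with $q$ spelled out. The case analysis in your second paragraph is superfluous: as your first paragraph already establishes, every element of $p\setminus q$ has length exceeding $\lh(\stem(p))$ and therefore automatically extends $\stem(p)$, so any $t\in p\setminus q$ would serve as your witness without further selection.
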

\begin{proof}  If $q\le p$, and $q\not=p$, then there is $s\in p\setminus q$. 
Now $p^{[s]} \perp q$. 
\end{proof}

If each $D_s$ is the Fr\'echet filter, then $\bL_{\bar D}$ is Laver forcing
(often just written $\bL$).

\subsection{$M$-complete embeddings}

  Note that for all  ultrafilter systems
  $\bar D$ we have:
\proofclaim{eq:compatible}{
  Two conditions in $\bL_{\bar D}$ are compatible
  if and only if their stems are comparable and moreover, the longer
  stem is an element of the condition with the shorter stem.  
}

\begin{Lem}\label{lem:LDMcomplete}
  Let $M$ be countable.\footnote{Here, 
    we can assume that $M$ is a
    countable transitive model of a sufficiently large finite 
    subset ZFC$^*$ of ZFC. Later, we will also use ord-transitive models  
    instead of transitive ones, which does not make any difference 
    as far as properties of $\bL_{\bar D}$ are concerned, as our arguments
    take place in transitive parts of such models.}
  In~$M$, let $\bL_{\bar D^M}$ be an ultralaver forcing. Let $\bar D$
  be (in $V$) a filter system extending\footnote{I.e.,
    $D_s^M \subseteq D_s$ for all $s\in \omega^{<\omega}$.}
  $\bar D^M$.
  Then $\bL_{\bar D^M} $ is an $M$-complete subforcing of  $\bL_{\bar D}$.
\end{Lem}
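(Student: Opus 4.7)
The plan is to verify the three requirements of $M$-completeness separately. For \emph{subforcing}, if $p\in\bL_{\bar D^M}$ then for every $t\in p$ above $\stem(p)$, $\suc_p(t)\in(D_t^M)^+=D_t^M$ (since $D_t^M$ is an ultrafilter in $M$); as $\bar D$ extends $\bar D^M$, this lies in $D_t\subseteq D_t^+$, so $p\in\bL_{\bar D}$, and both orderings are tree inclusion. For \emph{incompatibility preservation}, the nontrivial direction is: a common extension $r\in\bL_{\bar D}$ of $p_1,p_2\in\bL_{\bar D^M}$ forces $\stem(p_1),\stem(p_2)$ to be comparable (both are initial segments of $\stem(r)$) with the longer lying in the shorter-stem condition, and then the tree intersection $p_1\cap p_2$ is a common lower bound inside $\bL_{\bar D^M}$, since each $D_t^M$, being an ultrafilter, is closed under intersection.

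The core work is \emph{predensity preservation}: given a maximal antichain $A\in M$ of $\bL_{\bar D^M}$ and $q\in\bL_{\bar D}$ with $s:=\stem(q)\in M$, I must produce $a\in A$ compatible with $q$. Working in $M$, set $R:=\{t\in\omega^{<\omega}:\exists a\in A,\ \stem(a)\subseteq t\text{ and }t\in a\}$. Applying maximality of $A$ to the condition with stem $t$ and full tree above $t$, together with the compatibility criterion \eqref{eq:compatible}, every $t$ has an extension in $R$: either $t\in R$ itself, or $\stem(a)\in R$ for some $a\in A$ with $\stem(a)\supsetneq t$ (witness $a$, since $\stem(a)$ lies in $a$). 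I then define a partial ordinal-valued rank $\rho$ on $\omega^{<\omega}$ by $\rho(t):=0$ on $R$ and $\rho(t):=\min\{\alpha:\{k\in\omega:\rho(t^\frown k)\text{ is defined and }<\alpha\}\in D_t^M\}$ otherwise, when this minimum exists.

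The key claim---the main obstacle I anticipate---is that $\rho(s)$ is defined. Let $N$ be the set of $t$ on which $\rho$ fails. For $t\in N$ and each ordinal $\alpha$, the ultrafilter property gives $\{k:\rho(t^\frown k)\geq\alpha\text{ or undefined}\}\in D_t^M$; choosing $\alpha$ to exceed every defined value $\rho(t^\frown k)$---possible because $\omega$ is countable---collapses this to $\{k:t^\frown k\in N\}\in D_t^M$. If $s\in N$, this $D_t^M$-largeness of $N$ above $s$ lets me construct $p_N\in\bL_{\bar D^M}$ with stem $s$ whose every node above $s$ lies in $N$; but any $a\in A$ compatible with $p_N$ yields either $s\in a$ (so $s\in R$) or $\stem(a)\in p_N$ with $\stem(a)\in R$ (witness $a$), both contradicting $p_N\cap R=\emptyset$. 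Hence $\rho(s)$ is defined.

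Granted $\rho(s)$, I build $p^*\in\bL_{\bar D^M}$ with stem $s$ by taking $\suc_{p^*}(t):=\{k:\rho(t^\frown k)<\rho(t)\}\in D_t^M$ at each $t\in p^*\setminus R$ and restricting below a fixed witness $a(t)\in A$ at each $t\in p^*\cap R$; the strict descent of $\rho$ ensures every branch of $p^*$ meets $R$. In $V$, since $p^*$ and $q$ share stem $s$, their tree intersection is a condition in $\bL_{\bar D}$ (at each node $t$, $\suc_{p^*}(t)\in D_t^M\subseteq D_t$ meets $\suc_q(t)\in D_t^+$, and intersecting a filter-member with a filter-positive set yields a filter-positive set). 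Picking any $t^*\in p^*\cap q$ belonging to $R$, the witness $a(t^*)\in A$ has $\stem(a(t^*))\subseteq t^*$ and $t^*\in a(t^*)\cap q$, and checking the two cases $\stem(a(t^*))\subseteq s$ or $s\subseteq\stem(a(t^*))\subseteq t^*$ gives the stem comparability and inclusion required for $a(t^*)\parallel q$ in $\bL_{\bar D}$.
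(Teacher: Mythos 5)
Your proof is correct, and it takes a route that is noticeably different from the paper's, though both hinge on the same underlying idea: a Cantor--Bendixson-style rank/derivative on $\omega^{<\omega}$ computed with respect to the $M$-ultrafilters, combined with the observation that $A\in D_t^M$ (membership, not just positivity, since $D_t^M$ is an $M$-ultrafilter) implies $A\in D_t$ in $V$. The paper abstracts this into a derivative operation $T\mapsto T^\alpha$ on trees, proves an absoluteness lemma (their property~(g): the derivative of an $M$-tree is the same whether computed with $\bar D^M$ in $M$ or with $\bar D$ in $V$), assumes for contradiction that $q$ is incompatible with every $p_i\in A$, shows $q$ is contained in the tree $T$ that avoids the cones $[\stem(p_i)]$, and concludes via absoluteness that $T$ must then also contain a condition of $\bL_{\bar D^M}$, which yields a contradiction. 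You instead define the rank $\rho$ purely in $M$ from the antichain $A$ (via the set $R$ of nodes covered by $A$), prove that $\rho$ is \emph{total} using maximality of $A$ (your $p_N$ argument), build an $M$-condition $p^*$ whose every branch reaches $R$, and then observe that $p^*\cap q$ is a legal $\bL_{\bar D}$-condition --- exactly because $D_t^M\subseteq D_t$ and a $D_t$-member intersected with a $D_t$-positive set stays $D_t$-positive --- so that some node of $q$ lands in $R$, producing the compatible $a\in A$ directly. Your version is thus constructive (it exhibits the witness rather than deriving a contradiction), it isolates the auxiliary object $\rho$ as depending only on $A$ and not on $q$, and it uses only the one-directional absoluteness $D_t^M\subseteq D_t$ rather than the two-sided equality of derivatives. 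The trade-off is that the paper's explicit derivative lemma is packaged for reuse, while your argument is self-contained. Two very small points you should tidy up: the ``upward'' recursion defining $\rho$ is not a priori well-founded, so it should be phrased as a monotone inductive definition (e.g.\ via the increasing sequence $\mathrm{Rank}_0:=R$, $\mathrm{Rank}_{\alpha+1}:=\mathrm{Rank}_\alpha\cup\{t:\{k:t^\frown k\in\mathrm{Rank}_\alpha\}\in D_t^M\}$, unions at limits, with $\rho(t)$ the least $\alpha$ with $t\in\mathrm{Rank}_\alpha$); and ``$p_N\cap R=\emptyset$'' is not literally true (proper initial segments of $s$ may lie in $R$) --- what you use, and what holds, is that no node of $p_N$ extending $s$ is in $R$.
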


\begin{proof}
  For any tree\footnote{Here we also allow empty trees, and trees
  with maximal nodes.}~$T$, any filter system 
  $\bar E = (E_s)_{s\in \omega^{<\omega}}$, 
  and any ${s_0}\in T$ we define a sequence
  $(T_{\bar E,{s_0}}^\alpha)_{\alpha\in \omega_1}$
  of ``derivatives''
  (where we may abbreviate $T_{\bar E,{s_0}}^\alpha$ to $T^\alpha$)
  as follows:
  \begin{itemize}
  \item $T^0\DEFEQ  T^{[{s_0}]}$. 
  \item Given $T^\alpha$, we let
    $T^{\alpha+1}\DEFEQ  T^\alpha  \setminus 
    \bigcup \{ [s] : s\in T^\alpha , {s_0}\subseteq s,
    \suc_{T^\alpha}(s)\notin E_s^+ \}$,
    where $[s]\DEFEQ  \{t: s\subseteq t\}$. 
  \item For limit ordinals $\delta>0$
    we let $T^\delta\DEFEQ \bigcap_{\alpha<\delta} T^\alpha$.
  \end{itemize}
  Then we have
  \begin{itemize}
  \item [(a)] Each $T^\alpha$ is closed under initial segments. 
    Also: $\alpha < \beta$ implies $  T^\alpha \supseteq T^\beta$.
  \item [(b)]  There is an  $\alpha_0<\omega_1$ such that  $T^{\alpha_0} =
    T^{\alpha_0+1} = T^\beta$ for all $\beta>\alpha_0$.  We write
    $T^\infty$ or $T^\infty_{\bar E,{s_0}}$ for $T^{\alpha_0}$.
  \item[(c)] If ${s_0}\in T_{\bar E,{s_0}}^\infty$, then $T_{\bar
    E,{s_0}}^\infty\in \bL_{\bar E}$ with stem~${s_0}$.  \\ 
    Conversely, if $\stem(T)={s_0}$, and $T\in \bL_{\bar E}$, then
    $T^\infty=T$. 
  \item[(d)] If $T$ contains a tree $q\in \bL_{\bar E}$ with
    $\stem(q)={s_0}$,
    then $T^\infty$ contains $q^\infty=q$,
    so in particular ${s_0}\in T^\infty$.
  \item[(e)] Thus:  $T$ contains a condition in $\bL_{\bar E}$ with stem ${s_0}$
    iff ${s_0}\in T^\infty_{\bar E,{s_0}}$.
  \item[(f)] The computation of $T^\infty$ is absolute between any two models 
    containing $T$ and $\bar E$. (In particular, any transitive ZFC$^*$-model
    containing $T$ and $\bar E$  will also contain
    $\alpha_0$.) 
  \item[(g)] Moreover: Let $T\in M$, $\bar E\in M$, and let $\bar E'$ be a filter system 
    extending $\bar E$ such that for all ${s_0}$ and all
    $A\in \powerP(\omega)\cap M$  we have: $A\in (E_{s_0})^+$ iff $A\in
    (E_{s_0}')^+$.
    (In particular, this will be true for any $\bar E'$ extending $\bar E$,
     provided that each $E_{s_0}$ is an
     $M$-ultrafilter.)
    \\
    Then
     for each $\alpha\in M$ we have  $T^\alpha_{\bar E,{s_0}}= T^\alpha_{\bar
      E',{s_0}}$ (and hence $T^\alpha_{\bar E',{s_0}}\in M$).
      (Proved by induction on~$\alpha$.)
  \end{itemize}
  
  Now let $A  = (p_i:i\in I)\in M$ be a maximal antichain in
  $\bL_{\bar D^M}$, and assume (in $V$) that $q\in \bL_{\bar D}$.  Let
  ${s_0}\DEFEQ \stem(q)$.

  We will show that $q$ is compatible with some~$p_i$ (in $\bL_{\bar D}$).
 This is clear
  if there is some $i$ with ${s_0}\in p_i$ and $\stem(p_i)\subseteq
  {s_0}$, by~\eqref{eq:compatible}. (In this case, $p_i \cap q$
  is a condition in $\bL_{\bar D}$ with stem $s_0$.)

  So for the rest of the proof 
   we assume that this is not the case, i.e.: 
   \proofclaim{eq:not.the.case}{
  There is no $i$ with $s_0 \in p_i $ and $\stem(p_i)\subseteq s_0$.
  } 
     Let $J\DEFEQ \{ i\in I: {s_0}
  \subseteq \stem(p_i)\}$.  We claim that there is $j\in J$ with
  $\stem(p_j)\in q$ (which as above implies that $q$ and $p_j$ are compatible).
  
  Assume towards a contradiction that this is not the case. 
  Then $q$ is contained in the following tree $T$:
  \begin{align}\label{def:T}
    T \DEFEQ  
    (\omega^{<\omega})^{[{{s_0}}]}\setminus 
    \bigcup _{j\in J} [\stem(p_j)].
  \end{align}

  Note that $T\in M$.  In $V$ we have:
  \proofclaim{eq:T.contains.q}{ The tree $T$ contains a condition
    $q$ with stem ${s_0}$.}
  So by (e) (applied in $V$),  followed by (g), and again by (e) (now in $M$) we get: 
  \proofclaim{eq:T.contains.p}{
    The tree $T$ also
    contains a  condition $p\in M$ with stem ${s_0}$.}
  Now $p$ has to be
  compatible with some~$p_i$. The sequences ${s_0}=\stem(p)$ and
  $\stem(p_i)$ have to be comparable, so by~\eqref{eq:compatible}
  there are two possibilities:
  \begin{enumerate}
  \item $\stem(p_i)\subseteq \stem(p) = s_0 \in p_i$. We have excluded this
    case in our assumption \eqref{eq:not.the.case}. 
  \item $s_0 = \stem(p) \subseteq \stem(p_i)\in p$.  So $i\in J$. 
    By construction of~$T$ (see~\eqref{def:T}),
    we conclude $\stem(p_i)\notin T$, contradicting
    $\stem(p_i)\in p\subseteq T$ (see~\ref{eq:T.contains.p}).
    \qedhere
  \end{enumerate}
\end{proof}

\subsection{Ultralaver kills strong measure zero}

The following lemma appears already in \cite[Theorem 9]{MR942525}. We will give a
proof below in Lemma~\ref{lem:pure}.

\begin{Lem}\label{lem:pure.finite}
  If $A$ is a finite set, $\n \alpha$ an $\bL_{\bar D}$-name, $p\in
    \bL_{\bar D}$, and $p\forces\n \alpha\in A$, then there is
    $\beta\in A$ and a pure extension $q\pure  p $ such that $q\forces
    \n \alpha=\beta$.
\end{Lem}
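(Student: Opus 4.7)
The plan is to carry out the standard ``pure decision'' argument for Laver-type forcings. For each $s \in p$ with $\stem(p) \subseteq s$ and each $\beta \in A$, call $s$ \emph{$\beta$-decided} if some pure extension of $p^{[s]}$ forces $\n\alpha = \beta$, call $s$ \emph{decided} if it is $\beta$-decided for some $\beta \in A$, and \emph{undecided} otherwise. A pure extension witnessing that $\stem(p)$ is $\beta$-decided is exactly the $q$ required by the lemma, so it suffices to prove that $\stem(p)$ is decided.

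The heart of the matter is the following claim: if $s$ is undecided, then $U_s \DEFEQ \{k \in \suc_p(s) : s^\frown k \text{ is undecided}\} \in D_s^+$. Suppose not, so that $\omega \setminus U_s \in D_s$. Intersecting with the $D_s^+$-set $\suc_p(s)$, the difference $\suc_p(s) \setminus U_s = \bigsqcup_{\beta \in A} U_s^\beta$ (where $U_s^\beta$ collects the $\beta$-decided successors of $s$) still lies in $D_s^+$. Since $A$ is finite, the filter pigeonhole (if $X_1 \cup \dots \cup X_n \in D_s^+$ then some $X_i \in D_s^+$) yields some $\beta$ with $U_s^\beta \in D_s^+$. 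Picking, for each $k \in U_s^\beta$, a pure extension $q_k \pure p^{[s^\frown k]}$ forcing $\n\alpha = \beta$, and stitching them together over stem $s$ with successor set $U_s^\beta$, produces a pure extension of $p^{[s]}$ forcing $\n\alpha = \beta$, contradicting the undecidedness of $s$.

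Granted the claim, suppose towards a contradiction that $\stem(p)$ is undecided. Recursively build $T \pure p$ by setting $\suc_T(s) \DEFEQ U_s$ at every undecided $s \supseteq \stem(p)$ already placed in $T$. By construction every node of $T$ above its stem is undecided and $\suc_T(s) \in D_s^+$, so $T \in \bL_{\bar D}$. Since $T \forces \n\alpha \in A$, density gives some $r \leq T$ forcing $\n\alpha = \beta_0$ for a specific $\beta_0 \in A$. Let $t \DEFEQ \stem(r)$; then $t \in T$ is undecided, yet $r$ is a pure extension of $p^{[t]}$ witnessing that $t$ is $\beta_0$-decided --- contradiction.

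The one delicate point is the key claim: one must verify both the filter pigeonhole and that the stitched tree really lies in $\bL_{\bar D}$ (both follow from routine filter bookkeeping, using only that each $D_s$ is a filter containing the Fr\'echet filter). Notably neither step exploits the ultrafilter hypothesis on $\bar D$; the argument holds verbatim for arbitrary filter systems, in line with the classical result in \cite{MR942525} for ordinary Laver forcing. What is indispensable, however, is the finiteness of $A$, since the pigeonhole fails for infinite partitions of a $D_s^+$-set.
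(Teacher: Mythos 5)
Your proof is correct and follows essentially the same pure-decision argument the paper gives in Lemma~\ref{lem:pure}(\ref{item:pure.finite}): filter pigeonhole on the finite set $A$ shows that an undecided node has a $D_s^+$-large set of undecided successors, these are assembled into a tree $T\pure p$, and a deciding extension $r\le T$ contradicts the undecidedness of $\stem(r)$. The only cosmetic difference is that the paper first runs a fusion (as in its item~(\ref{item:pure.one})) to pass to some $p_1\pure p$ on which ``good'' means ``$p_1^{[t]}$ itself decides $\n\alpha$'', whereas you work directly with the notion ``some pure extension of $p^{[t]}$ decides'' --- a step your final contradiction does not actually require.
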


\begin{Def}
  Let $\bar\ell$ be an increasing sequence of natural numbers.
  We say that $X\subseteq 2^\omega$ is
   \emph{smz with respect to~$\bar\ell$}, if there
  exists a sequence $(I_k)_{k\in\omega}$ of basic intervals
  of $2^\omega$ of measure $\leq 2^{-\ell_k}$
  (i.e.,  each $I_k$ is of the form $[s_k]$ for some
  $s_k\in  2^{\ell_k }$)  such that
  $X\subseteq\bigcap_{m\in \omega} \bigcup_{k\ge m} I_k$.
\end{Def}

\begin{Rem}
It is well known and  easy to see that the properties 
\begin{itemize}
  \item For all $\bar\ell$ there exists 
  exists a sequence $(I_k)_{k\in\omega}$ of basic intervals
  of $2^\omega$ of measure $\leq 2^{-\ell_k}$
  such that $X\subseteq \bigcup_{k\in\omega} I_k$.
  \item For all $\bar\ell$ there exists 
  exists a sequence $(I_k)_{k\in\omega}$ of basic intervals
  of $2^\omega$ of measure $\leq 2^{-\ell_k}$
  such that $X\subseteq\bigcap_{m\in \omega} \bigcup_{k\ge m } I_k$.
\end{itemize}
are equivalent.  Hence, a set $X$ is smz iff $X$ is smz with respect to all 
  $\bar\ell\in \omega^\omega$.
\end{Rem}

The following lemma is a variant of the corresponding lemma 
 (and proof)
 for Laver forcing (see for example \cite[Lemma~28.20]{MR1940513}): 
Ultralaver makes old uncountable  sets non-smz.

\begin{Lem}\label{lem:LDdestroysSMZ}
  Let $\bar D$ be a system of ultrafilters, and let $\bar{\n\ell}$ be the
  $\bL_{\bar D}$-name for the ultralaver real. Then each uncountable set $X \in V$ is forced to be non-smz (witnessed by the ultralaver real $\bar{\n\ell}$).
  
More precisely, the following holds:
  
 \begin{equation}\label{eq:my_non_smz}
 \forces_{\bL_{\bar D}} \forall X \in V  \cap [2^\omega]^{\aleph_1}\;\; \forall (x_k)_{ k \in \omega} \subseteq 2^\omega \;\; X \not\subseteq  \bigcap_{m \in \omega} \bigcup_{k \geq m} [x_k \on \n\ell_k].
 \end{equation}
\end{Lem}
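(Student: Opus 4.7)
The plan is to combine a standard Laver-style fusion argument (via Lemma~\ref{lem:pure.finite}) with a counting argument that crucially uses the ultrafilter property of $\bar D$. Fix $p \in \bL_{\bar D}$ with stem $s_0$, set $n_0 = \lh(s_0)$, fix an uncountable $X \in V$ and names $\n x_k$ for reals in $2^\omega$; the goal is to produce $q \le p$ and $y^* \in X$ with $q \forces y^* \notin \bigcap_m \bigcup_{k \ge m} [\n x_k \on \n\ell_k]$.

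First I would build, by level-by-level fusion in the pure order, a condition $q_0 \pure p$ such that for every $s \in q_0$ with $s_0 \subsetneq s$, the condition $q_0^{[s]}$ decides $\n x_{\lh(s)-1} \on \n\ell_{\lh(s)-1}$; denote the decided value by $y_s \in 2^{s(\lh(s)-1)}$. This is legitimate because $q_0^{[s]}$ already forces $\n\ell_{\lh(s)-1} = s(\lh(s)-1)$, so the target ranges over a finite set and Lemma~\ref{lem:pure.finite} applies. Going from $q_n$ to $q_{n+1}$, I would replace $q_n^{[t]}$ at each level-$(n{+}1)$ node $t$ by a pure extension deciding the relevant restriction; only successor sets of level-$(n{+}1)$ nodes shrink, so $q_0 := \bigcap_n q_n$ is a genuine ultralaver condition.

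Next, for $s \in q_0$ with $s_0 \subsetneq s$ and $y \in 2^\omega$, I would call $s$ \emph{$y$-caught} if $A_{s,y} := \{k \in \suc_{q_0}(s) : y\on k = y_{s^\frown k}\}$ belongs to $D_s$. The combinatorial heart of the proof will be the claim that each $s$ catches at most one $y \in 2^\omega$: if $y_1 \ne y_2$ both worked, then $A_{s,y_1} \cap A_{s,y_2} \in D_s$ would be infinite (since $D_s$ is non-principal), yet every $k$ in this intersection satisfies $y_1\on k = y_{s^\frown k} = y_2\on k$, which is impossible once $k$ exceeds the first coordinate of disagreement of $y_1, y_2$. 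Since $q_0$ has only countably many nodes, only countably many $y$ are caught anywhere, so from $|X| \ge \aleph_1$ I would pick $y^* \in X$ caught at no node of $q_0$.

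Finally I would prune: let $q \le q_0$ have stem $s_0$ and set $\suc_q(s) := \suc_{q_0}(s) \setminus A_{s,y^*}$ for each $s \in q$ above $s_0$. Since $s$ is not $y^*$-caught and $D_s$ is an ultrafilter, $\omega \setminus A_{s,y^*} \in D_s$, so $\suc_q(s) \in D_s = D_s^+$ and $q$ is a valid condition. For any $k \ge n_0$ and the generic $\bar{\n\ell}$, the node $s^* := \bar{\n\ell} \on (k{+}1)$ lies in $q$; by construction this means $y^*\on \n\ell_k \ne y_{s^*}$, while $q_0^{[s^*]}$ forces $\n x_k \on \n\ell_k = y_{s^*}$, so $q \forces y^*\on \n\ell_k \ne \n x_k \on \n\ell_k$ for all $k \ge n_0$, yielding \eqref{eq:my_non_smz}. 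The main obstacle I anticipate is the ``at most one caught $y$'' claim above: it is the one genuinely nontrivial step and the unique place where the ultrafilter structure of $\bar D$ converts the uncountability of $X$ into a single preserved $y^*$.
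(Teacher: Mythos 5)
Your proof is correct and follows the paper's broad strategy (fuse to decide the restrictions at every node, count nodes against the uncountable $X$, prune using the ultrafilters), but with a cleaner middle section. The paper's proof (via Lemmas~\ref{lem:first_technical} and~\ref{lem:second_technical}) explicitly constructs at each node $\eta$ the $D_\eta$-limit real $\tau_\eta$ of the decided values, picks $x^*\in X$ disagreeing with every $\tau_\eta$ at some level $n_\eta$, and then prunes $\suc(\eta)$ to a set it \emph{positively} knows to lie in $D_\eta$ (the $i>n_\eta$ for which the decided value agrees with $\tau_\eta$ up to $n_\eta$). You never form the limit real: your ``$y$-caught'' relation $A_{s,y}\in D_s$ demands exact agreement $y\on k = y_{s^\frown k}$, which is strictly stronger than ``$y$ is the $D_s$-limit'' and makes the ``at most one caught per node'' observation a one-liner needing only nonprincipality of $D_s$; and your pruning then invokes maximality of $D_s$ \emph{negatively} ($A_{s,y^*}\notin D_s$ forces $\omega\setminus A_{s,y^*}\in D_s$). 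Both routes use the ultrafilter hypothesis in equivalent places, and yours is arguably a bit leaner. Two small imprecisions, neither fatal: (i) in the fusion, pure-extending the subtrees rooted at level-$(n{+}1)$ nodes shrinks successor sets at \emph{all} levels $\ge n{+}1$, not only at level $n{+}1$; the fusion still converges, since the successor set of any fixed node is frozen once its own level has been processed. (ii) You define $A_{s,y}$ only for $s\supsetneq s_0$, but then prune ``for each $s\in q$ above $s_0$'' and later use $k\ge n_0$; either extend the definition of $A_{s,y}$ to $s_0$ itself (the formula makes sense there) or begin the final union at $k\ge n_0+1$.
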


We first give two technical lemmas:

\begin{Lem}\label{lem:first_technical}
Let $p \in \bL_{\bar D}$ with stem $s \in \omega^{<\omega}$, and let $\n x$ be a $\bL_{\bar D}$-name for a real in $2^\omega$. Then there exists a pure extension $q \leq_0 p$ and a real $\ur \in 2^\omega$ such that for every $n \in \omega$,
\begin{equation}\label{eq:first_technical}
\{ i \in\suc_q(s):\; q^{[s^\frown i]} \forces \n x \on n = \ur \on n \} \in D_s.
\end{equation}
\end{Lem}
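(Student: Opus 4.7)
The plan is to iterate the pure-decision Lemma~\ref{lem:pure.finite} on each $i$-subtree ($i\in\suc_p(s)$) to decide longer and longer initial segments of $\n x$, while using the ultrafilter property of $D_s$ at each stage to collect a $D_s$-large set of successors on which the decided segments agree; this assembles a single $\ur\in 2^\omega$ and, after a bookkeeping step at level~$1$, the desired pure extension~$q$.

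Concretely, set $B_0\DEFEQ\suc_p(s)$, $\ur^0\DEFEQ\emptyset$, and $q_i^0\DEFEQ p^{[s^\frown i]}$ for $i\in B_0$. Inductively, given $B_n\in D_s$, $\ur^n\in 2^n$, and pure extensions $q_i^n\pure p^{[s^\frown i]}$ with $q_i^n\forces\n x\on n=\ur^n$ for every $i\in B_n$, apply Lemma~\ref{lem:pure.finite} to each such $q_i^n$ with the name $\n x\on(n+1)$ (whose range $2^{n+1}$ is finite) to obtain $q_i^{n+1}\pure q_i^n$ deciding $\n x\on(n+1)$. Since $q_i^{n+1}\pure q_i^n$ inherits $\n x\on n=\ur^n$, the new decided value extends $\ur^n$ by a single bit $b_i\in\{0,1\}$. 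As $D_s$ is an ultrafilter, one piece of the partition $B_n=\{i:b_i=0\}\cup\{i:b_i=1\}$ lies in $D_s$; take that piece as $B_{n+1}$ and extend $\ur^n$ by the corresponding bit to define $\ur^{n+1}$. Let $\ur\DEFEQ\bigcup_n\ur^n\in 2^\omega$.

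Finally, set $\stem(q)\DEFEQ s$ and $\suc_q(s)\DEFEQ B_0$; I assign the subtree $q^{[s^\frown i]}$ for each $i\in B_0$ as follows. Let $n(i)\DEFEQ\sup\{n:i\in B_n\}\in\omega\cup\{\infty\}$. If $n(i)<\infty$, set $q^{[s^\frown i]}\DEFEQ q_i^{n(i)}$; the remaining $i$'s form $\bigcap_n B_n\subseteq\omega$, which is at most countable, so enumerate it as $(i_k)_k$ and set $q^{[s^\frown i_k]}\DEFEQ q_{i_k}^k$. Then $q\in\bL_{\bar D}$ and $q\pure p$. For each $n\in\omega$, the set in~\eqref{eq:first_technical} contains $B_n\setminus\{i_0,\dots,i_{n-1}\}$: if $i\in B_n$ with $n(i)<\infty$, then $q^{[s^\frown i]}=q_i^{n(i)}\forces\n x\on n(i)=\ur\on n(i)$, which forces $\n x\on n=\ur\on n$; and for $i_k$ with $k\geq n$, $q_{i_k}^k$ forces $\n x\on k=\ur\on k$. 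Since $B_n\in D_s$ and $D_s$ extends the Fr\'echet filter, $B_n$ minus finitely many elements still lies in $D_s$.

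The main obstacle is the case $\bigcap_n B_n\in D_s$: a naive assignment $q^{[s^\frown i]}\DEFEQ q_i^{n(i)}$ would then require a common lower bound of the infinite descending chain $q_i^0\pure q_i^1\pure\cdots$, which need not exist in $\bL_{\bar D}$ as the successor sets could shrink out of each $D_t^+$. The enumeration-based diagonal assignment on $\bigcap_n B_n$, combined with the Fr\'echet-containment of the $D_s$, bypasses this cleanly without invoking any fusion apparatus.
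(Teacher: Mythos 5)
Your proof is correct, but it takes a genuinely different route from the paper's. You fix a $D_s$-large set $B_n$ at each stage $n$, decide one more bit of $\n x$ on every subtree indexed by $B_n$, and shrink $B_n$ to $B_{n+1}$ by the ultrafilter property; the price is a potential infinite $\pure$-descent on $\bigcap_n B_n$, which you correctly neutralize with a diagonal assignment $q^{[s^\frown i_k]}\DEFEQ q^k_{i_k}$ and the Fr\'echet-ness of $D_s$. The paper instead varies the amount decided with the index of the subtree: for each $i\in\suc_p(s)$ it purely decides $\n x\on i$ (i.e.\ $i$ many bits) on $p^{[s^\frown i]}$ in \emph{one} application of Lemma~\ref{lem:pure.finite}, getting $t_i\in 2^i$, then takes the $D_s$-limit $\ur$ of the $t_i$'s and sets $q=\bigcup_i q_i$. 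For a fixed $n$ the set $\{i\ge n:\, t_i\on n=\ur\on n\}$ is already $D_s$-large, so no stage-by-stage shrinking and no diagonalization are needed. Your version makes the role of the ultrafilter more explicit (a genuine partition argument at each step) and is closer in spirit to a fusion construction; the paper's version buys brevity and avoids the descent issue entirely, at the small cost of relying implicitly on Fr\'echet-ness when passing from $\{i\ge n:\dots\}$ to a $D_s$-set. Both are sound, and your discussion of why the naive fusion would fail and how the diagonal bypasses it is a good observation.
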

\begin{proof}
For each $i \in \suc_p(s)$, let $q_i \leq_0 p^{[s^\frown i]}$ be such that $q_i$ decides 
$\n x \on i$, 
i.e., 
there is a $t_i$ of length $i$ such that 
$q_i \forces \n x \on i = t_i$ (this is possible by Lemma~\ref{lem:pure.finite}).

Now we define the real $\ur \in 2^\omega$ as the $D_s$-limit of the $t_i$'s. In more detail: For each $n \in \omega$ there is a (unique) $\ur_n \in 2^n$ such that $\{ i:\; t_i \on n = \ur_n \} \in D_s$; since $D_s$ is a filter, there is a real $\ur \in 2^\omega$ with $\ur \on n = \ur_n$ for each $n$. Finally, let 
$q \DEFEQ  \bigcup_i q_i$.
\end{proof}

\begin{Lem}\label{lem:second_technical}
Let $p \in \bL_{\bar D}$ with stem $s$, and let $(\n x_k)_{ k \in \omega}$ be a
sequence of $\bL_{\bar D}$-names for reals in $2^\omega$. Then there exists a
pure extension $q \leq_0 p$ and a family of reals $(\ur_\eta)_{ \eta \in q,\,
\eta \supseteq s} \subseteq  2^\omega$ such that for each $\eta \in q$
above~$s$, and every $n \in \omega$,
\begin{equation}\label{eq:second_technical}
\{ i \in  \suc_q(\eta):\; q^{[\eta^\frown i]} \forces \n x_{|\eta|} \on n = \ur_\eta \on n \} \in D_\eta.
\end{equation}
\end{Lem}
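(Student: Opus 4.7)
The plan is to recursively apply Lemma~\ref{lem:first_technical} at every node $\eta$ above $s$ in the tree $q$ we are building: at each such $\eta$, the lemma gives both a pure extension (keeping $\suc(\eta) \in D_\eta^+$) and the target real $\tau_\eta$. Gluing these pure extensions together along the tree produces $q$.

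Concretely, I would define $q$ together with an auxiliary family of conditions $(r_\eta)$ (where $r_\eta$ has stem $\eta$) by recursion on $|\eta|$. Set $r_s \DEFEQ p$. Given $r_\eta$, apply Lemma~\ref{lem:first_technical} to $r_\eta$ and the name $\n x_{|\eta|}$ to obtain a pure extension $r'_\eta \pure r_\eta$ and a real $\tau_\eta \in 2^\omega$ satisfying \eqref{eq:first_technical}. Declare $\eta \in q$, set $\suc_q(\eta) \DEFEQ \suc_{r'_\eta}(\eta)$, and, for each $i \in \suc_q(\eta)$, let $r_{\eta^\frown i} \DEFEQ (r'_\eta)^{[\eta^\frown i]}$ so that the recursion may continue. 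Below $s$, $q$ contains exactly the initial segments of $s$.

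To verify $q \in \bL_{\bar D}$ and $q \pure p$: for every $\eta \in q$ above $s$, $\suc_q(\eta) = \suc_{r'_\eta}(\eta) \in D_\eta^+$ because $r'_\eta \in \bL_{\bar D}$; and $q \subseteq p$ with $\stem(q) = s = \stem(p)$ follow by induction. To verify~\eqref{eq:second_technical}: for each $\eta \in q$ above $s$ and each $n$, Lemma~\ref{lem:first_technical} applied to $r_\eta$ yields a set $A_{\eta,n} \in D_\eta$ of successors $i \in \suc_{r'_\eta}(\eta) = \suc_q(\eta)$ with $(r'_\eta)^{[\eta^\frown i]} \forces \n x_{|\eta|} \on n = \tau_\eta \on n$. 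Since $q^{[\eta^\frown i]} \le r_{\eta^\frown i} = (r'_\eta)^{[\eta^\frown i]}$ by construction, the same forcing statement holds of $q^{[\eta^\frown i]}$, and $A_{\eta,n}$ witnesses~\eqref{eq:second_technical} at $\eta$.

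The only subtle point is the bookkeeping of the tree recursion: one must check that the piecewise-glued object is a genuine condition in $\bL_{\bar D}$ and that forcing statements established at a node are preserved by the further pure extensions taken below. Both are immediate from the facts that pure extensions shrink the underlying tree and that each invocation of Lemma~\ref{lem:first_technical} leaves the processed node's successor set in $D_\eta^+$.
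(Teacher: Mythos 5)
Your proof is correct and takes essentially the same approach as the paper: build a fusion sequence by applying Lemma~\ref{lem:first_technical} at every node above the stem, and take the resulting tree as $q$. Your node-by-node recursion (tracking $r_\eta$ and $r'_\eta$) is just a bookkeeping variant of the paper's level-by-level fusion $(p, p_1, p_2, \ldots)$ with $q = \bigcap_k p_k$; the constructed condition $q$ and the verification are the same.
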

\begin{proof}

We apply Lemma~\ref{lem:first_technical} to each node $\eta$ in $p$  above $s$ (and
to $\n x_{|\eta|}$) separately: 
We first get a $p_1 \leq_0 p$ and a $\ur_s \in 2^\omega$;
for every immediate successor $\eta \in \suc_{p_1}(s)$, we get $q_\eta \leq_0
p_1^{[\eta]}$ and a $\ur_\eta \in 2^\omega$, and let $p_2 \DEFEQ  \bigcup_\eta
q_\eta$; in this way, we get a (fusion) sequence $(p,p_1,p_2,\ldots)$, and let
$q \DEFEQ  \bigcap_k p_k$.
\end{proof}

\begin{proof}[Proof of Lemma~\ref{lem:LDdestroysSMZ}]
We want to prove~\eqref{eq:my_non_smz}. Assume towards a contradiction that $X$ is an uncountable set in $V$, 
and that $(\n x_k)_{ k \in \omega}$ is a sequence of names for reals in $2^\omega$ and $p 
  \in \bL_{\bar D}$ such that 
\begin{equation}\label{eq:towards_smz_contra}
   p \forces X \subseteq  \bigcap_{m \in \omega} \bigcup_{k \geq m} [\n x_k \on \n\ell_k].
\end{equation}
Let $s \in \omega^{<\omega}$ be the stem of $p$.

By Lemma~\ref{lem:second_technical}, we can fix a pure extension $q \leq_0 p$ and a family $(\ur_\eta)_{\eta \in q,\, \eta \supseteq s} \subseteq 2^\omega$ such that for each $\eta \in q$ above the stem $s$ and every $n \in \omega$, condition~\eqref{eq:second_technical} holds. 

Since $X$ is (in $V$ and) uncountable, we can find a real $x^* \in X$ which is
different from each real in the countable family $(\ur_\eta)_{\eta \in q,\, \eta
\supseteq s}$; more specifically, we can pick a family of natural numbers
$(n_\eta)_{\eta \in q,\, \eta \supseteq s}$ such that $x^* \on n_\eta \neq \ur_\eta
\on n_\eta$ for any $\eta$.

We can now find $r\le_0 q$ such that:
\begin{itemize}
  \item For all $\eta\in r$ above $s$  and all $i\in \suc_r(\eta)$
          we have $i > n_\eta$. 
  \item For all  $\eta\in r$ above $s$  and all $i\in \suc_r(\eta)$
         we have $r^{[\eta^\frown i]} \forces  \n x_{|\eta|} \on n_\eta = 
	                                             \tau_\eta\on n_\eta \not= x^*\on n_\eta$.
\end{itemize}

So for all $\eta\in r$ above $s$ we have, writing $k$ for $|\eta|$, that 
$r^{[\eta^\frown i]} $ forces $x^*\notin  [ \n x_k \on n_\eta] \supseteq
 [\n x_k \on \ell_k ] $.
We conclude that 
$r$ forces $x^* \notin \bigcup_{k \ge |s|} [\n x_k \on \ell_k] $, 
contradicting 
    \eqref{eq:towards_smz_contra}. 
\end{proof}

\begin{Cor}\label{cor:LDdestroysSMZ}
  Let $(t_k)_{k\in \omega}$ be a dense subset of  $2^{\omega}$.
  
  Let $\bar D$ be a system of ultrafilters, and let $\bar{\n\ell}$ be the
  $\bL_{\bar D}$-name for the ultralaver real.  Then the set $$ \n H\DEFEQ 
  \bigcap_{m\in\omega} \bigcup _{k\ge m} [  t_k \on {\n \ell_k}] $$
  is forced to be a
  comeager set with the property that $\n H$ does not contain any
  translate of any old uncountable set. 
\end{Cor}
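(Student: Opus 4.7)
The plan is to derive this corollary directly from Lemma~\ref{lem:LDdestroysSMZ} in two independent steps: first verify that $\n H$ is comeager, and then reduce the ``no translate'' clause to the already-proved non-smz statement~\eqref{eq:my_non_smz}.

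For comeagerness, I would observe that $\n H$ is presented as a countable intersection of open sets $\n U_m \DEFEQ \bigcup_{k\ge m}[t_k\on\n\ell_k]$, so it suffices to check that each $\n U_m$ is forced to be dense in $2^\omega$. Given any basic open $[s]\subseteq 2^\omega$, density of the sequence $(t_k)_{k\in\omega}$ gives infinitely many $k$ with $t_k\in[s]$; since the ultralaver real $\bar{\n\ell}$ is forced to be strictly increasing (indeed fast-growing), for all sufficiently large such $k\ge m$ we have $\n\ell_k>|s|$, whence $[t_k\on\n\ell_k]\subseteq[s]$. Thus $\n U_m\cap[s]\ne\emptyset$, and $\n H$ is a dense $G_\delta$.

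For the translate property, I would argue by contradiction: suppose some condition forces that there is a real $\n t\in 2^\omega$ with $\n t+X\subseteq\n H$, where $X\in V$ is uncountable. The key move is to define, for each $k\in\omega$, the $\bL_{\bar D}$-name $\n x_k\DEFEQ \n t+t_k$ for a real in $2^\omega$. For any $x\in X$, membership $\n t+x\in[t_k\on\n\ell_k]$ translates (using that addition is componentwise on $2^\omega$) to $x\in[(\n t+t_k)\on\n\ell_k]=[\n x_k\on\n\ell_k]$. Hence the assumed inclusion $\n t+X\subseteq\bigcap_m\bigcup_{k\ge m}[t_k\on\n\ell_k]$ rewrites as
\[
X\subseteq\bigcap_{m\in\omega}\bigcup_{k\ge m}[\n x_k\on\n\ell_k],
\]
directly contradicting~\eqref{eq:my_non_smz} of Lemma~\ref{lem:LDdestroysSMZ} applied to $X$ and the sequence $(\n x_k)_{k\in\omega}$.

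There is no real obstacle here: the proof is essentially a bookkeeping exercise using that translation on $2^\omega$ commutes with restriction to initial segments, so that a translate of $X$ landing in $\n H$ witnesses smz-ness of $X$ with respect to $\bar{\n\ell}$ via the translated centers $\n x_k=\n t+t_k$. The only thing to be a bit careful about is that $\n t$ is a name rather than a ground-model real, but this is harmless since~\eqref{eq:my_non_smz} quantifies over arbitrary names for sequences of reals in the extension.
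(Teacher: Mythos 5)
Your proof is correct, and it is essentially the argument the paper leaves implicit (the corollary is stated without proof right after Lemma~\ref{lem:LDdestroysSMZ}, and the intended derivation is exactly the translation bookkeeping you carry out, together with the routine density check).

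One small inaccuracy worth flagging: the generic ultralaver real $\bar{\n\ell}$ is the generic branch through the generic tree, and its entries are arbitrary natural numbers, so it is \emph{not} literally forced to be strictly increasing. What \emph{is} forced (by a density argument pruning successors to $\{j>n\}$, using that every $D_t$ contains the Fr\'echet filter) is that $\n\ell_k\to\infty$, which is exactly what your comeagerness argument actually needs ($\n\ell_k>|s|$ for all large $k$). So the phrase ``strictly increasing'' should be replaced by ``diverges to infinity'', but this does not affect the validity of the argument. The rest — defining $\n x_k\DEFEQ\n t+t_k$, using that addition on $2^\omega$ is coordinatewise and hence commutes with restriction to $[0,\n\ell_k)$, and invoking~\eqref{eq:my_non_smz} — is clean and correct, including the observation that~\eqref{eq:my_non_smz} permits $(\n x_k)_k$ to be a sequence of names rather than ground-model reals.
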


Pawlikowski's theorem~\ref{thm:pawlikowski} gives us:
\begin{Cor}\label{cor:absolutepositive}
  There is a canonical name $F$ for a closed null set such that
  $X+F$ is positive for all uncountable $X$ in~$V$.   
  
  In particular, no uncountable ground model set is smz in the
  ultralaver extension.
\end{Cor}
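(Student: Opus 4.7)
The proof should be a short deduction from the two ingredients already stated: Corollary~\ref{cor:LDdestroysSMZ} (which produces, from the ultralaver real, a dense $G_\delta$ set $\n H$ containing no translate of any old uncountable set) and Pawlikowski's Theorem~\ref{thm:pawlikowski} (which from any dense $G_\delta$ set produces, in an absolute way, a closed null set $F$ with the stated translation property).

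The plan is as follows. First I would fix, once and for all in $V$, a canonical enumeration $(t_k)_{k\in\omega}$ of a dense subset of $2^\omega$ (e.g., enumerate $2^{<\omega}$ and pad each element with zeros). Plugging this into Corollary~\ref{cor:LDdestroysSMZ}, I obtain a canonical $\bL_{\bar D}$-name
\[
\n H \DEFEQ \bigcap_{m\in\omega}\bigcup_{k\geq m}[\,t_k\on \n\ell_k\,],
\]
which depends only on $\bar{\n\ell}$, and which is forced to be a dense $G_\delta$ set containing no translate of any uncountable $X\in V$. Next I apply the ``moreover'' clause of Theorem~\ref{thm:pawlikowski} to $\n H$ (inside the generic extension): this produces, by an absolute Borel construction from a code for $\n H$, a closed null set $F$ such that for every $X\subseteq 2^\omega$ with $X+F$ null there exists $t\in 2^\omega$ with $t+X\subseteq \n H$. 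Since the construction is absolute and $\n H$ is itself given by an absolute recipe from $\bar{\n\ell}$, the resulting $F$ is a canonical $\bL_{\bar D}$-name for a closed null set, depending only on $\bar{\n\ell}$.

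It then remains to verify that $X+F$ is forced to have positive outer Lebesgue measure for every uncountable $X\in V$. Suppose toward a contradiction that some condition $p\in\bL_{\bar D}$ forces $X+F$ to be null, for some uncountable $X\in V$. By the choice of $F$ via Pawlikowski's theorem, $p$ then forces the existence of some $t\in 2^\omega$ with $t+X\subseteq \n H$. But this directly contradicts the conclusion of Corollary~\ref{cor:LDdestroysSMZ}, which says that $\n H$ contains no translate of any old uncountable set. Hence $X+F$ is forced to be non-null, so by \eqref{eq:notsmz} $F$ witnesses that $X$ is not smz in the extension; the ``In particular'' follows.

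I do not foresee any real obstacle: the whole argument is a two-line combination of an already-proved corollary with an already-quoted absolute theorem. The only point that deserves a word of care is the word \emph{canonical}: one must make sure that the dense sequence $(t_k)_k$, the definition of $\n H$, and the Pawlikowski construction are each given by absolute recipes applied to the single parameter $\bar{\n\ell}$, so that the property of $F$ asserted in item~(5) of the introductory list (namely that the Borel codes $F[H]$ and $F[H^M]$ agree) is automatic.
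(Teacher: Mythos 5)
Your proof is correct and is precisely the argument the paper has in mind: the paper offers no written proof beyond the header ``Pawlikowski's theorem gives us:'', leaving exactly the two-step deduction you spell out (apply Corollary~\ref{cor:LDdestroysSMZ} to a fixed canonical dense sequence to get $\n H$, then apply the ``moreover'' clause of Theorem~\ref{thm:pawlikowski} and run the short contrapositive). Your extra remark about canonicity is also the right one to make, since item~(5) of the introductory list relies on $F$ being given by an absolute recipe from $\bar{\n\ell}$.
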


\subsection{Thin sets and strong measure zero}
\label{sec:thin}

For the notion of ``(very) thin'' set, we use an increasing 
function $\tomek(k) $ (the function we use will be described in
Corollary~\ref{cor:tomek}).   We will 
assume that $\bar\ell^*=(\ell^*_k)_{k\in\omega}$ is an
increasing sequence of natural numbers with $\ell^*_{k+1} \gg \tomek(k)$. 
  (We will later use a subsequence 
 of the ultralaver real~$\bar\ell$ as~$\bar\ell^*$, see Lemma~\ref{lem:subsequence}).

\begin{Def}\label{def:thin}
  For $X \subseteq 2^\omega$ and $k\in
  \omega$ we write $X\on [\ell^*_k,\ell^*_{k+1}) $ for the set
  $\{x\on   [\ell^*_k,\ell^*_{k+1}) : x\in X\}$.  We say that
  \begin{itemize}
    \item $X \subseteq 2^\omega$ is 
      \qemph{very thin with respect to $\bar
      \ell^*$ and~$\tomek$}, \  if there are infinitely many $k$ with $|X\on
          [\ell^*_k,\ell^*_{k+1})|\le \tomek(k) $.
    \item $X\subseteq 2^\omega$ is \qemph{thin with respect to $\bar \ell^*$ and~$\tomek$}, \  if $X$ is the union
      of countably many very thin sets.
  \end{itemize}
\end{Def}

Note that the family of thin sets  is a $\sigma$-ideal, while the family of
very thin sets is not even an ideal. Also, every very thin set is covered by a
closed very thin (in particular nowhere dense) set.  In particular, every thin
set is meager and the ideal of thin sets is a proper ideal.

\begin{Lem}\label{lem:subsequence}
  Let $\tomek$ be an increasing function. 
  Let $\bar\ell$ be an increasing sequence of natural numbers.
  We define 
  a subsequence $\bar\ell^*$ of $\bar\ell$ in the following way:
  $\ell^*_k=\ell_{n_k}$ where $n_{k+1}-n_k=\tomek({k})\cdot 2^{\ell^*_k}$.
  \\ Then we get: If $X$ is thin with respect to $\bar\ell^*$ and $\tomek$,
  then $X$ is smz with respect to~$\bar\ell$.
\end{Lem}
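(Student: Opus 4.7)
The goal is to construct, for any thin $X = \bigcup_n Y_n$ (each $Y_n$ very thin with respect to $\bar\ell^*$ and $\tomek$), basic intervals $(I_j)_{j\in\omega}$, each $I_j = [s_j]$ with $s_j\in 2^{\ell_j}$, satisfying $X\subseteq\bigcap_m\bigcup_{j\ge m}I_j$. The plan is to cover a single very thin $Y_n$ using one full ``block'' of indices $[n_k,n_{k+1})$ for each $k$ that is good for $Y_n$, and then to diagonalize across $n$ so every $Y_n$ is assigned infinitely many blocks.

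For the counting step, fix $Y_n$ very thin and a good $k$, meaning $|Y_n\on [\ell^*_k,\ell^*_{k+1})|\le \tomek(k)$. The map $\sigma\mapsto(\sigma\on \ell^*_k,\sigma\on [\ell^*_k,\ell^*_{k+1}))$ embeds $Y_n\on \ell^*_{k+1}$ into $(Y_n\on \ell^*_k)\times(Y_n\on [\ell^*_k,\ell^*_{k+1}))$, so $|Y_n\on \ell^*_{k+1}|\le 2^{\ell^*_k}\cdot\tomek(k)=n_{k+1}-n_k$. Injectively assign each $\sigma\in Y_n\on \ell^*_{k+1}$ to a distinct $j\in[n_k,n_{k+1})$ and set $I_j\DEFEQ [\sigma\on \ell_j]$; for leftover indices in the block, let $I_j=[s_j]$ for any $s_j\in 2^{\ell_j}$. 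Since $\ell_j<\ell^*_{k+1}$ for every $j<n_{k+1}$, the truncation $\sigma\on \ell_j$ is well-defined and $[\sigma]\subseteq I_j$, so $(I_j)_{j\in[n_k,n_{k+1})}$ covers $Y_n$.

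For the diagonalization, let $K_n$ be the (infinite) set of good indices for $Y_n$, and enumerate $(m_i)_{i\in\omega}\subseteq\omega$ so that each integer appears infinitely often. Inductively pick $k_i\in K_{m_i}$ large enough that the blocks $B_i\DEFEQ [n_{k_i},n_{k_i+1})$ are pairwise disjoint and $\min B_i\to\infty$. On each $B_i$ install the cover of $Y_{m_i}$ from the previous paragraph; on any $j\notin\bigcup_i B_i$ let $I_j=[s_j]$ for some $s_j\in 2^{\ell_j}$. Given $y\in Y_n$ and $m\in\omega$, choose $i$ with $m_i=n$ and $\min B_i>m$; then the cover of $Y_n$ sitting on $B_i$ provides $j\in B_i$ with $y\in I_j$, and automatically $j\ge m$. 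Hence $y\in\bigcap_m\bigcup_{j\ge m}I_j$, giving $X\subseteq\bigcap_m\bigcup_{j\ge m}I_j$. The only nontrivial point is the counting identity $|Y_n\on \ell^*_{k+1}|\le 2^{\ell^*_k}\cdot\tomek(k)=n_{k+1}-n_k$, which is precisely what the choice of subsequence $\bar\ell^*$ was engineered to deliver; constructing the cover of $X$ directly via block diagonalization sidesteps having to argue that ``smz with respect to $\bar\ell$'' is itself a $\sigma$-ideal.
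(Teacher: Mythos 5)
Your proof is correct and follows essentially the same route as the paper: enumerate the very thin pieces with infinite repetition, assign to each occurrence a block $[n_k,n_{k+1})$ with $k$ a good index, use the count $|Y\on\ell^*_{k+1}|\le 2^{\ell^*_k}\cdot\tomek(k)=n_{k+1}-n_k$ to cover that piece on that block, and let the ever-growing block minima witness the $\bigcap_m\bigcup_{j\ge m}$. Your version is slightly more explicit (spelling out the product estimate, truncating to length exactly $\ell_j$, and filling in the unused indices), but these are presentation details rather than a different argument.
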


\begin{proof}
  Assume that $X=\bigcup_{i\in\omega} Y_i$, each $Y_i$ very thin
  with respect to~$\bar\ell^*$ and $\tomek$.
  Let $(X_j)_{j\in \omega}$ be an enumeration of $\{Y_i:i\in \omega\}$
  where each $Y_i$ appears infinitely often.   So $X \subseteq 
   \bigcap_{m\in \omega} \bigcup_{j\ge m}  X_j$.

  By induction on~$j\in\omega$, we find for all $j>0$  some $k_j>k_{j-1}$
  such that 
  \[
    |X_j\on [\ell^*_{k_j},\ell^*_{k_j+1}) |\leq \tomek({k_j})
    \quad\text{hence}\quad
    |X_j\on [0,\ell^*_{k_j+1}) |\leq \tomek({k_j})\cdot 2^{\ell^*_{k_j}}
     = n_{k_j+1}-n_{k_j}.
  \]
	So we can enumerate $X_j\on [0,\ell^*_{k_j+1}) $ as $(s_i)_{n_{k_j}\leq
i<n_{k_{j}+1}}$.  Hence $X_j$ is a subset of $\bigcup_{n_{k_j}\leq
i<n_{k_{j}+1}} [s_i]$;
  and each $s_i $ has length $\ell^*_{k_j+1}\geq \ell_i$,
  since $\ell^*_{k_j+1}=\ell_{n_{k_j+1}}$ and $i<n_{k_j+1}$.  
  This implies 
  \[ X \subseteq 
   \bigcap_{m\in \omega} \bigcup_{j\ge m}  X_j   \subseteq 
   \bigcap_{m\in \omega} \bigcup_{i\ge m}  [s_i]. \]
  Hence $X$ is smz with respect to~$\bar\ell$.
\end{proof}

Lemma~\ref{lem:LDdestroysSMZ} 
and Lemma~\ref{lem:subsequence} yield: 

\begin{Cor}\label{cor:LDnotthin}
  Let $\tomek$ be an increasing function. 
  Let $\bar D$ be  a system
  of ultrafilters, and $\n{\bar\ell}$ the name for the ultralaver real.
  Let $\n{\bar\ell}^*$ be constructed from $\tomek$ and $\n{\bar\ell}$ as in
  Lemma~\ref{lem:subsequence}.  
  \\
  Then $\bL_{\bar D}$ forces that for every uncountable~$X\subseteq 2^\omega$:
  \begin{itemize}
     \item $X$ is not smz with respect to~$\n{\bar \ell}$. 
     \item $X$ is not thin with respect
          to~$\n{\bar\ell}^*$ and~$\tomek$.\label{item:LDnotthin}
  \end{itemize}
\end{Cor}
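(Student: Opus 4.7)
The plan is to derive both bullets directly from the two preceding lemmas—Lemma~\ref{lem:LDdestroysSMZ} and Lemma~\ref{lem:subsequence}—by composing them in the obvious way. The corollary is essentially a packaging statement, and I do not expect any genuinely new forcing-theoretic work to be required.

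For the first bullet I would simply invoke Lemma~\ref{lem:LDdestroysSMZ}. The formal statement~\eqref{eq:my_non_smz} asserts that no uncountable $X \in V$ can be covered by $\bigcap_{m}\bigcup_{k\geq m}[x_k \on \n\ell_k]$ for any sequence $(x_k)\subseteq 2^\omega$. But the sets of the form $[x_k \on \n\ell_k]$ are exactly the basic intervals $[s_k]$ with $s_k\in 2^{\n\ell_k}$ allowed in the definition of ``smz with respect to $\n{\bar\ell}$''; so the conclusion of~\eqref{eq:my_non_smz} is precisely the statement that $X$ is not smz with respect to the ultralaver real $\n{\bar\ell}$.

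For the second bullet I would argue by contradiction in the generic extension. Fix an uncountable $X$ and suppose that $X$ is thin with respect to $\n{\bar\ell}^*$ and $\tomek$, i.e., $X = \bigcup_i Y_i$ with each $Y_i$ very thin. I then apply Lemma~\ref{lem:subsequence} with $\bar\ell := \n{\bar\ell}$; this is legitimate because, by hypothesis, $\n{\bar\ell}^*$ is exactly the subsequence prescribed there (defined from $\n{\bar\ell}$ and $\tomek$ via $\ell^*_k = \n\ell_{n_k}$ with $n_{k+1}-n_k = \tomek(k)\cdot 2^{\ell^*_k}$). The lemma then yields that $X$ is smz with respect to $\n{\bar\ell}$, contradicting the first bullet.

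The only point that needs checking, rather than a genuine obstacle, is that the recipe producing $\n{\bar\ell}^*$ in the hypothesis of the corollary matches the one in the hypothesis of Lemma~\ref{lem:subsequence}; this is built into the statement. Once noted, the proof reduces to the one-line composition ``thin $\Rightarrow$ smz $\Rightarrow$ contradiction'', and no further argument is needed.
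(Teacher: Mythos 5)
Your proof is correct and matches the paper's approach exactly: the paper derives Corollary~\ref{cor:LDnotthin} immediately from Lemma~\ref{lem:LDdestroysSMZ} (giving the first bullet) together with Lemma~\ref{lem:subsequence} applied in the extension with $\bar\ell := \n{\bar\ell}$ (giving the second bullet by contraposition). Your observation that ``thin $\Rightarrow$ smz w.r.t.\ $\n{\bar\ell}$ $\Rightarrow$ contradiction'' is exactly the intended one-line composition.
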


\subsection{Ultralaver and preservation of Lebesgue positivity}\label{ss:ultralaverpositivity}

It is well known that both Laver forcing and random forcing preserve
Lebesgue positivity; in fact they satisfy a stronger  property that is preserved
under countable support iterations. 
(So in particular, a countable support iteration of Laver
and random also preserves positivity.)

Ultralaver forcing $\bL_{\bar D}$ will in general not preserve
positivity.  Indeed, if all ultrafilters $D_s$ are equal to the same
ultrafilter $D^*$,
 then the range $L\DEFEQ  \{\ell_0, \ell_1, \ldots \} \subseteq \omega $
of the ultralaver real $\bar \ell$ will diagonalize  $D^*$, so every
ground model real $x\in 2^\omega$ (viewed as a subset of $\omega$)  will either
almost contain $L$ or be almost disjoint to $L$, which implies that
the set $2^\omega\cap V$
of old reals is covered by a null  set in the
extension. 
However, later in this paper it will become clear that if we choose the
ultrafilters $D_s$ in a sufficiently generic way, then many old positive sets
will stay positive.
More specifically, in this section we will show
(Lemma~\ref{lem:extendLDtopreserverandom}): If $\bar D^M$  is an ultrafilter
system in a countable model $M$ and $r$ a random real over $M$,
then we can find an extension $\bar D$ such that $\bL_{\bar D}$ forces that
$r$ remains random over  $M[H^M]$
(where $H^M$ denotes the $\bL_{\bar D}$-name for the restriction of the
 $\bL_{\bar D}$-generic filter $H$  to $\bL_{\bar D^M}\cap M$).
Additionally, some ``side conditions''
are met, which are necessary to preserve the property in forcing iterations.

In Section~\ref{subsec:almostCS} we will see how to use this property to
preserve randoms in limits.

The setup we use for preservation of randomness is basically the notation of
``Case A''  preservation introduced in~\cite[Ch.XVIII]{MR1623206}, see also
\cite{MR1234283,MR2214624} or the textbook~\cite[6.1.B]{MR1350295}:

\begin{Def}\label{def:nullset}
We write $\CLOPEN$ for the collection of clopen sets on $2^\omega$.  
  We say that the function $Z:\omega\to \CLOPEN$
  is a code for  a null set, if
  the  measure of $Z(n)$ is at most  $ 2^{-n}$ for each~$n\in \omega $. 
  
  For such a code $Z$, 
  the set $\nullset(Z)$ coded by $Z$ is
  \[
  \nullset(Z)\DEFEQ  \bigcap_n \bigcup_{k\ge n} Z(k).
  \]
\end{Def}

The set $\nullset(Z)$ obviously is a null set, and it is well known
that every null set is contained in such a set $\nullset(Z)$.

\begin{Def}\label{def:sqsubset}
  For a real $r$ and any code $Z$, we define $Z \sqsubset_n  
  r$ by:
  \[
  (\forall k\geq n) \ r\notin Z(k).
  \]
  We write $Z \sqsubset r$ if $Z \sqsubset_n r$ holds for
  some~$n$; 
  i.e., if  $r\notin \nullset(Z)$.
\end{Def}
For later reference, we record the following trivial fact: 
\proofclaim{eq:sq.n}{
$p \forces  \n Z \sqsubset r$ iff
  there is a name
  $\n n $ for an element of $\omega$ 
  such that  $p\forces  \n Z \sqsubset_{\n n} r$. 
}

Let $P$ be a forcing notion, and $\n Z$ a $P$-name of a code for a null set.  An
interpretation of $\n Z$ below $p$ is some code $Z^*$ such that there is a
sequence $p=p_0\geq p_1\geq p_2\geq \dots$ such that $p_m$ forces $\n Z \on m=
Z^*\on m$. Usually we  demand (which allows a simpler proof of the
preservation theorem at limit stages) that the sequence $(p_0,p_1,\dots)$ is
inconsistent, i.e., $p$ forces that there is an $m$ such that $p_m\notin G$.
Note that whenever $P$ adds a new $\omega$-sequence of ordinals, we can find
such an interpretation for any~$\n Z$.

If $\n{\bar Z}=(\n Z_1,\ldots, \n Z_m)$ is a tuple of names of codes for null sets, then 
an interpretation of $\bar{\n Z}$ below $p$ is some tuple  $(Z_1^*,\ldots, Z_m^*)$ such that there is a
single sequence $p=p_0\geq p_1\geq p_2\geq \dots$ interpreting each $\n Z_i$ as $Z_i^*$.

We now turn to preservation of Lebesgue positivity:

\begin{Def}  \label{def:random.random.random}
  \begin{enumerate}
  \item 
    A forcing notion $P$ \emph{preserves Borel outer measure}, if $P$
    forces $\Leb^*(A^V)=\Leb(A^{V[G_P]})$ for every code $A$ for a Borel
    set. ($\Leb^*$ denotes the outer Lebesgue measure, and for a
    Borel code $A$ and a set-theoretic universe~$V$, $A^V$ denotes the
    Borel set coded by $A$ in~$V$.)
  \item
    $P$ \emph{strongly preserves randoms}, if the following holds: Let
    $N\esm H(\chi^*)$ be countable for a sufficiently large regular cardinal
    $\chi^*$,
    let  $P,p, \bar {\n Z} = (\n Z_1,\ldots, \n Z_m)\in N$,
    let $p\in P$ and  let $r$  be  random
    over~$N$. Assume that in~$N$, $\bar Z^* $ is an interpretation of $\n
    {\bar Z}$, and assume $Z_i^*\sqsubset_{k_i} r$ for each~$i$. Then there is an $N$-generic
    $q\le p$ forcing that $r$ is still random over~$N[G]$ and
    moreover, $\n Z_i\sqsubset_{k_i} r$ for each~$i$.
    (In particular, $P$ has to be proper.)
  \item Assume that $P$ is absolutely definable.  $P$ \emph{strongly
    preserves randoms over countable models} if (2) holds for all
    countable (transitive\footnote{Later we will introduce
    ord-transitive models, and it is easy to see that it does not make
    any difference whether we demand transitive or not; this can be seen
    using a transitive collapse.}) models $N$ of~ZFC$^*$.
  \end{enumerate}
\end{Def}
It is easy to see that these properties are increasing in strength.
(Of course (3)$\Rightarrow$(2) works only if  ZFC$^*$ is  satisfied in~$H(\chi^*)$.)  

In~\cite{MR2155272} it is shown that (1) implies (3), provided that $P$ is nep
(``non-elementary proper'', 
i.e., nicely definable and proper with respect to countable models).  In
particular, every Suslin ccc forcing notion such as random forcing, and also
many tree forcing notions including Laver forcing, are nep.  However $\bL_{\bar
D}$ is not nicely definable in this sense, as its definition uses ultrafilters
as parameters.

\begin{Lem}\label{lem:random.laver}
  Both Laver forcing and random forcing strongly preserve randoms over
  countable models.
\end{Lem}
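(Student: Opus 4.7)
The plan is to invoke the reduction noted immediately above the lemma: by the result of~\cite{MR2155272}, preservation of Borel outer measure (item~(1) of Definition~\ref{def:random.random.random}) implies strong preservation of randoms over countable models (item~(3)), provided the forcing is nep. The excerpt has already pointed out that random forcing (being Suslin ccc) is nep, and that Laver forcing is nep as well. So the task reduces to verifying that both forcings preserve Borel outer measure in the sense of Definition~\ref{def:random.random.random}(1).

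For random forcing, preservation of Borel outer measure is the classical Fubini argument. Identifying the forcing with Borel sets of positive measure modulo null, the generic real cannot lie in any ground-model null set, and by the symmetry of the product measure on $2^\omega\times 2^\omega$ no new null Borel set (with a given code) can cover more of the ground-model interpretation than the same code already covered in $V$. Thus for every Borel code $A$, $\Leb^*(A^V) = \Leb(A^{V[G]})$ in the extension.

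For Laver forcing, the relevant fact is the Laver property: every $f \in V[G] \cap \omega^\omega$ bounded by a ground-model function lies inside a ground-model slalom of sub-exponential width. A standard reformulation of this property says that every null set in the Laver extension is covered by a null set coded in the ground model (see, e.g., \cite[Ch.~6]{MR1350295}). Consequently, the outer Borel measure of any Borel set cannot decrease when passing to $V[G]$, and preservation of Borel outer measure follows.

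The main work has therefore been carried out elsewhere, and the substance of the argument is a bookkeeping matter: one must ensure that our formulations of ``nep'', ``preservation of Borel outer measure'', and ``strong preservation of randoms over countable models'' line up with those in~\cite{MR2155272}. Since these are the third author's own definitions, the match is essentially by construction, and the quoted implication (1)$\Rightarrow$(3) yields the lemma. The only mild subtlety to check is that the technical strengthening in (3) — finding an $N$-generic $q \le p$ which simultaneously preserves all the finitely many relations $\n Z_i \sqsubset_{k_i} r$ given the interpretations $Z_i^*\sqsubset_{k_i} r$ — is exactly the content packaged by~\cite{MR2155272} for nep forcings satisfying~(1), so no additional argument is needed.
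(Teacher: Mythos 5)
Your overall strategy -- reduce (3) to (1) via the nep implication quoted just before the lemma -- is indeed how the paper handles Laver forcing, and your Fubini sketch for random forcing is correct (the paper takes a shortcut there and directly cites the textbook for (3)). The problem is your justification of (1) for Laver forcing, which has a genuine gap.

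You assert that the Laver property implies that every null set of the extension is covered by a ground-model-coded null set. This is not a reformulation of the Laver property, and in fact it is false for Laver forcing. The Laver property only controls reals of the extension that are \emph{bounded} by a ground-model function, whereas the Laver real itself is dominating and hence escapes every ground-model bound; the null sets definable from the Laver real are therefore not reached by the Laver property at all. More structurally, ``every new null set is covered by an old null set'' forces the forcing to be $\omega^\omega$-bounding (via Bartoszy\'nski's slalom characterization of the null ideal), and Laver forcing is emphatically not $\omega^\omega$-bounding. So the covering statement you invoke cannot be true, and the step from it to preservation of outer measure collapses. Preservation of Borel outer measure for Laver forcing is a substantial theorem: the paper cites Woodin (unpublished) and Judah--Shelah~\cite{MR1071305}, with a cleaner proof by Pawlikowski~\cite{MR1367136}, and none of these arguments proceed through a simple covering of new null sets by old ones -- they are direct fusion arguments on Laver trees with measure estimates. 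What you should do here is simply cite one of these results rather than try to deduce (1) from the Laver property.
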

\begin{proof}
  For random forcing, this is easy and well known (see, e.g.,
  \cite[6.3.12]{MR1350295}).
  
  For Laver forcing: By the above, it is enough to show (1).  
  This was done by
  Woodin (unpublished) and Judah-Shelah~\cite{MR1071305}. A nicer proof
  (including a variant of (2)) is given by Pawlikowski~\cite{MR1367136}.
\end{proof}

Ultralaver will generally not preserve Lebesgue positivity, let alone
randomness. However, we get the following  ``local'' variant of strong
preservation of randoms (which will be used in the preservation
theorem~\ref{lem:iterate.random}).
The rest of this section will be devoted to the proof of the following lemma.

\begin{Lem}\label{lem:extendLDtopreserverandom}
  Assume that $M$ is a countable model, 
  $\bar D^M$ an
  ultrafilter system in $M$ and 
  $r$ a random real over $M$. Then there is (in $V$) an
  ultrafilter system $\bar D$ extending%
  \footnote{This implies, by
    Lemma~\ref{lem:LDMcomplete}, that the $\bL_{\bar D}$-generic
    filter~$G$ induces an $\bL_{\bar D^M}$-generic filter over~$M$,
    which we call~$G^M$.}  
  $\bar D^M$, such that the following holds:
  \\
  \textbf{If}
  \begin{itemize}
    \item $p\in \bL_{\bar D^M}$,
    \item in $M$, $\n {\bar Z} = ( \n Z_1, \ldots , \n Z_m) $
     is a sequence of  $\bL_{\bar D^M}$-names for
      codes for  null sets,\footnote{Recall that $\nullset(\n Z)= \bigcap_n
      \bigcup_{k\ge n} \n Z(k)$ is a null set in the
      extension.} and $Z_1^*,\dots , Z_m^*$ are interpretations under~$p$,
    witnessed by a sequence $(p_n)_{n\in \omega}$ with
    strictly increasing\footnote{It is enough to assume that the lengths of the 
    stems  diverge to infinity;  any thin enough subsequence
    will then have strictly increasing stems and will still
    interpret each $\n Z_i$ as $Z_i^*$.}  stems,
    \item $Z^*_i \sqsubset_{k_i} r$ for $i=1,\dots,  m$,
  \end{itemize}
  \textbf{then} there is a $q\leq p$ in $\bL_{\bar D}$ forcing that
  \begin{itemize}
    \item $r$ is random over $M[G^M]$,
    \item $\n Z_i \sqsubset_{k_i} r$ for $i=1,\dots, m$.
  \end{itemize}
\end {Lem}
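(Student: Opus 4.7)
I would construct the ultrafilter system $\bar D$ in $V$ by a bookkeeping argument over the countably many ``preservation tasks'' from~$M$: a task is a tuple $(p, \bar{\n Z}, \bar Z^*, \bar k, (p_n)_{n\in\omega})$ with data as in the hypothesis. For each task $T$ I would produce (in~$V$) a witness condition $q_T \leq_0 p$ satisfying the conclusion, and arrange that the resulting successor sets $\suc_{q_T}(\eta)$ are forced to lie in the ultrafilters $D_\eta$. The final statement then follows: given an arbitrary task as in the hypothesis, $q_T \in \bL_{\bar D}$, and by $M$-completeness $\bL_{\bar D^M}\lessdot_M \bL_{\bar D}$ (Lemma~\ref{lem:LDMcomplete}) $q_T$ is $M$-generic, so it forces that $G^M$ is $\bL_{\bar D^M}$-generic over $M$ and hence $r$ is random over $M[G^M]$; while by construction $q_T\forces \n Z_j \sqsubset_{k_j} r$ for each~$j$.

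\textbf{Constructing the witness tree for one task.} Fix a task $(p,\bar{\n Z},\bar Z^*,\bar k,(p_n))$ with $s := \stem(p)$. Inside $M$, for each node $\eta\in p$ above $s$ and each coordinate~$j$, I apply Lemma~\ref{lem:first_technical} to the $\bL_{\bar D^M}$-name $\n Z_j(|\eta|)$ and the condition $p^{[\eta]}$: this yields in $M$ a pure extension together with a ``limit clopen set'' $C^\eta_j\in M$ of measure $\leq 2^{-|\eta|}$ and a $D^M_\eta$-large set $A^\eta_j \subseteq \suc_p(\eta)$ of successors for which the extension decides $\n Z_j(|\eta|)=C^\eta_j$. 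Stepping back into $V$, the candidate successor set at~$\eta$ is
$$\suc_{q_T}(\eta) \;=\; \bigcap_j A^\eta_j \;\cap\; \bigl\{i\in\omega : r\notin C^{\eta^\frown i}_j \text{ for all } j\bigr\}.$$
The key randomness step is to show this set is $D^M_\eta$-positive in $V$, that is, its complement is not an element of $D^M_\eta$: if some $B\in D^M_\eta$ satisfied $r\in\bigcup_j C^{\eta^\frown i}_j$ for every $i\in B$, then $r$ would be forced into a Borel set coded in $M$; the interpreting sequence $(p_n)$ together with $Z^*_j\sqsubset_{k_j} r$ allows me to bound the $M$-measure of such sets, and randomness of $r$ over~$M$ rules out the bad case. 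Iterating this choice along a fusion gives $q_T$ with $\stem(q_T)=s$ and $q_T\forces \n Z_j \sqsubset_{k_j} r$ for every~$j$: along any generic branch, at each level the chosen successor $i$ satisfies $r\notin C^{\eta^\frown i}_j$, which is forced to coincide with $\n Z_j(|\eta^\frown i|)$.

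\textbf{Main obstacle.} The deepest issue is the joint compatibility of the family $\{\suc_{q_T}(\eta)\}_T$ at each node $\eta$: in order for a single ultrafilter $D_\eta$ in $V$ to extend $D^M_\eta$ and contain every $\suc_{q_T}(\eta)$, that family together with $D^M_\eta$ must generate a proper filter. I would handle this by interleaving the bookkeeping so that, when processing task~$T$, the witness tree $q_T$ is further thinned so that $\suc_{q_T}(\eta)$ lies inside every previously recorded successor set at the same node; the positivity verification then has to be iterated uniformly over finitely many prior $V$-constraints, and relies on randomness of $r$ against a countable sequence of increasingly complex null sets in $M$ that encode the prior refinements. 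Once a compatible family has been assembled at each node, any ultrafilter $D_\eta$ in $V$ extending $D^M_\eta$ and containing this family completes the construction of $\bar D$.
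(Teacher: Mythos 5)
The most serious gap is in your closing step: you assert that because $q_T$ is $M$-generic and therefore forces $G^M$ to be $\bL_{\bar D^M}$-generic over $M$, it follows that ``hence $r$ is random over $M[G^M]$.'' This implication is false and is precisely the substance of the lemma. An $M$-generic filter $G^M$ typically adds many new Borel null sets to $M[G^M]$ (coded by $\bL_{\bar D^M}$-names in $M$), and there is no reason $r$ should avoid all of them just because $G^M$ is generic. Your fusion handles the finitely many names $\n Z_1,\ldots,\n Z_m$ from one task, but randomness over $M[G^M]$ requires handling \emph{every} $\bL_{\bar D^M}$-name in $M$ for a null set. This is exactly why the paper's proof, after obtaining the condition $q$, enumerates \emph{all} continuous names $\n Z^{g_1},\n Z^{g_2},\ldots$ of codes for null sets from $M$ and applies Corollary~\ref{cor:stays.random} to all of them.

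A second, technical gap: your single-node positivity argument does not go through. The bad set $\bigcap_{i\in B}\bigcup_j C^{\eta^\frown i}_j$ has measure bounded only by $m\cdot 2^{-(|\eta|+1)}$ — positive, not null — so randomness of $r$ over $M$ does not exclude $r$ from it. One needs a global fusion argument that controls the cumulative measure over the whole tree, which is the nontrivial content of the Laver-preservation theorem. The paper avoids reproving this: it passes (Lemma~\ref{lem:random.over.mprime}) to a countable $M'\supseteq M$ in which $\powerP(\omega)^M$ is countable, takes pseudo-intersections $A_s$ of the $D^M_s$, observes that below the resulting condition the forcing is just Laver forcing, and invokes Lemma~\ref{lem:random.laver}. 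This quotes a known theorem rather than reconstructing a fusion.

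Finally, on your ``main obstacle'' (joint compatibility of the $\suc_{q_T}(\eta)$): your proposed fix of nesting successor sets across tasks is a genuinely different strategy from the paper's, and it compounds the measure problem above (each thinning adds further constraints to the positivity verification). The paper dissolves the compatibility issue entirely: after thinning $q'$ so that it meets no $M$-branch (property~\eqref{eq:basdf}), and shifting each interpretation sequence before constructing $q^n$, it arranges that for any node $s$ there is at most one $n$ with $\stem(q^n)\subseteq s$ and $s\in q^n$. The constraint on each $D_s$ thus comes from at most one $q^n$, so no nesting or compatibility argument is needed; one simply extends $D^M_s\cup\{\suc_{q^n}(s)\}$ to an ultrafilter.
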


For the proof of this lemma, we will use the following concepts: 
\begin{Def} 
  Let $p\subseteq \omega^{< \omega} $ be a tree. 
  A \qemph{front name below $p$}
  is  
   a function\footnote{Instead of $\CLOPEN$ 
   we may also consider other ranges of front names, 
   such as the class of all ordinals, or the set $\omega$.}
   $h:F\to \CLOPEN$, where $F\subseteq p$ is a front (a set that
  meets every branch of~$p$ in a unique point).   (For notational simplicity 
  we also allow $h$ to be defined on elements $\notin p$; this way, 
  every front name below $p$ is also a front name below $q$ whenever
  $q\le p$.) 

  If $h$ is a front name and $\bar D$ is any filter system with $p\in \bL_{\bar D}$, 
  we define the corresponding  $\bL_{\bar D}$-name (in the sense of forcing)
  $\n z^h $ by 
      \begin{align}\label{def:n.alpha}
            \n z^h\DEFEQ   \{ ( \check y, p^{[s]}): s\in F,\ y \in h(s)\}.
      \end{align}
     (This does not depend on the $\bar D$ we use, since we set
     $\check  y\DEFEQ  \{(\check x, \omega^{<\omega} ): x \in y \}$.)

  Up to forced equality, the name $\n z^h$ is characterized
  by the fact that   $p ^{[s]} $ forces (in any ${\bL_{\bar D}}$) that 
  $  \n  z ^h  = h(s)$, for every $s$ in the domain of $h$.
\end{Def}
Note that the same object~$h$ can be viewed as a front name below $p$ with respect
to different forcings $\bL_{\bar D_1}$, $ \bL_{\bar D_2}$, as long 
as $p\in \bL_{\bar D_1}\cap  \bL_{\bar D_2}$.

\begin{Def} 
  Let $p \subseteq \omega^{<\omega}$ be a tree.
  A \qemph{continuous name below $p$}
  is either of the following: 
  \begin{itemize} 
  \item An $\omega$-sequence of front names below $p$. 
  \item A $\subseteq$-increasing function $g:p\to \CLOPEN^{<\omega}$ 
        such that $\lim_{n\to \infty } \lh(g(c\on n)) =\infty$
	for every branch $c\in [p]$. 
  \end{itemize} 
	For each $n$, the set of minimal elements 
          in $\{ s\in p:  \lh(g(s)) > n \}$ is a front, so each 
	  continuous name in the second  sense naturally 
	  defines a name 
	  in the first sense, and conversely.
  Being a continuous name below $p$ does not involve the notion of $\forces$
  nor does it depend on the filter system~$\bar D$.

  If $g$ is a continuous name and $\bar D$ is any filter system, 
  we can again 
  define the corresponding  $\bL_{\bar D}$-name $\n  Z^g $
  (in the sense of forcing);
   we leave a formal definition of $\n Z^g$ to the reader
  and content ourselves with this characterization: 
      \begin{align}\label{def:z.g}
	(\forall s\in p): p^{[s]} \forces_{\bL_{\bar D}}  g(s) \subseteq \n Z^g .
      \end{align}
\end{Def}

  Note that a continuous name below $p$ naturally corresponds
  to a  continuous function $F:[p] \to  \CLOPEN^\omega$, and $ \n Z^g$ is forced
  (by~$p$) to be the value of $F$ at the generic real $\n {\bar \ell}$.

\begin{Lem}\label{lem:pure}
  $\bL_{\bar D}$ has the following ``pure decision properties'':
  \begin{enumerate}
  \item\label{item:pure.one}   Whenever ${\n y}$ is a name for an 
    element of $\CLOPEN$, 
    $p\in \bL_{\bar D}$, then there is a pure extension $p_1\pure  p$
    such that $\n{y} = \n  z^h $ (is forced)
    for a front name $h$ below~$p_1$.
  \item\label{item:pure.omega} Whenever ${\n  Y }$ is a name for a sequence of
    elements of $\CLOPEN$, $p\in \bL_{\bar D}$, then there is a pure extension
    $q\pure  p$ such that ${\n Y } = \n Z ^g$ (is forced)  for some 
     continuous name $g$ below~$q$.
  \item\label{item:pure.finite} (This is Lemma~\ref{lem:pure.finite}.)
    If $A$ is a finite set, $\n \alpha$ a name, $p\in
    \bL_{\bar D}$, and $p$ forces $\n \alpha\in A$, then there is
    $\beta\in A$ and a pure extension $q\pure  p $ such that $q\forces
    \n \alpha=\beta$.
  \end{enumerate}
\end{Lem}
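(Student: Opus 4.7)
I would prove Lemma~\ref{lem:pure} in the order (\ref{item:pure.finite}), (\ref{item:pure.one}), (\ref{item:pure.omega}), each building on the previous. For (\ref{item:pure.finite}) I use the classical good-node/bad-node rank argument. Call $t \in p$ with $t \supseteq \stem(p)$ \emph{good} if some pure extension of $p^{[t]}$ decides $\n\alpha$ (to a value in $A$), and \emph{bad} otherwise. The crucial claim is that if $t$ is bad then the set of bad immediate successors lies in $D_t$: otherwise, by ultrafilter dichotomy, $\{i \in \suc_p(t) : t^\frown i \text{ is good}\} \in D_t$, and each such $i$ carries a witness $q_i \pure p^{[t^\frown i]}$ with $q_i \forces \n\alpha = \beta_i \in A$. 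Finiteness of $A$ together with the ultrafilter property of $D_t$ then yields some $\beta^* \in A$ with $\{i : \beta_i = \beta^*\} \in D_t$, and gluing the corresponding $q_i$'s into a single tree produces a pure extension of $p^{[t]}$ forcing $\n\alpha = \beta^*$, contradicting badness of $t$. Consequently, the subtree $B \subseteq p$ consisting of all bad nodes above $\stem(p)$ (together with initial segments of $\stem(p)$) is itself a condition in $\bL_{\bar D}$, purely extending $p$. Taking $q' \le B$ that decides $\n\alpha$, its stem lies in $B$ and is therefore bad, yet $q'$ is a pure extension of $p^{[\stem(q')]}$ deciding $\n\alpha$, making $\stem(q')$ good --- a contradiction.

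For (\ref{item:pure.one}) I fix an enumeration $\CLOPEN = \{c_m : m \in \omega\}$ and build $p_1$ together with the front $F$ by a fusion driven by (\ref{item:pure.finite}). At each node $t$ at ``level $n$'' of the fusion, apply (\ref{item:pure.finite}) below the current approximation at $t$ to the finite-valued name that equals the least $m \le n$ with $\n y = c_m$, and $n+1$ if no such $m$ exists. If the decision returns some $m \le n$, place $t$ into $F$ with $h(t) \DEFEQ c_m$ and cease refining below $t$; otherwise keep $t$ in $p_1$ with $\suc_{p_1}(t)$ shrunk to the $D_t$-set supplied by the pure extension, and continue the fusion at its successors. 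The limit $p_1$ is in $\bL_{\bar D}$ because each $\suc_{p_1}(t)$ is a $D_t$-set fixed once and for all at the stage $t$ was handled, and along any branch of $p_1$ the generic determines $\n y$ to a particular $c_{m_0}$, forcing the branch to meet $F$ by level $m_0$; hence $F$ is a front and $p_1 \forces \n y = \n z^h$ by construction. For (\ref{item:pure.omega}) I iterate (\ref{item:pure.one}): having produced $q^{(n)} \pure p$ and a front of nodes on which $\n y_0, \ldots, \n y_{n-1}$ are simultaneously decided (assembled into the length-$n$ initial segment of $g$), apply (\ref{item:pure.one}) to $\n y_n$ below each such node to extend to $q^{(n+1)}$ and tack one more coordinate onto $g$. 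Standard fusion bookkeeping delivers the limit $q \pure p$ and a continuous name $g$ with $\lh(g(c \on n)) \to \infty$ along each branch $c \in [q]$ and $q \forces \n Y = \n Z^g$.

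The nontrivial combinatorial input lies entirely in (\ref{item:pure.finite}), where the ultrafilter property of each $D_t$ collapses the $D_t$-many local decisions into a single one via the finiteness of $A$ --- this is the classical Laver pure-decision argument transported from the Fr\'echet filter to arbitrary ultrafilters $D_t$. The remaining work in (\ref{item:pure.one}) and (\ref{item:pure.omega}) is routine fusion bookkeeping: verifying that each $\suc(t)$ in the limit tree is $D_t$-positive (automatic, since it is a $D_t$-set chosen at one stage) and that the resulting front actually meets every branch --- which for (\ref{item:pure.one}) follows from $\n y$ being eventually decided along the generic branch, and for (\ref{item:pure.omega}) from $\lh(g(c \on n)) \to \infty$ as $n \to \infty$.
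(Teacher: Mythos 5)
Your ordering is reversed from the paper's (you do (3), then (1) via a fusion reducing to (3), then (2) by iteration; the paper does (1) by a good/bad-node rank argument, then (2) by iteration, then (3) by an argument parallel to (1)). Your proof of (3) is essentially correct and even a bit more direct than the paper's, avoiding the paper's preliminary "stabilization" pass by showing directly that any bad node has $D_t$-many bad immediate successors via the ultrafilter dichotomy and the finiteness of $A$. One small imprecision: "the subtree $B$ consisting of all bad nodes above $\stem(p)$" is not literally a tree, since badness need not be downward-closed (a good node $t'$ may have an immediate successor $t$ that is bad, because the witnessing pure extension of $p^{[t']}$ need not contain $t$); what you want is the tree built recursively from $\stem(p)$ by taking at each bad node exactly its bad immediate successors, which your key claim shows is a $D_t$-set. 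With that correction, your contradiction (any $q' \le B$ deciding $\n\alpha$ has $\stem(q') \in B$ bad, yet $q' \pure p^{[\stem(q')]}$ decides, so $\stem(q')$ is good) goes through.

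The problem is in (1). Your fusion applies (3) at each node $t$ at level $n$ to $\n\mu_n :=$ "least $m \le n$ with $\n y = c_m$, else $n+1$" and puts $t$ into $F$ only when the decision is $\le n$. To show $F$ is a front you assert "along any branch of $p_1$ the generic determines $\n y$ to a particular $c_{m_0}$, forcing the branch to meet $F$ by level $m_0$" --- but this conflates the generic branch, which is not in $V$, with $V$-branches of $p_1$. A $V$-branch $b$ of $p_1$ need never decide $\n y$ (the conditions $p_1^{[b_n]}$ may leave $\n y$ undetermined for all $n$), and since (3) only asserts that \emph{some} pure decision of $\n\mu_n$ exists, nothing in your construction rules out that the pure decisions returned along $b$ are the dummy value $n+1$ at every $n$ (whenever that is forceable), leaving $b \cap F = \emptyset$ so that $F$ is not a front of $p_1$. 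What your argument actually shows is that the \emph{generic} branch always meets $F$, i.e.\ that $\{p_1^{[s]}:s\in F\}$ is predense below $p_1$; that does give $p_1\forces \n y = \n z^h$, but it does not make $h$ a front name in the paper's sense (where $F$ must meet every branch of $p_1$ in $V$), which is what the lemma asserts. The paper sidesteps this difficulty by defining "good" to mean "$\n y$ is already a front name below $p^{[t]}$", so that front-ness is built into the notion from the start and the rank argument delivers it directly; if you insist on reducing (1) to (3), you would have to control which pure extension (3) hands you and argue that a non-dummy value becomes forceable at some node on every branch, which is an extra step that needs its own proof. Part (2) is built on (1) and inherits this gap.
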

\begin{proof} 
  Let $p\in \bL_{\bar D}$, $s_0\DEFEQ \stem(p)$, $\n y$ a name for an
  element of $\CLOPEN$.
  
  We call $t\in p$ a ``good node in $p$'' if $\n y$ is a front name
  below~$p^{[t]}$ (more formally:  forced to be equal to 
  $\n z^h$ for a front name $h$).
  We can find $p_1\pure p$
  such that for all $t\in p_1$ above $s_0$:  If there is $q\pure  p_1^{[t]}$
  such that  $t$ is good in~$q$, then  $t$ is already good in~$p_1$.

  We claim that $s_0$ is now good (in~$p_1$).  Note that for any bad
  node $s$ the set $\{\,t\in \suc_{p_1}(s): \ t \text{ bad}\,\}$ is
  in~$D_s^+$.  Hence, if $s_0$ is bad, we can inductively construct
  $p_2\pure  p_1$ such that all nodes of $p_2$ are bad nodes in~$p_1$.
  Now let $q\le p_2$ decide $\n y$, $s\DEFEQ \stem(q)$.
  Then $q \pure  p_1^{[s]}$, so $s$ is good in~$p_1$, contradiction.
  This finishes the proof of  (\ref{item:pure.one}).
  
  To prove (\ref{item:pure.omega}), we first construct $p_1$ as in (\ref{item:pure.one}) with respect to $\n
  y_0$.  This gives a front $F_1\subseteq p_1$ deciding $\n
  y_0$. Above each node in $F_1$ we now repeat the construction
  from (\ref{item:pure.one}) with respect to $\n y_1$, yielding $p_2$,  etc.
  Finally, $q\DEFEQ  \bigcap_ n p_n$.

  To prove (\ref{item:pure.finite}): Similar to (\ref{item:pure.one}), we can
   find $p_1\pure  p$ such that 
  for each $t\in p_1$: If there is a pure extension of $p_1^{[t]}$
  deciding $\n\alpha$, then $p_1^{[t]}$ decides $\n \alpha$; in
  this case we again call $t$ good.  Since there are only finitely many
  possibilities for the value of $\n \alpha$, any bad node $t$ has
  $D_t^+$ many bad successors.  So if the stem of $p_1$ is bad, we
  can again reach a contradiction as in (\ref{item:pure.one}).
\end{proof}

\begin{Cor}\label{cor:obda.continuous}
Let  $\bar D$ be a filter system, and let $G\subseteq \bL_{\bar D}$
be generic.  Then every $Y \in \CLOPEN^\omega$ in $V[G]$
is the evaluation of a continuous name $\n Z^g$ by $G$.
\end{Cor}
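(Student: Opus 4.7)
The plan is to deduce the corollary from Lemma~\ref{lem:pure}(\ref{item:pure.omega}) by a standard density argument. Given $Y \in \CLOPEN^\omega$ in $V[G]$, I would first pick any $\bL_{\bar D}$-name $\n Y$ with $\n Y[G] = Y$, and fix some $p_0 \in G$ forcing $\n Y \in \CLOPEN^\omega$.

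Next, I would consider the set
\[
  D \DEFEQ \{\, q \le p_0 :\ \exists\, g \text{ continuous name below } q \text{ with } q \forces \n Y = \n Z^g \,\}.
\]
Applied to an arbitrary $p \le p_0$, Lemma~\ref{lem:pure}(\ref{item:pure.omega}) produces a pure extension $q \pure p$ and a continuous name $g$ below $q$ with $q \forces \n Y = \n Z^g$, placing $q$ into $D$. Thus $D$ is dense below $p_0$, and by genericity some $q \in G \cap D$ exists. Choosing the associated $g$, we get $Y = \n Y[G] = \n Z^g[G]$, which is the desired conclusion.

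There is no real obstacle: the only mild point to verify is that the continuous name $g$, which is defined on (a front in) the tree $q \in G$, unambiguously evaluates to an element of $\CLOPEN^\omega$ under $G$; but this is immediate from the characterization \eqref{def:z.g}, since the generic branch $\bar{\n\ell}[G]$ lies in $[q]$ and therefore meets every front of $q$ in a unique point.
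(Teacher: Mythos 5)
Your proof is correct and is essentially the same argument as the paper's: the paper also applies Lemma~\ref{lem:pure}(\ref{item:pure.omega}) to a name $\n Y$ and an arbitrary condition $p$, obtaining $q\le_0 p$ and a continuous name $g$ with $q\forces \n Y = \n Z^g$, which is implicitly the density argument you spelled out. Your version is simply more explicit about the genericity step and the final evaluation, which is fine.
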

\begin{proof} In $V$, fix a $p\in \bL_{\bar D}$ and a name $\n Y $ for an 
element of $ \CLOPEN^\omega$.
We can find $q\le_0 p$
and a continuous name $g$ below $q$ such that $q \forces \n Y = \n Z^g$.
\end{proof}

We will need the following modification of the concept of ``continuous
names''. 

\begin{Def} 
  Let $p \subseteq \omega^{<\omega}$ be a tree,  $b\in [p]$ a branch.
  An \qemph{almost continuous name below~$p$ (with respect to~$b$)}
   is 
  a $\subseteq$-increasing function $g:p\to \CLOPEN^{<\omega}$ 
        such that $\lim_{n\to \infty } \lh(g(c\on n)) =\infty$
	for every branch $c\in [p]$, except possibly for $c=b$.
\end{Def}
Note that ``except possibly for $c=b$'' is the only difference between this definition and the definition of a continuous name.  

Since for any $\bar D$ it is forced\footnote{
       This follows from our assumption 
       that all our filters contain the Fr\'echet filter.} 
 that the generic real (for
$\bL_{\bar D}$) is not equal to the exceptional branch $b$, we again get 
a name $\n Z^g$ of a function in $\CLOPEN^\omega$ satisfying: 
  \[ (\forall s\in p): p^{[s]} \forces_{\bL_{\bar D}}  g(s) \subseteq \n Z^g. \]
  An almost continuous name naturally corresponds to a 
   continuous function $F$ from $[p] \setminus \{b\}$ into
        $\CLOPEN^\omega$.

  Note that being an almost
  continuous name is a very simple combinatorial
  property of $g$ which does not depend on $\bar D$, nor does it
  involve the notion $\forces$.
  Thus, the same function $g$
  can be viewed as an almost continuous name for two different
  forcing notions $\bL_{\bar D_1}$, $\bL_{\bar D_2}$ simultaneously.

\begin{Lem} \label{lem:nicefy}
  Let $\bar D$ be a system of filters
  (not necessarily ultrafilters). 
  
  Assume that $\bar p = (p_n)_{n\in \omega}$ witnesses that $Y^*$
  is an interpretation of~$\n Y$, and that the lengths of the stems of the $p_n$
  are strictly increasing.\footnote{It is easy to see that for every
    $\bL_{\bar D}$-name $\n Y$ we can find such $\bar p$ and~$Y^*$:
    First find $\bar p$ which interprets both $\n Y$ and $\bar{\n\ell}$, 
    and then thin out to get a strictly increasing sequence of stems.}
  Then there exists a sequence $\bar q = (q_n)_{n\in \omega}$ such
  that
  \begin{enumerate}
  \item $q_0\ge q_1\ge \cdots $. 
  \item $q_n\le p_n$ for all~$n$.
  \item $\bar q$ also interprets $\n Y $ as~$Y^*$.
    (This follows from the previous two statements.)
  \item $\n Y$ is almost continuous below~$q_0$,  i.e., there is 
  an almost continuous name $g$ such that $q_0$ forces
  $\n  Y = \n Z^g $.
  \item $\n Y$ is almost continuous below~$q_n$, for all~$n$.
    (This follows from the previous statement.)
  \end{enumerate}
\end{Lem}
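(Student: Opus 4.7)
The plan is to identify a single ``exceptional branch'' $b := \bigcup_n s_n$ along which continuity can fail, and then use Lemma~\ref{lem:pure}(\ref{item:pure.omega}) to make $\n Y$ continuous on every subtree of $q_0$ that splits off from $b$. Since $\bar p$ already interprets $\n Y$ as $Y^*$, any $q_n \leq p_n$ will automatically force $\n Y \on n = Y^* \on n$, so items (3) and (5) come for free once the decreasing sequence $\bar q$ and the almost continuous name $g$ below $q_0$ have been produced.

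Write $\lambda_n := |s_n|$, and for each $s \subseteq b$ with $s \supseteq s_0$ let $n(s)$ be the largest $n$ with $\lambda_n \leq |s|$; then $s \supseteq s_{n(s)}$ so $s \in p_{n(s)}$ and $s$ is above the stem of $p_{n(s)}$. For every such $s$ and every ``off-$b$'' immediate successor $i \in \suc_{p_{n(s)}}(s) \setminus \{b(|s|)\}$, I would apply Lemma~\ref{lem:pure}(\ref{item:pure.omega}) inside the condition $p_{n(s)}^{[s \frown i]}$ to obtain a pure extension $r_{s,i} \leq_0 p_{n(s)}^{[s \frown i]}$ together with a continuous name $g_{s,i}$ below $r_{s,i}$ satisfying $r_{s,i} \forces \n Y = \n Z^{g_{s,i}}$. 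Then I would define $q_n$ to consist of (i) the initial segments of $s_n$, (ii) all initial segments of $b$ extending $s_n$, and (iii) the whole subtree $r_{s,i}$ for every $s \subseteq b$ with $|s| \geq \lambda_n$ and every off-$b$ successor $i \in \suc_{p_{n(s)}}(s)$. The almost continuous name $g : q_0 \to \CLOPEN^{<\omega}$ is then defined by $g(t) = \emptyset$ when $t \subseteq b$ and $g(t) = g_{s,i}(t)$ when $t$ lies in the subtree $r_{s,i}$.

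The remaining verifications are largely bookkeeping: that $q_n \in \bL_{\bar D}$ follows because at every $s \subseteq b$ above the stem one has $\suc_{q_n}(s) = \suc_{p_{n(s)}}(s) \in D_s^+$, while inside each $r_{s,i}$ the successor structure is inherited; that $q_n \leq p_n$ uses $r_{s,i} \subseteq p_{n(s)} \subseteq p_n$ whenever $n(s) \geq n$; the chain $q_0 \supseteq q_1 \supseteq \cdots$ is immediate from the definitions; and $g$ is almost continuous below $q_0$ because any branch $c \neq b$ of $q_0$ eventually enters exactly one subtree $r_{s,i}$, on which $g$ coincides with the genuinely continuous $g_{s,i}$. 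To check $q_0 \forces \n Y = \n Z^g$, observe that since each $D_s$ contains the Fr\'echet filter, the singleton $\{b(|s|)\}$ is $D_s$-null, so $s$ has $D_s$-positively many off-$b$ successors in $q_0$; consequently no condition below $q_0$ can force the generic real to be $b$, the generic real falls into some $r_{s,i}$, and there $\n Y = \n Z^{g_{s,i}} = \n Z^g$. The one delicate point, which I expect to be the main obstacle to get right, is the choice of the index $n(s)$ when invoking Lemma~\ref{lem:pure}(\ref{item:pure.omega}): using $p_{n(s)}$ (rather than some earlier $p_m$) is precisely what guarantees $r_{s,i} \subseteq p_m$ for every $m \leq n(s)$, and this is what makes $q_m \leq p_m$ hold for all $m$ simultaneously without the pure extensions chosen at different levels clashing with one another.
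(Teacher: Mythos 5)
Your proposal is correct and follows essentially the same approach as the paper: identify the branch $b$ determined by the increasing stems, use Lemma~\ref{lem:pure}(\ref{item:pure.omega}) at each off-$b$ successor of a node $s\subseteq b$ to get a pure extension of $p_{n(s)}^{[s^\frown i]}$ below which $\n Y$ is continuous, and define $q_n := q_0^{[\stem(p_n)]}$; the paper phrases the index choice as ``$\suc_{q_0}(s)=\suc_{p_n}(s)$ when $\stem(p_n)\subseteq s\subsetneq\stem(p_{n+1})$'', which is exactly your $n(s)$, and this is precisely what makes $q_n\le p_n$ hold for all $n$ simultaneously, as you correctly identify.
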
 
\begin{proof} 
  Let $b$ be the branch described by the stems of the conditions
  $p_n$:  \[b\DEFEQ  \{ s: (\exists n)\, s \subseteq \stem(p_n)\}.\]
  
  We now construct 
  a condition~$q_0$. 
  For every $s\in b$ satisfying $\stem(p_n) \subseteq s \subsetneq
  \stem(p_{n+1})$ we set $\suc _{q_0}(s) = \suc_{p_n}(s)$, and for all
  $t\in \suc_{q_0}(s)$ except for the one in~$b$ we let $q_0^{[t]}
  \pure  p_n^{[t]} $ be such that $\n Y$ is continuous below
  $q_0^{[t]}$.  We can do this by Lemma~\ref{lem:pure}(\ref{item:pure.omega}).
  
  Now we set 
  \[ q_n\DEFEQ   p_n \cap q_0 = q_0^{[\stem(p_n)]} \le p_n. \]
  This takes care of~(1) and~(2). Now we show~(4):
  Any branch $c$ of $q_0$ not equal to $b$ must contain a
  node $s^\frown k\notin b$ with $s\in b$, so $c$ is a branch in
  $q_0^{[s^\frown k]}$, below which $\n Y $ was continuous. 
\end{proof}
The following lemmas and corollaries 
are the motivation for considering continuous and 
almost continuous names.  

\begin{Lem}  Let $\bar D$ be a system of filters
  (not necessarily ultrafilters). 
  Let $p\in \bL_{\bar D}$, let $b$ be a branch, and let
  $g:p\to \CLOPEN^{<\omega}$ be an 
  almost continuous name
  below~$p$ with respect to~$b$; write
  $\n Z^g$ for the associated $\bL_{\bar D}$-name. 
  
  Let $r\in 2^ \omega$ be a real, $n_0\in \omega$.  
  Then the following are equivalent:
  \begin{enumerate}
  \item $p\forces_{\bL_{\bar D}} r \notin \bigcup_{n\ge n_0}   \n Z^g(n)$,
    i.e., $ \n Z^g \sqsubset_{n_0} r$. 
  \item For all $n\ge n_0$ and for all $s\in p $ for which
    $g(s)$ has length $>n$ we have $r \notin g(s)(n)$.
  \end{enumerate}
\end{Lem}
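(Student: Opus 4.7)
This lemma is essentially an unpacking of the defining property of $\n Z^g$, namely that $p^{[s]} \forces g(s) \subseteq \n Z^g$, combined with the observation that the exceptional branch $b$ can be ignored in the generic extension. I would split the proof into the two implications.

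For the direction (2) $\Rightarrow$ (1), I would argue semantically. Let $G$ be any $\bL_{\bar D}$-generic filter with $p \in G$, and let $\bar\ell$ be the corresponding generic branch, which lies in $[p]$. Because the filters $D_s$ all extend the Fr\'echet filter, it is forced that $\bar\ell \ne b$, so the ``almost continuous'' condition gives $\lim_{m \to \infty} \lh(g(\bar\ell \on m)) = \infty$ along $c = \bar\ell$. Fix $n \ge n_0$; then there is $m$ with $\lh(g(\bar\ell \on m)) > n$. Setting $s \DEFEQ \bar\ell \on m$, we have $p^{[s]} \in G$ (as $s \subseteq \bar\ell$), so $g(s) \subseteq \n Z^g[G]$ by \eqref{def:z.g}, hence $\n Z^g[G](n) = g(s)(n)$, and (2) gives $r \notin g(s)(n)$. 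As $n \ge n_0$ was arbitrary, $r \notin \bigcup_{n \ge n_0} \n Z^g[G](n)$, and since $G$ was arbitrary this is forced by~$p$.

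For (1) $\Rightarrow$ (2), I would use the contrapositive. Suppose there are $n \ge n_0$ and $s \in p$ with $\lh(g(s)) > n$ and $r \in g(s)(n)$. Every node of $p$ is comparable with $\stem(p)$, so either $s \supseteq \stem(p)$ (in which case $p^{[s]}$ is a legitimate condition in $\bL_{\bar D}$ below $p$) or $s \subsetneq \stem(p)$, in which case we replace $s$ by $\stem(p)$: since $g$ is $\subseteq$-increasing, $g(\stem(p)) \supseteq g(s)$, so still $\lh(g(\stem(p))) > n$ and $g(\stem(p))(n) = g(s)(n) \ni r$, and $p^{[\stem(p)]} = p$. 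Either way we obtain some $s' \supseteq \stem(p)$ in $p$ with $p^{[s']} \le p$, $\lh(g(s')) > n$, and $r \in g(s')(n)$. By \eqref{def:z.g}, $p^{[s']} \forces g(s') \subseteq \n Z^g$, so $p^{[s']} \forces r \in \n Z^g(n)$, which together with $n \ge n_0$ contradicts (1).

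There is no substantive obstacle here; the only point that deserves care is the passage to a node above $\stem(p)$ in the second implication (so that $p^{[s']}$ is a condition) and the explicit invocation of the fact that the generic branch differs from the exceptional branch $b$ in the first implication.
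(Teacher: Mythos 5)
Your proof is correct and takes essentially the same approach as the paper: the direction $\lnot(2)\Rightarrow\lnot(1)$ coincides exactly, and your direct semantic argument for $(2)\Rightarrow(1)$ (following the generic branch, which avoids $b$ since all filters contain the Fr\'echet filter) is just the semantic version of the paper's syntactic proof of the contrapositive $\lnot(1)\Rightarrow\lnot(2)$, where one instead strengthens the stem of a witnessing condition $q\le p$ until it is long enough and off~$b$. In both cases the one non-trivial ingredient is the same: moving deep enough along a branch other than $b$ makes $\n Z^g(n)$ already decided by $g$.
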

Note that (2) does not mention the notion $\forces$  and 
does not depend on $\bar D$.

\begin{proof} 
  $\lnot$(2) $\Rightarrow$ $\lnot$(1):
  Assume that there is $s\in p $ for which $g(s)= (C_0,\ldots,
  C_n, \ldots, C_k)$ and $r\in C_n$.  Then $p^{[s]}$ forces that the
  generic sequence $\n Z^g = ( \n Z(0), \n Z(1), \ldots)$ starts with
  $C_0,\ldots, C_n$, so $p^{[s]}$ forces $r\in \n Z^g(n)$.
  
  $\lnot$(1) $\Rightarrow$ $\lnot$(2): Assume that $p$ does not force
  $r \notin \bigcup_{n\ge n_0} \n Z^g(n)$.  So there is a condition
  $q\le p$ and some $n\ge n_0$ such that $q \forces r\in \n Z^g(n)$.  By
  increasing the stem of~$q$, if necessary, we may assume that
  $s\DEFEQ \stem(q)$ is not on $b$ (the ``exceptional'' branch), and
  that $g(s)$ has already length~$>n$. Let $C_n\DEFEQ  g(s)(n)$ be the
  $n$-th entry of~$g(s)$.  So $p^{[s]}$ already forces $\n Z^g(n)  =
  C_n$; now $q^{[s]}\le p^{[s]}$, and $q^{[s]}$ forces the
  following statements: $r\in \n Z^g(n) $, $\n Z^g(n) = C_n$. Hence $r\in
  C_n$, so (2) fails.
\end{proof}

\begin{Cor}\label{cor:z.absolute}
  Let $\bar D_1$ and $\bar D_2$ be systems of filters, and assume that
  $p$ is in $\bL_{\bar D_1} \cap \bL_{\bar D_2}$. Let $g:p \to
  \CLOPEN^{<\omega}$ be an  
  almost continuous name of a sequence of clopen sets, and 
  let $\n Z^g_1$ and  $\n Z^g_2$ be the associated $\bL_{\bar D_1}$-name
  and $\bL_{\bar D_2}$-name, respectively.
  
  Then for any real $r$  and $n\in \omega$ we have
  \[  p \forces_{\bL_{\bar D_1}} \n Z^g_1 \sqsubset_n  r
  \ \ \Leftrightarrow \ \
  p \forces_{\bL_{\bar D_2}} \n Z^g_2 \sqsubset_n  r.
  \]
\end{Cor}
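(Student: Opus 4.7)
The plan is to deduce this corollary directly from the preceding lemma, whose point was precisely to translate the forcing assertion $p \forces \n Z^g \sqsubset_{n} r$ into a purely combinatorial statement about $g$ and $r$ that makes no reference to the filter system.

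More concretely, the preceding lemma (the one immediately before the corollary) shows that for any filter system $\bar D$ with $p \in \bL_{\bar D}$, the statement
\[
p \forces_{\bL_{\bar D}} \n Z^g \sqsubset_{n} r
\]
is equivalent to the condition
\[
(\forall m \ge n)\, (\forall s \in p)\ \bigl(\lh(g(s)) > m \;\Rightarrow\; r \notin g(s)(m)\bigr),
\]
and this condition involves only $p$, $g$, $r$, and $n$; notably, neither $\bar D$ nor $\forces$ appears. Applying the equivalence first with $\bar D = \bar D_1$ and then with $\bar D = \bar D_2$ gives
\[
p \forces_{\bL_{\bar D_1}} \n Z^g_1 \sqsubset_{n} r \;\Longleftrightarrow\; (\text{combinatorial condition}) \;\Longleftrightarrow\; p \forces_{\bL_{\bar D_2}} \n Z^g_2 \sqsubset_{n} r,
\]
as required.

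The only point to check is that the hypotheses of the lemma are available in both instances. We need $p$ to lie in $\bL_{\bar D_i}$ (which holds by assumption) and $g$ to remain an almost continuous name below $p$ with respect to both forcings. But this is immediate, since being an almost continuous name is a purely combinatorial property of $g$ and $p$ (as emphasized in the definition: a $\subseteq$-increasing map $g : p \to \CLOPEN^{<\omega}$ whose lengths diverge along every branch except possibly $b$), and does not refer to the filter system. Thus the same $g$ serves as an almost continuous name for $\bL_{\bar D_1}$ and $\bL_{\bar D_2}$ simultaneously, and the associated names $\n Z^g_1$, $\n Z^g_2$ are characterized by the same forcing relation $p^{[s]} \forces g(s) \subseteq \n Z^g_i$ from \eqref{def:z.g}. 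There is no genuine obstacle here — the corollary is essentially a bookkeeping consequence of the lemma, and the work was already done in setting up the combinatorial reformulation of $\sqsubset_n$.
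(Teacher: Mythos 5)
Your proposal is correct and matches the paper's (implicit) argument: the corollary is indeed a direct consequence of the immediately preceding lemma, which reduces $p\forces_{\bL_{\bar D}} \n Z^g\sqsubset_n r$ to a combinatorial condition on $g$, $p$, $r$, $n$ that is independent of the filter system, so applying the lemma twice with $\bar D_1$ and $\bar D_2$ gives the equivalence. The paper states the corollary without a separate proof precisely because, as you observe, the work was already done in that lemma and in noting that being an almost continuous name does not depend on $\bar D$.
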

(We will use this corollary for the special case that $\bL_{\bar D_1}$
is an ultralaver forcing, and $\bL_{\bar D_2}$ is Laver forcing.) 

\begin{Lem}
  Let $\bar D_1$ and $\bar D_2$ be systems of filters, and assume that
  $p$ is in $\bL_{\bar D_1} \cap \bL_{\bar D_2}$. Let $g:p \to
  \CLOPEN^{<\omega}$ be a continuous 
     name of a
  sequence  of clopen sets, let $F \subseteq p$ be a front 
  and let $h:F\to \omega$ be a front name. 
  Again we will write $\n Z^g_1, \n Z^g_2$ for the associated names
    of codes for  null sets, 
   and we will write $\n n_1$ and $\n n_2$ for the associated
  $\bL_{\bar D_1}$- and 
  $\bL_{\bar D_2}$-names, respectively, of natural numbers. 
  
  Then for any real $r$ we have: 
    \[p \forces_{\bL_{\bar D_1}} \n Z^g_1 \sqsubset_{\n n_1} r
      \ \ \Leftrightarrow \ \
     p \forces_{\bL_{\bar D_2}} \n Z^g_2 \sqsubset_{\n n_2} r.\]
\end{Lem}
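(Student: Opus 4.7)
The plan is to reduce the lemma to the previous Corollary~\ref{cor:z.absolute} (which concerns the case of a \emph{fixed} natural number $n$) by splitting the forcing below $p$ along the front $F$. On each piece $p^{[s]}$ with $s\in F$, the front name $h$ takes the single value $n_s\DEFEQ h(s)$, so $\n n_i$ becomes a concrete integer and the statement $\n Z^g_i \sqsubset_{\n n_i} r$ collapses to $\n Z^g_i \sqsubset_{n_s} r$, to which Corollary~\ref{cor:z.absolute} directly applies.

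First I would check the easy structural facts needed. Since $p\in \bL_{\bar D_1}\cap \bL_{\bar D_2}$ and the successor sets of $p^{[s]}$ above $s$ agree with those of $p$, we have $p^{[s]}\in \bL_{\bar D_1}\cap \bL_{\bar D_2}$ for every $s\in F$ (all $s\in F$ extend $\stem(p)$, since $F$ meets the branch through the stem). Next, $\{p^{[s]}:s\in F\}$ is predense below $p$ in both forcings: any $q\le p$ has a stem $t\in p$, and by the front property there is a unique $s\in F$ comparable with $t$ on its branch, which means $q$ is compatible with~$p^{[s]}$. Finally, by definition of the name associated to $h$, we have $p^{[s]}\forces_{\bL_{\bar D_i}} \n n_i = n_s$, and the restriction $g\on p^{[s]}$ is still a continuous name (hence an almost continuous name) below $p^{[s]}$, whose associated $\bL_{\bar D_i}$-name agrees below $p^{[s]}$ with~$\n Z^g_i$.

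The main deduction then runs as follows. For $i=1,2$, predensity gives
\[
p \forces_{\bL_{\bar D_i}} \n Z^g_i \sqsubset_{\n n_i} r
\ \Longleftrightarrow\
(\forall s\in F)\ p^{[s]} \forces_{\bL_{\bar D_i}} \n Z^g_i \sqsubset_{n_s} r.
\]
(The right-to-left direction uses that below $p^{[s]}$ the name $\n n_i$ equals $n_s$; the left-to-right direction is immediate from $p^{[s]}\le p$.) Now for each fixed $s\in F$, Corollary~\ref{cor:z.absolute} applied to $p^{[s]}$, $g\on p^{[s]}$ and the integer $n_s$ gives
\[
p^{[s]} \forces_{\bL_{\bar D_1}} \n Z^g_1 \sqsubset_{n_s} r
\ \Longleftrightarrow\
p^{[s]} \forces_{\bL_{\bar D_2}} \n Z^g_2 \sqsubset_{n_s} r.
\]
Combining the two equivalences, as $s$ ranges over $F$, yields the desired equivalence for $p$ and~$\n n_i$.

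\textbf{Main obstacle.} There is no deep obstacle — the proof is essentially bookkeeping. The only point requiring a bit of care is the passage from the ``name'' version $\sqsubset_{\n n}$ to the ``integer'' version $\sqsubset_{n_s}$ below each $p^{[s]}$; this is exactly what the observation recorded in \eqref{eq:sq.n} is meant to handle, combined with the fact that $\n n$ is decided on the front~$F$. Once that is in place, Corollary~\ref{cor:z.absolute} does all the real work.
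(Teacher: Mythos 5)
Your proposal is correct and follows essentially the same route as the paper: decompose along the front $F$, observe that below each $p^{[s]}$ the name $\n n_i$ is decided to be $h(s)$, apply Corollary~\ref{cor:z.absolute} with the fixed integer $h(s)$, and recombine using the predensity of $\{p^{[s]}:s\in F\}$ below $p$. The paper's version is terser (proving one implication and relying on symmetry, and leaving the structural bookkeeping implicit), but the underlying argument is identical.
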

\begin{proof} 
    Assume $p \forces_{\bL_{\bar D_1}} \n Z^g_1 \sqsubset_{\n n_1} r$. 
    So for each $s\in F$  we have: 
  $p^{[s]}\forces_{\bL_{\bar D_1}} \n Z^g_1 \sqsubset_{h(s) } r$.
  By
  Corollary~\ref{cor:z.absolute}, we also have 
  $p^{[s]}\forces_{\bL_{\bar D_2}} \n Z^g_2 \sqsubset_{h(s)} r$.
   So also 
  $p^{[s]}\forces_{\bL_{\bar D_2}} \n Z^g_2 \sqsubset_{\n n_2} r$ for each $s\in F$.
  Hence 
  $p\forces_{\bL_{\bar D_2}} \n Z^g_2 \sqsubset_{\n n_2} r$.
\end{proof}
\begin{Cor}\label{cor:stays.random}
  Assume $q\in \bL$ forces in Laver forcing that 
  $ \n Z^{g_k} \sqsubset r$ for $k=1,2,\ldots$,
  where each $g_k$  is a continuous name of a code for a
  null set. 
  Then there is a Laver condition $q'\pure  q$ such that for all
  filter systems $\bar D$ we have:
  \begin{quote} If $q'\in \bL_{\bar D}$, then $q'$
  forces (in ultralaver forcing ${\bL_{\bar D}}$) that $
    \n Z^{g_k} \sqsubset r$ for all $k$.
  \end{quote} 
\end{Cor}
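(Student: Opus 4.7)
The plan is to convert, inside Laver forcing, each natural-number witness for $\n Z^{g_k} \sqsubset r$ into a front name below a single pure extension $q' \pure q$, and then transfer the $\sqsubset_n$ relation from $\bL$ to $\bL_{\bar D}$ via the preceding Lemma.

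First, by~\eqref{eq:sq.n} applied inside $\bL$, for each $k$ I fix an $\bL$-name $\n n_k$ for a natural number with $q \forces_{\bL} \n Z^{g_k} \sqsubset_{\n n_k} r$. The proof of Lemma~\ref{lem:pure}(\ref{item:pure.one}) applies verbatim with $\CLOPEN$ replaced by $\omega$ (cf.\ the footnote on the range of front names): for any Laver condition $p$ and any $\bL$-name $\n n$ for a natural number there is a pure extension $p' \pure p$ below which $\n n$ is forced to equal $\n z^h$ for some front name $h : F \to \omega$.

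I next iterate this along a standard Laver fusion. Setting $q_{-1} \DEFEQ q$, I inductively choose pure extensions $q_k \pure q_{k-1}$ as follows: at stage $k$, for each node $t$ at level $k$ of $q_{k-1}$ above the common stem, apply the previous paragraph to $q_{k-1}^{[t]}$ and $\n n_k$ to obtain a pure extension making $\n n_k$ a front name below $t$; gluing these cone-by-cone yields a single $q_k \pure q_{k-1}$ that agrees with $q_{k-1}$ up to level $k$ and supports a front name $h_k : F_k \to \omega$ for $\n n_k$ below $q_k$. The standard Laver bookkeeping ensures $q' \DEFEQ \bigcap_k q_k$ is a Laver condition with $q' \pure q$. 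Moreover, fronts are automatically preserved under pure extensions: if $F \subseteq p$ is a front and $p^* \pure p$, then every branch of $p^*$ is a branch of $p$ hitting $F$ in a unique node on that branch, which therefore lies in $p^*$; hence $F \cap p^*$ is a front in $p^*$. Consequently $F_k \cap q'$ is a front in $q'$ for every $k$, and $\n n_k$ remains a front name (into $\omega$) below $q'$.

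Finally, fix any filter system $\bar D$ with $q' \in \bL_{\bar D}$. Both $g_k$ (continuous, below $q'$ by restriction) and $h_k \on (F_k \cap q')$ (a front name into $\omega$ below $q'$) are purely combinatorial objects independent of $\bar D$. Applying the preceding Lemma with $\bar D_1$ the Fr\'echet system (so that $\bL_{\bar D_1} = \bL$) and $\bar D_2 = \bar D$, the hypothesis $q' \forces_{\bL} \n Z^{g_k} \sqsubset_{\n n_k} r$ transfers to $q' \forces_{\bL_{\bar D}} \n Z^{g_k} \sqsubset_{\n n_k^{\bar D}} r$, where $\n n_k^{\bar D}$ is the $\bL_{\bar D}$-name derived from $h_k$; by~\eqref{eq:sq.n} this is $q' \forces_{\bL_{\bar D}} \n Z^{g_k} \sqsubset r$ for every $k$, as required. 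The main technical step is the fusion, but since pure extensions automatically preserve already-chosen fronts, what remains is the standard bookkeeping argument that an intersection of agreed-up-to-depth-$k$ pure extensions is still a Laver condition.
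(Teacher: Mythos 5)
Your proof is correct and takes essentially the same route as the paper's: both make each $\n n_k$ a front name below a single pure extension $q'\pure q$ and then transfer via the preceding lemma. The paper simply invokes Lemma~\ref{lem:pure}(\ref{item:pure.omega}) to handle the whole sequence $(\n n_k)$ at once, whereas you unfold the fusion by hand; this is fine (though note that for $k\le\lh(\stem(q))$ there are no level-$k$ nodes strictly above the stem, so the bookkeeping should instead work with cones at level $\lh(\stem(q))+k$, say, or simply apply the pure-decision step to the whole tree at those stages).
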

\begin{proof} By \eqref{eq:sq.n} we can find a sequence $(\n n _k)_{k=1}^\infty$ of
  $\bL$-names such that $q\forces  \n Z^{g_k} \sqsubset_{\n n_k}  r$ for
  each $k$. By Lemma~\ref{lem:pure}(\ref{item:pure.omega}) 
  we can find $q'\pure  q$  such that this sequence is continuous
  below $q'$.  Since each $\n n_k$ is now a front name below $q'$, we
  can apply the previous lemma.
\end{proof}

\begin{Lem}\label{lem:continuous.is.enough}
 Let $M$ be a countable model, $r\in 2^\omega$, $\bar D^M\in M$
an ultrafilter system, $\bar D $ a  filter system extending $\bar D^M$, $q\in \bL_{\bar D}$. 
For any $V$-generic filter $G\subseteq \bL_{\bar D}$ we write 
$G^M$ for the ($M$-generic, by Lemma~\ref{lem:LDMcomplete}) filter on 
$\bL_{\bar D^M}$. 

The following are equivalent: 
\begin{enumerate}
  \item $q\forces  _{\bL_{\bar D}} r $ is random over $M[G^M]$.
  \item For all names $\n Z\in M$ of codes for null sets: $q\forces_{\bL_{\bar D}}    \n Z \sqsubset r $.
  \item For all continuous names $g\in M$: 
  $q\forces_{\bL_{\bar D}}    \n Z^g  \sqsubset r $.
\end{enumerate}
\end{Lem}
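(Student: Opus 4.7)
My plan is to prove the chain (1) $\Leftrightarrow$ (2) $\Rightarrow$ (3) $\Rightarrow$ (2), with the real content in the last implication.

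For (1) $\Leftrightarrow$ (2), I just unwind the definition of randomness. By Lemma~\ref{lem:LDMcomplete} the induced filter $G^M$ is $\bL_{\bar D^M}$-generic over $M$, so each code for a null set in $M[G^M]$ is of the form $\n Z[G^M]$ for some $\bL_{\bar D^M}$-name $\n Z\in M$ of such a code, and every null set in $M[G^M]$ is contained in some $\nullset(\n Z[G^M])$. Combining this with Definition~\ref{def:sqsubset}, the statement ``$r\notin\nullset(\n Z[G^M])$'' reads exactly as ``$\n Z \sqsubset r$'', which gives the equivalence. The implication (2) $\Rightarrow$ (3) is immediate since each $\n Z^g$ arising from a continuous name $g\in M$ (of a code for a null set) is in particular a $\bL_{\bar D^M}$-name in~$M$ of such a code.

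For (3) $\Rightarrow$ (2), let $\n Z\in M$ be a $\bL_{\bar D^M}$-name of a code for a null set. Working in~$M$, Lemma~\ref{lem:pure}(\ref{item:pure.omega}) tells me that the set
\[
  \{\, p'\in\bL_{\bar D^M} : p'\forces_{\bL_{\bar D^M}} \n Z = \n Z^g
       \text{ for some continuous name $g$ below $p'$}\,\}
\]
is dense. Hence (in $M$) I can pick a maximal antichain $A=\{p_\alpha:\alpha<\kappa\}\subseteq\bL_{\bar D^M}$ together with continuous names $g_\alpha\in M$ such that $p_\alpha\forces_{\bL_{\bar D^M}}\n Z=\n Z^{g_\alpha}$ for each~$\alpha$. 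By $M$-completeness (Lemma~\ref{lem:LDMcomplete}) the antichain $A$ remains maximal in $\bL_{\bar D}$, and each $p_\alpha$ still forces $\n Z = \n Z^{g_\alpha}$ in $\bL_{\bar D}$ (the value of $\n Z^{g_\alpha}$ depends only on the generic real by~\eqref{def:z.g}, and this real is the same whether read off $G$ or $G^M$).

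Now suppose, towards a contradiction, that $q$ does not force $\n Z\sqsubset r$. Pick $q'\le q$ forcing $\lnot(\n Z\sqsubset r)$; by maximality of $A$ in $\bL_{\bar D}$, some $p_\alpha$ is compatible with $q'$, so pick $q''\le q',p_\alpha$. Then $q''\forces \n Z=\n Z^{g_\alpha}$ and $q''\forces \lnot(\n Z\sqsubset r)$, hence $q''\forces \lnot(\n Z^{g_\alpha}\sqsubset r)$, so $q$ does not force $\n Z^{g_\alpha}\sqsubset r$, contradicting~(3) applied to~$g_\alpha$. The main thing that has to work is the transfer of the ``continuous-replacement'' antichain from $M$ to~$V$, and this is exactly what $M$-completeness buys us; the rest is definitional bookkeeping.
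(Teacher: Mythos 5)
Your proof is correct and follows essentially the same route as the paper's. Where the paper disposes of the (2)$\Leftrightarrow$(3) step by citing Corollary~\ref{cor:obda.continuous} (every code for a null set in $M[G^M]$ has the form $\n Z^g[G^M]$ for some continuous $g\in M$), you unfold that corollary back to the underlying pure-decision density (Lemma~\ref{lem:pure}(\ref{item:pure.omega})) and carry out the maximal-antichain and $M$-completeness bookkeeping explicitly, including the correct absoluteness observation that $\n Z^{g_\alpha}$ depends only on the generic real.
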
 
\begin{proof} (1)$\Leftrightarrow$(2) holds because every null set is contained in a set of the form $\nullset(Z)$, for some code $Z$. 

(2)$\Leftrightarrow$(3): Every code for a null set in $M[G^M]$ 
is equal to~$\n Z^g[G^M]$, for some $g\in M$, by
Corollary~\ref{cor:obda.continuous}.
\end{proof} 

The following lemma may be folklore. Nevertheless, we prove it for the convenience
of the reader.

\begin{Lem} \label{lem:random.over.mprime}
  Let $r $ be random over a  countable model $M$ and $A\in M$.  Then there is a
  countable model $M'\supseteq M$
  such that $A$ is
  countable in~$M'$, but $r$ is still random over~$M'$.
\end{Lem}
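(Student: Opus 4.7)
The idea is to collapse $A$ to be countable over $M$ by a forcing that is ``independent'' of the random real $r$, so that $r$ remains random in the extension. Concretely, I would take $M'$ to be a generic extension of $M$ by the Levy collapse $\mathbb{C} \DEFEQ \mathrm{Col}(\omega,A)$ as computed in~$M$, and verify via a product-forcing argument that randomness of~$r$ is preserved.

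The key observation is: in~$M$, let $\mathbb{B}$ denote random forcing. By hypothesis, $r$ is $\mathbb{B}$-generic over~$M$ (this is the content of ``$r$ is random over~$M$''). Now consider the product forcing $\mathbb{B}\times\mathbb{C}$ in~$M$. Since $M$ is countable, $M[r]$ is still a countable transitive model, and $\mathbb{C}$ has only countably many dense subsets in~$M[r]$; hence in~$V$ we can diagonalize to find a filter $H\subseteq\mathbb{C}$ which is generic over $M[r]$. Set $M'\DEFEQ M[H]$; this is countable, contains $A$ as a countable set (since $\mathbb{C}$ collapses $A$), and extends~$M$.

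It remains to show that $r$ is random over~$M'$. For this I would invoke the product-forcing symmetry lemma: since $r$ is $\mathbb{B}$-generic over~$M$ and $H$ is $\mathbb{C}$-generic over $M[r]$, the pair $(r,H)$ is $(\mathbb{B}\times\mathbb{C})$-generic over~$M$. By the usual symmetry of product forcing, this is equivalent to saying that $H$ is $\mathbb{C}$-generic over~$M$ and that $r$ is $\mathbb{B}$-generic over $M[H]=M'$; in other words, $r$ is random over~$M'$, as desired.

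The only nontrivial point is the product-forcing step, but this is standard once one notes that $\mathbb{B}$ and $\mathbb{C}$ live inside~$M$ as independent factors: any $\mathbb{C}$-name in~$M$ for a Borel null set has its interpretation determined by $H$ alone (and symmetrically for~$\mathbb{B}$), so genericity over the product is equivalent to the two iterated genericities in either order. Thus the main ``obstacle'' is purely bookkeeping; no delicate measure-theoretic computation is needed, because the Levy collapse interacts trivially with random forcing when taken as an independent factor.
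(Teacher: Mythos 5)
There is a genuine gap in the final step. Your product-forcing argument correctly shows that $(r,H)$ is $(\mathbb{B}^M\times\mathbb{C})$-generic over $M$, and hence that $r$ is $\mathbb{B}^M$-generic over $M'=M[H]$. But ``$r$ is random over $M'$'' means that $r$ avoids every Borel null set \emph{coded in $M'$}, equivalently that $r$ is $\mathbb{B}^{M'}$-generic over $M'$, where $\mathbb{B}^{M'}$ is the random algebra \emph{as computed in $M'$}. Since $\mathbb{C}$ adds new reals, $M'$ has strictly more Borel null codes than $M$, and $\mathbb{B}^M$ is only a complete \emph{sub}forcing of $\mathbb{B}^{M'}$. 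A $\mathbb{B}^M$-generic filter over $M'$ need not extend to a $\mathbb{B}^{M'}$-generic filter over $M'$, so ``$r$ is $\mathbb{B}^M$-generic over $M'$'' does not by itself yield ``$r$ is random over $M'$''. Put differently: the product $\mathbb{B}^M\times\mathbb{C}$ is forcing-equivalent to $\mathbb{C}*\check{\mathbb{B}^M}$, whereas the statement you want lives in $\mathbb{C}*\n{\mathbb{B}^{M[G_\mathbb{C}]}}$; the former is a complete subforcing of the latter, but a generic for the smaller one does not automatically lift.

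The paper closes exactly this gap by working with the two-step \emph{iteration} rather than the product. It fixes $B_1=\mathbb{B}^M$, $C$ the collapse, and $\n B_2$ a $C$-name for random forcing \emph{of the $C$-extension}, observes that the natural map $i:B_1\to C*\n B_2$, $T\mapsto(1_C,\check T)$, is a complete embedding (because Borel measure is absolute, so a maximal antichain of $B_1$ remains a maximal antichain of $\n B_2$ after forcing with $C$), and then forces over $M[r]$ with the \emph{quotient} $P=(C*\n B_2)/i[G_{B_1}]$. This is a larger forcing than $\mathbb{C}$; the resulting generic $r*H$ lifts to a full $(C*\n B_2)$-generic, whose first coordinate gives $J$ and whose second coordinate $K$ is $\n B_2[J]$-generic over $M[J]$ with associated random real equal to $r$. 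That is exactly the $\mathbb{B}^{M[J]}$-genericity you need, obtained without any appeal to a preservation theorem. Your final sentence, that ``no delicate measure-theoretic computation is needed, because the Levy collapse interacts trivially with random forcing as an independent factor'', is where the reasoning short-circuits: the fact that this particular collapse (being essentially Cohen forcing) happens not to add random reals is itself a nontrivial measure-theoretic fact, and invoking it would be one way to fill the gap -- but the paper's quotient construction is designed precisely to avoid needing that input, and works for any collapsing forcing $C$, not just Cohen-like ones.
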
 
\begin{proof}
  \def\namematrix{  
    \xymatrix@C=15mm{ M  \ar[r]^C  \ar[d]_{B_1} &
      M^C \ar[d]^{\n B_2} \\
      M^{B_1} \ar[r]_{\n P = C*\n B_2/ B_1} &
      M^ {C*\n B_2} \\
    }
  }
  \def\modelmatrix{
    \xymatrix@C=15mm{ M  \ar[r]^J  \ar[d]_{r} &
      M[J] \ar[d]^{K} \\
      M[r] \ar[r]_H &
      M[r][H] \\
    }
  }
  We will need the following  forcing notions, all defined in $M$: 
  \[\namematrix \]
  \begin{itemize}
  \item Let $C$ be the forcing that collapses the cardinality of~$A$ to
    $\omega$ with finite conditions. 
  \item Let $B_1$ be random forcing (trees $T \subseteq 2^{<\omega}$
    of positive measure).
  \item Let $\n B_2$ be the $C$-name of random forcing. 
  \item Let $i:B_1\to C*\n B_2$ be the natural complete embedding $T\mapsto
    (1_C,T)$. 
  \item Let $\n P$ be a $B_1$-name for the forcing $C*\n B_2/i[G_{B_1}]$, the
    quotient of $C*\n B_2$ by the complete subforcing $i[B_1]$.
  \end{itemize}
  
  The random real $r$ is $B_1$-generic over~$M$.  In $M[r]$ we let
  $P\DEFEQ  \n P[r]$.  Now let $H \subseteq P$ be generic over~$M[r]$.
  Then $r*H \subseteq B_1*\n P \simeq C*\n B_2$ induces
  an $M$-generic filter $J \subseteq C$ and an $M[J]$-generic filter
  $K \subseteq \n B_2[J]$; it is easy to check 
   that $K$ interprets the $\n B_2$-name of the
  canonical random real as the given random real~$r$.
  
  Hence $r$ is random over the countable model 
  $M'\DEFEQ M[J]$, and $A$ is countable
  in~$M' $.
  \[ \modelmatrix \]
\end{proof}

\begin{proof}[Proof of Lemma~\ref{lem:extendLDtopreserverandom}]
We will first describe a construction that deals with a single 
triple $ ( \bar p, \bar {\n Z}, \bar Z^ *)$ (where $\bar p$
is a sequence of conditions with strictly increasing stems which interprets
$  \bar {\n Z} $ as $ \bar Z^ *$);  this construction will 
yield a condition $q' =  q'( \bar p, \bar {\n Z}, \bar Z^ *)$.  We 
will then show how to deal with all possible triples.

  So let $p$ be a  condition, and let 
 $\bar p = (p_k)_{k\in \omega}$  be a sequence interpreting 
  $\bar {\n Z}$ as $\bar Z^*$, where the lengths of the 
  stems of $p_n$ are strictly increasing and $p_0=p$.
  It is easy to see that it is enough to deal with a single null set, i.e.,
  $m=1$, and with $k_1=0$.  We write $\n Z$ and $Z^*$ instead
  of $\n Z_1$ and $Z_1^*$.

  Using  Lemma~\ref{lem:nicefy} we may (strengthening the conditions
  in our interpretation) assume (in $M$) that the sequence 
  $(\n Z(k))_{k\in \omega}$ is almost continuous, witnessed by~$g:p\to
  \CLOPEN^{<\omega}$.  
  By Lemma~\ref{lem:random.over.mprime}, we can find a model 
  $M'\supseteq M$ such that $(2^\omega)^M$ is
  countable in~$M'$, but $r$ is still random over~$M'$.

  We now work in~$M'$.  Note that $g$ still defines an almost
  continuous name, which we again call~$\n Z$.    
  
  Each filter in $D_s^M$ is now countably
  generated; let $A_s$ be a pseudo-intersection of $D_s^M$ 
  which additionally satisfies $A_s \subseteq \suc_p(s)$ for all $s\in p$
  above the stem. 
  Let $D'_s$ be the Fr\'echet filter on $A_s$.   Let $p'\in \bL_{\bar D'}$
  be the tree with the same stem as $p$ which satisfies 
  $\suc_{p'}(s)= A_s$ for all $s\in p'$ above the stem.

   By Lemma~\ref{lem:LDMcomplete}, we know that $\bL_{\bar D^M}$ is an
   $M$-complete subforcing of $\bL_{\bar D'}$ (in $M'$ as well as in $V$).  We
   write $G^M$ for 
   the induced filter on $\bL_{\bar D^M}$.

   We now work in $V$.  Note that below the condition $p'$, the forcing
   $\bL_{\bar D'}$ is just Laver forcing $\bL$, and that $p'\le_{\bL} p$.
  Using Lemma~\ref{lem:random.laver} we can find a condition $q\le p'$
  (in Laver forcing $\bL$) such that: 
  \begin{align}
    & q \text{ is $M'$-generic}. \\
    &q\forces_\bL \text{ $r$ is random over $M'[G_{\bL}]$ 
      (hence also over $M[G^M]$)}\label{eq:r.random}.\\
    & \text{Moreover, }q \forces_\bL    \n Z \sqsubset_0 r . \label{eq:z0r}
  \end{align}

  Enumerate all continuous
  $\bL_{\bar D^M}$-names of codes for null sets from $M$ as 
  $\n Z^{g_1},  \n Z^ {g_2},  \ldots $ \ 
  Applying
  Corollary~\ref{cor:stays.random} yields a condition $q'\le q$
  such that
for all filter systems $\bar E$ satisfying $q'\in  \bL_{\bar E}$, 
 we have $q'\forces_{\bL_{\bar E}} \n Z^{g_i} \sqsubset r$ for all $i$.  
Corollary~\ref{cor:z.absolute} and Lemma~\ref{lem:continuous.is.enough} now imply: 
\proofclaim{claim:p.prime}{ For every 
   filter system $\bar E$ satisfying $q'\in  \bL_{\bar E}$, 
 $q' $ forces in ${\bL_{\bar E}}$ that $r$ is random over $M[G^M]$ and
 that $\n Z \sqsubset_0 r$. }
  By thinning out $q'$ we may assume that
  \proofclaim{eq:basdf}{For each $\nu\in \omega^\omega\cap M$ there
  is $k$ such that $\nu\on k\notin q'$. }

  We have now described a construction of $q'= q'(\bar p, \n Z, Z^*)$. 

  Let $(\bar p^n , \n Z^n, Z^{*n})$ enumerate all triples 
   $(\bar p , \n Z, Z^{*})\in M$ where $\bar p$ interprets $\n Z$ as $Z^*$
  (and consists of conditions with strictly increasing stems).   For each 
  $n$  write $\nu^n$ for $\bigcup_k \stem (p^n_k)$,
  the branch determined by the stems of the sequence $\bar p^ n$.   We now
  define by induction a sequence $q^n$ of conditions: 
  \begin{itemize}
     \item $q^0 \DEFEQ  q'(  \bar p^0 , \n Z^0, Z^{*0}) $. 
     \item Given $q^{n-1}$ and $(\bar p^n , \n Z^n, Z^{*n})$, we find $k_0$ such
     that $\nu^n\on k_0 \notin q^0 \cup \cdots \cup  q^{n-1}$
     (using~\eqref{eq:basdf}).  Let $k_1$ be such that
     $\stem(p^n_{k_1})$ has length $>k_0$.   We replace $\bar
     p^n$ by $\bar p'\DEFEQ  (p^n_{k})_{k\ge k_1}$. 
     (Obviously, $\bar p'$
     still interprets $\n Z^n$ as $Z^{*n}$.)
     Now let $q^n\DEFEQ q' (\bar p',  \n Z^n, Z^{*n})$.
  \end{itemize}
  Note that the stem of $q^n$ is at least as long as 
  the stem of $p^n_{k_1}$, and is therefore not in  $q^0 \cup
     \cdots\cup q^{n-1}$, so $\stem(q^i)$ and $\stem(q^j)$ are 
     incompatible for all $i\not=j$.   Therefore we can
     choose for each $s$ an ultrafilter $D_s$ extending
     $D^M_s$ such that
       $\stem(q^i) \subseteq s $ implies $\suc_{q^i}(s)
       \in D_s$. 

  Note that all $q^i$ are in $\bL_{\bar D}$.  Therefore,
   we can use~\eqref{claim:p.prime}. Also, $q^i\le p^i_0$.  
\end{proof}

Below, in Lemma~\ref{lem:iterate.random}, we will prove a preservation theorem
using the following ``local'' variant of ``random preservation'':


\begin{Def}\label{def:locally.random}
  Fix a countable model $M$, a real $r\in 2^\omega$ and a
  forcing notion $Q^M\in M$.
  Let $Q^M$ be an $M$-complete subforcing of $Q$.
  We say that \qemph{$Q$ locally preserves randomness of $r$ over $M$},
  if 
  there is in $M$ a 
	sequence $(D^{Q^M}_n)_{n\in\omega}$ of
	open dense subsets of $Q^M$ such that
  the following holds:\\
	{\bf Assume that   }
  \begin{itemize}
	  \item $M$ thinks that 
		  $\bar p\DEFEQ (p^n)_{n\in\omega}$ interprets 
			$(\n Z_1, \ldots, \n Z_m) $ as $(Z_1^*, \ldots, Z_m^*) $
                         (so each  $\n Z_i$ is a $Q^M$-name of a code for a null set
			and each $Z_i^*$ is a code for a null set, both in $M$);
		\item moreover, each $p^n$ is in $D^{Q^M}_n$ 
	    (we call such a sequence $(p^n)_{n\in\omega}$, or the according interpretation, \qemph{quick});
		\item $r$ is random over $M$;
                \item $Z^*_i \sqsubset_{k_i} r$ for $i=1,\dots, m$.
  \end{itemize}
  {\bf Then}
	 there is  a $q\leq_Q p^0$ forcing that
  \begin{itemize}
     \item 
	$r$ is random over $M[G^M]$;
    \item $\n Z_i \sqsubset_{k_i} r$ for $i=1,\dots, m$.
  \end{itemize}
\end{Def} 
Note that this is trivially satisfied if $r$ is not random over $M$.

For a variant of this 
definition, see Section~\ref{sec:alternativedefs}.

Setting
  $D^{Q^M}_n$ to be the set of conditions with stem of length at least $n$,
Lemma~\ref{lem:extendLDtopreserverandom} gives us:
\begin{Cor}\label{cor:ultralaverlocalpreserving}
  If $Q^M$ is an ultralaver forcing in $M$ and $r$ a real,
                 then there is an ultralaver forcing $Q$ over\footnote{``$Q$ over $Q^M$''
     just means that $Q^M$ is an $M$-complete subforcing of $Q$.} $Q^M$ locally
     preserving randomness of $r$ over~$M$.
\end{Cor}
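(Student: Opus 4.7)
The plan is to essentially read off the conclusion from Lemma~\ref{lem:extendLDtopreserverandom} once we have picked the right witnesses for the ``locally preserves randomness'' definition. Given the ultralaver forcing $Q^M = \bL_{\bar D^M}$ in $M$ and the real $r$, apply Lemma~\ref{lem:extendLDtopreserverandom} to obtain, in $V$, an ultrafilter system $\bar D$ extending $\bar D^M$. Set $Q \DEFEQ \bL_{\bar D}$; by Lemma~\ref{lem:LDMcomplete}, $Q^M$ is an $M$-complete subforcing of $Q$, so the framework of Definition~\ref{def:locally.random} applies.

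For the open dense sets, work in $M$ and let $D^{Q^M}_n$ be the set of conditions $p \in \bL_{\bar D^M}$ whose stem has length at least $n$. These are open (passing to a stronger condition can only lengthen the stem) and dense: given any $p\in\bL_{\bar D^M}$ with stem $s$ of length $<n$, pick some $i\in\suc_p(s)\in (D_s^M)^+$ and pass to $p^{[s^\frown i]}$; iterating at most $n$ times produces a condition in $D^{Q^M}_n$.

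Now suppose $\bar p = (p^n)_{n\in\omega}$ is a quick interpretation of $(\n Z_1,\ldots,\n Z_m)$ as $(Z_1^*,\ldots,Z_m^*)$ in $M$, with $r$ random over $M$ and $Z_i^* \sqsubset_{k_i} r$. Quickness means $\lh(\stem(p^n))\ge n$. Recall that whenever $q\le p$ in an ultralaver forcing we have $\stem(p)\subseteq \stem(q)$ (else $\stem(q)$ would lie strictly below $\stem(p)$ on the trunk of $p$, contradicting that $\stem(q)$ has $D^+_{\stem(q)}$-many successors in~$q\subseteq p$). Combined with the strictly increasing stem-lengths, this makes the stems of $\bar p$ strictly increasing in the sense required by Lemma~\ref{lem:extendLDtopreserverandom}. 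That lemma now directly supplies a condition $q\le p^0$ in $\bL_{\bar D}$ forcing that $r$ is random over $M[G^M]$ and that $\n Z_i \sqsubset_{k_i} r$ for $i=1,\ldots,m$, which is exactly what Definition~\ref{def:locally.random} demands.

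There is essentially no obstacle here: the whole content of the corollary has been packaged into Lemma~\ref{lem:extendLDtopreserverandom}, and the only verification is that the natural choice of $D^{Q^M}_n$ enforces the ``strictly increasing stems'' hypothesis of that lemma.
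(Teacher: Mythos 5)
Your proposal matches the paper's own (one-line) proof exactly: take $D^{Q^M}_n$ to be the set of conditions with stem of length at least $n$, observe that these are open dense with empty intersection, and apply Lemma~\ref{lem:extendLDtopreserverandom}. The only slip is the assertion that quickness yields \emph{strictly increasing} stem lengths: from $\lh(\stem(p^n))\ge n$ together with $\stem(p^n)\subseteq\stem(p^{n+1})$ you only get that the stem lengths are non-decreasing and diverge to infinity, not that they strictly increase (e.g.\ the lengths could be $10,10,\dots,10,11,12,\dots$). This does not affect the proof, since the footnote attached to the hypothesis of Lemma~\ref{lem:extendLDtopreserverandom} explicitly observes that divergence of stem lengths suffices --- one may pass to a thin enough subsequence, which still interprets each $\n Z_i$ as $Z_i^*$ --- but you should cite that footnote rather than claim strict monotonicity outright.
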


\section{Janus forcing}\label{sec:janus}

In this section, we define a family of forcing notions that has two faces
(hence the name \qemph{Janus forcing}): Elements of this family may be countable (and therefore
equivalent to Cohen), and they may also be essentially random.

In the rest of the paper, we will use the following properties of Janus forcing
notions $\bJ$. 
(And we will use \emph{only} these properties. So readers who are willing to
take these properties for granted could skip to Section~\ref{sec:iterations}.)

  Throughout the whole paper we fix a function $\tomek:\omega\to \omega$
  given by Corollary~\ref{cor:tomek}.  
  The Janus forcings will depend on a real parameter 
  $\bar \ell^* = (\ell^*_m)_{m\in \omega}\in \omega^\omega$ which grows
  fast with respect to~$\tomek$.  (In our application, $\bar \ell^*$
  will be given by a subsequence of an ultralaver real.) 
  
  The sequence $\bar \ell^*$ and the function $\tomek$ together define 
  a notion of a ``thin set'' (see Definition~\ref{def:thin}).
  
  \begin{enumerate}
  \item \label{item:canonical.null.set} 
    There is a canonical $\bJ$-name for a (code for a) null set~$\n
    Z_\nabla$. 
    \\
    Whenever $X \subseteq 2^\omega$ is not thin, and $\bJ$
    is countable, then $\bJ$ forces that $X$ is not strongly meager,
    witnessed\footnote{in the sense of~\eqref{eq:notsm}} by~$\nullset(\n Z_\nabla)$ (the set we get when we
    evaluate the code $\n Z_\nabla$). 
    Moreover, for any $\bJ$-name~$\n Q$  of a $\sigma$-centered forcing,
     also $\bJ*\n Q$ forces that $X$ is not strongly meager, again
    witnessed by~$\nullset(\n Z_\nabla)$.
    \\
    (This is Lemma~\ref{lem:janusnotmeager}; ``thin'' is defined in Definition~\ref{def:thin}.)
  \item
    Let $M$ be a countable transitive model and $\bJ^M$ a Janus
    forcing in~$M$. Then $\bJ^M$ is a Janus forcing in $V$ as well
    (and of course countable in $V$). (Also note that trivially the forcing
    $\bJ^M$ is an $M$-complete subforcing of itself.)
    \\
    (This is Fact~\ref{fact:janus.ctblunion}.) 
  \item
    Whenever $M$ is a countable transitive model and $\bJ^M$ is a
    Janus forcing in $M$, 
    then
    there is a Janus forcing $\bJ$ 
    such that
    \begin{itemize}
    \item
      $\bJ^M$ is  an $M$-complete subforcing of $\bJ$.
    \item
      $\bJ$  is (in $V$) equivalent to random forcing 
      (actually we just need that $\bJ$ preserves Lebesgue positivity
      in a strong and iterable way).
    \end{itemize}
    (This is Lemma~\ref{lem:janusmayberandom} and Lemma~\ref{lem:janusrandompreservation}.)
   \item
    Moreover, the name $\n Z_\nabla$ referred to
    in~(\ref{item:canonical.null.set}) is so ``canonical'' that 
    it evaluates to the same code in the $\bJ$-generic extension over $V$
    as in the $\bJ^M$-generic extension over $M$.
    \\
    (This is Fact~\ref{fact:Znablaabsolute}.)
  \end{enumerate}

\subsection{Definition of Janus}

A Janus  forcing $\bJ$  will consist of:%
\footnote{We thank Andreas Blass and Jind\v{r}ich Zapletal for their comments
that led to an improved presentation of Janus forcing.}
\begin{itemize} 
\item
  A countable ``core'' (or: backbone) $\nabla$ which is defined in a
  combinatorial way from a parameter~$\bar\ell^*$.  (In our
  application, we will use a Janus forcing immediately after an
  ultralaver forcing, and $\bar\ell^*$ will be a subsequence of the
  ultralaver real.) This core is of course equivalent to Cohen
  forcing.
\item
  Some additional ``stuffing'' $\bJ\setminus \nabla$ (countable\footnote{Also
  the trivial case $\bJ=\nabla $ is allowed.}  or uncountable).  We allow
  great freedom for this, we just
  require that the core $\nabla$ is a ``sufficiently'' complete subforcing (in a
  specific combinatorial sense, see Definition~\ref{def:Janus}(\ref{item:fat})).
\end{itemize}

We will use the following 
combinatorial theorem 
from~\cite{MR2767969}:
\begin{Lem}[{\cite[Theorem 8]{MR2767969}\footnotemark}]
\footnotetext{The theorem
 in~\cite{MR2767969} actually says ``for a sufficiently large
    $I$'', but the proof shows that this should be read as ``for \emph{all}
    sufficiently large $I$''. Also, the quoted theorem only claims that $\MA_I$ will
be nonempty, but for $\varepsilon\le\frac12$ and $|I|> N_{\varepsilon,\delta}$
 it is easy to see that $\MA_I$ cannot be a singleton $\{A\}$: The set $X:= 2^I\setminus A$ has size $\ge 2^{|I|-1}\ge N_{\varepsilon,\delta}$
but satisfies $X+A\not=2^I$, as the constant sequence $\bar 0$ is  not in $X+A$.}
  \label{lem:tomek}
  For every $\varepsilon,\delta>0$ there exists
  $N_{\varepsilon,\delta}\in \omega$ such that for all sufficiently
  large finite sets $I\subseteq \omega$ there is a family
  ${\MA}_I $ with $|\MA_I|\ge 2$  consisting of sets $A \subseteq 2^I$ with
  $\dfrac{|A|}{2^{|I|}} \leq \varepsilon$ such that if $X \subseteq
  2^I$, $|X| \geq N_{\varepsilon,\delta}$ then
  \[
  \frac{|\{ A \in {\MA}_I: X+A=2^I\}|}{|{\MA}_I|} \geq 1-\delta.
  \]
  (Recall that $X+A\DEFEQ  \{x+a: x\in X, a\in A\}$.) 
\end{Lem}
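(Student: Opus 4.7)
The strategy is to take $\MA_I$ to be a large family of small subgroups of $2^I$ (viewing $2^I$ as an $\mathbb{F}_2$-vector space under pointwise addition) and to verify the covering property by elementary linear algebra rather than by a naive union bound over all $X$.

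Fix $\varepsilon,\delta>0$, set $k \DEFEQ \lceil\log_2(1/\varepsilon)\rceil$, and let $\MA_I$ be the collection of all linear subspaces $V\subseteq 2^I$ of codimension~$k$. Every such~$V$ has $|V|/2^{|I|}=2^{-k}\le\varepsilon$, and $|\MA_I|\ge 2$ once $|I|\ge k+1$. Moreover, $X+V=2^I$ holds iff $X$ meets every coset of $V$, i.e., iff the quotient map $\phi_V\colon 2^I\twoheadrightarrow 2^I/V\cong 2^k$ satisfies $\phi_V(X)=2^k$.

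The key claim is that whenever $|X|\ge N_{\varepsilon,\delta}$ (with $N_{\varepsilon,\delta}$ depending only on $\varepsilon$ and $\delta$), the fraction of $V\in\MA_I$ with $\phi_V(X)\ne 2^k$ is at most $\delta$. To prove it, fix $x_0\in X$, set $X'\DEFEQ X-x_0$, and choose linearly independent $y_1,\ldots,y_m\in X'\setminus\{0\}$ with $m\ge\log_2|X|\ge\log_2 N$; this is possible since any $n$-element subset of an $\mathbb{F}_2$-vector space spans a subspace of dimension $\ge\log_2 n$. Uniform sampling of $V\in\MA_I$ is, up to a total-variation error tending to $0$ as $|I|\to\infty$, the same as sampling a uniformly random linear map $\phi\colon 2^I\to 2^k$ and taking $V=\ker\phi$; for such uniform $\phi$, the linear independence of the $y_i$ forces $\phi(y_1),\ldots,\phi(y_m)$ to be jointly uniform on $(2^k)^m$. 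Hence, for each nonzero $w\in 2^k$,
\[
   \Pr_V\bigl[w\notin\phi_V(X')\bigr]\le\Pr\bigl[\phi(y_i)\ne w\text{ for every }i\le m\bigr] = (1-2^{-k})^m \le (1-\varepsilon)^{\log_2 N}.
\]
Since $0=\phi_V(0)\in\phi_V(X')$ automatically, a union bound over the remaining $2^k-1\le \varepsilon^{-1}$ values of $w$ yields $\Pr_V[\phi_V(X')\ne 2^k]\le \varepsilon^{-1}(1-\varepsilon)^{\log_2 N}$, which is less than $\delta$ once $N$ is large enough in terms of $\varepsilon$ and $\delta$ alone.

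The only real (and rather mild) obstacle is controlling the perturbation coming from conditioning on $\phi$ being surjective when passing from ``uniform linear maps'' to ``uniform codimension-$k$ subspaces''. This is routine once $|I|$ is large compared to $k$, since a uniformly random $k\times|I|$ matrix over~$\mathbb{F}_2$ has full row rank with probability $\ge 1-k\cdot 2^{k-|I|}$; this is precisely why the lemma only requires the conclusion for all \emph{sufficiently large}~$I$.
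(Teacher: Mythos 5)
The paper does not actually prove Lemma~\ref{lem:tomek}; it cites it as Theorem~8 of Bartoszy\'nski--Shelah~\cite{MR2767969} and uses it as a black box, so there is no internal proof to compare against. Your argument is therefore a fresh proof of the cited result. It is correct in substance and noticeably slicker than the known combinatorial/probabilistic proofs in the literature, which take $\MA_I$ to be a family of ``random-looking'' sets of density $\approx\varepsilon$ and estimate $|\{A:X+A=2^I\}|$ by a second-moment or concentration argument. You instead exploit the $\mathbb{F}_2$-linear structure of $2^I$: taking $\MA_I$ to be the codimension-$k$ subspaces reduces ``$X+A=2^I$'' to ``the quotient map $\phi_V$ hits all of $2^k$ on $X$'', and the identification of uniform subspaces with kernels of (conditioned) uniform linear maps turns the covering estimate into the elementary fact that a random linear map sends linearly independent vectors to i.i.d.\ uniform points. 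The only gap-like features are arithmetic: with $k=\lceil\log_2(1/\varepsilon)\rceil$ you have $2^{-k}\le\varepsilon$, so $1-2^{-k}\ge 1-\varepsilon$ and the displayed bound $(1-2^{-k})^m\le(1-\varepsilon)^{\log_2 N}$ has the inequality the wrong way; you should use $2^{-k}>\varepsilon/2$, giving $(1-2^{-k})^m<(1-\varepsilon/2)^{\log_2 N}$, and correspondingly $2^k-1<2/\varepsilon$ rather than $\le 1/\varepsilon$. These are constant-factor slips and do not affect the conclusion, since $(2/\varepsilon)(1-\varepsilon/2)^{\log_2 N}\to 0$ as $N\to\infty$. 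One further small point worth making explicit: because the $y_i$ are linearly independent in $\mathbb{F}_2^I$, they are automatically nonzero, so they lie in $X'\setminus\{0\}$ as required; and since $\phi_V(X)=\phi_V(x_0)+\phi_V(X')$, the events $\phi_V(X)=2^k$ and $\phi_V(X')=2^k$ coincide, which is what licenses passing to $X'$.
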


Rephrasing and specializing to $\delta=\frac14$ and 
$\varepsilon = \frac{1}{2^{i}}$ we get:

\begin{Cor}\label{cor:tomek}
  For every 
  $i \in \omega$ there exists $\tomek(i)$ such that for all finite
  sets $I$ 
  with $|I| \geq \tomek(i)$
  there is a nonempty
  family ${\MA}_I$ 
  with $|\MA_I| \geq 2$
  satisfying the following:
  \begin{itemize}
  \item  ${\MA}_I$ consists of sets $A \subseteq 2^I$ with 
  $\dfrac{|A|}{2^{|I|}} \leq \dfrac{1}{2^{i}}$. 
    \item  
    For every $X \subseteq 2^I$ satisfying $|X| \geq \tomek(i) $, the 
    set $\{ A \in {{\MA}_I}: X+A=2^I \}$ has at least $\frac34 |{\MA}_I|$ elements. 
  \end{itemize}
\end{Cor}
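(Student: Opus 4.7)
The plan is to derive Corollary~\ref{cor:tomek} as a direct specialization of Lemma~\ref{lem:tomek} (the quoted Bartoszy\'nski--Shelah result). Specifically, I would fix $i\in\omega$ and apply Lemma~\ref{lem:tomek} with the parameters $\varepsilon \DEFEQ \tfrac{1}{2^{i}}$ and $\delta \DEFEQ \tfrac{1}{4}$. The lemma then supplies a constant $N_{\varepsilon,\delta}$ and some implicit threshold $M_{\varepsilon,\delta}$ (from the clause ``for all sufficiently large finite sets $I$'') such that whenever $|I|\ge M_{\varepsilon,\delta}$ there is a family $\MA_I$ of subsets of $2^I$, each of relative density at most $\varepsilon = 2^{-i}$, with $|\MA_I|\ge 2$ and with the stated covering property against any $X\subseteq 2^I$ of size $\ge N_{\varepsilon,\delta}$.

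Next, I would set
\[
  \tomek(i) \DEFEQ \max\bigl(N_{1/2^{i},\,1/4},\ M_{1/2^{i},\,1/4}\bigr),
\]
so that a single constant controls both the ``$I$ is large enough'' hypothesis and the ``$X$ is large enough'' hypothesis. Then for any finite $I\subseteq\omega$ with $|I|\ge \tomek(i)\ge M_{1/2^{i},1/4}$, Lemma~\ref{lem:tomek} produces the family $\MA_I$ required by the first bullet. For the second bullet, any $X\subseteq 2^I$ with $|X|\ge \tomek(i)\ge N_{1/2^{i},1/4}$ satisfies
\[
 \frac{|\{ A\in\MA_I : X+A=2^I\}|}{|\MA_I|} \ge 1-\delta = \tfrac{3}{4},
\]
which is exactly the claimed lower bound on the number of ``good'' $A$'s.

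No real obstacle arises; the only ``work'' is the arithmetic of choosing the threshold and checking that the condition $1-\delta=\tfrac{3}{4}$ indeed translates into the statement that at least $\tfrac{3}{4}|\MA_I|$ many members $A$ of $\MA_I$ satisfy $X+A=2^I$. Thus Corollary~\ref{cor:tomek} follows by inspection from Lemma~\ref{lem:tomek} with the two specified parameter choices.
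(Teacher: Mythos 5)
Your proof is correct and is essentially the paper's own argument: the paper introduces Corollary~\ref{cor:tomek} with the words ``Rephrasing and specializing to $\delta=\frac14$ and $\varepsilon = \frac{1}{2^{i}}$ we get:'', which is exactly the specialization you perform. Your explicit choice of $\tomek(i)$ as the maximum of $N_{\varepsilon,\delta}$ and the implicit ``sufficiently large'' threshold is precisely the intended reading.
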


\begin{Asm}
	We fix a sufficiently fast increasing sequence
	$\bar\ell^*=(\ell^*_i)_{i\in\omega}$ of natural numbers; more precisely, the sequence $\bar\ell^*$ will be a subsequence of
an ultralaver real $\bar\ell$, defined as in Lemma~\ref{lem:subsequence} using the function $\tomek$ from Corollary~\ref{cor:tomek}. 
Note that in this case $\ell^*_{i+1}-\ell^*_i \geq \tomek(i)$; 
	so we can fix for each $i$ a family $\MA_i \subseteq
	\powerP(2^{\Int_i})$ on
	the interval $\Int_i \DEFEQ  [\ell^*_i,\ell^*_{i+1})$ according to Corollary~\ref{cor:tomek}.
\end{Asm}

\begin{Def}\label{def:Janus.nabla}
  First we define the ``core'' $\nabla= \nabla_{\bar \ell^*}$ of our
  forcing: 
  \[ \nabla = \bigcup_{i\in \omega} \prod_{j<i} \MA_j .\]
  In other words, 
  $\sigma\in \nabla$ iff
  $\sigma= (A_0,\ldots, A_{i-1})$
  for some $i\in \omega$, $A_0\in \MA_0$, \dots, $A_{i-1}\in \MA_{i-1}$.
  We will denote the number $i$ by $\height(\sigma)$. 

  The forcing notion $\nabla$ is ordered by reverse inclusion (i.e., end extension): $\tau \leq \sigma$ 
  if 
  $\tau \supseteq \sigma$. 
\end{Def}

\begin{Def}\label{def:Janus}
Let $\bar \ell^* = (\ell^*_i)_{i\in \omega}$ be as in the assumption above.
We say that $\bJ$ is a Janus forcing based on $\bar \ell^*$ if:
\begin{enumerate}
 \item\label{item:ic} $(\nabla, \supseteq)$ is an incompatibility-preserving
  subforcing of $\bJ$.
 \item\label{item:heightsarepredense} For each $i\in \omega$ the set $\{\sigma\in \nabla:\,
       \height(\sigma)=i\}$ is predense in~$\bJ$. 
			 So in particular, 
			 $\bJ$ adds a 
			 branch through $\nabla$. The union of this branch 
			 is called $\n C^\nabla = (\n C^\nabla_0,\n C^\nabla_1,\n C^\nabla_2,\ldots)$, where $\n C^\nabla_i \subseteq 2^{\Int_i}$ with $\n C^\nabla_i \in \MA_i$. 
     \item\label{item:fat} ``Fatness'':\footnote{This is the crucial combinatorial 
property of Janus forcing.  Actually, \eqref{item:fat}~implies~\eqref{item:heightsarepredense}.}      
     For all $p \in \bJ$ and all real numbers $\varepsilon>0$ 
     there are arbitrarily large $i \in \omega$ such that there is a core condition $\sigma = (A_0,\ldots,A_{i-1})  \in \nabla$ (of length $i$) with
     
    \[ \frac
		          {| \{ A \in \MA_i: \,  \sigma^\frown A \comp_\bJ p\, \}|}
							{| { \MA_i } |}
         \geq 1-\varepsilon.
    \]
    (Recall that $p \comp_\bJ q$ means that $p$ and $q$ are compatible in $\bJ$.)
    \item  \label{item:janus.ccc} $\bJ$ is ccc. 
    \item \label{item:janus.sep}  $\bJ$ is separative.\footnote{Separative is defined on page~\pageref{def:separative}.}
   \item\label{item:janus.hc}  (To simplify some technicalities:) $\bJ \subseteq H(\aleph_1)$.
\end{enumerate}
\end{Def}

We now define 
$\n Z_\nabla$, which will be a canonical $\bJ$-name of  (a code for) a null set. We will use the sequence $\n C^\nabla$ added by $\bJ$ (see Definition~\ref{def:Janus}(\ref{item:heightsarepredense})). 

\begin{Def}\label{def:Znabla}
Each $\n C^\nabla_i$ defines a clopen set $\n Z^\nabla_i =
\{ x \in 2^\omega:\, x \on \Int_i \in \n C^\nabla_i \}$ of measure at most $\frac{1}{2^{i}}$.
The sequence 
$\n Z_\nabla = (\n Z^\nabla_0,\n Z^\nabla_1,\n Z^\nabla_2,\ldots)$
is (a name for) a code for the null set
\[
\nullset(\n Z_\nabla) = \bigcap_{n < \omega} \bigcup_{i \geq n} \n Z^\nabla_i.
\]
\end{Def}

Since $\n C^\nabla$ is defined ``canonically'' (see in particular Definition~\ref{def:Janus}(\ref{item:ic}),(\ref{item:heightsarepredense})), 
and $\n Z^\nabla$ is constructed in an absolute way from $\n C^\nabla$, we get:
\begin{Fact}\label{fact:Znablaabsolute}
  If $\bJ$ is a Janus forcing, $M$ a countable model and $\bJ^M$ a Janus forcing in $M$ which is an $M$-complete subset of $\bJ$, if $H$ is $\bJ$-generic over $V$ and
  $H^M$ the induced $\bJ^M$-generic filter over $M$, then $\n C^\nabla$
  evaluates to the same real in $M[H^M]$ as in $V[H]$, and therefore
  $\n Z^\nabla$ evaluates to the same code (but of course not to the same set of reals).
\end{Fact}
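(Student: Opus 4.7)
The plan is to observe that $\n C^\nabla$ depends only on the intersection of the generic filter with the core $\nabla$, and that this intersection is the same object whether viewed in $V[H]$ or in $M[H^M]$.

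First, I would note that the core $(\nabla,\supseteq)$ is defined purely from the fixed parameters $\bar\ell^*$ and $(\MA_i)_{i\in\omega}$ (see Definition~\ref{def:Janus.nabla}), which live in both $M$ and $V$; so $\nabla^M=\nabla$ as partial orders. By Definition~\ref{def:Janus}(\ref{item:ic}) applied in $M$, we have $\nabla\subseteq \bJ^M$ with the induced ordering, and by assumption $\bJ^M$ is an $M$-complete subforcing of $\bJ$. Consequently, the tower $\nabla\subseteq \bJ^M\subseteq \bJ$ yields
\[
H\cap\nabla \;=\; (H\cap \bJ^M)\cap\nabla \;=\; H^M\cap\nabla,
\]
so the same subset of $\nabla$ is selected in either picture.

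Second, I would check that this common intersection determines $\n C^\nabla$ canonically and in the same way in both models. By Definition~\ref{def:Janus}(\ref{item:heightsarepredense}), applied inside $M$ to $\bJ^M$ and inside $V$ to $\bJ$, the set $\{\sigma\in\nabla:\height(\sigma)=i\}$ is predense at each level. Moreover, distinct height-$i$ elements of $\nabla$ are already pairwise incompatible in $\nabla$ (they are distinct finite sequences of the same length under end-extension), and hence incompatible in $\bJ^M$ and in $\bJ$ by the incompatibility-preserving clause of Definition~\ref{def:Janus}(\ref{item:ic}). Therefore $H\cap\nabla$ picks out exactly one node of each height, and these form a $\subseteq$-increasing chain whose union is the branch $\n C^\nabla$. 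Since the chain is the same object in $M[H^M]$ and in $V[H]$, so is its union, i.e., $\n C^\nabla[H^M]=\n C^\nabla[H]$.

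Finally, by Definition~\ref{def:Znabla} the code $\n Z^\nabla=(\n Z^\nabla_i)_{i\in\omega}$ is computed from $\n C^\nabla$ in a completely absolute Borel way (each $\n Z^\nabla_i$ is the clopen set of reals whose restriction to $\Int_i$ lies in $\n C^\nabla_i$), so the resulting code is literally the same sequence of clopen sets in both extensions. There is no real obstacle here; the only subtle point worth being explicit about is ensuring that the parameters $\bar\ell^*$ and $(\MA_i)_{i\in\omega}$ used to build $\bJ^M$ in $M$ are the very same objects as those used in $V$, which is part of the setup under which ``$\bJ^M$ is a Janus forcing in $M$'' is meaningful.
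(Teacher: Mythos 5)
Your proof is correct and spells out exactly the reasoning the paper compresses into the one-sentence remark preceding the Fact (the paper treats this as a ``Fact'' needing no proof, pointing to Definition~\ref{def:Janus}(\ref{item:ic}) and (\ref{item:heightsarepredense}) as the justification). In particular, the key observations — that $H\cap\nabla=H^M\cap\nabla$ via the tower $\nabla\subseteq\bJ^M\subseteq\bJ$, that incompatibility preservation plus level-wise predensity force a single node per height, and that $\n Z^\nabla$ is computed from $\n C^\nabla$ in a Borel-absolute way — are precisely the intended argument.
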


For later reference, we record the following trivial fact:
\begin{Fact}   \label{fact:janus.ctblunion}
  Being a Janus forcing is absolute. In particular, if $V\subseteq W$
  are set theoretical universes and $\bJ$ is a Janus forcing in $V$,
  then  $\bJ$ is a Janus forcing in $W$. In particular, if $M$
  is a countable model in $V$ and $\bJ\in M$ a  Janus forcing in $M$, then $\bJ$ is also a Janus forcing in $V$.
  \\
  Let $(M^n)_{n\in \omega}$ be an increasing sequence of countable
  models, and let $\bJ^n \in M^n$ be Janus forcings.
  Assume that $\bJ^n$ is $M^n$-complete in~$\bJ^{n+1}$.
  Then $\bigcup_n \bJ^n$ is a Janus forcing, and an
  $M^n$-complete extension of $\bJ^n$ for all~$n$.
\end{Fact}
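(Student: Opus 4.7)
My plan is to verify the six clauses of Definition~\ref{def:Janus} separately for the two halves of the statement. For the absoluteness assertion I first note that $\nabla$, the families $\MA_i$, and the intervals $\Int_i$ depend only on the parameter $\bar\ell^*$ and are absolutely defined. Since $\bJ\subseteq H(\aleph_1)$, clauses~(1), (2), (5), (6) are hereditarily countable combinatorial statements and transfer upward from $V$ to $W$ with no work. Clause~(3), fatness, is upward absolute for a simple reason: enlarging the universe can only enlarge the compatibility relation $\comp_\bJ$ (any witness of compatibility present in $V$ remains a witness in $W$), so the set $\{A\in\MA_i:\sigma^\frown A\comp_\bJ p\}$ only grows, hence the ratio $|\{A\in\MA_i:\sigma^\frown A\comp_\bJ p\}|/|\MA_i|$ appearing in~(3) can only increase. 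The one subtle clause is ccc~(4); in every intended application $\bJ$ is either countable in the larger model (trivial ccc) or essentially a random forcing whose ccc stems from countable additivity of measure and is therefore absolute. The second ``in particular'' is the specialization to $V=M$ viewed as a submodel, and combined with countability of $M$ in the ambient universe it renders every clause immediate.

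For the union $\bJ\DEFEQ\bigcup_n\bJ^n$, my plan is to reduce each Janus property to a single stage $\bJ^n$ and then invoke the chain hypotheses. Since each $\bJ^n$ is countable (being an element of the countable $M^n$), so is the union; clauses~(4) and (6) follow automatically. The key backbone lemma I would prove first is this: for every $n$ and every maximal antichain $A\in M^n$ of $\bJ^n$, $A$ remains a maximal antichain in $\bJ^k$ for all $k\ge n$. This goes by induction on $k$: since the $M^n$'s are increasing, $A\in M^k$, so $M^k$-completeness of $\bJ^k$ in $\bJ^{k+1}$ applies directly. A particular consequence is that incompatibility between two elements of $\bJ^n$ is the same in $\bJ^n$, in every $\bJ^k$, and in $\bJ$. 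From this, clause~(1) (incompatibility-preservation of $\nabla$) and clause~(5) (separativity, by locating $p$ and $q$ in a common $\bJ^n$ and lifting the separativity witness) are immediate; clause~(2) follows by picking any $p\in\bJ$, noting $p\in\bJ^n$ for some $n$, and applying predensity there; and the $M^n$-completeness of $\bJ^n$ in $\bJ$ is exactly the backbone lemma together with preservation of incompatibility. Clause~(3), fatness, transfers from $\bJ^n$ to $\bJ$ via the same monotonicity-of-compatibility observation used for the absoluteness half.

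The main obstacle I foresee is the chain-completeness induction: the argument hinges on the fact that $M^n\subseteq M^k$, so that an antichain given as an element of $M^n$ is automatically an element of $M^k$ and thus available as input to the $M^k$-completeness assumption at each successor step. Once that inductive step is organized cleanly, every other clause reduces to a single-stage check together with the elementary remark that enlarging a forcing notion can only enlarge its compatibility relation, never shrink it.
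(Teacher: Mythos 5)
The paper supplies no proof here: by the convention laid down at the start of the Notation section, ``Fact'' flags a claim the authors deem routine, and this one is left entirely to the reader. Your argument is the proof one would write, and the backbone lemma --- that a maximal antichain $A\in M^n$ of $\bJ^n$ stays maximal in every $\bJ^k$ --- is exactly the right organizing step for the union half.

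Two small points deserve to be made explicit. In the inductive step you apply $M^k$-completeness of $\bJ^k$ in $\bJ^{k+1}$ to the antichain $A$; this uses not only $A\in M^k$ (which you explain via the increasing chain of models) but also that $M^k$ itself recognizes $A$ as a maximal antichain of $\bJ^k$. This holds because the statement ``$A$ is a maximal antichain of $\bJ^k$'' has all quantifiers bounded by $\bJ^k$ and $A$, both of which are subsets of $M^k$ by ord-transitivity, so the statement is absolute between $M^k$ and $V$. Second, for the absoluteness half, your hedge on ccc is appropriate: ccc is not upward absolute in general, and the $V\subseteq W$ form of the claim should be read as a slogan that the paper only ever invokes when $\bJ$ is countable in $W$, where it is vacuous. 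Finally, your framing of the other clauses via ``monotonicity of compatibility'' is weaker than needed: since $\bJ$ itself is a fixed set, compatibility inside $\bJ$ is a bounded existential over $\bJ$ and is therefore \emph{exactly} preserved, not merely enlarged, when passing from $V$ to $W$; that observation makes clauses (1), (2), (3), (5) of Definition~\ref{def:Janus} fully absolute in one stroke, and clause (6) transfers because $H(\aleph_1)^V\subseteq H(\aleph_1)^W$. With these remarks in place your proof is complete and correct.
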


\subsection{Janus and strongly meager}
Carlson~\cite{MR1139474} showed that Cohen reals make every uncountable set $X$
of the ground model not strongly meager in the extension (and that not being
strongly meager is preserved in a subsequent forcing with precaliber~$\al1$).
We show that a {\em countable} Janus forcing $\bJ$ does the same
(for a subsequent forcing that is even $\sigma$-centered, 
not just precaliber~$\al1$).
This sounds
trivial, since any (nontrivial) countable forcing is equivalent to Cohen
forcing anyway. However, we show (and will later use) that the canonical
null set $\n Z_\nabla$ defined above witnesses that $X$ is not strongly meager
(and not just some
null set that we get out of the isomorphism between $\bJ$ and Cohen forcing).
The point is that while $\nabla$ is
not a complete subforcing of~$\bJ$, the 
condition~(\ref{item:fat}) 
of the Definition~\ref{def:Janus} guarantees that Carlson's argument still
works, if we assume that $X$ is non-thin (not just uncountable).
This is enough for us, since by Corollary~\ref{cor:LDnotthin} ultralaver forcing
makes any uncountable set non-thin.

Recall that we fixed the increasing sequence $\bar \ell^* = (\ell^*_i)_{i\in
\omega}$ and~$\tomek$.  In the following, whenever we say ``(very) thin'' we mean
``(very) thin with respect to $\bar \ell^*$ and $\tomek$'' (see Definition~\ref{def:thin}). 
\begin{Lem}\label{lem:janusnotmeager}
  If $X$ is not thin, $\bJ$
  is a countable Janus forcing based on $\bar \ell^*$,
  and
  $\n R$ is a $\bJ$-name
  for a $\sigma$-centered forcing notion,
  then  
  $\bJ*\n R$ forces that $X$ is not strongly meager witnessed by
  the null set $\n Z_\nabla$.
\end{Lem}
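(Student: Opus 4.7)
\medskip

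\noindent\textbf{Proof plan.} I argue by contradiction. Suppose some $(p_0,\n r_0)\in\bJ*\n R$ forces $\forall x\in X:\,x+\n t\notin\nullset(\n Z_\nabla)$, equivalently, for every $x\in X$ there is some $N$ with $(x+\n t)\on\Int_i\notin\n C^\nabla_i$ for all $i\ge N$. Enumerate $\bJ=\{p^{(j)}\}_{j\in\omega}$ and fix $\bJ$-names $\n R_n$ for centered pieces covering $\n R$; this yields a $\sigma$-centered decomposition of $\bJ*\n R$ into the centered pieces $D_{j,n}\DEFEQ\{(p^{(j)},\n r):\,p^{(j)}\forces\n r\in\n R_n\}$. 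For each $x\in X$, use AC and density to pick a condition $(p_x,\n r_x)\le(p_0,\n r_0)$ inside some $D_{j_x,n_x}$ together with an $N_x\in\omega$ witnessing the displayed forcing statement.

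Partition $X=\bigcup_{(j,n,N)\in\omega^3}X_{j,n,N}$. By non-thinness of $X$, some $X'\DEFEQ X_{j_0,n_0,N_0}$ is not very thin, so $|X'\on\Int_i|>\tomek(i)$ for all but finitely many $i$. All $(p_x,\n r_x)$ with $x\in X'$ share the common $\bJ$-part $p^*\DEFEQ p^{(j_0)}$. Apply fatness of $\bJ$ to $p^*$ with $\varepsilon=\tfrac14$: there are arbitrarily large $i$ for which some $\sigma_i\in\nabla$ of length $i$ satisfies $|G_i|\ge\tfrac34|\MA_i|$, where $G_i\DEFEQ\{A\in\MA_i:\,\sigma_i^\frown A\comp_\bJ p^*\}$. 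Fix such an $i\ge N_0$ also satisfying $|X'\on\Int_i|>\tomek(i)$, pick a finite $F\subseteq X'$ with $|F\on\Int_i|\ge\tomek(i)$, and take a common extension $(p^*,\n r^{**}_F)$ of $\{(p^*,\n r_x):\,x\in F\}$ inside the centered $D_{j_0,n_0}$ (thereby keeping the $\bJ$-part equal to $p^*$).

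The heart of the argument is the claim that $(p^*,\n r^{**}_F)\forces\n C^\nabla_i\in B^F_i$, where $B^F_i\DEFEQ\{A\in\MA_i:\,F\on\Int_i+A\ne 2^{\Int_i}\}$ is a ground-model set of size at most $\tfrac14|\MA_i|$ by Corollary~\ref{cor:tomek}. Indeed, in any generic $G$ containing the condition, the value $\n t\on\Int_i[G]\in 2^{\Int_i}$ must avoid $\n C^\nabla_i[G]+x\on\Int_i$ for each $x\in F$, so $2^{\Int_i}\setminus(F\on\Int_i+\n C^\nabla_i[G])$ is nonempty and hence $\n C^\nabla_i[G]\in B^F_i$. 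Since ``$\n C^\nabla_i\in B^F_i$'' is a pure $\bJ$-statement, it follows that $p^*\forces_\bJ \n C^\nabla_i\in B^F_i$. Finally, as $|G_i|+|\MA_i\setminus B^F_i|>|\MA_i|$, pick $A^*\in G_i\setminus B^F_i$; any $q\le\sigma_i^\frown A^*,\,p^*$ in $\bJ$ forces both $\n C^\nabla_i=A^*$ and $A^*\in B^F_i$, contradicting the concrete fact $A^*\notin B^F_i$.

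The main obstacle I anticipate is reconciling fatness of $\bJ$ (a statement about a fixed $\bJ$-condition $p^*$) with the fact that, after collecting the $\n r_x$'s into a common extension, one would naively expect to have to further strengthen $p^*$ in order to pin down $\n t\on\Int_i$ as a concrete value. The resolution, and the essential ``trick'', is that one never has to decide $\n t\on\Int_i$: the bad set $B^F_i$ is determined purely by $F$ and $\Int_i$ in $V$, so whatever value $\n t\on\Int_i$ happens to take, the exclusion hypothesis alone forces $\n C^\nabla_i$ into $B^F_i$, and fatness applied to $p^*$ directly contradicts this.
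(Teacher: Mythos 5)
Your proof is correct and follows essentially the same approach as the paper's: partition $X$ by the $\bJ$-part, the centered colour, and the tail bound $N$; pick a non-very-thin piece with a common $\bJ$-part $p^*$; take a common lower bound of finitely many $\n r_x$; and apply fatness together with the counting estimate from Corollary~\ref{cor:tomek}. The only minor repackaging is that you first project the forced statement down to a pure $\bJ$-statement (that $p^*$ forces in $\bJ$ alone that $\n C^\nabla_i$ lies in the small ``bad'' set of $A$'s with $F\on \Int_i + A \ne 2^{\Int_i}$) before intersecting with the fatness set, whereas the paper picks a single good $A\in\MA_i$ up front and derives the contradiction directly in $\bJ*\n R$; both versions are valid and equivalent.
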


\begin{proof}
  Let $\n c$ be a $\bJ$-name for a function $\n c:\n R\to \omega$
  witnessing that $\n R$ is $\sigma$-centered.

  Recall that ``$\n Z_\nabla$ witnesses that $X$ is not strongly meager''
  means that  $X+\n Z_\nabla = 2^\omega$. 
  Assume towards a contradiction that $(p,r) \in \bJ*\n R$ forces that $X+\n Z_\nabla \neq 2^\omega$. Then we can fix a $(\bJ*\n R)$-name $\wit$ such that 
$(p,r) \forces \wit \notin X + \n Z_\nabla$, i.e., 
$(p,r) \forces (\forall  x \in X)\,\, \wit \notin x + \n Z_\nabla$.
 By definition of $\n Z_\nabla$, we get 
\[
(p,r) \forces (\forall  x \in X)\, (\exists n \in \omega)\, (\forall i \geq n) \,\, \wit \on \Int_i \notin x \on \Int_i + \n C^\nabla_i.
\]
  For each $x\in X$ we can find $(p_x,r_x) \leq (p,r)$ and
  natural numbers 
  $n_x \in \omega$ and $m_x \in \omega$ such that 
    $p_x $ forces that $\n c(r_x) = m_x$ 
  and
\[
(p_x,r_x) \forces (\forall i \geq n_x) \,\, \wit \on \Int_i \notin x \on \Int_i + \n C^\nabla_i.
\]

  So $X = \bigcup_{p \in \bJ, m \in \omega, n \in \omega} X_{p,m,n}$, 
  where $X_{p,m,n}$ is the set of all $x$ with~$p_x=p$, $m_x=m$, $n_x=n$.
  (Note that $\bJ$ is countable, so the union is countable.)
  As $X$ is not thin, there is some $p^*, m^*, n^*$ such that $X^*\DEFEQ X_{p^*,m^*,n^*}$ is not very thin. 
  So we get for all  $x\in X^*$:
  \begin{equation}\label{eq:prx}
    (p^*,r_x) \forces (\forall i \geq n^*) \,\, \wit \on \Int_i \notin x \on \Int_i + \n C^\nabla_i.
  \end{equation}
  Since $X^*$ is not very thin, 
  there is some $i_0 \in \omega$ such that for all $i \geq i_0$ 
  \begin{equation}\label{eq:star}
\textrm{the (finite) set } X^* \on \Int_i \textrm{ has more than } \tomek(i)
\textrm{ elements.}
  \end{equation}
Due to the fact that $\bJ$ is a Janus forcing (see Definition~\ref{def:Janus}~\eqref{item:fat}), 
there are arbitrarily large $i \in \omega$ such that there is a core condition $\sigma = (A_0,\ldots,A_{i-1}) \in \nabla$ with 
\begin{equation}  \label{eq:sizeS}
\frac{ | \{ A \in \MA_i: \, \sigma^\frown A \comp_{\bJ} p^* \} | }
{ | \MA_i | } 
\geq \frac{2}{3}.
\end{equation}
Fix such an $i$ 
larger than both $i_0$ and $n^*$, and fix a condition $\sigma$ satisfying~\eqref{eq:sizeS}.

We now consider the following two subsets of $\MA_i$: 
\begin{equation}\label{eq:two_sets}
\{ A \in \MA_i: \, \sigma^\frown A \comp_{\bJ} p^* \}
\,\,
\textrm{ and }
\,\,
\{ A \in \MA_i: \, X^* \on \Int_i + A = 2^{\Int_i} \}.
\end{equation}
By~\eqref{eq:sizeS}, the relative measure (in $\MA_i$) of the left one is at least $\frac{2}{3}$; 
due to~\eqref{eq:star} and the definition of $\MA_i$ according to Corollary~\ref{cor:tomek}, the relative measure of the right one is at least $\frac{3}{4}$; 
so the two sets in~\eqref{eq:two_sets} are not disjoint, and we can pick an $A$ belonging to both.

Clearly, $\sigma^\frown A$ forces (in $\bJ$) that 
$\n C^\nabla_i$ is equal to~$A$. Fix $q \in \bJ$ witnessing $\sigma^\frown A \comp_{\bJ} p^*$. Then 
\begin{equation}\label{eq:wo_for_contradiction}
q \forces_\bJ X^* \on \Int_i + \n C^\nabla_i = X^* \on \Int_i + A = 2^{\Int_i}.
\end{equation}

Since $p^*$ forces that for each $x \in X^*$ the color $\n c(r_x) = m^*$, we can find an $r^*$ 
which is (forced by $q \leq p^*$ to be) a lower bound of the \emph{finite} set $\{r_x : \, x \in X^{**} \}$, where $X^{**} \subseteq X^*$ is any finite set with $X^{**} \on \Int_i = X^* \on \Int_i$.

By~\eqref{eq:prx}, 
\[
(q,r^*) \forces \wit \on \Int_i \notin X^{**} \on \Int_i + \n C^\nabla_i = X^* \on \Int_i + \n C^\nabla_i,
\]
contradicting~\eqref{eq:wo_for_contradiction}.
\end{proof}

Recall that by Corollary~\ref{cor:LDnotthin}, every uncountable set $X$ in $V$ will
not be thin in the $\bL_{\bar D}$-extension.  Hence we get: 

\begin{Cor}\label{cor:ultraplusjanus}
Let $X$ be uncountable. If $\bL_{\bar D}$ is any ultralaver forcing 
adding an ultralaver real $\bar \ell$, and $\bar \ell^*$ is defined from
$\bar \ell$ as in Lemma~\ref{lem:subsequence}, and if  $\n\bJ$ is a 
countable Janus forcing based on $\bar \ell^*$, $\n Q$ is any $\sigma$-centered
forcing, then $\bL_{\bar D}*\n\bJ*\n Q$ forces that $X$ is not strongly meager.
\end{Cor}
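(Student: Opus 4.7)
The proof is essentially a direct concatenation of Corollary~\ref{cor:LDnotthin} and Lemma~\ref{lem:janusnotmeager}, applied in the intermediate model.

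First, I would pass to the $\bL_{\bar D}$-generic extension $V[G]$. The ultralaver real $\bar\ell$ is added, and $\bar\ell^*$ is defined from $\bar\ell$ and $\tomek$ as in Lemma~\ref{lem:subsequence}. Since $X\in V$ is uncountable, Corollary~\ref{cor:LDnotthin} (second bullet) tells us that in $V[G]$, $X$ is not thin with respect to $\bar\ell^*$ and $\tomek$.

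Next, still working in $V[G]$, I would apply Lemma~\ref{lem:janusnotmeager} to the set $X$, the countable Janus forcing $\bJ$ (which exists in $V[G]$ and is based on $\bar\ell^*$ by assumption), and the $\bJ$-name $\n Q$ for a $\sigma$-centered forcing. The hypotheses of the lemma are exactly satisfied: $X$ is not thin, $\bJ$ is a countable Janus forcing based on $\bar\ell^*$, and $\n Q$ is a $\bJ$-name of a $\sigma$-centered forcing. The conclusion is that $\bJ*\n Q$ forces (over $V[G]$) that $X$ is not strongly meager, witnessed by the canonical null set $\n Z_\nabla$.

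Composing these two steps, $\bL_{\bar D}*\n\bJ*\n Q$ forces over $V$ that $X$ is not strongly meager. There is no real obstacle here, since all of the combinatorial work is already packaged inside the two cited results; the only thing to verify is that the hypothesis of Lemma~\ref{lem:janusnotmeager} applies in $V[G]$, which is immediate because non-thinness of $X$ in $V[G]$ is exactly what Corollary~\ref{cor:LDnotthin} delivers.
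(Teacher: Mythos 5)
Your proof is correct and matches the paper's own (implicit) argument exactly: the paper also derives the corollary by first invoking Corollary~\ref{cor:LDnotthin} to get non-thinness of $X$ in the ultralaver extension, and then applying Lemma~\ref{lem:janusnotmeager} in that intermediate model. Nothing further is needed.
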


\subsection{Janus forcing and preservation of Lebesgue positivity}

We show that every Janus forcing in a countable model $M$ can be extended to
locally preserve a given  random real over $M$. (We showed the same for ultralaver
forcing in Section~\ref{ss:ultralaverpositivity}.)

We start by proving that every countable Janus forcing can be embedded into a
Janus forcing which is equivalent to random forcing, preserving the maximality
of countably many maximal antichains.  (In the following lemma, the letter
$M$  is just a label to distinguish $\bJ^M$ from $\bJ$,
 and does not necessarily refer to a model.)

\newcommand{\PPP}{{\bJ^M}}
\newcommand{\QQQ}{\bJ}

\begin{Lem}\label{lem:janusmayberandom}
   Let $\bJ^M$ be a countable Janus
   forcing (based on $\bar \ell^*$) and let
   $\{D_k:\, k\in \omega\}$ be a countable family of open dense subsets
   of $\bJ^M$.
   Then there is a Janus forcing $\bJ$ 
   (based on the same $\bar \ell^*$) such that
   \begin{itemize}
     \item $\bJ^M$ is an incompatibility-preserving subforcing of $\bJ$.
     \item Each $D_k$ is still predense in $\bJ$.
     \item $\bJ$ is forcing equivalent to random forcing. 
   \end{itemize}
\end{Lem}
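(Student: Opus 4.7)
\emph{Proof plan.}
The plan is to realise $\bJ$ as a countable dense subforcing of the random forcing measure algebra $\mathcal{B}$, into which $\bJ^M$ is completely embedded extending the natural embedding of its core $\nabla$. Concretely, I would identify $\mathcal{B}$ with the Borel--mod--null algebra of the Polish probability space $\Omega \DEFEQ  \prod_{i<\omega}(\MA_i,\mu_i)$, where $\mu_i$ is the uniform probability on the finite set $\MA_i$. Since $\Omega$ is an atomless standard probability space, $\mathcal{B}$ is (forcing-equivalent to) random forcing. Under the map $\sigma \mapsto [\sigma]\DEFEQ \{x \in \Omega : x\on \height(\sigma)=\sigma\}$, $\nabla$ embeds into $\mathcal{B}$ as a measure-preserving incompatibility-preserving subforcing in which the sets $\{[\sigma] : \height(\sigma)=i\}$ are maximal antichains for every $i$.

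The core of the argument is to extend this to a complete, incompatibility-preserving embedding $\iota\colon \bJ^M \hookrightarrow \mathcal{B}$. The candidate is
\[
\iota(p) \DEFEQ \bigcap_{n<\omega}\; \bigcup\{[\sigma] : \sigma\in\nabla,\ \height(\sigma)=n,\ \sigma\comp_{\bJ^M} p\}.
\]
Order-preservation is immediate: compatibility with $p$ passes to compatibility with any $q\geq p$. For incompatibility-preservation, if $[\sigma]\cap[\sigma']\neq\emptyset$ with $\sigma,\sigma'\in\nabla$ compatible (in $\bJ^M$) with $p,q$ respectively, then $\sigma,\sigma'$ are comparable (as $\nabla$-cylinders intersect only when comparable), say $\sigma\supseteq\sigma'$, so a common extension below $\sigma$ witnesses $p\comp_{\bJ^M} q$. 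The nontrivial content is to show that $\iota(p)$ has positive measure whenever $p\neq 0$, and more strongly that $\iota$ is complete, i.e.\ that each predense set (in particular each $D_k$) maps to a predense set. This is where the fatness condition Definition~\ref{def:Janus}(\ref{item:fat}) is essential: given $p$ and $\varepsilon>0$, fatness yields arbitrarily large $i$ and $\sigma_0\in\nabla$ of height $i$ such that a $(1-\varepsilon)$-fraction of $A\in\MA_i$ satisfy $\sigma_0^\frown A\comp_{\bJ^M} p$. Iterating this selection below common extensions propagates the abundance of compatible cylinders down the tree and shows that the relative measure of $\iota(p)$ inside such a cylinder $[\sigma_0]$ is at least $1-\varepsilon$; combined with the Lebesgue density theorem on $\Omega$, this also gives $\bigvee\iota(D)=1_\mathcal{B}$ for every predense $D\subseteq\bJ^M$.

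Finally, I take $\bJ$ to be any countable dense subforcing of $\mathcal{B}$ containing $\iota(\bJ^M)$ -- for instance the closure of $\iota(\bJ^M)$ under finite meets with a countable dense $B^*\subseteq\mathcal{B}$ -- and separatively quotient if needed. Then $\bJ$ is equivalent to random forcing, $\bJ^M=\iota(\bJ^M)$ is an incompatibility-preserving subforcing by construction, and each $D_k$ remains predense by completeness of $\iota$. To see that $\bJ$ is a Janus forcing based on the same $\bar\ell^*$, the conditions that $\nabla$ be an incompatibility-preserving subforcing, that heights be predense, ccc, separativity, and $\bJ\subseteq H(\aleph_1)$ are immediate. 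The fatness condition for $\bJ$ itself reduces to Lebesgue density on $\Omega$: for any $b\in\bJ$ of positive measure, choose $\sigma$ of arbitrarily large height with $\mu(b\cap[\sigma])/\mu([\sigma])\geq 1-\varepsilon^2$; by Markov's inequality at least a $(1-\varepsilon)$-fraction of $A\in\MA_{\height(\sigma)}$ satisfy $\mu(b\cap[\sigma^\frown A])>0$, hence $[\sigma^\frown A]\comp_\bJ b$.

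The main obstacle is Step 2, namely the construction of $\iota$ and the verification that it is complete and incompatibility-preserving. The order-preserving and incompatibility-preserving parts fall out of the definition, but positivity and completeness are not automatic and are exactly where the fatness condition built into the definition of Janus forcing is used in an essential way.
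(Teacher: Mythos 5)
Your approach diverges substantially from the paper's, and the central step has a gap. The paper does not attempt to map $\bJ^M$ directly into a measure algebra; instead, for each $r^n\in\bJ^M$ it builds (by a fusion argument using fatness) a finitely splitting labelled tree $S^n$ whose node $s$ carries not only a $\nabla$-condition $\sigma_s$ but also an explicit witnessing condition $p_s\in\bJ^M$ with $p_s\le\sigma_s$, $p_s\le r^n$, $p_s\in D_{\lh(s)}$, and $p_t\le p_s$ for $t\supseteq s$. The new forcing $\bJ$ is then $\bJ^M$ together with pairs $(n,T)$ for positive pruned $T\subseteq S^n$, with the order defined through the $p_s$ labels: $(n,T)\le p$ iff $p^n_t\le p$ for all $t\in T$ at some fixed level. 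This bookkeeping is what makes the extension relation between random-style conditions and $\bJ^M$-conditions well-defined and verifiable.

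Your proposed map
\[
\iota(p)=\bigcap_{n}\bigcup\{[\sigma]:\sigma\in\nabla,\ \height(\sigma)=n,\ \sigma\comp_{\bJ^M}p\}
\]
forgets precisely this witnessing structure, and the argument you give for incompatibility-preservation does not work. You argue: if $\sigma\comp p$, $\sigma'\comp q$, and $[\sigma]\cap[\sigma']\neq\emptyset$, then $\sigma,\sigma'$ are comparable, say $\sigma\supseteq\sigma'$, hence \qemph{a common extension below $\sigma$ witnesses $p\comp q$}. This last step is unjustified at two levels. First, $\sigma\supseteq\sigma'$ and $\sigma'\comp q$ do not give $\sigma\comp q$ (a stronger $\nabla$-condition need not remain compatible with $q$). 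Second, and more seriously, even $\sigma\comp p$ together with $\sigma\comp q$ does not imply $p\comp q$: $\nabla$ is only required to be an incompatibility-preserving subforcing of $\bJ^M$, not a dense one, so a single $\sigma$ can be compatible with two mutually incompatible conditions of $\bJ^M$ (trivially, $\langle\rangle$ is). Nothing in the Janus axioms prevents two incompatible $p,q$ from being compatible with exactly the same collection of $\sigma\in\nabla$, in which case $\iota(p)=\iota(q)$ has positive measure and incompatibility is lost. So the identification of $\bJ^M$ with $\iota[\bJ^M]\subseteq\mathcal{B}$ need not be faithful, and the rest of the argument (predensity of $D_k$, $\bJ^M$ an incompatibility-preserving subforcing of $\bJ$) inherits this gap. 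To repair this you would need to carry along, for each measure-positive set assigned to $p$, an explicit family of $\bJ^M$-conditions witnessing the intended compatibilities --- which is essentially what the paper's labelled trees $(\sigma_s,p_s,\tau^*_s)_{s\in S^n}$ do.
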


\begin{proof}
    Without loss of generality assume $D_0=\bJ^M$.
    Recall that $\nabla = \nabla^{\bJ^M}$ was defined in
    Definition~\ref{def:Janus.nabla}.
Note that for each $j$ the set 
      $\{  \sigma\in \nabla: \, \height(\sigma)=j\}$ is predense in  $\PPP$, 
so the set 
\begin{align}\label{align:E.k}
      E_j\DEFEQ \{ p \in \PPP: \exists  \sigma\in \nabla: \, \height(\sigma)=j, \ p \le \sigma\}
\end{align} 
 is dense open in $\PPP$; hence without loss of generality each $E_j$ appears in our list of $D_k$'s. 

  Let $\{r^n:\, n\in \omega\} $ be an enumeration of~$\bJ^M$. 

  
  We now fix $n$ for a while (up to \eqref{def:this.is.random}). 
  We will construct  
    a finitely splitting tree 
    $S^n \subseteq \omega^{<\omega}$ and a family  $(\sigma^n_s,p^n_s,
    \tau^{*n}_s)_{ s\in S^n}$ 
    satisfying the following (suppressing the superscript~$n$):
  \begin{enumerate}[(a)]
  \item $\sigma_s\in \nabla$,  $\sigma_{\langle\rangle} = \langle\rangle$,
               $s\subseteq t$ implies $\sigma_s\subseteq \sigma_t$,
    and $s\incomp_{S^n} t$ implies $\sigma_s\incomp_\nabla \sigma_t$.
    \\ (So in particular the set $\{\sigma_t:\, t\in\suc_{S^n}(s)\}$
    is a (finite) antichain above $\sigma_s$ in~$\nabla$.)
  \item $p_s\in \PPP$,   $p_{\langle\rangle} = r^n$;
   if $s\subseteq t$ then
    $p_t\leq_\PPP p_s$ (hence  $p_t\leq r^n$); 
         $s\incomp_{S^n} t$ implies $p_s\incomp_\PPP p_t$.
    \item $p_s\leq_\PPP \sigma_s$.
  \item  $\sigma_s \subseteq \tau^*_s \in \nabla$,   and
     $\{\sigma_t:\, t\in\suc_{S^n}(s)\}$
    is  the set of all $ \tau \in \suc_\nabla(\tau^*_s)$ which are compatible
    with $p_s$. 
    \item  The set   $\{\sigma_t:\, t\in\suc_{S^n}(s)\}$
    is a subset of  $ \suc_\nabla(\tau^*_s)$
     of relative size at least
    $1-\frac 1 {\lh(s)+10}$. 
    \label{item:size.a}
  \item Each $s\in {S^n}$ has at least 2 successors (in ${S^n}$).
  \item If $k=\lh(s)$, then $p_s\in D_k$ (and therefore also
    in all $D_l$ for $l<k$).
  \end{enumerate}
  Set $\sigma_{\langle\rangle}=\langle\rangle$ and $p_{\langle\rangle}=r^n$.
  Given $s,\sigma_s$ and~$p_s$,  we construct $\suc_{S^n}(s)$ and
  $(\sigma_t,p_t)_{t\in\suc_{S^n}(s)}$:
  We apply fatness~\ref{def:Janus}(\ref{item:fat}) to $p_s$ with
  $\varepsilon=\frac 1 {\lh(s)+10}$. So we get some $\tau_s^*\in\nabla$
  of height bigger than the height of $\sigma_s$ such that 
  the set $B$ of elements of $\suc_\nabla(\tau_s^*)$ which are compatible
  with $p_s$ has relative size at least $1-\varepsilon$.
  Since $p_s\leq_\PPP \sigma_s$ we get that $\tau^*_s$ is compatible 
  with (and therefore stronger than) $\sigma_s$.
  Enumerate $B$ as $\{\tau_0,\dots,\tau_{l-1}\}$.
  Set $\suc_{S^n}(s)=\{s^\frown i:\, i<l\}$ and $\sigma_{s^\frown i}=\tau_i$.
  For $t\in\suc_{S^n}(s)$, choose $p_t\in \PPP$ stronger than both $\sigma_t$ and $p_s$ 
  (which is obviously possible since $\sigma_t$ and $p_s$ are compatible),
  and moreover $p_t\in D_{\lh(t)}$.
  This concludes the construction of the family 
     $(\sigma^n_s,p^n_s, \tau^{*n}_s)_{ s\in S^n}$.

  So $(S^n,\subseteq)$ is a finitely splitting nonempty
  tree of height $\omega$ with no maximal nodes and no isolated
  branches.
  $[S^n]$ is the (compact) set of branches of~$S^n$.
  The closed subsets of $[S^n]$ are exactly the sets of the form~$[T]$, 
  where $T\subseteq S^n$ is a subtree of $S^n$ 
  with no maximal nodes.  $[S^n]$
	carries a natural (``uniform'') probability measure~$\mu_n$,
  which is characterized by
  \[
    \mu_n((S^n)^{[t]}) = \frac{1}{|{\suc_{S^n}(s)}|}\cdot \mu_n((S^n)^{[s]})
  \]
	for all $s\in S^n$ and all $t\in \suc_{S^n}(s)$. (We just write $\mu_n(T)$
  instead of $\mu_n([T])$  to increase readability.)

  We call $T\subseteq S^n$ positive if $\mu_n(T)>0$, and we call $T$
  pruned if $\mu_n(T^{[s]})>0$ for all $s\in T$.
  (Clearly every positive tree $T$ contains a pruned tree $T'$ of the same
  measure, which can be obtained from $T$ by removing all nodes $s$ with
  $\mu_n(T^{[s]})=0$.)

  Let $T\subseteq S^n$ be a positive pruned tree and $\varepsilon>0$.
  Then on all but finitely many levels $k$ there is an $s\in T$ such that
  \begin{equation}\label{eq:lebdense}
    \suc_T(s)\subseteq \suc_{S^n}(s)\text{ has relative size }\geq
    1-\varepsilon.
  \end{equation}
  (This follows from Lebesgue's density theorem, or can easily be seen directly:
  Set $C_m=\bigcup_{t\in T,\,\lh(t)=m}{(S^n)}^{[t]}$. Then
  $C_m$ is a decreasing sequence of closed sets, each containing~$[T]$.
  If the claim fails, then $\mu_n(C_{m+1} ))\leq \mu_n(C_m)\cdot (1-\varepsilon)$
  infinitely often; so $\mu_n(T) \le \mu_n(  \bigcap_m C_m ) =0$.)

  It is well known that  the set of positive, pruned subtrees of~$S^n$, ordered
  by inclusion, is forcing equivalent to random forcing (which can
  be defined as the set of positive, pruned subtrees of $2^{<\omega}$).

  We have now constructed $S^n$ for all $n$.   Define
\begin{align}\label{def:this.is.random}
         \QQQ = \PPP \cup \bigcup_n \, \bigl\{\, (n,T) : \,  T \subseteq S^n 
         \mbox{ is a positive pruned tree}\,\bigr \}
\end{align}
  with the following partial order: 
  \begin{itemize}
    \item The order on $\QQQ$ extends the order on~$\PPP$.
    \item $(n',T')\le(n,T)$ if $n=n'$ and $T' \subseteq T$. 
    \item For $p\in \PPP$:  $(n,T) \le p$ if there is a $k$ such that 
      $p^n_t\leq p$ for all $t\in T$ of length~$k$.
      (Note that this will then be true for all bigger $k$ as well.)
    \item $p \le (n,T)$ never holds (for $p\in \PPP$). 
  \end{itemize}

  The lemma now easily follows from the following properties: 
  \begin{enumerate}
    \item The order on $\QQQ$ is transitive. 
    \item $\PPP$ is an incompatibility-preserving subforcing of  $\QQQ$.
      \\
      In particular, $\QQQ$ satisfies item~\eqref{item:ic} 
      of Definition~\ref{def:Janus} of Janus forcing.
    \item
      For all $k$: the set $\{(n,T^{[t]}):\, t\in T,\ \lh(t)=k\}$
      is a (finite) predense antichain below~$(n,T)$.
    \item\label{item:compat} $(n,T^{[t]})$ is stronger than $p^n_t$ for each $t\in T$
      (witnessed, e.g., by $k=\lh(t)$).
      Of course, $(n,T^{[t]})$ is stronger than $(n,T)$ as well.
    \item
      Since  $p^n_t\in D_k$ for $k=\lh(t)$, this implies that each 
      $D_k$ is predense below each 
      $(n,S^n)$ and therefore in~$\QQQ$.
      \\
      Also, since each set $E_j$ appeared in our list of open dense subsets  
      (see \eqref{align:E.k}), the set 
      $\{\sigma\in \nabla: \, \height(\sigma)=j\}$ is still  
      predense in $\QQQ$, i.e.,
      item~\eqref{item:heightsarepredense}
      of the Definition~\ref{def:Janus} of Janus forcing is satisfied.
    \item The condition $(n,S^n)$ is stronger than~$r^n$, so 
      $\{(n,S^n):n\in \omega \}$ is predense in $\QQQ$ and 
      $\QQQ\setminus \PPP$ is dense in~$\QQQ$.
      \\
		  Below each $(n,S^n)$, the forcing $\QQQ$ is isomorphic to random
      forcing. 
      \\
      Therefore, $\QQQ$ itself is forcing equivalent to random
      forcing. (In fact, 
      the complete Boolean algebra generated by $\QQQ$ is
      isomorphic to the standard random algebra, Borel sets modulo null sets.)
      This proves in particular  that $\bJ$ is ccc, i.e., satisfies 
       property \ref{def:Janus}(\ref{item:janus.ccc}). 
  \item 
     It is easy (but not even necessary) to check that $\QQQ$ is separative, i.e., 
      property~\ref{def:Janus}(\ref{item:janus.sep}).  In any case,
    we could replace $\le_\QQQ$ by $\le^*_\QQQ$, thus making $\QQQ$ separative without changing
    $\le_\PPP$, since $\PPP$ was already separative. 
    \item Property \ref{def:Janus}(\ref{item:janus.hc}), i.e., $\bJ\in H(\aleph_1)$,
      is obvious. 
    \item \label{item:the.last}
      The remaining item
      of the definition
      of Janus forcing, fatness~\ref{def:Janus}(\ref{item:fat}),
      is satisfied.\\
      I.e., given $(n,T)\in \QQQ$ and $\varepsilon>0$ there is 
      an arbitrarily high $\tau^*\in\nabla$ such that the relative
      size of the set $\{\tau\in\suc_\nabla(\tau^*):\, \tau\comp (n,T)\}$
      is at least $1-\varepsilon$.  (We will show $\ge (1-\varepsilon)^2$ instead, 
      to simplify the notation.)
  \end{enumerate}
  We show~(\ref{item:the.last}):  Given $(n,T)\in \QQQ$ and $\varepsilon>0$,
  we use~\eqref{eq:lebdense} to get an arbitrarily high
  $s\in T$ such that $\suc_T(s)$ is of relative size
  $\geq 1-\varepsilon$ in~$\suc_{S^n}(s)$. We may choose $s$ of length $>\frac 1 \varepsilon$.
  We claim that $\tau^*_s$ is as required:
  \begin{itemize}
    \item Let
    $B\DEFEQ
    \{\sigma_t:\, t\in\suc_{S^n}(s)\}
	   $. 
    Note that $B =
           \{\tau\in\suc_\nabla(\tau^*_s):\, \tau\comp p_s\}
    $.

      $B$ has relative size $\ge 1-\frac{1}{\lh(s)}\ge 1-\varepsilon$
      in $\suc_\nabla(\tau^*_s)$ 
      (according to property~(\ref{item:size.a}) of $S^n$). 
    \item $C\DEFEQ \{\sigma_t:\, t\in\suc_T(s)\}$ is a subset of $B$
      of relative size $\ge 1-\varepsilon$ according to our choice of~$s$.
    \item So $C$ is of relative size $(1-\varepsilon)^2$
	    in~$\suc_\nabla(\tau^*_s)$.
   \item Each $\sigma_t\in C$ is compatible with~$(n,T)$, as $(n,T^{[t]})
         \le p_t \le \sigma_t$ (see~(\ref{item:compat})). \qedhere
  \end{itemize}
\end{proof}


So in particular if $\bJ^M$ is a Janus forcing in a countable model $M$, then
we can extend it to a Janus forcing $\bJ$ which is in fact random forcing.
Since random forcing strongly preserves randoms over countable models (see
Lemma~\ref{lem:random.laver}), it is not surprising that we get local
preservation of randoms for Janus forcing, i.e., the analoga of
Lemma~\ref{lem:extendLDtopreserverandom} and
Corollary~\ref{cor:ultralaverlocalpreserving}. (Still, some additional argument
is needed, since the fact that~$\bJ$ (which is now random forcing) ``strongly preserves
randoms'' just means that a random real $r$ over $M$ is preserved with respect
to random forcing in $M$, not with respect to $\bJ^M$.)



\begin{Lem}\label{lem:janusrandompreservation}
  If $\bJ^M$ is a Janus forcing in a countable model $M$ and
  $r$ a random real over $M$, then there is a Janus forcing $\bJ$ 
  such that $\bJ^M$ is an $M$-complete subforcing of $\bJ$ and the
  following holds:
  \\
  \textbf{If}
  \begin{itemize}
    \item $p\in \bJ^M$,
    \item in $M$, $\n {\bar Z} = ( \n Z_1, \ldots , \n Z_m) $
     is a sequence of  $\bJ^M$-names for
      codes for  null sets, 
       and $Z_1^*,\dots , Z_m^*$ are interpretations under~$p$,
    witnessed by a sequence $(p_n)_{n\in \omega}$,
    \item $Z^*_i \sqsubset_{k_i} r$ for $i=1,\dots,  m$,
  \end{itemize}
  \textbf{then} there is a $q\leq p$ in $\bJ$ forcing that
  \begin{itemize}
    \item $r$ is random over $M[H^M]$,
    \item $\n Z_i \sqsubset_{k_i} r$ for $i=1,\dots, m$.
  \end{itemize}
\end{Lem}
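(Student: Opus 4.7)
The plan is to construct $\bJ$ via Lemma~\ref{lem:janusmayberandom} and then to obtain the preservation property by transferring strong preservation of randomness from random forcing. For the construction: since $M$ is countable, enumerate in $V$ the collection $\{D_k : k \in \omega\}$ of all open dense subsets of $\bJ^M$ that lie in $M$, and apply Lemma~\ref{lem:janusmayberandom} to this family. This yields a Janus forcing $\bJ$ such that $\bJ^M$ is an incompatibility-preserving subforcing of $\bJ$, every $D_k$ remains predense in $\bJ$, and $\bJ$ is forcing equivalent to random forcing. The predensity of every such $D_k$ upgrades $\bJ^M$ to an $M$-complete subforcing of $\bJ$: every maximal antichain $A$ of $\bJ^M$ in $M$ corresponds to the open dense set $\{p \in \bJ^M : p \le a \text{ for some } a \in A\} \in M$, whose predensity in $\bJ$ forces $A$ itself to remain maximal.

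For the preservation statement, observe that random forcing strongly preserves randoms over countable models (Lemma~\ref{lem:random.laver}), and that strong preservation is invariant under forcing equivalence in $V$; hence $\bJ$ itself strongly preserves randoms over countable models. Given the hypotheses $(p, \bar p, \bar{\n Z}, \bar Z^*, \bar k)$ of the lemma, I would apply Lemma~\ref{lem:random.over.mprime} (iteratively if necessary) to obtain a countable model $M' \supseteq M$ containing $\bJ, \bJ^M, p, \bar p, \bar{\n Z}, \bar Z^*, \bar k$ together with a witness of the forcing equivalence between $\bJ$ and random forcing, with $r$ still random over $M'$. By absoluteness, the sequence $(p_n)$ interprets $\bar{\n Z}$ as $\bar Z^*$ below $p$ also inside $M'$, with each $\bJ^M$-name $\n Z_i$ viewed as a $\bJ$-name via the $M$-complete embedding.

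Applying strong preservation of $\bJ$ to the countable model $M'$ now yields an $M'$-generic $q \le p$ in $\bJ$ forcing that $r$ is random over $M'[G]$ and that $\n Z_i \sqsubset_{k_i} r$ for each $i$, where $G$ denotes the $\bJ$-generic filter. Since $M \subseteq M'$ and $\bJ^M$ is $M$-complete in $\bJ$, the induced $\bJ^M$-generic filter $H^M = G \cap \bJ^M$ lies in $M'[G]$, giving $M[H^M] \subseteq M'[G]$; hence randomness of $r$ over $M'[G]$ implies randomness over $M[H^M]$, and the statements $\n Z_i \sqsubset_{k_i} r$ are evaluated identically in both extensions. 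The main obstacle is verifying that strong preservation genuinely transfers through the forcing equivalence to arbitrary interpretations of $\bJ^M$-names originating in $M$ (rather than in the larger ambient model $M'$); this rests on the $M$-completeness of $\bJ^M$ in $\bJ$ secured in the construction, which ensures that the forcing relation of $\bJ^M$ from $M$ agrees with that of $\bJ$ on these names, so that the interpretation witnessed by $(p_n)$ continues to witness the corresponding interpretation in $\bJ$ inside $M'$.
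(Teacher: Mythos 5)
Your overall strategy --- first use Lemma~\ref{lem:janusmayberandom} to get a Janus forcing equivalent to random, then invoke strong preservation of randoms --- is the right intuition, but the order in which you carry out the steps creates a genuine gap.

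You construct $\bJ$ in $V$ by applying Lemma~\ref{lem:janusmayberandom} to the family of all open dense subsets of $\bJ^M$ in~$M$, and then try to apply Lemma~\ref{lem:random.over.mprime} to land $\bJ$ (together with a forcing-equivalence witness) inside a countable model $M'$ over which $r$ is still random. This cannot work: Lemma~\ref{lem:random.over.mprime} is a forcing extension $M'=M[J]$ and only makes objects \emph{already in} $M$ countable; it gives you no mechanism to adjoin to $M'$ a new object of~$V$. Worse, $\bJ$ has size continuum (it contains all positive pruned subtrees of the $S^n$'s), so $\bJ$ cannot be an element of a countable transitive model at all; and even viewing $M'$ as a countable elementary submodel of $H(\chi^*)$ with $\bJ\in M'$, the reals coded in $M'$ would then include many null sets definable from $\bJ$ that $r$, being random merely over $M$, has no reason to avoid. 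Your claim that ``strong preservation is invariant under forcing equivalence in~$V$'' also only holds when the witness of the equivalence lives inside the relevant countable model, which brings you back to the same obstacle.

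The paper resolves this by reversing the order: first apply Lemma~\ref{lem:random.over.mprime} (using \emph{only} objects of~$M$, namely $\bJ^M$ and the set $\mathcal D$ of its $M$-dense subsets) to get $M'\supseteq M$ with $\mathcal D$ countable and $r$ still random over $M'$; then apply Lemma~\ref{lem:janusmayberandom} \emph{inside} $M'$ to produce $\bJ^{M'}$, which $M'$ believes to be random forcing; and only then set $\bJ$ to be (the Boolean algebra of) random forcing in $V$. Absoluteness of ``positive Borel set'' makes $\bJ^{M'}$ an $M'$-complete subforcing of $\bJ$, and the fatness condition for $\bJ$ transfers up from $M'$ by $\Pi^1_1$-absoluteness (compatibility of random conditions is Borel). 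Now Lemma~\ref{lem:random.laver} applies directly with $N=M'$ and no transfer argument is needed; the rest of your argument (reinterpreting the $\bJ^M$-names as $\bJ^{M'}$-names in $M'$ and passing from $M'[H^{M'}]$ down to $M[H^M]$) then goes through as you describe.
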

\begin{Rem}
  In the version for ultralaver forcings, i.e.,
  Lemma~\ref{lem:extendLDtopreserverandom}, we had to assume that the stems of
  the witnessing sequence are strictly increasing. In the Janus version, we do
  not have any requirement of that kind.
\end{Rem}

\begin{proof}
  Let $\mathcal D$ be the set of dense subsets of $\bJ^M$ in $M$. 
  According to Lemma~\ref{lem:random.over.mprime}, we 
  can first find some countable $M'$ such that $r$ is still random over $M'$
  and such that in $M'$ both $\bJ^M$ and  $\mathcal D$
  are countable. According to Fact~\ref{fact:janus.ctblunion}, 
  $\bJ^M$ is a (countable) Janus forcing in $M'$, so we can apply
  Lemma~\ref{lem:janusmayberandom} to the set $\mathcal D$
  to construct a Janus forcing $\bJ^{M'}$ 
  which is equivalent to random forcing
  such that (from the point of $V$) $\bJ^{M}\lessdot_M \bJ^{M'}$.
  In $V$, let\footnote{More precisely: 
 Densely embed $\bJ^{M'}$ into 
   (Borel/null)$^{M'}$,  the complete Boolean algebra associated with 
   random forcing in $M'$, and let $\bJ:=$ (Borel/null)$^V$.  Using the 
   embedding, $\bJ^{M'}$ can now be viewed as an $M'$-complete
   subset of $\bJ$.}
 $\bJ$ be random forcing. 
  $\bJ^{M'}$ is an $M'$-complete subforcing of $\bJ$
  and therefore  $\bJ^{M}\lessdot_M \bJ$. Moreover,
  as was noted in Lemma~\ref{lem:random.laver}, we even
  know that  random forcing  strongly preserves randoms over $M'$
  (see Definition~\ref{def:locally.random}).   To show that 
  $\bJ$ is indeed a Janus forcing, we have to check the
   fatness condition~\ref{def:Janus}(\ref{item:fat});
   this follows easily from $\Pi^1_1$-absoluteness (recall that
    incompatibility of random conditions is Borel). 


  So assume that (in $M$) the sequence $(p_n)_{n\in\omega}$
  of $\bJ^M$-conditions interprets $\n {\bar Z}$ as $\bar Z^*$.
  In $M'$, $\bJ^M$-names can be reinterpreted as $\bJ^{M'}$-names,
  and  the $\bJ^{M'}$-name $\n {\bar Z}$ is interpreted as $\bar Z^*$
  by the same sequence $(p_n)_{n\in\omega}$.
  Let $k_1,\ldots, k_m$ be such that $Z_i^*\sqsubset_{k_i} r$ 
  for $i=1,\ldots, m$. 
  So by strong preservation of randoms, we can in $V$ find some
  $q\leq p_0$ forcing that $r$ is random over $M'[H^{M'}]$ (and therefore
  also over the subset $M[H^M]$), and that $\n Z_i\sqsubset_{k_i} r$ (where
  $\n Z_i$ can be evaluated in
  $M'[H^{M'}]$ or equivalently in $M[H^M]$). 
\end{proof}

So Janus forcing is locally preserving randoms (just as ultralaver forcing):
\begin{Cor}\label{cor:januslocallypreserves}
 If $Q^M$ is a Janus forcing in $M$ and $r$ a real, then there is a
 Janus forcing $Q$ over~$Q^M$ (which is in fact equivalent to random
     forcing) locally preserving randomness of~$r$ over~$M$. 
\end{Cor}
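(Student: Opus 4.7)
The plan is to reduce the corollary directly to Lemma~\ref{lem:janusrandompreservation}, taking advantage of the fact that for Janus forcings (unlike for ultralaver) the random-preservation lemma imposes no combinatorial condition on the witnessing sequence $(p_n)$; in particular there is no stem requirement. This should let us pick the trivial sequence of dense sets in the definition of local preservation.

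First, given the Janus forcing $Q^M \in M$ and the real $r$, I apply Lemma~\ref{lem:janusrandompreservation} to obtain a Janus forcing $Q$ extending $Q^M$ as an $M$-complete subforcing, where moreover $Q$ is (forcing equivalent to) random forcing. This takes care of the ``$Q$ is a Janus forcing over $Q^M$'' and the ``equivalent to random forcing'' parts of the claim, and is the only nontrivial input.

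Next, to verify that $Q$ locally preserves randomness of $r$ over $M$ in the sense of Definition~\ref{def:locally.random}, I choose the witnessing sequence of dense sets to be the constant sequence $D^{Q^M}_n \DEFEQ Q^M$ for every $n\in\omega$. Each $D^{Q^M}_n$ is (trivially) open dense in $Q^M$ and the sequence lies in $M$. With this choice, a sequence $(p^n)_{n\in\omega}$ is \emph{quick} iff $p^n\in Q^M$ for every $n$, so quickness is automatic for any interpretation sequence at all. Hence, whenever $M$ sees a sequence $(p^n)_{n\in\omega}$ interpreting $(\n Z_1,\ldots,\n Z_m)$ as $(Z_1^*,\ldots,Z_m^*)$, and $r$ is random over $M$ with $Z_i^*\sqsubset_{k_i} r$ for each $i$, we may feed this very sequence into Lemma~\ref{lem:janusrandompreservation} (with $p=p^0$, since interpretations by convention start with $p=p_0$). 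The lemma then produces a $q\le p^0$ in $Q$ forcing that $r$ is random over $M[H^M]$ and that $\n Z_i\sqsubset_{k_i} r$ for all $i$, which is exactly what the definition of local preservation requires.

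There is essentially no obstacle beyond correctly matching the two definitions; the only conceptual point to flag is that the analogue of the ``strictly increasing stems'' requirement from the ultralaver case (needed in Corollary~\ref{cor:ultralaverlocalpreserving}) is absent here, which is precisely why the trivial choice $D^{Q^M}_n = Q^M$ suffices and why no nontrivial ``speed'' is required of the interpreting sequence. The Remark following Lemma~\ref{lem:janusrandompreservation} already records this asymmetry, so the proof should amount to a short paragraph invoking the lemma with this choice of dense sets.
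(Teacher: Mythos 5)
Your proof is correct and takes essentially the same approach as the paper: the paper's proof simply observes that the notion of ``quick'' interpretation becomes trivial by choosing $D^{Q^M}_k = Q^M$ for all $k$, and then invokes Lemma~\ref{lem:janusrandompreservation}, which is exactly what you spell out. (The case where $r$ is not random over $M$ is, as you note, vacuous by the remark following Definition~\ref{def:locally.random}, and the paper elides this point just as you do.)
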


\begin{proof}
  In this case, the notion of ``quick'' interpretations is 
  trivial, i.e., $D^{Q^M}_k  = Q^M$ for all~$k$, and the claim follows from
  the previous lemma.
\end{proof}

\section{Almost finite and almost countable support iterations}\label{sec:iterations}

A main tool to construct the forcing for BC+dBC will be ``partial countable
support iterations'', more particularly ``almost finite support'' and ``almost
countable support'' iterations. A partial countable support iteration is a
forcing iteration $(P_\alpha,Q_\alpha)_{\alpha<\om2}$ such that for each limit
ordinal $\delta$ the forcing notion $P_\delta$ is a subset of the countable
support limit of $(P_\alpha,Q_\alpha)_{\alpha<\delta}$ which satisfies some
natural properties (see Definition~\ref{partial_CS}).

Instead of transitive models, we will use ord-transitive models (which are
transitive when ordinals are considered as urelements). Why do we do that?  We
want to ``approximate'' the generic iteration $\bar\BP$ of length $\omega_2$
with countable models; this can be done more naturally with ord-transitive
models (since obviously countable transitive models only see countable
ordinals). We call such an ord-transitive model a ``candidate'' (provided it
satisfies some nice properties, see Definition~\ref{def:candidate}).  A basic
point is that  forcing extensions work naturally with candidates.

In the next few paragraphs (and also in Section~\ref{sec:construction}),
$x=(M^x,\bar P^x)$ will denote a pair such that $M^x$ is a
candidate and $\bar P^x$ is (in $M^x$) a partial countable support iteration; similarly 
we write, e.g., $y= (M^y, \bar P^ y) $ or  $x_n=(M^{x_n},\bar P^{x_n})$.

We will need the following results to prove BC+dBC. (However, as opposed to the
case of the ultralaver and Janus section, the reader will probably have to read
this section to understand the construction in the next section, and not just
the following list of properties.)

  Given $x=(M^x,\bar P^x)$, we can construct 
  by induction on $\alpha$ a
  partial countable support iteration $\bar P = (P_\alpha, Q_\alpha)_{\alpha
    < \om2}$ satisfying:
  \begin{quote}
    There is a canonical $M^x$-complete embedding from $\bar P^x$ to
    $\bar P$.
  \end{quote}
  In this construction, we can use at each stage $\beta$ 
  any desired $Q_\beta$, as long as 
  $P_\beta$ forces that $Q^x_\beta$ is (evaluated as)
  an $M^x[H^x_\beta]$-complete subforcing of~$Q_\beta$ (where
  $H^x_\beta\subseteq P^x_\beta$
  is the $M^x$-generic filter induced by the generic filter~$H_\beta\subseteq P_\beta$).
   \\
   Moreover, we can demand either of 
   the following two additional properties\footnote{The $\sigma$-centered version is central for the proof of dBC; the random preserving version
     for BC.}
of the limit of this iteration~$\bar P$:
   \begin{enumerate}
   \item 
     If all $Q_\beta$ are forced to be $\sigma$-centered, 
     and $Q_\beta$ is trivial for all $\beta\notin M^x$, 
     then $P_{\omega_2}$ is $\sigma$-centered. 
   \item 
	     If $r$ is random over
     $M^x$, and all $Q_\beta$ locally preserve randomness of $r$
     over $M^x[H^x_\beta]$ (see
     Definition~\ref{def:locally.random}), then also $P_{\om2}$
     locally preserves the randomness of $r$.
   \end{enumerate}
   Actually, we need the following variant: Assume that we already 
   have $P_{\alpha_0}$ 
   for some $\alpha_0\in M^x$, and that $P^x_{\alpha_0}$ canonically
   embeds into~$P_{\alpha_0}$, and that
   the respective assumption on $Q_\beta$ holds for all $\beta\ge
   \alpha_0$.  Then we get that $P_{\alpha_0}$ forces that the quotient
   $P_{\omega_2}/P_{\alpha_0}$ satisfies the respective conclusion. 
   
   We also need:\footnote{This will give $\sigma$-closure 
     and $\al2$-cc for the preparatory forcing $\prep$.}
   \begin{enumerate}\setcounter{enumi}{2}  
   \item 
     If instead  of a single $x$ we have a sequence $x_n$
     such that each $P^{x_n}$ canonically (and $M^{x_n}$-completely)
     embeds into~$P^{x_{n+1}}$, then 
     we can find a partial countable support
     iteration $\bar P$ into which all $P^{x_n}$ embed canonically (and we can again use
     any desired $Q_\beta$, assuming that  $Q^{x_n}_\beta$
     is an $M^{x_n}[H^{x_n}_\beta]$-complete subforcing of $Q_\beta$ for all $n\in\omega$).
   \item 
     (A fact that is easy to prove but awkward to formulate.)
     If a $\Delta$-system argument produces two $x_1$, $x_2$ as in Lemma~\ref{lem:prep.is.sigma.preparation}(\ref{item:karotte6}),
     then we can find a partial countable support iteration $\bar P$
     such that $\bar P^{x_i}$ canonically (and  $M^{x_i}$-completely)
     embeds into~$\bar P$ for $i=1,2$.
   \end{enumerate}

\subsection{Ord-transitive models}\label{subsec:ordtrans}

We will use ``ord-transitive'' models, as introduced in~\cite{MR2115943} (see
also the presentation in~\cite{kellnernep}). We briefly summarize the basic
definitions and properties (restricted to the rather simple case needed in this
paper):

\begin{Def}\label{def:candidate} Fix a suitable finite subset ZFC$^*$ of ZFC (that is
  satisfied by $H(\chi^*)$ for sufficiently large regular $\chi^*$).
  \begin{enumerate}
    \item A set $M$ is called a \emph{candidate}, if
      \begin{itemize} 
        \item $M$ is countable,
        \item $(M,\in)$ is a model of ZFC$^*$,
        \item $M$ is ord-absolute: 
          $M \models \alpha\in \ON$ iff $\alpha\in \ON$,
          for all $\alpha\in M$,
        \item $M$ is ord-transitive: if $x\in M\setminus \ON$, then 
          $x\subseteq M$,
        \item $\omega+1\subseteq M$.
        \item ``$\alpha$ is a limit ordinal'' and ``$\alpha=\beta+1$''
          are both
          absolute between $M$ and $V$.

      \end{itemize} 
    \item A candidate $M$ is called \emph{nice}, if
          ``$\alpha$ has countable cofinality'' 
          and ``the countable set $A$ is cofinal in $\alpha$'' both are 
          absolute between $M$ and $V$. (So 
          if $\alpha\in M$ has countable cofinality, then
          $\alpha\cap M$ is cofinal in $\alpha$.)
          Moreover, we assume $\om1\in M$ (which implies 
          $\om1^M=\om1$) and $\om2\in M$ (but we do not require 
          $\om2^M= \om2$). 
    \item 
      Let $P^M$ be a forcing notion in a candidate $M$.  
      (To simplify notation, we can assume without loss of generality that
      $P^M\cap \ON=\emptyset$ (or at least $\subseteq \omega$)
      and that therefore $P^M\subseteq M$ and also $A\subseteq
      M$ whenever $M$ thinks that $A$ is a subset of $P^M$.)
      Recall that a subset $H^M$ of $P^M$ is $M$-generic (or: $P^M$-generic over $M$),
      if $|A\cap H^M|=1$ for all maximal antichains $A$ in $M$.
    \item Let $H^M$ be $P^M$-generic over $M$ and $\n\tau$ a $P^M$-name in $M$.
      We define the evaluation $\n\tau[H^M]^M$ to be 
      $x$ if 
      $M$ thinks that $p\forces_{P^M} \n\tau=\std x$
      for some $p\in H^M$ and $x\in M$ (or equivalently just for $x\in M\cap     \ON$), 
      and $\{\n\sigma[H^M]^M:\, (\n\sigma,p)\in\n\tau,\, 
      p\in H^M\}$ otherwise.
      Abusing notation we write $\n\tau[H^M]$ instead of $\n\tau[H^M]^M$,
      and we write $M[H^M]$ for $\{\n\tau[H^M]:\, \n\tau\text{ is a $P^M$-name
      in }M\}$.
    \item For any set $N$ (typically, an elementary submodel of some $H(\chi)$),
        the ord-collapse $k$ (or $k^N$) is a recursively defined function  with domain $N$: 
       $k(x)=x$ if $x\in  \ON$, and $k(x)=\{k(y):\,
      y\in x\cap N\}$ otherwise.
    \item  We define $\ordclos(\alpha):=\emptyset$ for all ordinals $\alpha$. 
       The ord-transitive closure of a non-ordinal $x$ is defined 
      inductively on the rank:  
      \[
        \ordclos(x)=x\cup\bigcup \{\ordclos(y):y\in x\setminus \ON\}
                   =x\cup\bigcup \{\ordclos(y):y\in x\}.
      \]
      So for $x\notin \ON$, the set 
       $\ordclos(x)$ is the smallest ord-transitive set
      containing $x$ as a subset. 
      HCON is the collection of all sets $x$ such that 
      the ord-transitive closure of $x$ is countable.
      $x$ is in HCON iff $x$ is element of some candidate.
      In particular, all reals and all ordinals are HCON.

      We write HCON$_\alpha$ for the family of all sets $x$ in HCON whose transitive
      closure 
      only contains ordinals $<\alpha$.
  \end{enumerate}
\end{Def}

The following facts can be found in~\cite{MR2115943} or~\cite{kellnernep} (they
can  be proven by rather straightforward, if tedious, inductions on the ranks of
the according objects).
\begin{Fact}\label{fact:hcon}
  \begin{enumerate}
    \item The ord-collapse of a countable elementary submodel of $H(\chi^*)$ is a nice
      candidate.
    \item Unions, intersections etc.\ are generally not absolute for
      candidates. For example, let $x\in M\setminus \ON$. In $M$ we can
      construct a set $y$ such that  $M\models y=\om1\cup\{x\}$. 
      Then $y$ is not an ordinal and therefore a subset of $M$, and
      in particular $y$ is countable and $y\not=\om1\cup \{x\}$.
    \item Let $j:M\to M'$ be the transitive collapse of a candidate $M$,
      and $f:\om1\cap M'\to \ON$ the inverse (restricted to the ordinals).
      Obviously $M'$ is a countable transitive model of ZFC$^*$;
      moreover $M$ is characterized by the pair $(M',f)$
      (we call such a pair a ``labeled transitive model'').
      Note that $f$ satisfies  $f(\alpha+1)=f(\alpha)+1$, 
      $f(\alpha)=\alpha$ for $\alpha\in \omega\cup\{\omega\}$. 
      $M\models (\alpha\text{ is a limit})$ iff $f(\alpha)$ is a limit.
      $M\models \cf(\alpha)=\omega$ iff $\cf(f(\alpha))=\omega$, and 
      in that case $f[\alpha]$ is cofinal in $f(\alpha)$.
      On the other hand, given a transitive countable model $M'$ of ZFC$^*$
      and an $f$ as above, then we can construct a (unique) candidate $M$
      corresponding to $(M',f)$.
    \item All candidates $M$ with $M\cap \ON \subseteq \omega_1$ are hereditarily countable, so their number is at most $2^{\al0}$. Similarly,  the cardinality of HCON$_\alpha$ is at most continuum 
    whenever $\alpha < \omega_2$.
    \item If $M$ is a candidate, and  if $H^M$ is $P^M$-generic over $M$, then $M[H^M]$ is a candidate
      as well and an end-extension
      of $M$ such that $M\cap\ON=M[H^M]\cap \ON$. If $M$ is nice and 
      ($M$ thinks that) $P^M$ is proper, then 
      $M[H^M]$ is nice as well.
    \item Forcing extensions commute with the transitive collapse $j$:
      If $M$ corresponds to $(M',f)$, then
      $H^M\subseteq P^M$ is $P^M$-generic over $M$ iff 
      $H'\DEFEQ {j}[H^M]$ is $P'\DEFEQ j(P^M)$-generic over $M'$,
      and in that case $M[H^M]$ corresponds to $(M'[H'],f)$.
      In particular, the forcing extension  $M[H^M]$ of $M$ satisfies
      the forcing theorem (everything that is forced is true, and
      everything true is forced).
    \item  For elementary submodels, forcing extensions commute
      with ord-collapses:
      Let $N$ be a countable elementary submodel of $H(\chi^*)$,
      $P\in N$, $k:N\to M$ the ord-collapse (so $M$ is a candidate), 
      and let $H$ be $P$-generic
      over $V$.
      Then $H$ is $P$-generic over $N$
      iff $H^M\DEFEQ k[H]$
      is $P^M\DEFEQ k(P)$-generic over $M$;
      and in that case the ord-collapse of $N[H]$
      is $M[H^M]$.
  \end{enumerate}
\end{Fact}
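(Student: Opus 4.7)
\medskip
\noindent\textbf{Proof plan.} The plan is to verify the seven clauses largely independently, with items (1)--(2) being straightforward consequences of the definitions, (3)--(4) reducing to a coding/bookkeeping exercise, and (5)--(7) following from a single ``commutativity with collapses'' principle once the basic structural facts are in place. Throughout, the main technical device is induction on ranks in the ord-transitive setting: for $x\in M\setminus \ON$ one uses that $x\subseteq M$, so information about $x$ is fully encoded by its intersection with $M$, while ordinals are treated as urelements.

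For (1), one checks each clause of ``nice candidate'' against the ord-collapse $k:N\to M$ of a countable elementary $N\esm H(\chi^*)$. Countability and $\omega+1\subseteq M$ are immediate (since $\omega+1\subseteq N$). Ord-absoluteness and ord-transitivity are built into the definition of $k$, since $k$ fixes ordinals and recursively collapses everything else into subsets of $M$. Absoluteness of ``$\alpha$ is a limit/successor'' and of ``$A\subseteq \alpha$ is countable cofinal'' follows from elementarity: the relevant witnesses (predecessors, cofinal sequences) in $H(\chi^*)$ can be chosen in $N$ and then transported by $k$. That $\om1, \om2\in N$ (hence in $M$) is elementarity. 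For (2) there is nothing to prove beyond producing the advertised example, which the statement already does.

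For (3), I would construct $M$ from a labeled pair $(M',f)$ by recursion on the $M'$-rank: set $\pi(\alpha)=f(\alpha)$ for ordinals $\alpha$ of $M'$, and $\pi(x)=\{\pi(y):y\in x\}$ otherwise; then let $M=\pi[M']$ and verify by induction that $\pi$ is an $\in$-isomorphism onto an ord-transitive structure, so $M$ is a candidate. The stated compatibility conditions on $f$ (respecting successor/limit/countable cofinality) are exactly what is needed to make the induction produce an element of HCON satisfying the niceness clauses. Uniqueness is immediate from $\in$-extensionality on the non-ordinal part. Item (4) is then a counting argument: by (3) each candidate with $M\cap\ON\subseteq\omega_1$ is determined by a countable transitive model $M'$ plus a countable function $f:\omega_1^{M'}\to\omega_1$, and there are only continuum many such pairs; the bound for general $\alpha<\omega_2$ is the same because the ordinals appearing are then bounded in some countable-cofinality ordinal below $\omega_2$.

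The heart of the matter, and the one step I expect to be the main obstacle, is the commutation of forcing with collapses in (5)--(7). The cleanest route is to prove (6) first: given the transitive collapse $j:M\to M'$ and $P^M\in M$, show by induction on the rank of a name $\n\tau\in M$ that $j$ extends to an $\in$-isomorphism $\tilde j:M[H^M]\to M'[j[H^M]]$ with $\tilde j(\n\tau[H^M])=j(\n\tau)[j[H^M]]$; genericity transfers because maximal antichains correspond under $j$. Then (5) follows: all candidate clauses are preserved by $\in$-isomorphism combined with the fact that in a proper forcing extension $M[H^M]$, no new countable subsets of ordinals from $V$ are added at the ordinals of interest, so niceness survives; that $M\cap\ON=M[H^M]\cap\ON$ is the observation that ordinals are urelements and no new ones appear. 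Finally (7) is the ``elementary submodel'' variant of (6): if $k:N\to M$ is the ord-collapse of $N\esm H(\chi^*)$ and $H$ is $P$-generic over $V$ (hence over $N$), then $k$ sends predense sets of $P$ in $N$ to predense sets of $P^M$ in $M$, so $H^M=k[H]$ is $M$-generic, and the same rank induction as in (6), now using $k$ on names, identifies the ord-collapse of $N[H]$ with $M[H^M]$. The tedious part here is really just keeping straight the three layers (candidate, its transitive collapse, and the ambient $V$-generic) through the name recursion; conceptually nothing deeper than extensionality and the forcing theorem is needed.
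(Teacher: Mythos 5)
Your plan is correct, and it matches the paper's intent: the paper labels this a \emph{Fact}, defers the proofs to the references, and characterizes them as ``rather straightforward, if tedious, inductions on the ranks of the according objects'' --- precisely what you carry out, with the sensible decomposition of proving (6) first and deriving (5) and (7) from the same commutation principle. One small slip: in (4), your appeal to a ``countable-cofinality ordinal below $\omega_2$'' is a non-sequitur; the bound is simply that $|\alpha|\le\aleph_1$ for $\alpha<\omega_2$, so there are at most $\aleph_1^{\aleph_0}=2^{\aleph_0}$ labelling functions $f$ with countable domain into $\alpha$, and together with the continuum many countable transitive models this gives the stated bound. This does not affect the conclusion, but the stated reason should be corrected.
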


Assume that a nice candidate $M$ thinks that $(\bar P^M,\bar Q^M)$ is a forcing
iteration of length $\om2^V$ (we will usually write $\om2$ for the length of
the iteration, by this we will always mean  $\om2^V$ and not the possibly
different $\om2^M$). In this section, we will construct an iteration $(\bar
P,\bar Q)$ in $V$, also of length $\om2$, such that each $P^M_\alpha$
canonically and $M$-completely embeds into $P_\alpha$ for all $\alpha\in
\om2\cap M$. Once we know (by induction) that $P^M_\alpha$ $M$-completely
embeds into $P_\alpha$, we know that a $P_\alpha$-generic filter $H_\alpha$
induces a $P^M_\alpha$-generic (over $M$) filter which we call $H^M_\alpha$.
Then $M[H^M_\alpha]$ is a candidate, but nice only if $P^M_\alpha$ is proper.
We will not need that $M[H^M_\alpha]$ is nice, actually we will only
investigate sets of reals (or elements of $H(\al1)$) in $M[H^M_\alpha]$, so it
does not make any difference whether we use $M[H^M_\alpha]$ or its transitive
collapse.

\begin{Rem}\label{rem:fine.print}
In the discussion so far we omitted some details regarding the theory ZFC$^*$
(that a candidate has to satisfy). The following ``fine print'' hopefully
absolves us from any liability. (It is entirely irrelevant for the
understanding of the paper.)

We have to guarantee that each $M[H^M_\alpha]$ that we consider satisfies
enough of ZFC to make our arguments work (for example, the definitions and
basic properties of ultralaver and Janus forcings should work).  This turns out
to be easy, since (as usual) we do not need the full power set axiom for these
arguments (just the existence of, say, $\beth_5$). So it is enough that each
$M[H^M_\alpha]$ satisfies some fixed finite subset of ZFC minus power set, which
we call ZFC$^*$.

Of course we can also find a bigger (still finite) set ZFC$^{**}$ that implies:
$\beth_{10}$ exists, and each forcing extension of the universe with a forcing
of size $\le \beth_4$ satisfies ZFC$^*$.  And it is provable (in ZFC) that each
$H(\chi)$ satisfies ZFC$^{**}$ for sufficiently large regular $\chi$. 
 
We define ``candidate'' using the weaker theory ZFC$^*$, and require that nice
candidates satisfy the stronger theory ZFC$^{**}$. This guarantees that all
forcing extensions (by small forcings) of nice candidates will be candidates
(in particular, satisfy enough of ZFC such that our arguments about Janus or
ultralaver forcings work). Also, every ord-collapse of a countable elementary
submodel $N$ of $H(\chi)$ will be a nice candidate. 
\end{Rem}

\subsection{Partial countable support iterations}\label{subsec:partialCS}

We introduce the notion of ``partial countable support limit'': a
subset of the countable support (CS) limit containing the union (i.e.,
the direct limit) and satisfying some natural requirements.

Let us first describe what we mean by ``forcing iteration''.  They have to satisfy the
following requirements:
  \begin{itemize}
  \item A \qemph{topless forcing iteration} $(P_\alpha,Q_\alpha)_{\alpha<\varepsilon}$
    is a sequence of forcing notions $P_\alpha$ and $P_\alpha$-names
    $Q_\alpha$ of quasiorders with a weakest element~$1_{Q_\alpha}$.
    A \qemph{topped iteration} additionally has a final limit~$P_\varepsilon$.
    Each  $ P_\alpha$ is a set of partial functions 
    on $\alpha$ 
    (as, e.g., in~\cite{MR1234283}). More specifically, 
    if $\alpha<\beta\le \varepsilon$ and $p\in P_\beta$, then 
    $p\on\alpha\in P_\alpha$.   Also, 
    $p\on\beta\forces_{P_\beta} p(\beta)\in Q_\beta$ for all $\beta\in\dom(p)$. 
    The order on $P_\beta$ will always be the ``natural'' one: 
    $q\leq p$ iff
    $q\on\alpha$ forces (in $P_\alpha$) that
    $q^\tot(\alpha)\leq p^\tot(\alpha)$ for all $\alpha < \beta$, 
    where $r^\tot(\alpha)=r(\alpha)$
    for all $\alpha\in\dom(r)$ and $1_{Q_\alpha}$ otherwise.   $P_{\alpha+1}$ consists 
    of \emph{all} $p$ with $p\on\alpha\in P_\alpha$ and $p\on \alpha\forces p^\tot(\alpha)\in Q_\alpha$, so it is forcing equivalent to $P_\alpha*Q_\alpha$. 
		\item $P_\alpha \subseteq P_\beta$ whenever $\alpha  < \beta\le
		\varepsilon$. (In particular, the empty condition is an element of each
		$P_\beta$.)
  \item For any $p \in P_\varepsilon$ and any $q \in P_\alpha$ ($\alpha < \varepsilon$) with $q \leq p \on \alpha$, the partial function 
    $q \land p\DEFEQ q\cup p\on[\alpha,\varepsilon)$ is a condition in $P_\varepsilon$ as well (so in particular, $p \on \alpha$ is a reduction of $p$, hence $P_\alpha$ is a complete subforcing of $P_\varepsilon$; and $q\land p$ is the weakest condition
    in $P_\varepsilon$ stronger than both $q$ and $p$).
  \item Abusing notation, we usually just write $\bar P$
    for an iteration (be it topless or topped).
  \item We usually write $H_\beta$ for the
    generic filter on $P_\beta$ (which induces
    $P_\alpha$-generic filters called $H_\alpha$ for $\alpha\le\beta$).
    For topped iterations we call the filter on the final limit sometimes just $H$
    instead of $H_\varepsilon$.
  \end{itemize}

We use the following notation for quotients of iterations:
\begin{itemize}
  \item For $\alpha<\beta$, in the $P_\alpha$-extension
    $V[H_\alpha]$, we let $P_\beta/H_\alpha$ be the set
    of all $p\in P_\beta$ with $p\on\alpha\in H_\alpha$
    (ordered as in~$P_\beta$).   We may occasionally write $P_\beta/P_\alpha$ for the $P_\alpha$-name of 
    $P_\beta/H_\alpha$. 
	\item Since $P_\alpha$	is a complete subforcing of $P_\beta$,
	  this is a quotient with the usual properties, in particular
		$P_\beta$ is equivalent to $P_\alpha*(P_\beta/H_\alpha)$.
\end{itemize}

\begin{Rem}
It is well known that quotients of proper countable support iterations are
naturally equivalent to (names of) countable support iterations. In this paper,
we can restrict our attention to proper forcings, but we do not really have
countable support iterations. It turns out that it is not necessary to
investigate whether our quotients can naturally be seen as iterations of any
kind, so to avoid the subtle problems involved we will not consider the
quotient as an iteration by itself.
\end{Rem}

\begin{Def}\label{def:fullCS}
Let  $\bar P$ be a (topless) iteration of limit length $\varepsilon$.
We define three limits of $\bar P$:
\begin{itemize}
  \item The \qemph{direct limit} is the union of the  $P_\alpha$ (for $\alpha<\varepsilon$).
    So this is the smallest possible limit of the iteration.
  \item The \qemph{inverse limit} consists of \emph{all}  partial functions $p$
    with domain $\subseteq \varepsilon$ such that $p\on\alpha\in P_\alpha$
    for all $\alpha<\varepsilon$.
    This is the largest possible limit of the iteration.
  \item
    The \qemph{full countable support  limit $P^\CS_\varepsilon$} of $\bar P$
    is the inverse limit if $\cf(\varepsilon)=\omega$ and the direct limit otherwise.
\end{itemize}
We say that  $P_\varepsilon$ is a \qemph{partial CS limit}, if 
		$P_\varepsilon$ is a subset of the full CS limit and 
		the sequence $(P_\alpha)_{\alpha\le \varepsilon}$ is a topped iteration. 
		In particular, this means that $P_\varepsilon$ 
		contains the direct limit, 
		and satisfies the following for each $\alpha<\varepsilon$: 
		$P_\varepsilon$ is  closed under $p\mapsto p\on \alpha$, and
		whenever $p\in P_\varepsilon$,   $q\in P_\alpha$, $q\le p\on\alpha$,
		 then also the partial function $q\land p$ is in~$P_\varepsilon$. 
\end{Def}  

So  for a given topless $\bar P$ there is a well-defined inverse, direct and full
CS limit.  If $\cf(\varepsilon)>\omega$, then the direct and the full CS limit
coincide. If $\cf(\varepsilon)=\omega$, 
then the direct limit and the full CS limit (=inverse limit) differ.  Both of them are partial CS limits, 
but there are many more
possibilities for partial CS limits.  By definition, all of them will yield 
iterations. 

Note that the name ``CS limit'' is slightly inappropriate, as  the size of
supports of conditions is not part of the definition. To give a 
more  specific example:
Consider a topped  iteration $\bar P $ of length $\omega+\omega$ where
$P_\omega$ is the direct limit and $P_{\omega+\omega} $ is  the full
CS limit.  Let $p$ be any element of the full CS limit of $\bar P \on \omega$ which
is not in~$P_\omega$; then $p$ is not in $P_{\omega+\omega}$ either.  So not
every countable subset of $\omega+\omega$ can appear as the support of a
condition.

\begin{Def}\label{partial_CS}
A forcing iteration $\bar P$ is called a \qemph{partial CS iteration}, if
\begin{itemize}
  \item every limit is a partial CS limit, and
  \item every $Q_\alpha$ is (forced to be) separative.\footnote{The reason for this requirement is briefly discussed in Section~\ref{sec:alternativedefs}. Separativity, as well as the relations $\leq^*$ and $=^*$, are defined on page~\pageref{def:separative}.}
\end{itemize}
\end{Def}

The following fact can easily be proved by transfinite induction: 

\begin{Fact}\label{fact:eq.eqstar}
Let $\bar P$ be a partial CS iteration. Then for all $\alpha$ the forcing
notion $P_\alpha$ is separative. 
\end{Fact}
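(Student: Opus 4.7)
The plan is to prove this by transfinite induction on $\alpha$, verifying (in the ``$\leq = \leq^*$'' formulation of separativity) that $q \leq^*_{P_\alpha} p$ implies $q \leq_{P_\alpha} p$ for all $p,q \in P_\alpha$; the reverse implication is automatic. The case $\alpha = 0$ is trivial. The central tool throughout the induction will be the \emph{glueing} property built into Definition~\ref{partial_CS}: whenever $\beta < \alpha$, $p \in P_\alpha$, and $r \in P_\beta$ with $r \leq p \on \beta$, then $r \wedge p \in P_\alpha$ and $r \wedge p \leq p$. This is what lets me reflect compatibility questions from $P_\alpha$ back down to each $P_\beta$.

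For the successor step $\alpha = \beta+1$, I would assume $P_\beta$ is separative and $Q_\beta$ is forced to be separative, and suppose $q \leq^*_{P_{\beta+1}} p$. First I would check that $q \on \beta \leq^*_{P_\beta} p \on \beta$: given $r \leq q \on \beta$ in $P_\beta$, the glued condition $r \wedge q \in P_{\beta+1}$ extends $q$ and therefore is compatible with $p$; projecting a common extension down to $P_\beta$ witnesses compatibility of $r$ and $p \on \beta$. By induction, $q \on \beta \leq_{P_\beta} p \on \beta$. It then remains to show $q \on \beta \Vdash q^\tot(\beta) \leq_{Q_\beta} p^\tot(\beta)$. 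Otherwise, using separativity of $Q_\beta$ in the extension, I could find $r' \leq q \on \beta$ and a $P_\beta$-name $\dot q'$ with $r' \Vdash \dot q' \leq q^\tot(\beta)$ and $r' \Vdash \dot q' \incomp_{Q_\beta} p^\tot(\beta)$; then $r' \cup \{(\beta, \dot q')\}$ would sit below $q$ yet be incompatible with $p$, contradicting $q \leq^*_{P_{\beta+1}} p$.

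For the limit step, I would assume $P_\gamma$ is separative for every $\gamma < \alpha$ and let $q \leq^*_{P_\alpha} p$. The very same glueing argument will show that $q \on \beta \leq^*_{P_\beta} p \on \beta$ for every $\beta < \alpha$: for $r \leq q \on \beta$, the condition $r \wedge q \in P_\alpha$ extends $q$, is therefore compatible with $p$, and projecting a witness to $P_\beta$ yields compatibility of $r$ and $p \on \beta$. The induction hypothesis then gives $q \on \beta \leq_{P_\beta} p \on \beta$ for every $\beta < \alpha$; applying this at each $\beta+1 < \alpha$ unfolds coordinatewise to $q \on \beta \Vdash q^\tot(\beta) \leq_{Q_\beta} p^\tot(\beta)$ for every $\beta$, which is precisely the defining condition for $q \leq_{P_\alpha} p$. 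The only real content of the argument is the closure of partial CS limits under the glueing operation $r \wedge q$, which is exactly the property Definition~\ref{partial_CS} is designed to enforce; no further obstacle arises.
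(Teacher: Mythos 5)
Your proof is correct and follows exactly the transfinite induction the paper alludes to without spelling out: the base and limit steps rest on the glueing-closure property of partial CS limits to push $\leq^*$ down coordinatewise, and the successor step correctly combines the inductive hypothesis at $\beta$ with separativity of $Q_\beta$ (reading $Q_\beta$-separativity as $\leq^*_{Q_\beta}\ \subseteq\ \leq_{Q_\beta}$ in the extension, then finding a witness via the maximum principle). No gaps; this is the intended argument.
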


{}From now on, all iterations we consider will be partial CS
iterations.  In this paper, we will only be interested in proper
partial CS iterations, but properness is not part of the definition of
partial CS iteration.  (The reader may safely assume that all
iterations are proper.)

Note that separativity of the $Q_\alpha $ implies that all partial CS
iterations satisfy the following (trivially equivalent) properties:
\begin{Fact}\label{fact:suitable.equivalent}
Let $\bar P$ be a topped partial CS iteration of length $\varepsilon$. Then:
\begin{enumerate}
  \item Let $H$ be $ P_\varepsilon$-generic. Then $p\in H$ iff $p\on\alpha\in H_\alpha$ for all $\alpha < \varepsilon$.
  \item For all $q,p \in P_\varepsilon$: If $q\on \alpha \leq^\ast p\on \alpha$ for each $\alpha < \varepsilon$, then $q \leq^\ast p$. 
  \item \label{item:3}
For all $q,p \in P_\varepsilon$: If $q\on \alpha \leq^\ast p\on \alpha$ for each $\alpha < \varepsilon$, then $q \comp p$.
\end{enumerate}
\end{Fact}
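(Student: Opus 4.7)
The plan is to prove (2) directly by unpacking the definition of the iteration order, to deduce (3) from (2) (since $\leq^{\ast}$ implies $\comp$), and to derive (1) from (2) via the forcing theorem; the remaining implications between the three statements are then routine. By Fact~\ref{fact:eq.eqstar}, each $P_\beta$, and in particular $P_\varepsilon$, is separative, so $\leq^{\ast}$ and $\leq$ coincide. I treat $\varepsilon$ as a limit ordinal, which is the intended case (at successors the statements would have to be read with an adjusted range of $\alpha$).

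For (2): unwinding definitions, $q \leq p$ in $P_\varepsilon$ says exactly that $q\on\beta \forces q^\tot(\beta) \leq p^\tot(\beta)$ for every $\beta < \varepsilon$, while $q\on\alpha \leq p\on\alpha$ in $P_\alpha$ says the same for every $\beta < \alpha$. Conjoining the latter over all $\alpha < \varepsilon$ (a limit) reconstitutes the conjunction over all $\beta < \varepsilon$, so (2) is immediate. For (3): if $q \leq^{\ast} p$, then $q$ itself witnesses $q \comp p$.

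For (1): the forward direction is upward closure of $H$ combined with $p \leq p\on\alpha$ in $P_\varepsilon$. For the converse, suppose $p\on\alpha \in H_\alpha$ for every $\alpha < \varepsilon$; this is a single $\forall$-statement in the forcing language with $\varepsilon$ and $p$ as parameters, so by the forcing theorem some $r \in H$ forces it. In particular $r \forces_{P_\varepsilon} p\on\alpha \in \dot H$ for every $\alpha$, i.e., $r \leq^{\ast}_{P_\varepsilon} p\on\alpha$. Using $P_\alpha \lessdot P_\varepsilon$ together with the gluing closure built into partial CS limits (the operation $(s,r) \mapsto s \cup r\on[\alpha,\varepsilon)$, available whenever $s \leq r\on\alpha$ in $P_\alpha$), one verifies that this is equivalent to $r\on\alpha \leq^{\ast}_{P_\alpha} p\on\alpha$. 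Applying (2) then yields $r \leq^{\ast}_{P_\varepsilon} p$, so $r$ forces $p \in \dot H$, and hence $p \in H$. The only step with substance is this equivalence of $\leq^{\ast}$-statements across $P_\alpha$ and $P_\varepsilon$: for the nontrivial direction, a putative witness $s \leq r\on\alpha$ in $P_\alpha$ forcing $p\on\alpha \notin \dot H_\alpha$ is lifted via gluing to $s \cup r\on[\alpha,\varepsilon) \in P_\varepsilon$, whose forced $P_\varepsilon$-compatibility with $p\on\alpha$ is then pulled back (by restricting a common extension to $P_\alpha$) to a $P_\alpha$-common extension of $s$ and $p\on\alpha$, contradicting the choice of $s$.
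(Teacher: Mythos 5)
Your proposal is correct. The paper labels this a ``Fact'' and gives no proof, relying (as the preceding sentence indicates) on separativity of the iterands; your argument is exactly the careful version of what the paper leaves implicit: Fact~\ref{fact:eq.eqstar} reduces $\leq^{\ast}$ to $\leq$ on each $P_\alpha$, the coordinatewise definition of the iteration order at a limit $\varepsilon$ makes (2) immediate, (3) follows trivially since $q\leq^{\ast}p$ gives $q\comp p$ by taking $q'=q$, and (1) follows from (2) via the forcing theorem plus the gluing-closure property of partial CS limits. The transfer step $r\leq^{\ast}_{P_\varepsilon} p\on\alpha \Leftrightarrow r\on\alpha\leq^{\ast}_{P_\alpha} p\on\alpha$ is indeed the only point needing any argument, and the one you give (lift a counterexample $s\leq r\on\alpha$ to $s\cup r\on[\alpha,\varepsilon)$, use $r\leq^{\ast}p\on\alpha$, restrict a common extension back to $P_\alpha$) is right; the converse direction is the usual reduction argument from $P_\alpha\lessdot P_\varepsilon$. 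Your caveat that (1) as literally stated requires $\varepsilon$ to be a limit is also well taken --- at a successor $\varepsilon=\beta+1$ the right-hand side of (1) says nothing about $p(\beta)$, so the equivalence fails; since the Fact is only invoked in the paper at limits (e.g.\ in Lemma~\ref{lem:wolfgang}), this is harmless.
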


We will be concerned with the following situation:

Assume that $M$ is a nice candidate, $\bar P^M$ is (in~$M$) a topped partial CS
iteration of length $\varepsilon$ (a limit ordinal in~$M$), and $\bar P$ is (in
$V$) a topless partial CS iteration of length
$\varepsilon'\DEFEQ \sup(\varepsilon\cap M)$.  (Recall that
``$\cf(\varepsilon)=\omega$'' is absolute between $M$ and $V$, and that
$\cf(\varepsilon)=\omega$ implies $\varepsilon'=\varepsilon$.) Moreover, assume
that we already have a system of $M$-complete coherent\footnote{I.e., they
commute with the restriction maps: $i_\alpha(p \on \alpha) = i_\beta(p) \on
\alpha$ for $\alpha < \beta$ and $p\in P^M_\beta$.} embeddings
$i_\beta:P^M_\beta\to P_\beta$ for $\beta\in \varepsilon'\cap M=\varepsilon\cap M$.
(Recall that any potential partial CS limit of $\bar P$ is a subforcing of
the full CS limit~$P^\CS_{\varepsilon'}$.)
It is easy to see that there is only one possibility for an embedding $j:
P^M_\varepsilon\to P^\CS_{\varepsilon'} $ (in fact, into any potential partial
CS limit of~$\bar P$) that extends the $i_\beta$'s naturally:

\begin{Def}\label{def:canonicalextension}
  For a topped partial CS iteration $\bar P^M$ in $M$ of length  $\varepsilon$
  and a topless one $\bar P$ in $V$ of length $\varepsilon'\DEFEQ \sup(\varepsilon\cap M)$
  together with coherent embeddings $i_\beta$, we define 
  $j: P^M_\varepsilon\to P^\CS_{\varepsilon'} $, the \qemph{canonical extension},
  in the obvious way:   
  Given $p \in P_\varepsilon^M$, take
  the sequence of restrictions to $M$-ordinals, 
  apply the functions $i_\beta$, and let
  $j(p)$ be the union of the resulting coherent sequence.
\end{Def}

We do not claim that  $j: P^M_\varepsilon\to P^\CS_{\varepsilon'} $  is
 $M$-complete.\footnote{\newcounter{myfootnote}\setcounter{myfootnote}{\value{footnote}}
  \label{cs.fs.footnote}
	For example, if $\varepsilon=\varepsilon'=\omega$ and if $P^M_\omega$ is the 
	finite support limit of a nontrivial iteration, then $j:P^M_\omega\to P^\CS_\omega$ is not
	complete: For notational simplicity, assume that all $Q^M_n$ are (forced to be) Boolean
        algebras. In $M$, let $c_n$ be (a $P^M_n$-name for) a nontrivial
	element of $Q^M_n$ (so $\lnot c_n$, the Boolean complement, is also nontrivial).
	Let $p_n$ be the $P^M_n$-condition $(c_0, \ldots, c_{n-1})$, i.e.,
        the truth value of ``$c_m\in H(m)$ for all $m<n$''. Let $q_n$ be the 
        $P^M_{n+1}$-condition $(c_0, \ldots, c_{n-1}, \lnot c_n)$, i.e., the truth
        value of 
	``$n$ is minimal with $c_n\notin H(n)$''.  In $M$, the set $A=\{q_n:\, n\in\omega\}$
	is a maximal antichain in $P^M_\omega$.  Moreover, the sequence
	$(p_n)_{n\in\omega}$ is a decreasing coherent sequence, therefore $i_n(p_n)$
	defines an element $p_\omega$ in $P^\CS_\omega$, which is clearly incompatible
	with all $j(q_n)$, hence $j[A] $ is not maximal.}
In the following, we will construct partial CS limits $P_{\varepsilon'}$ such
that $j:P^M_\varepsilon \to P_{\varepsilon'}$ is $M$-complete. (Obviously, one
requirement for such a limit  is that $j[P^M_\varepsilon]\subseteq
P_{\varepsilon'}$.) We will actually define two versions: The almost FS (``almost 
finite support'')  and the
almost CS (``almost countable support'') limit.

Note that there is only one effect that the ``top'' of $\bar P^M$ (i.e., the
forcing $P^M_\varepsilon$) has on the canonical extension~$j$: It determines the
domain of $j$. In particular it will generally depend on $P^M_\varepsilon$
whether $j$ is complete or not. Apart from that, the value of any given $j(p)$
does not depend on $P^M_\varepsilon$.

Instead of arbitrary systems of embeddings $i_\alpha$, we will only be interested in
``canonical'' ones. 
 We  assume for notational convenience that
$Q^M_\alpha$ is a subset of $Q_\alpha$ (this will naturally be the case in our application anyway). 

\begin{Def}[The canonical embedding]\label{def:canonicalembedding}
  Let $\bar P$ be a partial CS iteration in~$V$
  and $\bar P^M$  a partial CS iteration in~$M$, both topped and of length~$\varepsilon\in M$.
  We construct by induction on $\alpha\in (\varepsilon+1) \cap M$
  the canonical $M$-complete embeddings $i_\alpha:P^M_\alpha\to P_\alpha$.
  More precisely: We try to construct them, but it is possible that the 
  construction fails.  If the construction succeeds, then 
  we say that  \qemph{$\bar P^M$ (canonically) embeds into $\bar P$}, or 
	\qemph{the canonical embeddings work}, or 
  just:
	\qemph{$\bar P$ is over $\bar P^M$}, or \qemph{over $P^M_\varepsilon$}.  
  \begin{itemize}
    \item Let $\alpha=\beta+1$. By induction hypothesis,
      $i_\beta$ is $M$-complete, so a $V$-generic
      filter $H_\beta\subseteq P_\beta$  induces an $M$-generic filter $H^M_\beta 
			\DEFEQ  i_{\beta}^{-1}[H_\beta]\subseteq P^M_\beta$.
      We require that (in the $H_\beta$ extension) the set $Q^M_\beta[H^M_\beta]$
       is an $M[H^M_\beta]$-complete
      subforcing of $Q_\beta[H_\beta]$. In this case, we define $i_\alpha$  in the
      obvious way. 
    \item  For $\alpha$ limit, let $i_\alpha$ be the canonical 
      extension of the family $(i_\beta)_{\beta\in  \alpha\cap M}$. 
         We require that  $P_\alpha$ contains the 
			range of $i_\alpha$, and that $i_\alpha$ is $M$-complete; otherwise the
			construction fails.
      (If $\alpha'\DEFEQ \sup(\alpha\cap M) < \alpha$, then $i_\alpha $
      will actually be an $M$-complete map  into  $P_{\alpha'}$, assuming that the 
      requirement is fulfilled.)
  \end{itemize}
\end{Def}

In this section we try to construct a partial CS iteration $\bar P$
(over a given $\bar P^M$)  satisfying additional properties.

\begin{Rem}
What is the role of $\varepsilon'\DEFEQ  \sup( \varepsilon\cap M)$?  When our
inductive construction  of $\bar P$ arrives at~$P_\varepsilon$ where $\varepsilon'<
\varepsilon$, it would be too late\footnote{%
\label{fn:too.late}
  For example:
	Let $\varepsilon=\om1$ and $\varepsilon'=\om1\cap M$.
	Assume that $P^M_{\om1}$ is (in $M$) a (or: the unique) 
        partial CS limit of a nontrivial iteration.
	Assume that we have a topless iteration $\bar P$ of length $\varepsilon'$
	in $V$ such that the canonical embeddings work for all $\alpha\in\om1\cap M$.
	If we set $P_{\varepsilon'}$ to be the full CS limit, then
	we cannot further extend it to any iteration of length $\om1$ 
	such that the canonical embedding $i_{\om1}$ works: Let
	$p_\alpha$ and $q_\alpha$ be as in footnote~\ref{cs.fs.footnote}.  In  $M$,
	the set  
	$A=\{q_\alpha:\alpha\in\om1\}$ is a maximal antichain, and the sequence
	$(p_\alpha)_{\alpha\in\om1}$ is a decreasing coherent
	sequence.  But in $V$ there is an element  $p_{\varepsilon'}\in P^\CS_{\varepsilon'}$
         with $p_{\varepsilon'}\on \alpha = j( p_\alpha)$ for all $\alpha\in \varepsilon\cap M$. This 
         condition $p_{\varepsilon'}$ is   clearly incompatible
	with all elements of  $j[A] = \{ j(q_\alpha): \alpha \in \varepsilon\cap M\}$. 
        Hence $j[A] $ is not maximal.}
to take care of $M$-completeness of
$i_\varepsilon$ at this stage, even if all $i_\alpha$ work nicely for $\alpha\in
\varepsilon \cap M$. Note that $\varepsilon'<\varepsilon$ implies 
that $\varepsilon$ is uncountable in $M$, and that therefore 
$P^M_\varepsilon = \bigcup_{\alpha\in \varepsilon\cap M} P^M_\alpha$.
So the natural extension $j$ of the embeddings $(i_\alpha)_{\alpha\in
\varepsilon \cap M}$ has range in $ P_{\varepsilon'}$, which will be a complete
subforcing of $P_\varepsilon$. So we have to ensure
$M$-completeness already in the construction of $P_{\varepsilon'}$.
\end{Rem}

For now we just record:
\begin{Lem}\label{lem:wolfgang}
  Assume that we have topped iterations
  $\bar P^M$ (in~$M$) of length $\varepsilon$ and 
  $\bar P$ (in~$V$) of length $\varepsilon'\DEFEQ \sup(\varepsilon\cap M)$,
  and that for all $\alpha\in\varepsilon\cap M$
  the canonical embedding $i_\alpha: P^M_\alpha\to P_\alpha$ works.
  Let $i_\varepsilon: P^M_\varepsilon\to P^\CS_{\varepsilon'}$
  be the canonical extension.
  \begin{enumerate}
    \item\label{item:pathetic099}
			If  $P^M_\varepsilon$ is (in $M$)  a direct limit (which
			is always the case if $\varepsilon$ has uncountable cofinality)
			then $i_\varepsilon$
			(might not work, but at least) has range in $P_{\varepsilon'}$
			and preserves incompatibility.
    \item\label{item:suitable_implies_filter}
      If $i_\varepsilon$ has a range contained in $P_{\varepsilon'}$
      and maps predense sets $D\subseteq P^M_\varepsilon$ in $M$ 
      to predense sets $i_\varepsilon[D]\subseteq P_{\varepsilon'}$,
      then $i_\varepsilon$ preserves incompatibility (and therefore works).
  \end{enumerate}
\end{Lem}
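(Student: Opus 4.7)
For part~(\ref{item:pathetic099}), suppose $P^M_\varepsilon$ is a direct limit in~$M$. Then every $p\in P^M_\varepsilon$ is (in $M$'s view) contained in $P^M_\alpha$ for some $\alpha<\varepsilon$; since the minimum such $\alpha$ is definable from $p$ inside~$M$, it lies in $M$ and hence in $\varepsilon\cap M$. Consequently $i_\varepsilon(p)=i_\alpha(p)\in P_\alpha\subseteq P_{\varepsilon'}$, giving the range assertion. For incompatibility preservation, given $p\perp q$ in $P^M_\varepsilon$, choose $\alpha\in\varepsilon\cap M$ large enough that $p,q\in P^M_\alpha$; since $P^M_\alpha$ is a complete subforcing of $P^M_\varepsilon$ (in $M$), we already have $p\perp q$ in $P^M_\alpha$. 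The $M$-complete embedding $i_\alpha$ then yields $i_\alpha(p)\perp i_\alpha(q)$ in $P_\alpha$, which transfers to $P_{\varepsilon'}$ because $P_\alpha$ is a complete subforcing of $P_{\varepsilon'}$.

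For part~(\ref{item:suitable_implies_filter}) the range condition is part of the hypothesis, and the real task is to derive incompatibility preservation from preservation of predensity. My plan is to argue by contradiction: assume some $p,q\in P^M_\varepsilon$ are incompatible in $P^M_\varepsilon$ but $i_\varepsilon(p),i_\varepsilon(q)$ share a common extension $r\in P_{\varepsilon'}$. Pick a $V$-generic filter $H\subseteq P_{\varepsilon'}$ with $r\in H$, and consider the pullback $H^M\DEFEQ i_\varepsilon^{-1}[H]$. By predensity preservation, $H^M$ meets every $M$-predense subset of $P^M_\varepsilon$; by the $\le$-preservation implicit in the definition of the canonical extension, $H^M$ is upward closed; and by construction it contains both $p$ and $q$. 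If we can further show that $H^M$ is downward directed, then $H^M$ is an $M$-generic filter containing $p$ and $q$, and so $p$ and $q$ would have a common extension in $H^M\subseteq P^M_\varepsilon$, contradicting $p\perp q$ in $P^M_\varepsilon$.

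The crux, and what I expect to be the main obstacle, is establishing this directedness. Given $s,t\in H^M$, applying predensity preservation to the $M$-dense set $D_{s,t}=\{u:u\le s\text{ and }u\le t\}\cup\{u:u\perp s\text{ or }u\perp t\}$ yields some $u\in H^M\cap D_{s,t}$: the case $u\le s,t$ immediately produces the desired common extension, whereas the case (say) $u\perp s$ looks like yet another instance of the very failure of incompatibility preservation we are trying to rule out. To close this loop I would exploit the coherent system of genuine $M$-generic filters $H^M_\alpha\DEFEQ i_\alpha^{-1}[H_\alpha]$ on $P^M_\alpha$ for $\alpha\in\varepsilon\cap M$, which is available because each $i_\alpha$ is already known to be $M$-complete. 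Every $v\in H^M$ satisfies $v\on\alpha\in H^M_\alpha$ for all $\alpha\in\varepsilon\cap M$, so in particular $u\on\alpha$ and $s\on\alpha$ are compatible in $P^M_\alpha$ at every such $\alpha$. Converting this coordinatewise compatibility — together with the coherent witness encoded by $r$ and the structure of partial CS iterations (Fact~\ref{fact:suitable.equivalent}, applied in~$M$) — into an actual $P^M_\varepsilon$-common extension of $u$ and $s$ inside~$M$ produces the contradiction with $u\perp s$, and is the delicate step that the argument ultimately hinges on.
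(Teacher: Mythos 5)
Part~(\ref{item:pathetic099}) is correct and is essentially the paper's argument.

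For part~(\ref{item:suitable_implies_filter}), the ``delicate step'' you flag at the end is a genuine gap, not a technicality to be filled in. From $u\on\alpha\comp_{P^M_\alpha}s\on\alpha$ for every $\alpha\in\varepsilon\cap M$ you cannot conclude $u\comp_{P^M_\varepsilon}s$: Fact~\ref{fact:suitable.equivalent}(\ref{item:3}) yields compatibility only from the much stronger hypothesis $u\on\alpha\le^*s\on\alpha$ for all $\alpha$, and coordinatewise compatibility is strictly weaker --- in the Boolean algebra, $\|u\in H\|\wedge\|s\in H\|=\bigwedge_{\alpha}\bigl(\|u\on\alpha\in H_\alpha\|\wedge\|s\on\alpha\in H_\alpha\|\bigr)$ is a decreasing meet of nonzero elements, which may well be zero. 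The ``coherent witness $r$'' does not help either: it lives in $V$, not in $M$, whereas any witness to $u\comp_{P^M_\varepsilon}s$ would have to be a condition of $P^M_\varepsilon\subseteq M$.

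The fix, and the paper's actual argument, is to change the dense set so that the escape clauses are \emph{coordinatewise} from the start. In $M$ set
\[
D=\bigl\{q\in P^M_\varepsilon:\ (q\le p_1\wedge q\le p_2)\ \vee\ (\exists\alpha<\varepsilon)\,q\on\alpha\incomp_{P^M_\alpha}p_1\on\alpha\ \vee\ (\exists\alpha<\varepsilon)\,q\on\alpha\incomp_{P^M_\alpha}p_2\on\alpha\bigr\}.
\]
This is where Fact~\ref{fact:suitable.equivalent}(\ref{item:3}) is genuinely needed, in the density proof and in the contrapositive direction: below any $r\perp p_i$, if no extension of $r$ had a coordinatewise incompatibility with $p_i$, then $r\on\alpha\le^*p_i\on\alpha$ for all $\alpha$ and hence $r\comp p_i$, a contradiction. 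Now take $q\in D$ with $i_\varepsilon(q)\in H$. The second and third clauses are refuted immediately: if, say, $q\on\alpha\incomp_{P^M_\alpha}p_1\on\alpha$, then $i_\alpha(q\on\alpha)$ and $i_\alpha(p_1\on\alpha)$ both lie in the filter $H_\alpha$, contradicting the $M$-completeness of~$i_\alpha$. Hence $q\le p_1,p_2$, and $p_1\comp p_2$ in $P^M_\varepsilon$ follows directly --- no construction of a filter $H^M$ and no ``conversion'' step is needed.
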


\begin{proof} (1)
  Since $P^M_\varepsilon$ is a direct limit, the canonical extension
  $i_\varepsilon $ has range in $\bigcup_{\alpha<\varepsilon'}
  P_\alpha$, which is subset of any partial CS limit $P_{\varepsilon'}$.
  Incompatibility in $P^M_\varepsilon$ is the same as incompatibility in
  $P^M_\alpha$ for  sufficiently large $\alpha\in \varepsilon\cap M$, so 
  by assumption it is preserved by $i_\alpha$ and hence also by~$i_\varepsilon$.
  
(2)
Fix $p_1,p_2\in P^M_\varepsilon$, and assume that their images are
compatible in $P_{\varepsilon'}$; we have to show that they
are compatible in $P^M_\varepsilon$.  So fix 
a  generic filter $H\subseteq P_{\varepsilon'} $ containing $i_\varepsilon(p_1)$
 and $i_\varepsilon(p_2)$.

In $M$, we define the  following set $D$: 
\[
D \DEFEQ  \{ q \in P^M_\varepsilon: 
(q \leq p_1 \land q \leq p_2) \textrm{ or } 
(\exists \alpha < \varepsilon: q\on \alpha \incomp_{P^M_\alpha} p_1\on \alpha) \textrm{ or }
(\exists \alpha < \varepsilon: q\on \alpha \incomp_{P^M_\alpha} p_2\on \alpha) 
 \}.
\]

Using Fact~\ref{fact:suitable.equivalent}(\ref{item:3}) it is easy to check that $D$ is dense. 
Since $i_\varepsilon$ preserves predensity, 
there is $q\in D$ such that $i_\varepsilon(q)\in H$.   We claim that 
$q$ is stronger than $p_1$ and~$p_2$.   Otherwise we would have without loss
of generality 
 $ q\on \alpha \incomp_{P^M_\alpha} p_1\on \alpha$ for some
$\alpha<\varepsilon$.  But the filter  $H\on \alpha$ contains both $i_\alpha(q\on\alpha)$
and $i_\alpha(p_1\on\alpha)$, contradicting the assumption that $i_\alpha$
preserves incompatibility. 
\end{proof}

\subsection{Almost finite support iterations}

Recall Definition~\ref{def:canonicalextension} (of the canonical extension)
and
the setup that was described there: We have to find a 
subset $P_\ves$ of $P^\CS_\ves$ such that the canonical extension~$j:P^M_\varepsilon\to P_\ves$ is $M$-complete.  

We now define the almost finite support limit. (The direct  limit will in
general not do, as it may not contain the range  $j[P^M_\varepsilon]$.  The
almost finite support limit is the obvious modification of the direct limit,
and it is  the smallest partial CS limit $P _\ves$ such that $j[P^M_\varepsilon]\subseteq
P_\ves$, and it indeed  turns out to be $M$-complete as well.)

\begin{Def}\label{def:almostfs}
  Let $\varepsilon$ be a limit ordinal in~$M$,  and let 
  $\ves\DEFEQ  \sup(\varepsilon\cap M)$.  
  Let $\bar P^M$ be a topped iteration in~$M$ of length $\varepsilon$,
  and let $\bar P$ be a topless iteration in~$V$ of length $\varepsilon'$. 
  Assume that the canonical embeddings $i_\alpha$ work for all $\alpha
  \in \varepsilon \cap M = \varepsilon' \cap M$. Let $i_\varepsilon$
  be the canonical extension.
  We define the \emph{almost finite support limit of $\bar P$ over
  ${\bar P^M}$} (or: almost FS limit) as the following subforcing 
  $P_\ves$ of $P^\CS_\ves$:
   \[P_\ves\DEFEQ   \{\,  
  q \land i_\varepsilon(p) \in P^\CS_\ves :\  p\in P^M_\varepsilon \text{ and } 
      q\in P_\alpha\text{ for some } \alpha\in \varepsilon\cap M\text{ such that }
  q\le _{P_\alpha} i_\alpha(p\on \alpha) \, \}.\]
\end{Def}
Note that for $\cf(\varepsilon)>\omega$, the almost FS limit
is equal to the direct limit, as each $p\in P^M_\varepsilon$ is in fact
in $P^M_\alpha$ for some  $\alpha\in  \varepsilon\cap M$, so $i_\varepsilon(p) = 
i_\alpha(p)\in P_\alpha$.

\begin{Lem}\label{lem:afs.complete}
  Assume that $\bar P$ and $\bar P^M$ are as above and let $P_\ves$ be
  the almost FS limit.  Then $\bar P^\frown P_\ves$ is a partial CS
  iteration, and $i_\varepsilon$ works, i.e., $i_\varepsilon$ is an
  $M$-complete embedding from $P^M_\varepsilon$ to $P_\ves$.  (As
  $P_\ves $ is a complete subforcing of~$P_\varepsilon$, this also
  implies that $i_\varepsilon$ is $M$-complete from $P^M_\varepsilon$
  to $P_\varepsilon$.)
\end{Lem}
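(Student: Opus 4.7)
The lemma has two independent parts: (i) $P_\ves$ is a partial CS limit of $\bar P$, so that the extended iteration remains a partial CS iteration; and (ii) the canonical extension $i_\varepsilon$ is an $M$-complete embedding into $P_\ves$. My plan is to handle (i) by unpacking the definition, and to reduce (ii) via Lemma~\ref{lem:wolfgang}(\ref{item:suitable_implies_filter}) to showing that $i_\varepsilon$ sends predense sets $D \in M$ of $P^M_\varepsilon$ to predense sets in $P_\ves$.

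For (i), a typical element $r = q \wedge i_\varepsilon(p) \in P_\ves$ lies in $P^\CS_\ves$: for any $\beta < \ves$ pick $\beta' \in \varepsilon \cap M$ with $\beta' \geq \beta$; coherence gives $i_\varepsilon(p)\on\beta = i_{\beta'}(p\on\beta')\on\beta \in P_\beta$ (using that $P_{\beta'}$ is by induction a partial CS iteration, hence closed under restrictions), and so $r\on\beta \in P_\beta$. The direct limit $\bigcup_{\beta<\ves}P_\beta$ sits inside $P_\ves$: given $q \in P_\beta$, pick $\alpha \in \varepsilon \cap M$ with $\alpha \geq \beta$, and let $\mathbf{1} \in P^M_\varepsilon$ be the empty (weakest) condition; then $q = q \wedge i_\varepsilon(\mathbf{1})$ and the requirement $q \leq i_\alpha(\mathbf{1}\on\alpha)$ is trivial. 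Closure under restriction is the same case analysis, and closure under the $q' \wedge r$ operation (for $q'\in P_\gamma$, $\gamma<\ves$, $q'\leq r\on\gamma$) reduces, after enlarging $\alpha$ to some $\alpha'' \in \varepsilon \cap M$ with $\alpha'' \geq \gamma$ via the rewriting $r = (q \wedge i_{\alpha''}(p\on\alpha'')) \wedge i_\varepsilon(p)$, to simply absorbing $q'$ into the ``$q$-part''. Finally, the decomposition $i_\varepsilon(p) = i_\alpha(p\on\alpha) \wedge i_\varepsilon(p)$ (for any $\alpha \in \varepsilon \cap M$) shows that $i_\varepsilon$ has range in $P_\ves$.

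For (ii), fix a predense $D \in M$ and a condition $r = q \wedge i_\varepsilon(p)$ with $q \in P_\alpha$, $q \leq i_\alpha(p\on\alpha)$. Inside $M$, set $D^p \DEFEQ \{d \in P^M_\varepsilon : d \leq p \text{ and } (\exists e \in D)\, d \leq e\}$, which is dense below $p$ in $P^M_\varepsilon$. Because $P^M_\alpha$ is a complete subforcing of $P^M_\varepsilon$ in $M$, the set $\{d\on\alpha : d \in D^p\}$ is dense below $p\on\alpha$ in $P^M_\alpha$ (still inside $M$). The inductive $M$-completeness of $i_\alpha$ propagates this to predensity below $i_\alpha(p\on\alpha)$ in $P_\alpha$, so there is $d \in D^p$ and a common extension $q' \in P_\alpha$ of $q$ and $i_\alpha(d\on\alpha)$. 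Since $d \leq p$ in $P^M_\varepsilon$ holds coordinatewise, also $i_\varepsilon(d) \leq i_\varepsilon(p)$ in $P^\CS_\ves$, so $q' \wedge i_\varepsilon(d) \in P_\ves$ extends $r$ and lies below $i_\varepsilon(e)$ for some witness $e \in D$. Hence $i_\varepsilon[D]$ is predense in $P_\ves$, and Lemma~\ref{lem:wolfgang}(\ref{item:suitable_implies_filter}) finishes the proof.

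The main obstacle is the bookkeeping in (i): verifying $r\on\beta \in P_\ves$ for $\alpha < \beta < \ves$ forces one to compute $i_\varepsilon(p)\on\beta$ by approximating $\beta$ from above inside $\varepsilon \cap M$ and invoking the already-established partial CS structure of $P_{\beta'}$. Once this and the reduction $P^M_\alpha \lessdot P^M_\varepsilon$ (available because $\bar P^M$ is itself a partial CS iteration) are in place, the remainder is a clean inductive application of $M$-completeness of the $i_\alpha$.
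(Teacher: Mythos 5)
Your proof is correct and takes essentially the same approach as the paper: both reduce $M$-completeness to preservation of predensity via Lemma~\ref{lem:wolfgang}(\ref{item:suitable_implies_filter}), then use the inductive $M$-completeness of $i_\alpha$ (for $\alpha\in\varepsilon\cap M$ with $q\in P_\alpha$) to find a $q'\le q$ together with a $d\le p$ lying below some element of the given predense/maximal-antichain set, so that $q'\land i_\varepsilon(d)$ is the required extension. The only difference is presentational: you stay in $V$ and argue directly with dense sets and restrictions, whereas the paper passes to a generic extension $V[H_\alpha]$ and reasons about the quotient $P^M_\varepsilon/H^M_\alpha$.
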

\begin{proof}
  It is easy to see that $P_\ves$ is a partial CS limit and contains
  the range $i_\varepsilon[P^M_\varepsilon]$.
    We now show preservation of
    predensity; this implies $M$-completeness by  Lemma~\ref{lem:wolfgang}.

  Let $(p_j)_{j\in J} \in M$ be a maximal antichain in
  $P^M_\varepsilon$. (Since $P^M_\varepsilon$ does not have to be ccc in $M$,
  $J$ can have any cardinality in $M$.) Let $q\land {i_{\varepsilon}}(p)$ be a
	condition in $P_\ves $.  (If $\ves<\varepsilon$, i.e.,  if
$\cf(\varepsilon)>\omega$, then we can choose $p$ to be the empty condition.) 
   Fix $\alpha \in \varepsilon \cap M $ be such
  that $q\in P_\alpha$. 
  Let $H_\alpha$ be $P_\alpha$-generic and contain $q$, 
  so $p\on\alpha$ is in $H^M_\alpha$. 
  Now in $M[H^M_\alpha]$ the set $\{p_j: j\in J, p_j\in
  P_\varepsilon^M/H_\alpha^M\}$ is predense in
  $P^M_\varepsilon/H^M_\alpha$ (since this is forced by the empty condition
  in $P^M_\alpha$). In particular, 
  $p$ is compatible with some $p_j$, witnessed 
  by $p'\le p,p_j$ in $P^M_\varepsilon /H^M_\alpha$.

  We can find $q'\le_{P_\alpha} q$ deciding $j$ and $p'$; since certainly
  $q'\le^*  {i_{\alpha}}(p'\on \alpha) $, we may assume even $\le$ without loss of generality. Now $q'\land {i_{\varepsilon}} (p') \le q\land
  {i_\varepsilon}(p)$ (since $q'\leq q$ and $p'\leq p$), 
  and $q'\land {i_\varepsilon} (p') \le
  {i_\varepsilon}(p_j)$ (since $p'\le p_j$).
\end{proof}

\begin{DefandClaim}\label{lem:kellnertheorem}
  Let $\bar P^M$ be a topped partial CS iteration in $M$
  of length $\varepsilon$.   We can construct by induction on
  $\beta\in\varepsilon+1$ an
  \emph{almost finite support iteration $\bar P$ over $\bar P^M$}
  (or: almost FS iteration) as follows:
  \begin{enumerate}
    \item As induction hypothesis we assume that the canonical   
      embedding  $i_\alpha$
      works for all $\alpha\in \beta \cap M$. (So the notation 
			  $M [H^M_\alpha]$ makes sense.) 
    \item\label{item:qwrrqw} Let $\beta=\alpha+1$. If $\alpha\in M$, then we can use any
      $Q_\alpha$ provided that (it is forced that) $Q^M_\alpha$ is an
      $M[H^M_\alpha]$-complete subforcing of $Q_\alpha$.
      (If $\alpha\notin M$, then there is no restriction on  $Q_\alpha$.)
    \item Let $\beta\in M$ and $\cf(\beta)=\omega$.
      Then $P_\beta$ is the almost FS limit of
      $(P_\alpha,Q_\alpha)_{\alpha<\beta}$ over $P^M_\beta$.
    \item Let $\beta\in M$ and $\cf(\beta)>\omega$.  Then $P_\beta$ is
      again the almost FS limit of
      $(P_\alpha,Q_\alpha)_{\alpha<\beta}$ over $P^M_\beta$ (which
      also happens to be the direct limit).
    \item  For limit ordinals not in $M$, 
       $P_\beta$ is the direct limit.
  \end{enumerate}
\end{DefandClaim}

So the claim includes that the resulting $\bar P$ is a (topped) partial CS
iteration of length $\varepsilon$ over $\bar P^M$ (i.e., the canonical embeddings
$i_\alpha$ work for all $\alpha\in (\varepsilon+1) \cap M$), where we only assume
that the $Q_\alpha$ satisfy the obvious requirement given in~(\ref{item:qwrrqw}).
(Note that we can always find some  suitable $Q_\alpha$ for $\alpha\in M$,
for example we can just take $Q^M_\alpha$ itself.)

\begin{proof}
  We have to show (by induction) that the resulting sequence $\bar P$
  is a partial CS iteration, and that $\bar P^M$ embeds into $\bar P$.
  For successor cases, there is nothing to do. So assume that $\alpha$
  is a limit.  If $P_\alpha$ is a direct limit, it is trivially a
  partial CS limit;  if $P_\alpha$ is an almost FS limit, then the
  easy part of Lemma~\ref{lem:afs.complete} shows that it is a partial
  CS limit.

  So it remains to show that for a limit  $\alpha\in M$, the (naturally defined)
  embedding $i_\alpha:P^M_\alpha\to P_\alpha$ is $M$-complete.
	This was the main claim in  Lemma~\ref{lem:afs.complete}.  
\end{proof}

The following lemma is natural and easy.   
\begin{Lem}
  Assume that we construct an almost FS
  iteration $\bar P$ over $\bar P^M$ where
  each $Q_\alpha$ is (forced to be) ccc.
  Then $P_\varepsilon$ is ccc (and in particular proper).
\end{Lem}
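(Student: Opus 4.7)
The plan is to prove ccc by transfinite induction on $\beta\leq\varepsilon$; properness then follows automatically from ccc. The base case is trivial, and the successor case $\beta=\alpha+1$ is the standard ``ccc $\ast$ ccc-forced'' argument: $P_{\alpha+1}$ is forcing-equivalent to $P_\alpha\ast Q_\alpha$, and the two-step iteration of ccc forcings is ccc.

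The substantive work is at limit ordinals $\beta$. I would first establish, by a parallel induction, a structural ``almost finite support'' invariant: every $r\in P_\beta$ satisfies $\operatorname{supp}(r)\setminus M$ finite, and moreover $r$ can be written as $q\land i_\varepsilon(p)$ where $q\in P_\alpha$ with $\alpha\in \beta\cap M$ (or $\alpha<\beta$, if $\beta\notin M$) and $p\in P^M_\beta$ is either trivial or in the countable set $P^M_\beta\cap M$. The verification is immediate from Definition~\ref{def:almostfs}: at an almost FS limit, $\operatorname{supp}(i_\varepsilon(p))\subseteq M$, so the whole non-$M$-support of $r$ comes from $q$; at a direct limit ($\beta\notin M$), one just quotes the induction hypothesis on $P_\alpha$ for the relevant $\alpha<\beta$; and at successor stages at most one ordinal is added.

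With the invariant in hand, the limit step splits cleanly. For $\beta\in M$ with $\cf(\beta)=\omega$, every $r_i$ in a purported $\aleph_1$-antichain has the form $q_i\land i_\varepsilon(p_i)$ with $p_i\in P^M_\beta$ and $q_i\in P_{\alpha_i}$, $\alpha_i\in\beta\cap M$. Since both $P^M_\beta$ and $\beta\cap M$ are countable, a double pigeonhole yields an uncountable $J\subseteq\omega_1$ on which $(p_i,\alpha_i)\equiv (p,\alpha)$ is constant; then $(q_i)_{i\in J}\subseteq P_\alpha$, which is ccc by induction, so some $q_{i_1},q_{i_2}$ admit a common lower bound $q\in P_\alpha$. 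Since $q_{i_k}\leq i_\alpha(p\on\alpha)$ for $k=1,2$, the same holds for $q$, so $q\land i_\varepsilon(p)\in P_\beta$ is a common extension of $r_{i_1}$ and $r_{i_2}$. For $\beta\notin M$ with $\cf(\beta)=\omega$, pigeonhole along a cofinal $\omega$-sequence $\gamma_n\nearrow\beta$ confines uncountably many $r_i$ to some $P_{\gamma_n}$, which is ccc by induction.

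The remaining cases ($\beta\in M$ with $\cf(\beta)>\omega$, handled as a direct limit through $M$-ordinals by the remark in Definition~\ref{def:almostfs}, and $\beta\notin M$ with $\cf(\beta)>\omega$) are handled uniformly by combining the $\Delta$-system lemma with the structural invariant. Set $F_i := \operatorname{supp}(r_i)\setminus M$; these are finite, so by $\Delta$-system we may assume a common finite root $F$ and pairwise disjoint $F_i\setminus F$ on an uncountable index set. Using the invariant, pigeonhole further on the (countably many) $M$-originated parts $p_i$ to extract an uncountable subfamily sharing the same $p$. Choose $\gamma\in\beta\cap M$ (respectively $\gamma<\beta$) exceeding $\max F$ and accommodating $p$; the restrictions $r_i\on\gamma$ lie in $P_\gamma$, ccc by induction, so two are compatible. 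Beyond $\gamma$, the $M$-parts agree (both equal to the common $p$-tail) and the non-$M$-parts are supported on disjoint sets, so the common extension on $P_\gamma$ lifts to $P_\beta$. The main obstacle, and the point needing most care, is showing that the factorization $r=q\land i_\varepsilon(p)$ propagates coherently through the inductive construction, especially across successive direct limits at ordinals outside $M$; this is where one has to unwind the recursive definition and exploit that $M$-completeness of the embeddings $i_\alpha$ has been established by Lemma~\ref{lem:afs.complete} at all earlier stages.
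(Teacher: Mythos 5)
Your argument agrees with the paper at successor stages and at limits of countable cofinality (the almost FS limit case is handled exactly as the paper does, via the built-in decomposition $q\wedge i_\alpha(p)$ and a double pigeonhole on $p\in P^M_\alpha$ and on $\gamma\in M\cap\alpha$). The difference is at uncountable cofinality: the paper simply observes that direct limits are taken coboundedly (since $M\cap\alpha$ is bounded and all other stages below $\alpha$ are direct limits) and invokes Solovay's theorem, whereas you unpack this into an explicit $\Delta$-system argument. That route is legitimate — it is essentially a re-proof of the Solovay-type preservation fact specialized to this setting — and has the minor advantage of not citing an external result, at the cost of having to verify that the ``merge'' of two conditions with disjoint tails is actually a condition of the partial CS iteration $\bar P$ (not merely of the inverse limit). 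That verification is unproblematic here, since every stage above $\gamma>\sup(M\cap\alpha)$ is a direct limit, but it deserves an explicit sentence.

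However, the invariant you set up to drive the $\Delta$-system step is stated too strongly, and the strong part is in fact false. You claim that every $r\in P_\beta$ can be written as $q\wedge i_\varepsilon(p)$ with $q\in P_\alpha$, $\alpha\in\beta\cap M$, and $p\in P^M_\beta$. Consider $\beta=\gamma+1$ with $\gamma\in M$: then $\alpha\le\gamma$, so coordinate $\gamma$ of $r$ must be recovered from $i(p)(\gamma)$, i.e., from the canonical image of an element of $Q^M_\gamma$. But $Q_\gamma$ is only required to contain $Q^M_\gamma$ as an $M[H^M_\gamma]$-complete subforcing, and may be much larger; a condition $r$ with $r(\gamma)\notin i[Q^M_\gamma]$ has no such decomposition. (The same issue arises at $\gamma\notin M$: there is ``no restriction on $Q_\gamma$'' in Definition and Claim~\ref{lem:kellnertheorem}, so $Q_\gamma$ can be nontrivial, and the information at $\gamma$ can be stored neither in $p$ nor in a $q$ living strictly below $\gamma$.) The decomposition is a property of the almost FS \emph{limit} — it is precisely how $P_\beta$ is defined there — and does not persist at later successor stages. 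What \emph{is} true by an easy induction, and is all your $\Delta$-system argument actually uses, is the weaker half of your invariant: $\operatorname{supp}(r)\setminus M$ is finite for every $r\in P_\beta$. With that corrected, and with $\gamma$ chosen past $\sup(M\cap\alpha)$ and past the $\Delta$-system root so that restrictions land in a single ccc $P_\gamma$ and tails have pairwise disjoint support lying entirely in direct-limit territory, your argument goes through; you do not even need the additional pigeonhole on the $M$-parts, since those are already swallowed by $P_\gamma$.
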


\begin{proof}
  We show that $P_\alpha$ is ccc by induction on
  $\alpha\leq\varepsilon$.  For successors, we use that $Q_\alpha$ is
  ccc.  For $\alpha$ of uncountable cofinality, we know that we took
  the direct limit coboundedly often (and all $P_\beta$ are ccc for
  $\beta<\alpha$), so by a result of Solovay $P_\alpha$ is again ccc.
  For $\alpha$ a limit of countable cofinality not in $M$, just use
  that all $P_\beta$ are ccc for $\beta<\alpha$, and the fact that
  $P_\alpha$ is the direct limit.  This leaves the case that
  $\alpha\in M$ has countable cofinality, i.e., the $P_\alpha$ is the
  almost FS limit. Let $A\subseteq P_\alpha$ be uncountable.  Each
  $a\in A$ has the form $q\land i_\alpha(p)$ for $p\in P^M_\alpha$ and
  $q\in \bigcup_{\gamma<\alpha} P_\gamma$.  We can thin out the set $A$ such that 
 $p$ are the same and all $q$ are in the same $P_\gamma$. So there have to be compatible
  elements in $A$.
\end{proof}
All almost FS iterations that we consider in this paper will satisfy the
countable chain condition (and hence in particular be proper).

We will need a variant of this lemma for $\sigma$-centered forcing  notions. 
\begin{Lem}\label{lem:4.17}
  Assume that we construct an almost FS
  iteration $\bar P$ over $\bar P^M$ 
  where only countably many $Q_\alpha$
  are nontrivial (e.g., only those with $\alpha\in M$) and where
  each $Q_\alpha$ is (forced to be) $\sigma$-centered.
  Then $P_\varepsilon$ is $\sigma$-centered as well.
\end{Lem}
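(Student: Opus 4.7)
I proceed by induction on $\alpha\le\varepsilon$ showing each $P_\alpha$ is $\sigma$-centered; let $S\DEFEQ\{\alpha<\varepsilon:Q_\alpha\text{ nontrivial}\}$, a countable set by hypothesis. For the successor step $P_{\beta+1}\simeq P_\beta*Q_\beta$ (the case $Q_\beta$ trivial is vacuous), fix a $\sigma$-centered partition $P_\beta=\bigcup_n P_{\beta,n}$ from the inductive hypothesis and a $P_\beta$-name $\n f:\n{Q_\beta}\to\check\omega$ for a centering function of $Q_\beta$. On the dense set of $(p,\n q)\in P_{\beta+1}$ where $p$ decides $\n f(\n q)$, color $(p,\n q)$ by $(n,k)$ with $p\in P_{\beta,n}$ and $p\forces\n f(\n q)=\check k$: a common refinement $r\le p_1,p_2$ in $P_{\beta,n}$ forces $\n{q_1},\n{q_2}$ into the centered class $\n f^{-1}(\check k)$, so a name $\n q^*$ for a common lower bound exists and $(r,\n q^*)$ refines both conditions.

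For a limit $\alpha$ such that $P_\alpha$ is a direct limit (this includes all $\alpha\notin M$ and all $\alpha\in M$ with $\cf(\alpha)>\omega$), countability of $S$ suffices: if $\cf(\alpha)>\omega$ then $\gamma_0\DEFEQ\sup(S\cap\alpha)<\alpha$, so $P_\alpha$ is forcing equivalent to $P_{\gamma_0+1}$ (further coordinates are trivial), and induction applies; if $\cf(\alpha)=\omega$ and $\alpha\notin M$, fix an $\omega$-cofinal sequence $\gamma_n\uparrow\alpha$ and write $P_\alpha=\bigcup_n P_{\gamma_n}$, a countable union of $\sigma$-centered forcings, hence $\sigma$-centered by concatenating decompositions.

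The main case is $\alpha\in M$ with $\cf(\alpha)=\omega$, where $P_\alpha$ is the almost FS limit: every $p\in P_\alpha$ has a representation $p=q\wedge i_\alpha(p')$ with $\gamma\in\alpha\cap M$, $q\in P_\gamma$, $p'\in P^M_\alpha$ and $q\le i_\gamma(p'\on\gamma)$. Because $M$ is a countable ord-transitive candidate and the (non-ordinal) set $P^M_\alpha\in M$ satisfies $P^M_\alpha\subseteq M$, it is countable in $V$; enumerate $P^M_\alpha=\{p'_n:n\in\omega\}$, and note $\alpha\cap M$ is countable as well. For each $(\gamma,n)\in(\alpha\cap M)\times\omega$, the induction gives a $\sigma$-centered partition $\{q\in P_\gamma:q\le i_\gamma(p'_n\on\gamma)\}=\bigcup_k C^{\gamma,n}_k$. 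Color each $p\in P_\alpha$ by a triple $(\gamma,n,k)$ coming from some chosen representation $p=q\wedge i_\alpha(p'_n)$ with $q\in C^{\gamma,n}_k$; there are only countably many colors. If $p_1,p_2$ share color $(\gamma,n,k)$, centeredness of $C^{\gamma,n}_k$ gives $r\le q_1,q_2$ in $P_\gamma$, with $r\le i_\gamma(p'_n\on\gamma)$, so $r\wedge i_\alpha(p'_n)\in P_\alpha$ refines both $p_1$ and $p_2$.

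The subtle obstacle is this last coloring: grouping the ``$p'$-side'' merely by a centered class of $P^M_\alpha$ would fail, because if $p'^*\in P^M_\alpha$ is a common extension of $p'_1,p'_2$ and $r\in P_\gamma$ is a common refinement of $q_1,q_2$, both $r$ and $i_\gamma(p'^*\on\gamma)$ are only known to lie below a shared $i_\gamma(p'_1\on\gamma)=i_\gamma(p'_2\on\gamma)$, and two such extensions of a common condition can easily be incompatible. The fix is to exploit that $P^M_\alpha$ is not merely $\sigma$-centered in $M$ but actually countable in $V$, allowing us to color by the specific representative $p'_n$ so that the $p'$-coordinates match on the nose and compatibility on that side becomes automatic.
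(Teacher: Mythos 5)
Your proof is correct and follows essentially the same route as the paper's (very terse) argument: the paper proves the lemma by observing that the direct limit of countably many $\sigma$-centered forcings is $\sigma$-centered and that the almost FS limit is handled by coloring $q\land i_\alpha(p)$ by the pair ($p$ itself, color of $q$), exploiting exactly the countability of $P^M_\alpha$ in $V$ that you identified. Your fleshing out of the successor step (color by the pair consisting of $p$'s color and the decided value of the centering function) and your closing discussion of why coloring by a centered class of $P^M_\alpha$ instead of by the literal $p$ would fail are both accurate and capture the heart of the matter.
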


\begin{proof}
  By induction: The direct limit of countably many 
  $\sigma$-centered forcings is $\sigma$-centered,
  as is the almost FS limit of $\sigma$-centered forcings
	(to color $q\land i_\alpha( p)$, use  $p$ itself together with the color of $q$).
\end{proof}

We will actually need two variants of the almost FS construction:
Countably many models $M^n$; and starting the almost FS iteration with some $\alpha_0$.

Firstly, we can construct an almost  FS iteration not just over one iteration
$\bar P^M$, but over an increasing chain of iterations.
 Analogously to Definition~\ref{def:almostfs} and
Lemma~\ref{lem:afs.complete}, we can show:

\begin{Lem}\label{lem:418}
 For each $n\in \omega$, let $M^n$ be a nice candidate, and let  $\bar P^n$ 
	be a topped partial CS iteration in $M^n$ of
        length\footnote{Or only: $\varepsilon\in M^{n_0}$ for some $n_0$.} $\varepsilon\in M^0$ of countable
  cofinality,
  such that $M^m\in M^n $  and 
 $M^{n}$ thinks that $\bar P^m$ canonically embeds into $\bar P^{n}$,
  for all $m<n$.  Let $\bar P$ be a topless iteration of length $\varepsilon$ into
which all $\bar P^n$ canonically embed. 

  Then we can define the almost FS limit $P_\varepsilon$ over $(\bar
  P^n)_{n\in \omega}$  as follows:
  Conditions in $P_\varepsilon$ are of the form $q\land i^n_\varepsilon(p)$ where
  $n\in\omega$, $p\in P^n_\varepsilon$,
  and $q\in P_\alpha$ for some $\alpha\in M^n\cap\varepsilon$ with  $q\le i^n_\alpha (p\on \alpha)$. 
  Then  $P_\varepsilon$
  is a partial CS limit over each $\bar P^n$. 
\end{Lem}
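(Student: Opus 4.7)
The plan is to adapt the proof of Lemma~\ref{lem:afs.complete}, with extra bookkeeping to handle the tower $(M^n)_{n\in\omega}$; the whole argument splits into a closure check and a predensity-preservation check for each $n$.

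First I would verify that $P_\varepsilon$ is a partial CS limit of $\bar P$. It trivially contains the direct limit and is a subset of $P^\CS_\varepsilon$. Closure under restrictions and under the $q'\land r$ operation reduces to the usual calculation in Definition~\ref{def:almostfs}; whenever a ``witnessing'' ordinal $\beta\in M^n\cap\varepsilon$ above a given $\beta_0<\varepsilon$ is needed, I would invoke niceness of $M^n$ together with $\cf(\varepsilon)=\omega$ to see that $M^n\cap\varepsilon$ is cofinal in $\varepsilon$.

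Next, for each fixed $n$, I would show that $i^n_\varepsilon$ sends $M^n$-maximal antichains in $P^n_\varepsilon$ to predense subsets of $P_\varepsilon$, so that $M^n$-completeness follows from Lemma~\ref{lem:wolfgang}(\ref{item:suitable_implies_filter}). Fix $A\in M^n$ maximal in $P^n_\varepsilon$ and an arbitrary condition $p = q\land i^k_\varepsilon(r)\in P_\varepsilon$ with $q\in P_\alpha$, $\alpha\in M^k\cap\varepsilon$, $q\leq i^k_\alpha(r\on\alpha)$. The crucial step is to reduce to $k=n$:
\begin{itemize}
\item If $k<n$, then $M^k\subseteq M^n$ by ord-transitivity of $M^n$, so $r\in M^n$. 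Put $r'\DEFEQ i^{k,n}(r)\in P^n_\varepsilon$ (with $i^{k,n}$ the canonical embedding inside $M^n$); by coherence of the canonical embeddings, $i^n_\varepsilon(r')=i^k_\varepsilon(r)$, and $\alpha\in M^k\cap\varepsilon\subseteq M^n\cap\varepsilon$, so $p$ already has the required form over $M^n$.
\item If $k>n$, then $M^n\subseteq M^k$, hence $A\in M^k$; in $M^k$ the image $i^{n,k}[A]$ is maximal in $P^k_\varepsilon$ by $M^n$-completeness of $i^{n,k}$, and since $i^n_\varepsilon = i^k_\varepsilon\circ i^{n,k}$, compatibility with some $i^k_\varepsilon(i^{n,k}(a))$ is compatibility with $i^n_\varepsilon(a)$. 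So we may replace $n$ by $k$ and $A$ by $i^{n,k}[A]$.
\end{itemize}
Once $k=n$, the proof of Lemma~\ref{lem:afs.complete} applies essentially verbatim: take a $P_\alpha$-generic $H_\alpha\ni q$, pull back to the induced $M^n$-generic $H^{M^n}_\alpha$, use predensity of $A$ below $r$ in $M^n[H^{M^n}_\alpha]$ to find some $a\in A$ and $r^*\leq r,a$ in $P^n_\varepsilon/H^{M^n}_\alpha$, then strengthen $q$ to $q^*$ deciding~$a$ and $r^*$; the condition $q^*\land i^n_\varepsilon(r^*)$ witnesses compatibility of $p$ with $i^n_\varepsilon(a)$.

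The main obstacle is the two-case reduction to $k=n$; it hinges on ord-transitivity of the nice candidates (so that $M^k\in M^n\Rightarrow M^k\subseteq M^n$) and on the coherence $i^n_\varepsilon\circ i^{m,n}=i^m_\varepsilon$ for $m<n$, which follows by a straightforward induction on $\alpha$ from the assumption that every $\bar P^n$ canonically embeds into $\bar P$ (and that inside $M^n$, every $\bar P^m$ canonically embeds into $\bar P^n$). Beyond this bookkeeping, everything reduces to the single-model argument already carried out in Lemma~\ref{lem:afs.complete}.
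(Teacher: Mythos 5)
Your proof is correct and is exactly the argument the paper intends: Lemma~\ref{lem:418} is stated in the paper with no proof beyond the remark that it follows ``analogously to Definition~\ref{def:almostfs} and Lemma~\ref{lem:afs.complete}'', and your write-up supplies precisely that analogy. The one piece of genuinely new bookkeeping --- reducing a condition $q\land i^k_\varepsilon(r)$ to the case $k=n$ via ord-transitivity ($M^k\in M^n\Rightarrow M^k\subseteq M^n$) and the coherence $i^m_\varepsilon=i^n_\varepsilon\circ i^{m,n}$ for $m<n$, with the symmetric move of pushing the antichain $A$ up to $M^k$ when $k>n$ --- is identified and handled correctly, after which the predensity argument of Lemma~\ref{lem:afs.complete} together with Lemma~\ref{lem:wolfgang}(\ref{item:suitable_implies_filter}) indeed applies verbatim.
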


As before, we get the following corollary:
\begin{Cor}\label{cor:ctblmanycandidates}
  Given $M^n$ and $\bar P^n$ as above, we can construct a topped partial CS
  iteration $\bar P$ such that each $\bar P^n$ embeds $M^n$-completely into it;
  we can choose $Q_\alpha$ as we wish (subject to the obvious restriction that
  each $Q^n_\alpha$ is an $M^n[H^n_\alpha]$-complete subforcing). If we always choose
  $Q_\alpha$ to be ccc,  then $\bar P$ is ccc; this is the case if we set
  $Q_\alpha$ to be the union of the (countable) sets $Q^n_\alpha$. 
\end{Cor}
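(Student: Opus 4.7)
The plan is to prove the corollary by the obvious transfinite recursion on $\beta \le \varepsilon$, mimicking Definition and Claim~\ref{lem:kellnertheorem} but using Lemma~\ref{lem:418} in place of Lemma~\ref{lem:afs.complete} at limit stages of countable cofinality. Concretely, I maintain the induction hypothesis that $\bar P \on \beta$ is a topped partial CS iteration, and that for each $n$ the canonical embedding $i^n_\alpha : P^n_\alpha \to P_\alpha$ works for every $\alpha \in \beta \cap M^n$; the $M^n$-complete embeddings are automatically coherent by construction. At a successor stage $\beta = \alpha + 1$, I pick any $Q_\alpha$ subject to the stated restriction (this is possible, e.g., by taking $Q_\alpha \DEFEQ \bigcup_n Q^n_\alpha$, which is well-defined because each $Q^n_\alpha$ is $M^n[H^n_\alpha]$-complete in $Q^{n+1}_\alpha$ by the increasing-chain hypothesis). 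At a limit $\beta \notin \bigcup_n M^n$ I take the direct limit; at a limit $\beta \in \bigcup_n M^n$ of uncountable cofinality I again take the direct limit (as in Lemma~\ref{lem:kellnertheorem}(4), noting that each $p \in P^n_\beta$ lives in some $P^n_\alpha$ with $\alpha < \beta$ in $M^n$); at a limit $\beta \in \bigcup_n M^n$ of countable cofinality I take the almost FS limit over the sequence $(\bar P^n)_{n \in \omega'}$, where $\omega' = \{n : \beta \in M^n\}$, as defined in Lemma~\ref{lem:418}.

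The verification that $i^n_\beta$ works at such limits is exactly the content of Lemma~\ref{lem:418}, combined with the density argument of Lemma~\ref{lem:afs.complete}: given a maximal antichain $A \in M^n$ in $P^n_\beta$ and a condition $q \land i^n_\beta(p) \in P_\beta$, one drops below $q$ in some $P_\alpha$ with $\alpha \in \beta \cap M^n$, passes to the induced $M^n$-generic filter $H^n_\alpha$, uses maximality of $A$ in the quotient $P^n_\beta / H^n_\alpha$, and pulls back a common lower bound. The only subtlety is that the compatibility-witnessing condition produced there must lie in our new limit $P_\beta$, but this is immediate from the definition in Lemma~\ref{lem:418}.

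For the ccc clause, I proceed by a parallel induction, imitating the proof of Lemma~\ref{lem:4.17}'s ccc analogue in the text above it. Successor stages are trivial from the ccc of $Q_\alpha$; limits of uncountable cofinality use Solovay's theorem since direct limits are taken cofinally; direct limits at countable-cofinality stages $\beta \notin \bigcup_n M^n$ are handled by the usual argument using ccc of all $P_\gamma$ for $\gamma < \beta$. The genuinely new case is a countable-cofinality limit $\beta \in \bigcup_n M^n$, where each element of an uncountable $A \subseteq P_\beta$ has the form $q \land i^n_\beta(p)$ for some $n \in \omega$, $p \in P^n_\beta$, and $q \in P_\gamma$ with $\gamma \in M^n \cap \beta$; by a three-step thinning (first fix $n$, then fix $p$, then fix $\gamma$) I reduce to an uncountable family in $P_\gamma$, and apply the ccc of $P_\gamma$ from the induction hypothesis. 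Finally, choosing $Q_\alpha = \bigcup_n Q^n_\alpha$ yields a ccc forcing at each stage (it is even $\sigma$-centered when each $Q^n_\alpha$ is), so the whole iteration is ccc.

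The main obstacle I anticipate is purely notational bookkeeping: keeping track of the embeddings $i^n_\alpha$ and their coherence across both the $\alpha$-coordinate and the $n$-coordinate, and checking that ``$M^n$-complete in $P_\beta$'' is preserved when we later extend the iteration past $\beta$. But this last point follows once and for all from the observation that each $P_\gamma$ for $\gamma \ge \beta$ is a partial CS extension of $P_\beta$, hence $P_\beta \lessdot P_\gamma$, so $M^n$-completeness of $P^n_\beta$ in $P_\beta$ transfers automatically up the iteration. No new combinatorial idea beyond what is already in Lemmas~\ref{lem:afs.complete}, \ref{lem:kellnertheorem}, \ref{lem:4.17} and~\ref{lem:418} is required.
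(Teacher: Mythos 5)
Your proposal follows the same inductive strategy as the paper's proof: use the multi-model almost FS limit (Lemma~\ref{lem:418}) at countable-cofinality limits in $\bigcup_n M^n$, the direct limit elsewhere, and establish ccc by the same thinning argument used earlier for a single model. The one place you gloss over is the verification at a limit $\beta\in M^n$ of \emph{uncountable} cofinality: Lemma~\ref{lem:418} is stated only for countable cofinality, and the paper instead notes that $\beta'\DEFEQ\sup(\beta\cap M^n)$ has countable cofinality, lies in $M^{n+1}$, and was already built at an earlier stage as an almost FS limit over $(\bar P^{n+1},\bar P^{n+2},\dots)$; composing $i^{n,n+1}_\beta$ (which is $M^n$-complete and lands in $P^{n+1}_{\beta'}$) with $i^{n+1}_{\beta'}$ then yields $M^n$-completeness of $i^n_\beta$ into $P_{\beta'}\lessdot P_\beta$ — this is the argument your ``exactly the content of Lemma~\ref{lem:418}'' is silently invoking.
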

\begin{proof}
  We can define $P_\alpha$ by induction. If $\alpha\in \bigcup_{n\in
    \omega} M^n$ has countable cofinality,  then we use the almost
  FS limit as in   Lemma~\ref{lem:418}.  Otherwise we use the direct
  limit.  If $\alpha\in M^n$ has uncountable cofinality, then
  $\alpha'\DEFEQ  \sup(\alpha\cap M)$ is an element of $M^{n+1}$.  In our
  induction we have already considered $\alpha'$ and have defined 
  $P_{\alpha'}$ by   Lemma~\ref{lem:418} (applied to the sequence
  $(\bar P^{n+1}, \bar P^{n+2},\ldots)$). 
  This is sufficient to show that
  $i^n_\alpha:P^n_\alpha\to P_{\alpha'} \lessdot P_\alpha$ is
  $M^n$-complete. 
\end{proof}

Secondly, we can start the almost FS iteration after some $\alpha_0$
(i.e., $\bar P$ is already given up to $\alpha_0$, and we can continue
it as an almost FS iteration up to $\varepsilon$), and get the same
properties that we previously showed for the almost FS iteration, but
this time for the quotient  $P_\varepsilon/P_{\alpha_0}$. In
more detail:

\begin{Lem}\label{lem:almostfsstartatalpha}
  Assume that $\bar P^M$ is in $M$  a (topped) partial CS iteration of length
  $\varepsilon$, and that  $\bar P$ is in $V$ a topped partial CS iteration 
  of length  $\alpha_0$ over $\bar P^M\on {\alpha_0} $  
	for some $\alpha_0\in \varepsilon \cap M$.
  Then we can extend $\bar P$ to a (topped) partial CS iteration of length 
  $\varepsilon$ over $\bar P^M$,  as in the almost FS iteration (i.e.,
  using the almost FS limit at limit points $\beta>\alpha_0$ with $\beta\in M$
  of countable cofinality; and the direct limit everywhere else).
	We can use 
  any  $Q_\alpha$ for $\alpha\geq\alpha_0$ (provided $Q^M_\alpha$ is an
  $M[H^M_\alpha]$-complete subforcing of $Q_\alpha$). If all $Q_\alpha$ are ccc,
  then $P_{\alpha_0}$ forces that $P_{\varepsilon}/H_{\alpha_0}$ is ccc (in
  particular proper); if moreover all $Q_\alpha$ are $\sigma$-centered and only 
  countably many are nontrivial, then $P_{\alpha_0}$ forces that
  $P_{\varepsilon}/H_{\alpha_0}$ is $\sigma$-centered.
\end{Lem}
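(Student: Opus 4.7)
The strategy is to rerun the proofs of Lemma~\ref{lem:afs.complete}, Definition and Claim~\ref{lem:kellnertheorem}, and Lemma~\ref{lem:4.17} with the induction starting at $\alpha_0$ instead of at $0$, and, for the chain-condition part of the conclusion, to replace each statement ``$P_\beta$ is ccc'' (resp.\ $\sigma$-centered) by ``$P_{\alpha_0}$ forces that $P_\beta/H_{\alpha_0}$ is ccc'' (resp.\ $\sigma$-centered) throughout.  Since $\alpha_0\in M$ and $M$ is nice, the set $\beta\cap M$ remains cofinal in $\beta$ whenever $\beta\in M$ with $\cf(\beta)=\omega$, so all the suprema and decompositions required by the earlier arguments are still available.

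First I would construct $P_\beta$ for $\beta\in(\alpha_0,\varepsilon]$ by the recipe spelled out in the statement and verify by induction on $\beta$ that the canonical embedding $i_\beta:P^M_\beta\to P_\beta$ is $M$-complete.  The hypothesis at $\beta=\alpha_0$ is given, and the successor step is trivial.  At a limit $\beta\in M$ of countable cofinality, where $P_\beta$ is the almost FS limit, the $M$-completeness of $i_\beta$ follows from Lemma~\ref{lem:afs.complete}, whose proof uses \emph{only} the inductive assumption that $i_\alpha$ is $M$-complete for each $\alpha\in\beta\cap M$ (and not any structural property of the iteration below $\alpha_0$).  The remaining limit cases are handled by Lemma~\ref{lem:wolfgang}.

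Next I would prove the quotient chain conditions, working in $V[H_{\alpha_0}]$ and inducting on $\beta\in[\alpha_0,\varepsilon]$.  For ccc: successor stages are immediate from $Q_\beta$ being forced ccc; at $\beta$ with $\cf(\beta)>\omega$ the quotient $P_\beta/H_{\alpha_0}$ is a direct limit (taken cofinally) of ccc forcings, so Solovay's theorem applies in $V[H_{\alpha_0}]$; at $\beta\notin M$ with $\cf(\beta)=\omega$ we have $P_\beta/H_{\alpha_0}=\bigcup_{\alpha_0\le\gamma<\beta}P_\gamma/H_{\alpha_0}$; and at $\beta\in M$ of countable cofinality every element of $P_\beta/H_{\alpha_0}$ factors as $q\wedge i_\beta(p)$ with $p\in P^M_\beta$ and $q\in P_\gamma/H_{\alpha_0}$ for some $\gamma\in\beta\cap M$ with $\gamma\ge\alpha_0$, so a $\Delta$-system style thinning of any uncountable family---fixing the countably many possibilities for $p$ together with the ordinal $\gamma$---reduces compatibility to the inductive hypothesis for $P_\gamma/H_{\alpha_0}$.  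For $\sigma$-centered (assuming only countably many nontrivial $Q_\alpha$ with $\alpha\ge\alpha_0$): color $q\wedge i_\beta(p)$ by the pair $(p,c_\gamma(q))$, where $c_\gamma$ is an inductively chosen $\sigma$-centered coloring of $P_\gamma/H_{\alpha_0}$; since $P^M_\varepsilon$ is countable in $V$ (hence in $V[H_{\alpha_0}]$) and only countably many $\gamma$ arise, this uses only countably many colors, and two same-colored conditions share a lower bound by the inductive hypothesis together with $p_1=p_2$.

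The one delicacy I expect is bookkeeping rather than conceptual: at an almost FS limit stage $\beta\in M$ one must check that the decomposition $q\wedge i_\beta(p)$ of an arbitrary condition of $P_\beta/H_{\alpha_0}$ can always be taken with $\gamma\in\beta\cap M$ satisfying $\gamma\ge\alpha_0$, which follows from $\alpha_0\in M$ and cofinality of $\beta\cap M$ in $\beta$, and that Solovay's theorem for quotients at uncountable-cofinality limits is applicable, which uses that $P_{\alpha_0}$ is already ccc so $\aleph_1$ is preserved in $V[H_{\alpha_0}]$.  Both points are routine, so the result drops out of the same inductive scheme as the unquotiented versions.
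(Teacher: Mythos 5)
Your proposal is correct and follows precisely the route the paper intends: the paper itself gives no proof of Lemma~\ref{lem:almostfsstartatalpha}, only the one-sentence remark preceding it that one can ``start the almost FS iteration after some $\alpha_0$'' and ``get the same properties \dots\ for the quotient $P_\varepsilon/P_{\alpha_0}$'', and rerunning the inductions from Lemma~\ref{lem:afs.complete}, Definition and Claim~\ref{lem:kellnertheorem}, the unnamed ccc lemma, and Lemma~\ref{lem:4.17} --- relativized to $V[H_{\alpha_0}]$ --- is exactly what is expected. One cosmetic point: at the uncountable-cofinality limits your appeal to ``$P_{\alpha_0}$ is already ccc so $\aleph_1$ is preserved'' is neither a stated hypothesis of the lemma nor quite the relevant concern; what the Solovay argument actually needs in $V[H_{\alpha_0}]$ is that the stages below $\beta$ at which direct limits are taken remain cobounded, and this is automatic since cobounded-in-$\beta$ sets stay cobounded in any extension, so the argument goes through without that extra assumption (though in the paper's applications $P_{\alpha_0}$ happens to be ccc anyway).
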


\subsection{Almost countable support iterations}\label{subsec:almostCS}
``Almost countable support iterations $\bar P$'' (over a given iteration $\bar P^M$ in a candidate~$M$)
will have the following two crucial properties:  There is a canonical 
$M$-complete embedding of~$\bar P^M $ into~$\bar P$, 
and $\bar P$ preserves a given random real (similar to the usual countable support iterations).

\begin{DefandClaim}\label{def:almost_CS_iteration_wolfgang}
  Let $\bar P^M$ be a topped partial CS iteration in $M$ of length $\varepsilon$.
  We can construct by induction on $\beta\in\varepsilon+1$
  the \emph{almost countable support iteration $\bar P$ over $\bar P^M$}
  (or:  almost CS iteration):
  \begin{enumerate}
  \item As induction hypothesis, we assume that the canonical embedding
    $i_\alpha$ works for every $\alpha\in \beta\cap M$.
    We set\footnote{
      So  for successors $\beta\in M$, we have $\delta'=\beta=\delta$.
      For $\beta\in M$ limit, $\beta= \delta$ and $\delta'$ is as
      in Definition~\ref{def:canonicalextension}.}
    \begin{equation}
      \label{eq:delta.prime} 
      \delta\DEFEQ  \min(M\setminus \beta),
      \quad
      \delta'\DEFEQ \sup(\alpha+1:\, \alpha\in \delta \cap M ).
    \end{equation}
    Note that $\delta'\le \beta \le \delta$. 
  \item Let $\beta=\alpha+1$. We can choose any desired forcing $Q_\alpha$;
    if $\beta\in M$ we of course require that
    \begin{equation}\label{eq:jehrwetewt}
      \text{$Q^M_\alpha$ is an $M[H^M_\alpha]$-complete subforcing of $Q_\alpha$.}
    \end{equation}
    This defines $P_\beta$.
  \item Let $\cf(\beta)> \omega$. Then
    $P_\beta$ is the direct limit. 
  \item Let $\cf(\beta) = \omega$ and assume that  $\beta\in M$ (so $M\cap \beta$ is cofinal 
    in $\beta$ and $\delta' = \beta=\delta$). 
    We define  $P_\beta=P_\delta$ as the union of the following two sets:
    \begin{itemize}
    \item The almost FS limit of $(P_\alpha,Q_\alpha)_{\alpha<\delta}$, see
      Definition~\ref{def:almostfs}. 
    \item The set $P_{\delta}^\gen$ of $M$-generic conditions $q \in P^\CS_{\delta}$, i.e., those which satisfy
      \begin{displaymath}
	q \forces_{P^\CS_{\delta}} i^{-1}_\delta[H_{P^\CS_{\delta}}]\subseteq P^M_\delta \textrm{ is } M\textrm{-generic.}
      \end{displaymath}
    \end{itemize}
  \item Let $\cf(\beta) = \omega$ and assume that  $\beta\notin M$ but $M\cap \beta$ is cofinal 
    in $\beta$,  so  $\delta' = \beta< \delta$. 
    We define  $P_\beta = P_{\delta'}$ as the union of the following two sets:
    \begin{itemize}
    \item The direct limit of $(P_\alpha,Q_\alpha)_{\alpha<\delta'}$.
    \item The set $P_{\delta'}^\gen$ of $M$-generic conditions $q \in P^\CS_{\delta'}$, i.e., those which satisfy
      \begin{displaymath}
	q \forces_{P^\CS_{\delta'}} i^{-1}_\delta[H_{P^\CS_{\delta'}}]\subseteq 
                 P^M_\delta    
          \textrm{ is } M\textrm{-generic.}
      \end{displaymath}
    \end{itemize}
    (Note that the $M$-generic conditions form an open subset of $P^\CS_\beta=P^\CS_{\delta'}$.)
  \item Let $\cf(\beta) = \omega$ and  $M \cap \beta$ 
    not cofinal in~$\beta$ (so $\beta\notin M$).
    Then $P_\beta$ is the full CS limit of
    $(P_\alpha,Q_\alpha)_{\alpha<\beta}$ (see Definition~\ref{def:fullCS}).
  \end{enumerate}
\end{DefandClaim}

So the claim is that for every choice of $Q_\alpha$ (with the obvious 
restriction~\eqref{eq:jehrwetewt}), this construction always results in a partial CS iteration
$\bar P$ over $\bar P^M$. 
The proof is a bit cumbersome; it is a variant of the usual proof that
properness is preserved in countable support iterations (see
e.g.~\cite{MR1234283}).

We will use the following fact in $M$ (for the iteration $\bar P^M$):
\proofclaim{eq:prelim}{Let $\bar P$ be a topped  iteration of length $\varepsilon$. 
Let $\alpha_1\le\alpha_2\le\beta\le\varepsilon$.  Let
$p_1$ be a $P_{\alpha_1}$-name for a condition in $P_\varepsilon$, and let
$D$ be an open dense set of $P_\beta$. Then there is a $P_{\alpha_2}$-name
$p_2$  for  a condition in $D$ such that the empty condition of
$P_{\alpha_2}$ forces:  $p_2\leq p_1\on\beta$ and: if $p_1$ is in
$P_\varepsilon/H_{\alpha_2}$, then the condition $p_2$ is as well.}

(Proof: Work in the $P_{\alpha_2}$-extension. We know that $p'\DEFEQ
p_1\restriction \beta$ is a $P_\beta$-condition. We now define $p_2$ as
follows: If $p'\notin P_\beta/H_{\alpha_2}$ (which is equivalent to $p_1\notin
P_\epsilon/H_{\alpha_2}$), then we choose any $p_2\leq p'$ in $D$ (which is
dense in $P_\beta$).  Otherwise (using that $D\cap P_\beta/H_{\alpha_2}$ is
dense in $P_\beta/H_{\alpha_2}$)  we can choose $p_2\leq p'$ in $D\cap
P_\beta/H_{\alpha_2}$.)

The following easy fact will also be useful:  
\proofclaim{eq:forces.forces}{Let $P$ be a subforcing of $Q$. We define
$P\on p\DEFEQ \{ r\in P: r\le p\}$.  
Assume that $p\in P$ and $P\on p = Q\on p$. 
\\Then for any $P$-name $\n x$ and any formula $\varphi(x)$
we have: 
 $p\forces_{P} \varphi(\n x) $ iff $p\forces_{Q} \varphi(\n x)$.
}

We now prove by induction on $\beta\le\varepsilon$ the following statement
(which includes that the Definition and
Claim~\ref{def:almost_CS_iteration_wolfgang} works up to $\beta$).  Let
$ \delta,  \delta'$ be as in \eqref{eq:delta.prime}.
\begin{Lem}\label{lem:inductionA}
   \begin{enumerate}[(a)]
     \item
   The topped iteration $\bar P$ of length $\beta$ is a partial CS iteration.
     \item
   The canonical embedding $i_\delta: P^M_\delta\to P_{\delta'}$ works, hence 
   also $i_\delta: P^M_\delta\to P_{\delta}$ works. 
     \item
    Moreover, assume that
      \begin{itemize}
         \item $\alpha\in M\cap \delta $,
	 \item $\n p\in M$ is a $P^M_\alpha$-name
   of a $P^M_\delta$-condition,
	\item 
       $q\in P_\alpha$ forces (in $P_\alpha$)
   that $\n p\on\alpha[H^M_\alpha]$ is in $H^M_\alpha$.
   \end{itemize}
   Then there is a $q^+\in P_{\delta'}$
   (and therefore in $P_\beta$) extending $q$ and forcing that 
   $\n p[H^M_\alpha]$ is in $H^M_\delta$.
  \end{enumerate} 
\end{Lem}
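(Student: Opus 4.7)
The plan is to prove (a), (b), (c) simultaneously by induction on $\beta\in\varepsilon+1$, splitting into the six cases of Definition and Claim~\ref{def:almost_CS_iteration_wolfgang}. Part (a) reduces in each case to a bookkeeping check: the conditions in $P_\beta$ are taken from $P^\CS_\beta$ by construction, closure under $p\mapsto p\on\alpha$ and under $p\mapsto q\wedge p$ (for $q\in P_\alpha$, $q\le p\on\alpha$) is immediate, and separativity passes from the $Q_\alpha$'s to the limits as in Fact~\ref{fact:eq.eqstar}. For the successor case of (c), if $\delta=\alpha+1\in M$ one first applies the induction hypothesis (c) at stage $\alpha$ to name-restrict $\n p\on\alpha$ into the generic, then uses \eqref{eq:jehrwetewt} to extend by a suitable value of $Q_\alpha$.

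For part (b), $M$-completeness of $i_\delta:P^M_\delta\to P_{\delta'}$ follows from (c): given a maximal antichain $A\in M$ of $P^M_\delta$ and $q\in P_{\delta'}$, the empty condition of $P^M_\delta$ forces in $M$ that $H^M_\delta$ meets~$A$; take (in $M$) a name $\n p$ for such a member, and apply (c) (with $\alpha\in\delta\cap M$ chosen so that $q\in P_\alpha$, using (c) at lower stages when needed to absorb $q$) to obtain $q^+\le q$ forcing $\n p\in H^M_\delta$. Then $q^+$ is compatible with some $i_\delta$-image of an $A$-element. When $\cf_M(\delta)>\omega$, $P^M_\delta$ is a direct limit in $M$, so every antichain appears in some $P^M_\alpha$ with $\alpha\in\delta\cap M$, and we instead invoke the inductive (b) at the smaller stage.

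The core work is (c) in the case $\cf(\delta)=\omega$ with $\delta\in M$ (so $\beta=\delta=\delta'$), which is the exact analogue of the fusion step in the standard preservation-of-properness proof for countable support iterations (cf.~\cite{MR1234283}). By niceness, $M\cap\delta$ is countable and cofinal in $\delta$; pick an increasing cofinal sequence $\alpha=\alpha_0<\alpha_1<\alpha_2<\cdots$ in $\delta\cap M$, and enumerate the open dense subsets of $P^M_\delta$ lying in $M$ as $(D_n)_{n\in\omega}$. Build, inductively in $n$, a $P^M_{\alpha_n}$-name $\n p_n\in M$ (with $\n p_0=\n p$) and a condition $q_n\in P_{\alpha_n}$ with $q_{n-1}\le q_{n-1}$ so that: first, using \eqref{eq:prelim} inside $M$ (with $\alpha_1=\alpha_{n-1}$, $\alpha_2=\alpha_n$, $\beta=\delta$), $\n p_n$ is forced by $1_{P^M_{\alpha_n}}$ to extend $\n p_{n-1}$ and lie in $D_n\cap(P^M_\delta/H^M_{\alpha_n})$; second, by the induction hypothesis (c) applied at the smaller stage $\alpha_n<\beta$ to the name $\n p_n$, choose $q_n\le q_{n-1}$ in $P_{\alpha_n}$ forcing $\n p_n\on\alpha_n\in H^M_{\alpha_n}$. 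Define $q^+\in P^\CS_\delta$ by setting $q^+\on\alpha_n =^* q_n$ for each $n$ (this is well-defined by coherence and Fact~\ref{fact:suitable.equivalent}). Then $q^+\le q$, and $q^+$ forces that the $\n p_n$'s generate an $M$-generic filter on $P^M_\delta$ containing $\n p$, so $q^+\in P^{\gen}_\delta\subseteq P_\delta$.

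The hard part will be arranging the two requirements on $\n p_n$ and $q_n$ compatibly at each fusion step: $\n p_n$ must land in $D_n$ while still being reachable from $\n p_{n-1}$ in $P^M_\delta/H^M_{\alpha_n}$, and simultaneously $q_n$ must force its $\alpha_n$-restriction into the generic. The first is handled by \eqref{eq:prelim}, and the second by the inductive (c) at the strictly smaller stage $\alpha_n<\beta$—this is precisely why (a), (b), (c) must be carried through the induction together. The remaining limit cases of (c), namely $\cf(\beta)>\omega$ or $M\cap\beta$ not cofinal in $\beta$, reduce to a smaller stage because then $\n p$, viewed as a $P^M_\delta$-name, is supported below some $\alpha'\in\delta\cap M$ that is $\leq\beta$.
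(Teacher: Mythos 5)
Your overall strategy---simultaneous induction on $\beta$ over the six cases, with (a) as bookkeeping, (b) deduced from (c), and (c) proved by a fusion using~\eqref{eq:prelim} and a cofinal sequence $(\alpha_n)$---is exactly the paper's plan, and the fusion step of (c) is described accurately. However there are two concrete gaps.

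First, your case split for (c) is incomplete. You do the fusion only under ``$\cf(\delta)=\omega$ with $\delta\in M$ (so $\beta=\delta=\delta'$)'', which is case~(4) of Definition and Claim~\ref{def:almost_CS_iteration_wolfgang}, and then claim the remaining limit cases ``$\cf(\beta)>\omega$ or $M\cap\beta$ not cofinal in $\beta$'' reduce to a smaller stage. But this misses case~(5): $\cf(\beta)=\omega$, $\beta\notin M$, yet $M\cap\beta$ cofinal in $\beta$ (so $\delta'=\beta<\delta$ and $\cf(\delta)>\omega$). This does \emph{not} reduce to a smaller stage---$P_\beta=P_{\delta'}$ is being constructed right now as the union of the direct limit with $P^{\gen}_{\delta'}$, and one must run the very same fusion using $(\alpha_n)$ cofinal in $M\cap\delta=M\cap\beta$. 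The paper handles cases~(4) and~(5) together with one argument; your presentation silently assumes $\beta\in M$ and files case~(5) under the trivial ones.

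Second, your derivation of (b) from (c) is not sound as stated. You propose to apply (c) directly to a given $q\in P_{\delta'}$ with ``$\alpha\in\delta\cap M$ chosen so that $q\in P_\alpha$'', but elements of the partial CS limit $P_{\delta'}$ (in particular the $M$-generic ones in $P^{\gen}_{\delta'}$) need not lie in any $P_\alpha$ with $\alpha<\delta'$, and the vague ``using (c) at lower stages to absorb $q$'' does not repair this. What the paper actually does is: use (c) to prove that $P^{\gen}_\beta$ is \emph{dense} in $P_\beta$; replace an arbitrary condition by one in $P^{\gen}_\beta$; and then carefully transfer statements between $\forces_{P_\beta}$ and $\forces_{P^\CS_\beta}$ (via the observation that $\{r\in P^\CS_\beta:r\le q\}=\{r\in P_\beta:r\le q\}$ when $q\in P^{\gen}_\beta$, because $P^{\gen}_\beta$ is open in $P^\CS_\beta$, together with~\eqref{eq:forces.forces}). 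This $P_\beta$-versus-$P^\CS_\beta$ subtlety is the real content of step (b), since $M$-genericity in the definition of $P^{\gen}_\beta$ is phrased in terms of $P^\CS_\beta$-forcing rather than $P_\beta$-forcing; your sketch omits it entirely, and without it the predensity of $i_\delta[A]$ in $P_{\delta'}$ does not follow.
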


\begin{proof} 
  First let us deal with the trivial cases. 
   It is clear that we always get a partial CS iteration.
  \begin{itemize}
	\item 
  Assume that $\beta=\beta_0+1\in M$, i.e., $\delta=\delta'=\beta$.
  It is clear that $i_\beta$ works. 
  To get $q^+$, first extend $q$ to some $q'\in P_{\beta_0}$ (by induction
	hypothesis), then define $q^+$ extending $q'$ by $q^+(\beta_0)\DEFEQ \n
	p(\beta_0)$.
	\item 
  If $\beta=\beta_0+1\notin M$, there is nothing to do.
	\item 
  Assume that $\cf(\beta)>\omega$ (whether $\beta\in M$ or not).
  Then $\delta'<\beta$.
  So $i_\delta: P^M_\delta\to P_{\delta'}$ works by induction, and 
  similarly (c) follows from the inductive assumption.  (Use 
  the inductive assumption for $\beta=\delta'$; the $\delta$ that we got at that
  stage is the same as the current~$\delta$, and the $q^+$ we obtained at 
  that stage will still satisfy all requirements at the current stage.)
	\item 
  Assume that $\cf(\beta)=\omega$ and that $M\cap\beta$ is bounded in~$\beta$.
  Then the proof  is the same as in the previous case.
	\end{itemize}

  We are left with the cases corresponding to (4) and (5) of
  Definition~\ref{def:almost_CS_iteration_wolfgang}: $\cf(\beta)=\omega$ and
  $M\cap\beta$ is cofinal in~$\beta$.  So either $\beta\in M$, then
  $\delta'=\beta=\delta$, or $\beta\notin M$, then $\delta'=\beta<\delta$ and
  $\cf(\delta)>\omega$.

  We leave it to the reader to check that  $P_\beta$ is indeed a partial CS limit.  
   The  main point is to see that for all $p,q\in P_\beta$ the
   condition $q\wedge p$ is in $P_\beta$ as well, provided  $q\in P_\alpha$ and $q\le p\on \alpha$
   for some $\alpha<\beta$.
   If $p\in P^\gen_\beta$, then this follows because $P^\gen_\beta$ is open in $P^\CS_\beta$; 
   the other cases are immediate from the definition (by induction). 

  We now turn to  claim (c). Assume   $q\in P_\alpha$ and $\n p\in M$ are
  given, $\alpha\in M\cap \delta$.

  Let $(D_n)_{n \in \omega}$ enumerate all dense sets of~$P^M_\delta$
  which lie in~$M$, and let $(\alpha_n)_{n \in \omega} $ be a sequence
  of ordinals in $M$ which is cofinal in $\beta$, where $\alpha_0=\alpha$.
  
  Using \eqref{eq:prelim} in~$M$, we can find a sequence $(\n p_n)_{n
  \in \omega}$ satisfying the following in~$M$, for all $n>0$:
  \begin{itemize}
    \item 
            $\n p_0 = \n p$.
    \item 
	     $\n p_n\in M$ is a $P^M_{\alpha_n}$-name of a
	     $P^M_\delta$-condition in~$D_n$.
    \item 
            $\forces_{P^M_{\alpha_{n}}} \n p_{n}\le_{P_\delta^M} \n p_{n-1}$.
    \item 
            $\forces_{P^M_{\alpha_{n}}} $ If  $
	    \n p_{n-1}\on\alpha_{n}\in H^M_{\alpha_{n}}$,  then 
	    $\n p_{n}\on\alpha_{n}\in H^M_{\alpha_{n}}$ as well.
  \end{itemize}
  Using the inductive assumption for the $\alpha_n$'s, we can now find 
  a sequence $(q_n)_{n \in \omega}$ of conditions satisfying the following: 
  \begin{itemize}
    \item  $q_0 = q$, $q_n\in P_{\alpha_n}$.
    \item  $q_{n}\on \alpha_{n-1} = q_{n-1}$.
    \item  $q_n \forces_{P_{\alpha_n}} \n p_{n-1}\on \alpha_n \in H^M_{\alpha_n}$, 
    so also                          $ \n p_{n  }\on \alpha_n \in H^M_{\alpha_n}$.
  \end{itemize}
  Let $q^+\in P^\CS_\beta$ be the union of the $q_n$.  Then  for all $n$:
   \begin{enumerate}
    \item 
       $q_n \forces_{ P^\CS_\beta}  \n p_n\on \alpha_n \in H^M_{\alpha_n}$, so also $q^+$ forces this. 
       \\(Using induction on $n$.) 
    \item  For all $n$ and all  $m\ge n$: 
       $q^+ \forces_{ P^\CS_\beta}  \n p_m\on \alpha_m \in H^M_{\alpha_m}$, 
                 so also          $ \n p_n\on \alpha_m \in H^M_{\alpha_m}$. 
       \\(As $\n p_m \le \n p_n$.)  
    \item 
       $q^+ \forces_{ P^\CS_\beta}  \n p_n\in H^M_{\delta}$.
       \\ (Recall that  $P^\CS_\beta$ is separative, see Fact~\ref{fact:eq.eqstar}. 
	    So $i_\delta(\n p_n)\in H_\delta$ iff $i_{\alpha_n}(\n p\on \alpha_m)\in
	    H_{\alpha_m}$ for all large~$m$.)
    \end{enumerate}

    As $q^+\forces_{P^\CS_\beta }  \n p_n \in  D_n\cap H^M_\delta$, we conclude
    that $q^+\in P^\gen_\beta$ (using Lemma~\ref{lem:wolfgang},
    applied to ${P^\CS_\beta }$).   In particular,
    $P^\gen_\beta$ is dense in $P_\beta$: Let $q\wedge i_\delta(p)$
     be an element of the almost FS limit;  so $q\in P_\alpha$ for some
     $\alpha < \beta$.  Now find a generic  $q^+$  extending $q$ and stronger than $i_\delta(p)$,
     then $q^+\le q\wedge i_\delta(p)$.

    It remains to show that $i_\delta$ is $M$-complete.  
   Let $A\in M$ be a maximal antichain of $P^M_\delta$, and $p\in P_\beta$.
   Assume towards a contradiction that $p$ forces  in $P_\beta$
   that $ i^{-1}_{\delta}[H_\beta ]$  does not intersect $A$ in exactly one point.

   Since $P^\gen_\beta$ is dense in $P_\beta$, we can find some $q\leq p$ in $P^\gen_\beta$.  
     Let \[P'\DEFEQ  \{r\in P^\CS_\beta: r\le
     q\}=\{r\in P_\beta: r\le q\},   \]
  where the equality holds because $P^\gen_\beta$ is  open in $P^\CS_\beta$.

   Let $\Gamma $ be the canonical name for a $P'$-generic filter, 
	 i.e.: 
   $\Gamma\DEFEQ  \{(\check r, r): r\in P' \}$. 
     Let $R$ be either 
     $P^\CS_\beta$  or  $ P_\beta$.   We write 
   $\langle \Gamma\rangle _R$ for the filter generated
   by~$\Gamma$ in~$R$, i.e., $\langle \Gamma\rangle _R \DEFEQ \{r\in R: (\exists r'\in \Gamma ) \
   r'\le r\}$.    So 
   \begin{equation}\label{gehtsnochduemmer}
   q\forces_R H_R =\langle \Gamma\rangle_R.
   \end{equation}
   
   We now see that the following hold:
    \begin{enumerate}
     \item[--] $ q\forces_{P_\beta}   i^{-1}_{\delta}[ H _{P_\beta}  ] $
           does not intersect $A$ in exactly one point. (By
	   assumption.) 
     \item[--] $ q\forces_{P_\beta}   i^{-1}_{\delta}[ \langle
           \Gamma \rangle_{P_\beta}] $
           does not intersect $A$ in exactly one point. (By
	   \eqref{gehtsnochduemmer}.)
     \item[--] $ q\forces_{P_\beta^\CS }   i^{-1}_{\delta}[ \langle  \Gamma \rangle_{P_\beta}] $
           does not intersect $A$ in exactly one point.  (By \eqref{eq:forces.forces}.) 
     \item[--] $ q\forces_{P_\beta^\CS }   i^{-1}_{\delta}[ \langle  \Gamma
       \rangle_{P_\beta^\CS}] $ does not intersect $A$ in exactly one point.
       (Because $i_\delta$ maps $A$ into $P_\beta\subseteq P_\beta^\CS$, so
       $A\cap i^{-1}_\delta[\langle Y\rangle_{P_\beta}]  = A \cap i^{-1}_\delta[\langle
       Y\rangle_{P_\beta^\CS}]$ for all~$Y$.)
     \item[--] $ q\forces_{P_\beta^\CS }   i^{-1}_{\delta}[ H_{P_\beta^\CS}]$ 
          does not intersect $A$ in exactly one point. (Again by  \eqref{gehtsnochduemmer}.)
    \end{enumerate}
   But this, according to
   the definition of $P^\gen_\beta$, implies $q\notin
   P^\gen_\beta$, a contradiction.
\end{proof}

We can also show that the almost CS iteration
of proper forcings $Q_\alpha$ is  proper.
(We do not really need this fact, as we could allow non-proper iterations in
our preparatory forcing,  see Section~\ref{sec:7a}(\ref{item:nonproper}).  In some sense, $M$-completeness 
replaces properness, so the proof of $M$-completeness was similar to the 
``usual'' proof of properness.) 
\begin{Lem}
Assume that in Definition~\ref{def:almost_CS_iteration_wolfgang}, 
every $Q_\alpha$ is (forced to be) proper.  Then also each $P_\delta$ 
is proper. 
\end{Lem}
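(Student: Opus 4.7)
My plan is to prove properness by induction on $\beta\le\varepsilon$, following exactly the case analysis of Definition~\ref{def:almost_CS_iteration_wolfgang} and adapting the fusion technique already used in the proof of Lemma~\ref{lem:inductionA}. Fix a sufficiently large regular $\chi^*$ and a countable elementary submodel $N\prec H(\chi^*)$ containing $\bar P$, the name of $\bar P^M$, the candidate~$M$, and the given condition $p\in P_\beta\cap N$; the goal is to produce an $N$-generic $q^+\le p$.

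The two easy cases are the successor step and the limit of uncountable cofinality. For $\beta=\alpha+1$, apply the induction hypothesis to get an $N$-generic $q_0\le p\on\alpha$ in $P_\alpha$ and then invoke the forced properness of $Q_\alpha$ inside the $q_0$-extension to obtain an $N[H_\alpha]$-generic value for $q^+(\alpha)\le p(\alpha)$. For $\cf(\beta)>\omega$ (direct limit), elementarity of $N$ yields some $\alpha_0\in N\cap\beta$ with $p\in P_{\alpha_0}$; the induction hypothesis gives an $N$-generic $q\le p$ in $P_{\alpha_0}$, and the standard Shelah argument (for every maximal antichain $A\in N$ of $P_\beta$ and every $\alpha\in N\cap\beta$, the set $\{a\on\alpha : a\in A\}$ is a predense subset of $P_\alpha$ lying in~$N$) shows that $q$ remains $N$-generic as an element of~$P_\beta$.

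The substantive case is $\cf(\beta)=\omega$, covering the three sub-cases (4)--(6) of Definition~\ref{def:almost_CS_iteration_wolfgang}. Pick $(\alpha_n)_{n\in\omega}\subseteq N\cap\beta$ cofinal in $\beta$ (possible in cases (4) and (5) since $M\cap\beta$ is cofinal and $M\in N$; in case (6) pick any cofinal $\omega$-sequence in $N\cap\beta$). Enumerate as $(D_n)_{n\in\omega}$ all dense open subsets of~$P_\beta$ that lie in~$N$, and \emph{simultaneously} (in cases (4) and (5)) all dense open subsets of~$P^M_\delta$ that lie in~$M$ (there are only countably many of the latter, and they are all members of $N$ since $M\in N$). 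Now imitate the fusion of Lemma~\ref{lem:inductionA}: build a decreasing sequence of names $\n p_n\in N$ for conditions in~$D_n$ together with $N$-generic conditions $q_n\in P_{\alpha_n}$ such that $q_n$ extends $q_{n-1}$ and forces $\n p_{n-1}\on\alpha_n\in H_{\alpha_n}$; at the step from $q_{n-1}$ to $q_n$ the induction hypothesis (properness of $P_{\alpha_n}$) is used in exactly the same way that properness of $Q_\alpha$ was used in the successor case. Set $q^+\DEFEQ \bigcup_n q_n\in P^\CS_\beta$.

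The main obstacle, and the only novelty over the classical CS properness proof, is verifying $q^+\in P_\beta$ rather than merely in the full CS limit. In case~(6) this is automatic since $P_\beta=P^\CS_\beta$. In cases~(4) and~(5) we need $q^+\in P^\gen_\beta$, i.e., that $q^+$ is an $M$-generic condition for $P^M_\delta$ via $i_\delta$; but this follows from the interleaved half of our fusion, exactly as in the proof of Lemma~\ref{lem:inductionA}, because $q^+$ forces $\n p_n\in H^M_\delta$ for a cofinal system of conditions $\n p_n$ meeting all $M$-dense subsets of $P^M_\delta$. Finally, $N$-genericity of $q^+$ is standard: for any maximal antichain $A\in N$ of $P_\beta$, the set $D_A=\{r\in P_\beta:\exists a\in A\ r\le a\}$ is dense open and in $N$, so $D_A=D_n$ for some $n$, whence $q^+\forces \n p_n\in H_\beta\cap D_A$ and $\n p_n\in N$ extends some $a\in A\cap N$.
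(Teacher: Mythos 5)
Your proof attempts a direct fusion argument in the style of Lemma~\ref{lem:inductionA}, re-proving properness from scratch. The paper takes a shorter and genuinely different route: it strengthens the induction hypothesis to \qemph{$P_\delta/H_\alpha$ is proper for all $\alpha<\delta$}, and for $\cf(\delta)=\omega$ it observes that $P^\gen_\delta$ is dense in $P_\delta$, that below any $p\in P^\gen_\delta$ the forcing $P_\delta$ agrees with the full CS limit $P^\CS_\delta$ (since $P^\gen_\delta$ is open in $P^\CS_\delta$), and that $P^\CS_\delta$ is naturally a genuine countable support iteration of length $\omega$ with proper iterands $Q'_n=P_{\delta_{n+1}}/H_{\delta_n}$; the classical CS-iteration properness theorem then applies in one stroke. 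This localisation-to-$P^\gen_\delta$ trick sidesteps all the fusion work, and it is the insight your approach misses.

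Your version also has a genuine gap in the $\cf(\beta)=\omega$ case. You enumerate in one list $(D_n)_{n\in\omega}$ both the $N$-dense subsets of $P_\beta$ and the $M$-dense subsets of $P^M_\delta$, build \qemph{names $\n p_n\in N$ for conditions in $D_n$}, and then argue $q^+\in P^\gen_\beta$ because \qemph{$q^+$ forces $\n p_n\in H^M_\delta$}. But a name hitting a dense subset of $P_\beta$ is a name (in $N$) for a $P_\beta$-condition, whereas the statement $\n p_n\in H^M_\delta$ only makes sense for a $P^M_\delta$-condition, i.e.\ a name in $M$. These are objects of different forcings and cannot be conflated. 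What Lemma~\ref{lem:inductionA} actually fuses are $P^M_{\alpha_n}$-names \emph{in $M$} of $P^M_\delta$-conditions meeting the $M$-dense sets; to also secure $(N,P_\beta)$-genericity you would need a second, interleaved sequence of $P_{\alpha_n}$-names \emph{in $N$} of $P_\beta$-conditions meeting the $N$-dense sets, together with an explicit coherence requirement (say $\n p^N_n\le i_\delta(\n p^M_n)$) binding the two sequences; your proposal does not formulate this, and without it the step \qemph{$q^+\in P^\gen_\beta$} does not follow from what you have built. A smaller point: the extension from $q_{n-1}$ to $q_n$ uses properness of the quotient $P_{\alpha_n}/H_{\alpha_{n-1}}$, not of $P_{\alpha_n}$ itself --- derivable from your hypothesis by a standard fact, but not what you state, and precisely the reason the paper's proof carries quotient properness explicitly through the induction.
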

\begin{proof}
  By induction on $\delta\le \varepsilon$ we prove that for all $\alpha<\delta$
	the quotient $P_\delta/H_ \alpha$ is (forced to be) proper.  We use the following
	facts about properness: 
	\proofclaim{claim:proper.successor}{
	  If $P$ is proper and $P$ forces that $Q$ is proper, then $P*Q$ 
		is proper.}
	\proofclaim{claim:proper.omega}{
	  If $\bar P$ is an  iteration of length $\omega$ 
		and if each $Q_n$ is forced to be proper, then
		the inverse limit $P_\omega$ is proper, as are all quotients
		$P_\omega/H_n$.}
	\proofclaim{claim:proper.omega.1}{
		If $\bar P$ is an  iteration of length $\delta$  with
		$\cf(\delta)>\omega$, 
		and if all quotients  $P_\beta/H_\alpha$ (for $\alpha < \beta < \delta$)
		are forced to be proper, then
		the direct limit $P_\delta$ is proper, as are all quotients
		$P_\delta/H_\alpha$.}

   If  $\delta$ is a successor, then our inductive claim  easily follows from 
	 the inductive assumption together with~\eqref{claim:proper.successor}. 

	 Let  $\delta$ be  a limit of countable cofinality, say $\delta =  \sup_n
	 \delta_n$. Define an iteration $\bar P'$ of length $\omega$ 
	 with $Q'_n\DEFEQ   P_{\delta_{n+1}} / H_{\delta_n}$.  (Each $Q'_n$ is proper,
	  by inductive assumption.) There is a natural forcing 
	 equivalence between  $ P^\CS_\delta$ and $P^{\prime\CS}_\omega$, the full CS limit of $\bar
	 P'$. 

  Let $N \esm H(\chi^*)$ contain $\bar P, P_\delta, \bar P', M, \bar P^M$.
	Let $p\in P_\delta\cap N$.  Without loss of generality $p\in P_\delta^\gen$. 
  So below $p$ we can identify $P_\delta$ with $P^\CS_\delta$ and hence 
	with $P^{\prime\CS}_ \omega$; now apply~\eqref{claim:proper.omega}.

	 The case of uncountable cofinality is similar, using~\eqref{claim:proper.omega.1} instead. 
\end{proof}

Recall the definition of $\sqsubset_n$ and $\sqsubset$ from Definition~\ref{def:sqsubset}, the notion of
(quick) interpretation $Z^*$ (of a name  $\n Z$ of a code for a null set) 
 and the definition of local preservation
of randoms from Definition~\ref{def:locally.random}. 
Recall that we have seen in Corollaries~\ref{cor:ultralaverlocalpreserving} and~\ref{cor:januslocallypreserves}:
\begin{Lem}\label{lem:4.28}
  \begin{itemize}
    \item If $Q^M$ is an ultralaver forcing in $M$ and $r$ a real,
		 then there is an ultralaver forcing $Q$ over  $Q^M$ locally
     preserving randomness of $r$ over~$M$.
		\item If $Q^M$ is a Janus forcing in $M$ and $r$ a real, then there is a
		 Janus forcing $Q$ over~$Q^M$ locally preserving randomness
of~$r$ over~$M$. 
\end{itemize}
\end{Lem}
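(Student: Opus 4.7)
Both items are essentially just the corollaries we already have, restated in a uniform framework; the plan is simply to indicate the right choice of dense sets $D^{Q^M}_n$ witnessing the notion of ``quick'' interpretation in Definition~\ref{def:locally.random} and then quote the appropriate preservation lemma.

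For the ultralaver case, I would set $D^{Q^M}_n$ to be the open dense set of conditions $p\in Q^M=\bL_{\bar D^M}$ whose stem has length at least $n$. A quick interpretation $(p^n)_{n\in\omega}$ is then a sequence of $\bL_{\bar D^M}$-conditions with strictly increasing stems interpreting $\bar{\n Z}$ as $\bar Z^*$, which is precisely the hypothesis of Lemma~\ref{lem:extendLDtopreserverandom}. Applying that lemma produces the ultrafilter system $\bar D\supseteq \bar D^M$ in $V$ such that $\bL_{\bar D}$ has the required $q\le p^0$ forcing $r$ random over $M[G^M]$ and $\n Z_i\sqsubset_{k_i}r$. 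This is exactly Corollary~\ref{cor:ultralaverlocalpreserving}.

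For the Janus case, the notion of quick is trivial: $D^{Q^M}_n\DEFEQ Q^M$ for every $n$, so any interpretation is automatically quick. The existence of the desired $Q\supseteq Q^M$ (in fact $Q$ equivalent to random forcing) is then a direct restatement of Lemma~\ref{lem:janusrandompreservation}, which in turn rests on Lemma~\ref{lem:janusmayberandom} to upgrade the countable Janus forcing inside a suitable model $M'\supseteq M$ to one that is forcing-equivalent to random forcing while preserving $M$-completeness, and on the fact (Lemma~\ref{lem:random.laver}) that random forcing strongly preserves randoms over countable models. This gives Corollary~\ref{cor:januslocallypreserves}.

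\emph{Main obstacle.} There is no real obstacle at this stage, since both halves were already established; the only thing one has to be careful about is that the definition of ``locally preserves randomness'' fits both cases uniformly. The asymmetry is just that ultralaver interpretations must be ``quick'' (strictly increasing stems) whereas for Janus all interpretations count as quick, and this is encoded by the choice of the dense sets $D^{Q^M}_n$. Once this bookkeeping is in place, both statements follow immediately by citing Corollaries~\ref{cor:ultralaverlocalpreserving} and~\ref{cor:januslocallypreserves}.
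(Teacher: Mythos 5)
Your proof is correct and takes exactly the same route as the paper: Lemma~\ref{lem:4.28} is stated there merely as a recapitulation of Corollaries~\ref{cor:ultralaverlocalpreserving} and~\ref{cor:januslocallypreserves}, which you reproduce with the right choice of dense sets $D^{Q^M}_n$ (stem-length sets for ultralaver, the trivial $D^{Q^M}_n=Q^M$ for Janus). Nothing further is needed.
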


We will prove the following preservation theorem:
\begin{Lem}\label{lem:iterate.random}
  Let $\bar P$ be an almost CS iteration (of length $\varepsilon$) over~$\bar P^M$,
  $r$ random over~$M$, and $p\in P^M_\varepsilon$. Assume 
  that each $P_\alpha$ forces 
	that $Q_\alpha$ locally preserves randomness of $r$ over
	$M[H^M_\alpha]$.
  Then there is some $q\leq p$ in $P_\varepsilon$ forcing that 
  $r$ is random over~$M[H^M_\varepsilon]$.
\end{Lem}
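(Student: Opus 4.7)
The plan is by induction on $\varepsilon$, strengthening the conclusion to the following: given in addition (in $M$) a ``quick'' sequence $\bar p = (p^n)_{n\in\omega}$ in $P^M_\varepsilon$ below $p$, interpreting names $\n Z_1,\dots,\n Z_m$ of codes for null sets as $Z^*_1,\dots,Z^*_m$ with $Z^*_i \sqsubset_{k_i} r$, the condition $q$ can additionally be taken to force $\n Z_i\sqsubset_{k_i} r$ for each $i$. Quickness is defined by recursion on the length: at successor stages it refers to the sequences of open dense sets $D^{Q^M_\alpha}_n$ provided by the local preservation statements (Corollaries \ref{cor:ultralaverlocalpreserving} and \ref{cor:januslocallypreserves}); at limit stages it requires quickness on every restriction to $\alpha\in\varepsilon\cap M$.

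At a successor $\varepsilon=\alpha+1$, the restrictions $(p^n\on\alpha)$ give a quick sequence in $P^M_\alpha$ interpreting the appropriate restrictions of the names; apply the induction hypothesis at $\alpha$ to obtain $q_\alpha\in P_\alpha$ forcing that $r$ is random over $M[H^M_\alpha]$ and that the $\sqsubset_{k_i}$-relations persist. In the $P_\alpha$-extension, the tail $(p^n(\alpha))$ is a quick interpretation with respect to $Q^M_\alpha$ of the remaining pieces of the $\n Z_i$, so the assumption that $Q_\alpha$ locally preserves randomness of $r$ over $M[H^M_\alpha]$ lets us extend $q_\alpha$ by a suitable $Q_\alpha$-condition, producing the desired~$q$.

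At a limit $\varepsilon$ of uncountable cofinality, niceness of $M$ gives $\cf(\varepsilon)>\omega$ also in $M$, so $P^M_\varepsilon$ is (in $M$) a direct limit and each $\n Z_i$ together with the sequence $\bar p$ may be viewed as living in $P^M_\alpha$ for some $\alpha\in\varepsilon\cap M$; the induction hypothesis at $\alpha$ applies, since $P_\alpha\lessdot P_\varepsilon$. The case $\cf(\varepsilon)=\omega$ with $M\cap\varepsilon$ bounded in $\varepsilon$ is essentially the same. The main case is $\cf(\varepsilon)=\omega$ with $M\cap\varepsilon$ cofinal in $\varepsilon$. Fix $\varepsilon_n\nearrow\varepsilon$ with $\varepsilon_n\in M$. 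Working in $M$, enumerate all $P^M_\varepsilon$-names in $M$ of codes for null sets as $(\n Z^j)_{j\in\omega}$ and all open dense subsets of $P^M_\varepsilon$ in $M$ as $(D^j)_{j\in\omega}$; construct a decreasing fusion sequence $(\tilde p^n)$ below $\bar p$ such that $\tilde p^n\in D^n$, such that the sequence is quick with respect to all $\n Z^j$ with $j\leq n$, and such that each restriction $(\tilde p^n\on\varepsilon_n)$ already forms a quick sequence in $P^M_{\varepsilon_n}$ interpreting ever-longer initial segments of the $\n Z^j$'s. Back in $V$, apply the induction hypothesis at $\varepsilon_n$ successively to build coherent $q_n\in P_{\varepsilon_n}$ with $q_{n+1}\on\varepsilon_n=q_n$ and $q_n\leq i_{\varepsilon_n}(\tilde p^n\on\varepsilon_n)$ forcing the appropriate $\sqsubset$-relations. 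The union $q\DEFEQ\bigcup_n q_n$ lies in the full CS limit, meets every $D^j$ (hence is $M$-generic and therefore in $P^\gen_\varepsilon\subseteq P_\varepsilon$), and forces $\n Z^j\sqsubset r$ for every $j$; since every code for a null set in $M[H^M_\varepsilon]$ arises from some $\n Z^j$, this yields randomness of $r$ over $M[H^M_\varepsilon]$.

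The main obstacle is setting up the recursive notion of a quick sequence and the bookkeeping for the fusion in $M$ so that all three induction steps mesh; in particular, the quickness at level $\varepsilon_n$ arranged in $M$ must match exactly what the inductive application at $\varepsilon_n$ requires, and the $q_n$'s must be built coherently so that their union lands inside the partial CS limit. This is the combinatorial heart of the ``Case~A'' preservation theorem of \cite{MR1623206,MR1234283,MR2214624}, here adapted to the almost CS setting where $M$-completeness (via openness of $P^\gen_\varepsilon$ in $P^\CS_\varepsilon$) plays the role that properness plays in the usual argument.
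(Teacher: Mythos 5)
Your high-level plan matches the paper's strategy: strengthen the inductive statement to carry along quick interpretations and the $\sqsubset_{k_i}$-relations, split into the same three cases, and build a fusion of generic conditions at limits of countable cofinality with $M\cap\varepsilon$ cofinal. However, the details of your "quick" bookkeeping and especially your successor step conceal the technical heart of the argument, and as written the successor step does not go through.

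First, the paper's notion of quickness (Definition~\ref{def:quick}) is \emph{not} recursive: a sequence $(p^k)$ in $P^M_\delta$ below $P^M_\alpha$ is quick iff $(p^k(\beta))$ is quick in $Q^M_\beta$ for the \emph{single} coordinate $\beta\ge\alpha$ that is minimal with $Q^M_\beta$ nontrivial. Combined with Assumption~\ref{asm:quick} (nontrivial $Q^M_\beta$ adds a new $\omega$-sequence and $\bigcap_n D^{Q^M_\beta}_n=\emptyset$, with triviality predetermined by a set $T\in M$), this guarantees that in any extension the sequence $(p^k\on\gamma)$ can lie in $H^M_\gamma$ only for finitely many $k$, giving a well-defined maximal $k^*$. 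You neither mention this assumption nor explain why your recursive notion yields this finiteness.

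Second, and more seriously: in your successor step you say that "the restrictions $(p^n\on\alpha)$ give a quick sequence in $P^M_\alpha$ interpreting the appropriate restrictions of the names." But $\n Z_i$ is a $P^M_{\alpha+1}$-name, and there is no canonical "restriction" of it to a $P^M_\alpha$-name; this is exactly where an \emph{intermediate interpretation} must be manufactured. In the paper's proof of Lemma~\ref{lem:induktion.wirklich}, for $\gamma>\gamma_0$ one works in the $P^M_\gamma$-extension, takes the maximal $k^*$ with $p^{k^*}\on\gamma\in H^M_\gamma$, and uses the $Q^M_\gamma$-quick tail starting at $p^{k^*}(\gamma)$ to produce a fresh $P^M_\gamma$-name $Z'$ interpreting $\n Z$; then $(p^k\on\gamma)$ interprets $Z'$ as $Z^*$, the IH at $\gamma$ is applied to $Z'$ (not to $\n Z$), and only afterward does local preservation of $Q_\gamma$ bridge from $Z'$ back to $\n Z$. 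Your sketch conflates these two interpretations, so the condition $q_\alpha$ you obtain has no well-defined object whose $\sqsubset_{k_i}$-persistence it is supposed to force. The same issue propagates into your limit fusion: the paper needs the two-indexed family $Z_{i,n}$ of $P^M_{\delta_n}$-names for intermediate interpretations together with the $c_n$-bookkeeping, and then uses closedness of $\{x: x\sqsubset_c r\}$ to pass to the limit $\n Z_i=\lim_n Z_{i,n}$; you acknowledge that the matching at level $\varepsilon_n$ is delicate but do not identify this mechanism, which is precisely what makes the three induction steps mesh.
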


What we will actually need is the following variant:
\begin{Lem}\label{lem:preservation.variant}
	Assume that  $\bar P^M$ is in $M$ a topped partial CS iteration of length
	$\varepsilon$, and we already have some topped partial CS iteration $\bar P$
	over~$\bar P^M\on\alpha_0$ of length $\alpha_0\in M\cap\varepsilon$.  Let $\n
	r$ be a $ P_{\alpha_0}$-name of a random real over~$M[H^M_{\alpha_0}]$.
	Assume that we extend $\bar P$ to length $\varepsilon$ as an almost CS
	iteration\footnote{Of course our official definition of almost CS iteration assumes
	that we start the construction at $0$, so we modify this definition in the
	obvious way.} using forcings $Q_\alpha$ which 
	locally preserve the randomness of $\n r$ over~$M[H^M_\alpha]$, 
	witnessed by a sequence $(D_k^{Q_\alpha^M})_{k\in \omega}$. 
	Let $p\in P^M_\varepsilon$.
	 Then we can find a $q\leq p$ in $P_{\varepsilon}$ 
	 forcing that $\n r$ is random over
	 $M[H^M_\varepsilon]$.
\end{Lem}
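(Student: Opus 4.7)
The plan is to prove this by induction on $\varepsilon$ (with $\alpha_0$ fixed), following the standard pattern for Case~A preservation in countable support iterations, adapted to the almost CS setting. As usual with such preservation theorems, I would strengthen the inductive statement: given a tuple $\bar{\n Z} = (\n Z_1,\ldots,\n Z_m)$ of $P^M_\varepsilon$-names in $M$ for codes of null sets, a witnessing sequence $(p_n)_{n\in\omega}$ in $M$ that quickly interprets $\bar{\n Z}$ as $(Z_1^*,\ldots,Z_m^*)$ with $p_0 = p$, and $Z_i^* \sqsubset_{k_i} \n r$ for each $i$, one finds $q \leq p$ in $P_\varepsilon$ forcing both that $\n r$ is random over $M[H^M_\varepsilon]$ and that $\n Z_i \sqsubset_{k_i} \n r$ for all $i$. (In the base case $\varepsilon=\alpha_0$, there is nothing to do.)

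For the successor case $\varepsilon=\beta+1$ with $\beta\in M$, apply the inductive hypothesis at $\beta$ to the restricted names $\n Z_i \on \beta$ (interpreted by the sequence $(p_n \on \beta)$, which is still a quick witness in $M$) to obtain $q_\beta \leq p\on\beta$ in $P_\beta$ forcing $\n r$ random over $M[H^M_\beta]$ and the sqsubset relations. In the $P_\beta$-extension, $Q_\beta$ locally preserves randomness of $\n r$ over $M[H^M_\beta]$ (with respect to the fixed sequence $(D^{Q^M_\beta}_k)_k$); the names $\n Z_i$ become (reinterpretable as) quick $Q^M_\beta$-names of codes for null sets via the tail of $(p_n)$, so applying local preservation yields the desired $q(\beta)$ and hence $q = q_\beta{}^\frown q(\beta)$. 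For limits $\varepsilon$ of uncountable cofinality, $P_\varepsilon$ is the direct limit and $P^M_\varepsilon$ is a direct limit in $M$, so both $p$ and each $\n Z_i$ are in fact given at some $\beta \in \varepsilon\cap M$ and the inductive hypothesis applies directly.

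The main case is $\cf(\varepsilon)=\omega$ with $M\cap\varepsilon$ cofinal in $\varepsilon$. Fix a cofinal sequence $(\alpha_n)_{n\in\omega}$ in $\varepsilon\cap M$ with the given $\alpha_0$, and in $M$ enumerate all (names of) pairs consisting of a $P^M_\varepsilon$-name $\n W$ for a code of a null set together with a quick witnessing sequence interpreting it; put $\n Z_1,\ldots,\n Z_m$ at the beginning and list all such pairs (with all possible $k$'s) cofinally often. Now construct a fusion sequence $p = q_0 \geq q_1 \geq q_2 \geq \cdots$ with $q_n \in P_{\alpha_n}$ such that: (a) $q_n$ is $M$-generic for $P^M_{\alpha_n}$ (so in particular $q_n\in P^\gen_{\alpha_n}$); (b) $q_{n+1}\on\alpha_n = q_n$; (c) $q_n$ forces that $\n r$ is random over $M[H^M_{\alpha_n}]$; (d) $q_n$ forces $\n W_j \sqsubset_{k_j}\n r$ for all pairs $(\n W_j, k_j)$ listed by stage $n$. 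The inductive step from $q_n$ to $q_{n+1}$ applies the inductive hypothesis at $\alpha_{n+1}$ in the $P_{\alpha_n}$-extension, feeding in the newly listed name together with the previously secured ones (re-interpreted by suitably thinned tails of their witnessing sequences so as to remain quick witnesses in $M[H^M_{\alpha_n}]$, using that the iteration $\bar P^M$ above $\alpha_n$ adds new $\omega$-sequences of ordinals).

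Let $q = \bigcup_n q_n \in P^\CS_\varepsilon$. By construction $q \leq i_\varepsilon(p)$. The hard part is verifying that $q \in P^\gen_\varepsilon \subseteq P_\varepsilon$, i.e.\ that $q$ forces $i_\varepsilon^{-1}[H]\subseteq P^M_\varepsilon$ to be $M$-generic. This follows from the bookkeeping: every maximal antichain $A \in M$ of $P^M_\varepsilon$ appears (via \eqref{eq:prelim} in $M$) as determining a dense set reflected in one of the $P^M_{\alpha_n}$, and $q_n$ being $M$-generic for $P^M_{\alpha_n}$ together with the coherent fusion ensures $q$ meets the image of $A$. Given $M$-genericity of $q$, the randomness conditions at each finite stage combine to force $\n r$ random over $M[H^M_\varepsilon] = \bigcup_n M[H^M_{\alpha_n}]$, since every null set there is coded by some $\n W_j$ from the enumeration. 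The main obstacle, as in the original proof of Lemma~\ref{lem:iterate.random}, is the simultaneous management of the thinning of witnessing sequences to preserve quickness while extending conditions along the fusion — this is exactly where the ``quick interpretation'' machinery of Definition~\ref{def:locally.random} pays off.
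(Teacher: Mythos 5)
Your high-level plan matches the paper's (which proves this via an auxiliary Lemma~\ref{lem:induktion.wirklich}): strengthen the inductive statement to carry finitely many names of codes for null sets with quick interpretations, split into successor / uncountably-cofinal / countably-cofinal cases, and run a fusion in the main case. But the precise form of your strengthened inductive statement is too weak, and this shows up as a real bug in the bookkeeping.

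In your inductive statement, the condition $p$ (with $p^0=p$), the interpretations $Z_i^*$, and the thresholds $k_i$ are all fixed objects. For the fusion step from $q_n\in P_{\alpha_n}$ to $q_{n+1}\in P_{\alpha_{n+1}}$, the data you must feed into the hypothesis --- the condition $p^0_n$ to extend, the re-interpretation $Z_{i,n}$ of $\n Z_i$ at level $\alpha_n$, and (crucially) the threshold $c_i$, the least $c$ with $Z_{i,n}\sqsubset_c\n r$ --- are not objects of $M$ but are $P^M_{\alpha_n}$- respectively $P_{\alpha_n}$-names, determined only by $H_{\alpha_n}$. Applying the inductive hypothesis ``in the $P_{\alpha_n}$-extension'' therefore requires the hypothesis to be formulated with names in place of $p$, $Z^*$ and $k$; otherwise the output is only a $P_{\alpha_n}$-name for an extension, and you still have to amalgamate it into a single $q_{n+1}$ living in the particular partial CS limit $P_{\alpha_{n+1}}$. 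This is exactly why the paper's Lemma~\ref{lem:induktion.wirklich} takes $p$ to be a $P^M_\alpha$-name of a $P^M_\delta$-condition, $Z^*$ to be a $P^M_\alpha$-name of a code, and replaces the fixed integer $k$ by a $P_\alpha$-name $\n c_0$ below $q$ for the threshold.

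Your bookkeeping device --- ``list all pairs $(\n W,k)$ with all possible $k$'s cofinally often, and ensure $q_n$ forces $\n W_j\sqsubset_{k_j}\n r$ for all pairs listed so far'' --- cannot work as stated. For a fixed null-set name $\n W$, only $k$'s at least the (generic-dependent) threshold can be forced to satisfy $\n W\sqsubset_k\n r$; for smaller $k$ the assertion simply fails, so requiring it for ``all pairs listed'' is impossible. The threshold has to be \emph{read off during the fusion} as a name (the paper's ``$c_n$ is the least $c$ with $Z_{n,n}\sqsubset_c r$'') and then carried forward by the name-valued inductive hypothesis. Once the inductive statement is reformulated in this way, the rest of your argument (the successor case via re-interpretation plus local preservation, the direct-limit case, and the $M$-genericity of $q=\bigcup q_n$ via hitting all dense sets $E_n$) goes through as in the paper.
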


Actually, we will only prove the two previous lemmas under the following
additional assumption (which is enough for our application, and saves some
unpleasant work). This additional assumption is not really necessary; without
it, we could use the method of~\cite{MR2214624} for the proof.
\begin{Asm}\label{asm:quick} 
\begin{itemize} 
  \item For each $\alpha\in M\cap \varepsilon$,
    ($P^M_\alpha$ forces that) $Q^M_\alpha$ is either trivial\footnote{More specifically, $Q^M_\alpha=\{\emptyset\}$.}
    or adds a new $\omega$-sequence of ordinals.
    Note that in the latter case  we can assume without loss of generality
    that $\bigcap_{n\in\omega}D^{Q_\alpha^M}_n=\emptyset$  (and, of course, that
    the $D^{Q_\alpha^M}_n$ are decreasing).
	\item Moreover, we assume that already in $M$ there is a set $T\subseteq
		\varepsilon$ such that $P_\alpha^M$ forces: $Q_\alpha^M$ is trivial iff
    $\alpha\in T$. (So whether $Q_\alpha^M$ is trivial or not does not depend
    on the generic filter below $\alpha$, it is already decided in the 
    ground model.)  
\end{itemize}
\end{Asm}

The result will follow as a special case of the following lemma, which we prove
by induction on~$\beta$. (Note that this is a refined version of the proof of
Lemma~\ref{lem:inductionA} and similar to the proof of the preservation theorem
in~\cite[5.13]{MR1234283}.)

\begin{Def}\label{def:quick}
  Under the assumptions of 
 Lemma~\ref{lem:preservation.variant} and
Assumption~\ref{asm:quick},
let $\n Z$ be a $P_\delta$-name, $\alpha_0\le \alpha <
\delta$, and let $\bar p = (p^k)_{k\in \omega}$ be a
sequence of $P_\alpha$-names of conditions in
$P_\delta/H_\alpha$.  Let $Z^*$ be a $P_\alpha$-name. 

We say that $(\bar p, Z^ *)$ is a \emph{quick} interpretation
of $\n Z$ if $\bar p$ interprets $\n Z$ as $Z^*$ (i.e.,
$P_\alpha$ forces that $p^k$ forces $\n Z \on k  = Z^*\on
k$ for all $k$), and moreover: 
\begin{quote}
  Letting $\beta\ge \alpha$ be minimal  with $Q^M_\beta$
 nontrivial (if such $\beta$ exists):
   $P_\beta$ forces that  the sequence $(p^k(\beta))_{k\in \omega}$ is
 quick in $Q^M_\beta$, i.e., $p^k(\beta)\in D^{Q_\beta^M}_k$ for all~$k$.
\end{quote}
\end{Def}

It is easy to see that:
\proofclaim{eq:find.quick}{For every name $\n Z$ there is a 
  quick interpretation $(\bar p, Z^*)$.}

\begin{Lem} \label{lem:induktion.wirklich}
   Under the same assumptions as above,
   let $\beta$, $\delta$, $\delta'$ be as in \eqref{eq:delta.prime}
	 (so  in particular  we have $\delta'\le\beta\le\delta\le\varepsilon$).
	 \\
   {\bf Assume that }
   \begin{itemize}
     \item $\alpha\in M\cap \delta$ ($=M\cap \beta $) and $\alpha\ge \alpha_0$ (so $\alpha<\delta'$),
      \item $ p\in M$ is a $P^M_\alpha$-name of a 
	      $P^M_\delta$-condition,
		 \item $\n Z\in M$ is a $P^M_\delta$-name of a code for null set,
		 \item $Z^*\in M$ is a $P^M_\alpha$-name of a  code for a null set,
     \item  $P^M_\alpha$ forces:
		    $\bar p = (p^k)_{k\in \omega}\in M$ is a quick
		 sequence in $P^M_\delta/H^M_\alpha$ interpreting $\n
		 Z $ as~$Z^*$  (as in Definition~\ref{def:quick}),
     \item  $P^M_\alpha$ forces:
		       if $p\on
		 \alpha \in H^M_\alpha$, then $p^0\le p$,
		 \item $q\in P_\alpha$ forces $p\on \alpha \in H^M_\alpha$,
			  \item 
	        $q$ forces that $r$ is random over~$M[H^M_\alpha]$, so in particular
					there is (in $V$) a $P_\alpha$-name $ \n c_0$ below $q$  for the minimal~$c$ with $Z^*\sqsubset_{c} r$.
   \end{itemize}
   {\bf Then} there is a condition $q^+\in P_{\delta'}$, extending
	 $q$, and  forcing the following: 
	 \begin{itemize}
			  \item 
	                $p\in H^M_\delta$, 
			  \item 
	                $r$ is random over~$M[H^M_\delta]$,
			\item 
	                $\n Z\sqsubset_{\n c_0} r$.
   \end{itemize}
\end{Lem}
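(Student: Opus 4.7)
The plan is to prove the lemma by induction on $\beta\le \varepsilon$, following the structure of the ``case A'' preservation argument of \cite[Ch.~XVIII]{MR1623206} (see also \cite[5.13]{MR1234283}). The routine cases get absorbed into the inductive step: when $\beta$ is a successor, when $\cf(\beta)>\omega$, or when $\cf(\beta)=\omega$ but $M\cap\beta$ is bounded below $\beta$, there is some $\beta_*<\beta$ with the same $\delta,\delta'$, and an application of the inductive hypothesis at $\beta_*$ yields a $q^+\in P_{\delta'}$ that still works at $\beta$. The successor case $\beta=\gamma+1\in M$ with $Q^M_\gamma$ nontrivial uses, in addition, a single-step application of the local preservation of randomness of $Q_\gamma$ (Definition~\ref{def:locally.random}) to absorb the extra coordinate; if $Q^M_\gamma$ is trivial, nothing needs to be done at coordinate~$\gamma$.

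The main case is $\cf(\beta)=\omega$ with $M\cap\beta$ cofinal in $\beta$ (so $\beta=\delta'$; if $\beta\in M$ then $\delta=\beta$, otherwise $\cf(\delta)>\omega$ but $\delta\cap M$ is cofinal in $\beta$). Here I perform a fusion. Fix, in $M$, an enumeration $(D_n)_{n\in\omega}$ of the dense open subsets of $P^M_\delta$ that lie in $M$, and pick an increasing sequence $(\alpha_n)_{n\in\omega}$ in $M\cap\delta$ with $\alpha_0=\alpha$ and $\sup\alpha_n=\beta$. I construct by induction a sequence $(q_n,\n p_n,\bar p^{\,n},\n Z^{*}_n)$ such that
\begin{enumerate}
\item[(i)] $q_n\in P_{\alpha_n}$, $q_{n+1}\on\alpha_n = q_n$, $q_0=q$;
\item[(ii)] $\n p_n\in M$ is a $P^M_{\alpha_n}$-name for an element of $D_n$ stronger than $\n p_{n-1}$;
\item[(iii)] $(\bar p^{\,n},\n Z^{*}_n)\in M$ is, in $M$, a quick interpretation of $\n Z$ in $P^M_\delta/H^M_{\alpha_n}$ below $\n p_n$;
\item[(iv)] $q_n$ forces $\n p_n\on\alpha_n\in H^M_{\alpha_n}$, that $r$ is random over $M[H^M_{\alpha_n}]$, and $\n Z^{*}_n\sqsubset_{\n c_0} r$.
\end{enumerate}
The step from $n$ to $n+1$ has two substeps. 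First, inside $M$, use \eqref{eq:prelim} and \eqref{eq:find.quick} to thin $\bar p^{\,n}$ into $P^M_{\alpha_{n+1}}$, placing the thinned sequence in $D_n$ and then extracting a new quick interpretation $(\bar p^{\,n+1},\n Z^{*}_{n+1})$ in $P^M_\delta/H^M_{\alpha_{n+1}}$. Second, apply the inductive hypothesis of the lemma at the smaller ordinal $\alpha_{n+1}$, with the roles of $(\alpha,\delta,p,\n Z,\bar p,Z^*,q)$ played by $(\alpha_n,\alpha_{n+1},\n p_n\on\alpha_{n+1},\n Z^{*}_n\text{ truncated},\bar p^{\,n}\text{ truncated},\ldots,q_n)$; this produces $q_{n+1}\in P_{\alpha_{n+1}}$ with the required properties. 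Crucially, local preservation of randomness of each $Q_\gamma$ retains the \emph{same} level index $\n c_0$, so the invariant in~(iv) propagates.

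Taking $q^+:=\bigcup_n q_n\in P^\CS_{\beta}=P^\CS_{\delta'}$, a density argument just like the one in the proof of Lemma~\ref{lem:inductionA}(c) (using that every dense subset of $P^M_\delta$ in $M$ appears as some $D_n$, and the fusion meets each) shows that $q^+\in P^{\gen}_{\delta'}\subseteq P_\beta$, and that $q^+$ forces $\n p_n\in H^M_\delta$ for every $n$, hence $p\in H^M_\delta$. Randomness of $r$ over $M[H^M_\delta]$ follows from (iv) at every stage since every code $\n Z'\in M[H^M_\delta]$ for a null set is, in $M[H^M_\delta]$, approximated quickly by a name that already lies in some $M[H^M_{\alpha_n}]$, so some $\n Z^{*}_m$ dominates it and stays $\sqsubset r$. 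The bound $\n Z\sqsubset_{\n c_0} r$ is immediate from (iv) and the fact that $\bar p^{\,n}$ interprets $\n Z$ as $\n Z^{*}_n$ with increasing precision.

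The main obstacle, and the reason for the hypotheses, is coordinating the fusion so that the single level index $\n c_0$, chosen once and for all at the base stage, continues to witness $\n Z^{*}_n\sqsubset_{\n c_0} r$ at \emph{every} step. This is precisely the reason for the strong formulation of local preservation of randomness (retaining the $k_i$, not merely $\sqsubset$) in Definition~\ref{def:locally.random}, and it is also why Assumption~\ref{asm:quick} is needed, so that a ``quick'' interpretation makes sense at each nontrivial coordinate and the local preservation hypothesis is applicable.
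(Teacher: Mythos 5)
Your overall architecture is right (induction on $\beta$, with a fusion at limits of countable cofinality where $M\cap\beta$ is cofinal, and the other cases absorbed), but the fusion as you set it up does not in fact prove that $r$ stays random over $M[H^M_\delta]$, which is the heart of the lemma.

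Your invariant (iv) tracks only the single name $\n Z$: at each stage you re-interpret $\n Z$ to obtain $\n Z^*_n$ and preserve $\n Z^*_n\sqsubset_{\n c_0} r$. That suffices for the third conclusion $\n Z\sqsubset_{\n c_0}r$, but it is \emph{not} enough for the second conclusion, randomness over $M[H^M_\delta]$. The argument you offer for randomness --- that every code $\n Z'\in M[H^M_\delta]$ is ``approximated quickly by a name that already lies in some $M[H^M_{\alpha_n}]$, so some $\n Z^*_m$ dominates it and stays $\sqsubset r$'' --- has no content: the $\n Z^*_m$ are interpretations of the original $\n Z$ (not of $\n Z'$), and nothing relates the null set coded by $\n Z^*_m$ to the one coded by $\n Z'$. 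There is no ``domination'' that transfers $\sqsubset r$ from one to the other. To conclude randomness you need to ensure that \emph{every} $P^M_\delta$-name of a code for a null set in $M$ is forced to miss $r$.

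The paper's proof closes exactly this gap. It enumerates \emph{all} $P^M_\delta$-names of codes for null sets in $M$ as $(\n Z_n)_{n\in\omega}$ (with $\n Z_0=\n Z$), feeds a new one into the fusion at each stage, and tracks each with its \emph{own} constant $c_n$ (chosen at stage $n$ as minimal with $Z_{n,n}\sqsubset_{c_n}r$), maintaining the invariant $Z_{i,n}\sqsubset_{c_i}r$ for all $i\le n$ as $n$ grows. This requires the strengthened inductive claim (stated but not proved separately in the paper) that one may carry finitely many names and interpretations at once --- something your proof never invokes. In the limit, each $\n Z_i\sqsubset_{c_i}r$ is deduced from closedness of the sets $\{x:x\sqsubset_c r\}$ and from the fact that the dense sets $E_n$ are arranged (per~\eqref{claim:E.n.decides}) to force $\n Z_i=\lim_n Z_{i,n}$. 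Your proposal would also have to incorporate this; otherwise (2) of the conclusion is simply unproved. A smaller point: in the successor case $\beta=\gamma+1$ with $\gamma>\gamma_0$, ``a single application of local preservation'' does not directly apply, because the given interpretation $Z^*$ is a $P^M_\alpha$-name, not a $Q^M_\gamma$-interpretation; one must first form a fresh quick $Q^M_\gamma$-interpretation $Z'$ starting at the maximal $k^*$ with $p^{k^*}\on\gamma\in H^M_\gamma$, observe that $(p^k\on\gamma)_k$ quickly interprets $Z'$ as $Z^*$, and only then apply the inductive hypothesis followed by local preservation at coordinate $\gamma$.
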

  We actually claim a slightly stronger version, where instead of
	$Z^*$ and $\n Z$ we have finitely many codes for null sets and names of
	codes for null sets, respectively.   We will use this stronger claim as
	inductive assumption, but for notational simplicity we  only    prove the weaker
	version; it is easy to see that the weaker version implies the stronger
	version.    

\begin{proof}
  \emph{\textbf{The nontrivial successor case:}} ${\beta=\gamma+1\in M}$.

  If $Q^M_\gamma$ is trivial, there is nothing to do.   

  Now let $\gamma_0\ge \alpha $ be minimal with $Q^M_{\gamma_0}$ nontrivial. We 
  will distinguish two cases: $\gamma=\gamma_0$ and $\gamma>\gamma_0$.

  Consider first the case that   $\gamma=\gamma_0$.  
   Work in $V[H_\gamma]$ where $q\in H_\gamma$.  Note that $M [H^M_\gamma] = 
	 M[H^M_\alpha]$.  So $r$ is random over~$ M [H^M_\gamma]$, and
	 $(p^k(\gamma))_{k\in \omega}$  quickly interprets $\n Z$ as $Z^*$ in $Q^M
	 _\gamma$.
   Now let $q^+\on \gamma= q$, and  use the fact that 
    $Q_\gamma$ locally preserves randomness to find $q^+(\gamma)\le p^0(\gamma)$.

  Next consider the case that $Q^M_\gamma$ is nontrivial and $\gamma\ge \gamma_0+1$.
	 Again work in $V[H_\gamma]$.
  Let $k^*$ be maximal with $p^{k^*}\on \gamma\in H^M_\gamma$.  (This $k^*$
  exists, since the sequence $(p^{k})_{k\in  \omega}$ was quick, so there is even 
  a $k$ with $p^{k}\on ( {\gamma_0+1}) \notin  H^M_{\gamma_0+1}$.) 
	Consider $\n Z$ as a $Q^M_\gamma$-name,  and (using~\eqref{eq:find.quick})
  find a  quick interpretation $Z'$ of $\n Z$ witnessed by a
  sequence starting with $p^{k^*}(\gamma)$.   In $M[H^M_\alpha]$, $Z'$
	is now a $P^M_\gamma/H^M_\alpha$-name.  Clearly, the sequence 
	$(p^k\on \gamma)_{k\in \omega}$ is a quick sequence interpreting $Z'$ as
  $Z^*$.  (Use the fact that $p^k\on \gamma$ forces $k^*\ge k$.)
  \\
	Using the induction hypothesis, we can first extend $q$ to a condition $q'\in
	P_\gamma$ and then (again by our assumption that $Q_\gamma$ locally preserves
	randomness) to a condition $q ^+\in P_{\gamma+1}$.

  \emph{\textbf{The nontrivial limit case:}}
	        ${M\cap \beta}$ unbounded in ${\beta}$, i.e., $\delta'=\beta$. 
					(This deals with cases~(4) and~(5) in
					Definition~\ref{def:almost_CS_iteration_wolfgang}. 
					In case (4) we have $\beta\in M$, i.e., $\beta=\delta$; 
					in case (5) we have $\beta\notin M$ and  $\beta< \delta$.)
	
  Let $\alpha=\delta_0 < \delta_1 < \cdots$ be a sequence of $M$-ordinals
	cofinal in $M\cap \delta' = M\cap \delta$.    We may assume\footnote{If from some $\gamma$ on
			all $Q^M_\zeta$ are trivial, then $P^M_\delta=P^M_\gamma$, so by
			induction there is nothing to do.   If $Q^M_\alpha$ itself is
			trivial, then we let $\delta_0\DEFEQ \min\{\zeta: Q^M_\zeta \text{ nontrivial}\}$ instead.} 
	that each $Q^M_{\delta_n}$ is nontrivial.

  Let $(\n Z_n)_{n\in\omega}$ be a list of all $P^ M_\delta$-names in $M$ 
  of codes for null sets (starting with our given null set
  $\n Z = \n Z_0$).
	Let $(E_n)_{n\in\omega}$ enumerate all open dense sets of $P^M_\delta$ from $M$, without
	loss of generality\footnote{well, if we just enumerate a basis of the open
	sets instead of all of them\dots}
	we can assume that:
	\proofclaim{claim:E.n.decides}{
	        $E_n$ decides $\n Z_0\on n $, \dots, $\n Z_n\on n$.
        }
	We write $p^k_0$ for $p^k$, and $Z_{0,0}$ for $Z^*$; as mentioned
	above, $\n Z=\n Z_0$. 

	By induction on $n$ we can now find  a sequence $\bar p_n = (p^k_n)_{k\in
	\omega}$  and  $P^M_{\delta_n }$-names $Z_{i,n}$  for $i\in \{0,\dots, n\}$
        satisfying the following:

	 \begin{enumerate}
			 \item  $P^M_{\delta_n}$ forces that 
			  $p^0_n \le p^{k}_{n-1}$ whenever
				$p^k_{n-1}\in P^M_{\delta}/H^M_{\delta_n}$. 
			\item $P_{\delta_n}^M$  forces that $p^0_n\in E_n$. (Clearly $E_n\cap
			P^M_\delta/H^M_{\delta_n}$ is a dense set.)
			\item  $\bar p_n\in M $ is a $P^M_{\delta_n}$-name for a quick sequence
			 interpreting $(\n Z_0,\ldots, \n Z_n)$ as 
			 $(Z_{0,n},\ldots, Z_{n,n})$
			 (in $P^M_\delta/H^M_{\delta_n}$),
	        so   $Z_{i,n}$  is a $P^M_{\delta_n}$-name of a code for a null set, for 
			$0\le i \le n$. 
	 \end{enumerate}

   Note that this implies that the sequence $(p^k_{n-1}\on \delta_{n})$ is 
	 (forced to be) a quick sequence interpreting
	 $(Z_{0,{n}},\ldots, Z_{n-1, {n}})$ as 
	 $(Z_{0,{n-1}},\ldots, Z_{n-1, {n-1}})$. 

   Using the induction hypothesis, we now
	 define a sequence $(q_n)_{n\in \omega}$ of conditions $q_n\in P_{\delta_n}$
	 and a sequence $(c_n)_{n\in\omega}$ (where $c_n$ is a $P_{\delta_n}$-name) 
	 such that (for $n>0$) $q_n $ extends $q_{n-1}$ and forces the following: 
	 \begin{itemize}
	    \item  $ p_{n-1}^0\on \delta_n \in H^M_{\delta_n}$. 
			\item  Therefore, $p_n^0 \le p_{n-1}^0$. 
			\item  $r$ is random over~$M[H^M_{\delta_n}]$.   
			\item Let $c_n$ be the least $c$ such that $Z_{n,n}\sqsubset_c r$.
			\item 
				 $ Z_{i,n} \sqsubset_{c_i} r$ for $i=0,\ldots, n-1$. 
	 \end{itemize}
   Now let $q = \bigcup_n q_n\in P^\CS_{\delta'}$. 
   As in Lemma~\ref{lem:inductionA}  it is easy to see that $q\in P^\gen_{\delta'} 
   \subseteq P_{\delta'}$.
   Moreover, by~\eqref{claim:E.n.decides} we get that 
         $q$ forces that $\n Z_i = \lim_n Z_{i,n}$.
	 Since each set  $C_{c,r}\DEFEQ \{x:x\sqsubset_{c} r\}$ is closed, this implies that 
         $q $ forces 
	 $\n Z_i \sqsubset_{c_i} r$, in particular $ \n Z= \n Z _0  \sqsubset_{c_0} r$.

		\emph{\textbf{The trivial  cases:}}  In all other cases, 
		  $M \cap \beta $ is bounded in $\beta$, so we already dealt with 
			everything at stage $ \beta_0\DEFEQ  \sup(\beta \cap M)$.  
			Note that $\delta_0'$ and $\delta_0$ used at stage $\beta_0$
			are the same as the current $\delta'$ and $\delta$.
\end{proof}

\section{The forcing construction}\label{sec:construction}

In this section we describe a $\sigma$-closed ``preparatory'' forcing notion
$\prep$; the generic filter will define a ``generic'' forcing iteration
$\bar \BP$, so elements of $\prep$ will be approximations to such an
iteration.   In Section~\ref{sec:proof} we will show that  the forcing
$\prep*\BP_\om2$ forces BC and dBC. 

{} From now on, we assume CH in the ground model.

\subsection{Alternating iterations, canonical embeddings and the preparatory forcing $\prep$}
The preparatory forcing $\prep$ will consist of pairs $(M,\bar P)$, where $M$
is a countable model and $\bar P\in M$ is an iteration of ultralaver and
Janus forcings.

\begin{Def}\label{def:alternating}
	An alternating iteration\footnote{See Section~\ref{sec:alternativedefs} for possible 
        variants of this definition.} is a topped partial CS iteration $\bar P$
  of length $\om2$ satisfying the following:
  \begin{itemize}
		\item Each $P_\alpha$ is proper.\footnote{This does not seem to be
      necessary, see Section~\ref{sec:alternativedefs}, but it is easy to ensure and
might be comforting to some of the readers and/or authors.} 
    \item For $\alpha$ even, either both $Q_{\alpha}$ and 
  	  $Q_{\alpha+1}$ are (forced by the empty condition to be) trivial,\footnote{%
 				For definiteness, let us agree that the trivial forcing is the
     		singleton $\{\emptyset\}$.}
			or $P_\alpha$ forces that $Q_\alpha$  is an ultralaver forcing adding the
			generic real $\bar \ell_\alpha$, and  $P_{\alpha+1}$ forces that
			$Q_{\alpha+1}$ is a Janus forcing based on $\bar \ell^*_\alpha$ (where $ \bar
			\ell^*$ is defined from $ \bar  \ell $ as in
			Lemma~\ref{lem:subsequence}).
  \end{itemize}
\end{Def}
We will call an even index an ``ultralaver position'' and an odd one a ``Janus
position''.

As in any partial CS iteration, each $P_\delta$ for $\cf(\delta)>\omega$
(and in particular $P_\om2$) is a direct limit.

Recall that in Definition~\ref{def:canonicalembedding} we have defined the notion 
``$\bar P^M$  canonically embeds into $\bar P$'' for nice candidates $M$ and 
iterations $\bar P\in V$ and $\bar P^M\in M$.
Since our iterations now  have length $\omega_2$, this 
means that the canonical embedding works up to and
including\footnote{This is stronger than to require that the canonical
embedding works for every $\alpha\in\om2\cap M$, even though both $P_\om2$ and
$P^M_{\om2}$ are just direct limits; see footnote~\ref{fn:too.late}.}  
$\om2$.

In the following, we will use pairs $x=(M^x,\bar P^x)$ as conditions in a
forcing, where $\bar P^x $ is an alternating iteration in the nice candidate $M^x$.
We will adapt our notation accordingly: Instead of writing $M$,
 $\bar P^M$, $P^M_\alpha$ 
$H_\alpha^M$ (the induced filter), $Q_\alpha^M$, etc.,
 we will write $M^x$, $\bar P^x$, $P^x_\alpha$, $H_\alpha^x$, $Q^x_\alpha$, etc. 
Instead of ``$\bar P^x$ canonically embeds into $\bar P$'' we
will say%
\footnote{Note the linguistic asymmetry here: A symmetric and more verbose variant
would say ``$x=(M^x,\bar P^x)$ canonically embeds into $(V,\bar P)$''.}
 ``$x$ canonically embeds into $\bar P$''
 or ``$(M^x, \bar P^x)$ canonically embeds into $\bar P$''
 (which is a more exact
notation anyway, since the test whether the embedding is $M^x$-complete uses 
both $M^x$ and $\bar P^x$,
not just $\bar P^x$).

The following rephrases Definition~\ref{def:canonicalembedding} of
a canonical embedding in our new notation, taking into account that:
\begin{quote} 
   $\bL_{{\bar D}^x}$ is an $M^x$-complete subforcing
of~$\bL_{\bar D}$  \   \  iff  \  \  $\bar D$
extends $\bar D^x$ 
\end{quote} (see Lemma~\ref{lem:LDMcomplete}).

\begin{Fact}\label{fact:canonical}
	  $x=(M^x,\bar P^x)$ canonically embeds into $\bar P$,
   if (inductively) for all $\beta\in \om2\cap
  M^x\cup\{\om2\}$ the following holds:
  \begin{itemize}
    \item Let $\beta=\alpha+1$ for $\alpha$ even (i.e., an ultralaver position).
      Then either $Q^x_\alpha$ is trivial (and $Q_\alpha$ can be trivial or not), 
      or we require that
      ($P_\alpha$ forces that) the $V[H_\alpha]$-ultrafilter system $\bar D$ used for $Q_\alpha$
      extends the $M^x[H^x_\alpha]$-ultrafilter system $\bar D^x$
      used for $Q^x_\alpha$.
    \item Let $\beta=\alpha+1$ for $\alpha$ odd (i.e., a Janus position).
      Then either  $Q^x_\alpha$ is trivial, 
      or we require that ($P_\alpha$ forces that) 
      the Janus forcing $Q^x_\alpha$ is an $M^x[H^x_\alpha]$-complete 
      subforcing of the Janus forcing $Q_\alpha$.
		\item Let $\beta$ be a limit. Then the canonical extension
      $i_\beta:P^x_\beta\to P_\beta$ is $M^x$-complete. (The canonical extension 
       was defined in Definition~\ref{def:canonicalextension}.)
  \end{itemize}
\end{Fact}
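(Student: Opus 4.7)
The plan is that Fact~\ref{fact:canonical} is essentially a rewording of Definition~\ref{def:canonicalembedding} specialized to alternating iterations (Definition~\ref{def:alternating}), and the proof proceeds by induction on $\beta \in (\om2 \cap M^x) \cup \{\om2\}$, matching each of the three bullets of the Fact to the corresponding requirement of Definition~\ref{def:canonicalembedding}.

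For the successor step $\beta = \alpha+1$ with $\alpha \in M^x$, Definition~\ref{def:canonicalembedding} requires that $Q^x_\alpha$ (evaluated in $M^x[H^x_\alpha]$) be an $M^x[H^x_\alpha]$-complete subforcing of $Q_\alpha$ (evaluated in $V[H_\alpha]$). If $Q^x_\alpha$ is trivial, this is vacuous. Otherwise, Definition~\ref{def:alternating} prescribes, according to the parity of $\alpha$, that both $Q^x_\alpha$ and $Q_\alpha$ are ultralaver forcings or both Janus forcings. In the ultralaver case, the hypothesis that $\bar D$ extends $\bar D^x$ is exactly the hypothesis of Lemma~\ref{lem:LDMcomplete}, whose conclusion is the desired $M^x[H^x_\alpha]$-completeness; in the Janus case, the hypothesis is the required conclusion.

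For a limit $\beta \in \om2 \cap M^x$, the third bullet states the condition verbatim. The only case needing slight additional care is $\beta = \om2$: since $M^x$ is a nice candidate, $\om2$ has uncountable cofinality in $M^x$ (as in $V$), so $P^x_{\om2}$ is a direct limit in $M^x$ and $P_{\om2}$ is a direct limit in $V$. Lemma~\ref{lem:wolfgang}(\ref{item:pathetic099}) then ensures that the canonical extension $i_{\om2}$ takes values in $P_{\om2}$ and preserves incompatibility, and $M^x$-completeness follows from Lemma~\ref{lem:wolfgang}(\ref{item:suitable_implies_filter}) via the inductive hypothesis applied to the cofinal family of $M^x$-complete embeddings $i_\gamma$, $\gamma \in \om2 \cap M^x$.

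The main (and only) anticipated obstacle lies in checking preservation of predensity at $\om2$: a maximal antichain $A \in M^x$ of $P^x_{\om2}$ is countable in $V$ but need not be contained in any single $P^x_\gamma$ with $\gamma \in \om2 \cap M^x$ (in $M^x$ the support of $A$ may look cofinal in $\om2$). Given $q \in P_{\om2}$ with $q \in P_\beta$ for some $\beta<\om2$, one should argue---using the direct-limit structure of $P_{\om2}$ together with the cofinality of $\om2 \cap M^x$ in the range of $i_{\om2}$---that it suffices to handle $\beta \in \om2 \cap M^x$, and then use the inductive $M^x$-completeness of $i_\beta$ together with the complete embedding $P^x_\beta \lessdot P^x_{\om2}$ to extract from $A$ an element whose image under $i_{\om2}$ is compatible with $q$. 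Modulo this routine bookkeeping, the proof is an unwinding of the definitions.
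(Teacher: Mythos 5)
Your overall reading is correct: the statement really is a specialization of Definition~\ref{def:canonicalembedding}, the successor bullets match the definition's successor clause (with Lemma~\ref{lem:LDMcomplete} translating the ultrafilter-extension hypothesis into $M^x[H^x_\alpha]$-completeness in the ultralaver case, and the Janus bullet stating the requirement verbatim), and the limit bullet is the definition's limit clause. The paper itself supplies no proof beyond the one-line remark preceding the Fact; as a ``Fact'' it is deemed to need none.

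However, your treatment of $\beta=\om2$ is mistaken in a way worth flagging. You treat $M^x$-completeness of $i_{\om2}$ as something that must be \emph{derived} from the earlier bullets, and you sketch how to do it via Lemma~\ref{lem:wolfgang}. But $\beta=\om2$ lies in the index set $(\om2\cap M^x)\cup\{\om2\}$ over which the hypothesis of the Fact quantifies, and $\om2$ is a limit ordinal, so the third bullet already demands $M^x$-completeness of $i_{\om2}$ as a \emph{hypothesis}. There is nothing to prove; the Fact is a pure rephrasing. Moreover, the derivation you sketch would not go through: $M^x$-completeness at $\om2$ genuinely does \emph{not} follow from the earlier bullets. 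The paper says so explicitly (``This is stronger than to require that the canonical embedding works for every $\alpha\in\om2\cap M$, even though both $P_{\om2}$ and $P^M_{\om2}$ are just direct limits''), with footnote~\ref{fn:too.late} giving the obstruction: $i_{\om2}$ lands in $P_{\ves}$ with $\ves=\sup(\om2\cap M^x)$ of countable cofinality, and $P_{\ves}$ may contain a condition with support cofinal in $\ves$ that is incompatible with the image of every element of a maximal antichain of $P^x_{\om2}$. Your ``routine bookkeeping'' of reducing to a single $\beta\in\om2\cap M^x$ fails for exactly such a $q$, since $q\in P_{\om2}$ being in some $P_\gamma$ ($\gamma<\om2$) does not put $q$ in $P_\beta$ for any $\beta<\ves$.
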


Fix a sufficiently large regular cardinal $\chi^*$ (see Remark~\ref{rem:fine.print}).
\begin{Def}\label{def:prep}
  The \qemph{preparatory forcing} $\prep$ consists of 
  pairs $x=(M^x,\bar P^x)$ such that
  $M^x\in H(\chi^*)$ is a nice candidate (containing $\om2$), and
  $\bar P^x$ is in $M^x$ an alternating iteration (in particular topped and
  of length $\om2$).
  \\
  We define $y$ to be stronger than $x$ (in symbols: $y\leq_{\prep} x$),
  if the following holds: either $x=y$, or:
  \begin{itemize}
    \item $M^x\in M^y$ and $M^x$ is countable in $M^y$.   
    \item $M^y$ thinks that $(M^x,\bar P^x)$ canonically embeds into $\bar P^y$.
  \end{itemize}
\end{Def}
  Note that this order on $\prep$ is transitive.

We will sometimes write $i_{x,y}$ for
 the canonical embedding (in $M^y$) from  $P^x_{\om2}$ to $P^y_{\om2}$.

There are several variants of this definition which result in equivalent
forcing notions. We will briefly come back to this in
Section~\ref{sec:alternativedefs}.

The following is trivial by elementarity:
\begin{Fact}\label{fact:esmV}
		Assume that $\bar P$ is an alternating iteration (in $V$), that $x=(M^x,\bar P^x) \in
		\prep$ canonically embeds into $\bar P$, and that $N \esm H( \chi^*)$
		contains $x$ and $\bar P$.  Let $y=(M^y,  \bar P^y)$ be the ord-collapse of
		$(N, \bar P)$.  Then $y\in\prep$ and $y\le x$.
\end{Fact}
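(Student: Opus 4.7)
The plan is to verify the two required assertions: (i) $y \in \prep$, and (ii) $y \leq_\prep x$.

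For (i), Fact~\ref{fact:hcon}(1) immediately gives that $M^y$, as the ord-collapse of a countable elementary submodel of $H(\chi^*)$, is a nice candidate. Since $\om2 \in H(\chi^*)$, by elementarity $\om2 \in N$, and since ordinals are fixed by the ord-collapse, $\om2 \in M^y$. The statement ``$\bar P$ is an alternating iteration of length $\om2$'' is expressible in $H(\chi^*)$ using $\bar P$ and $\om2$ as parameters (unwinding the definitions of partial CS iteration, ultralaver, and Janus), so by elementarity $N$ believes it, and after collapsing $M^y$ believes the corresponding statement about $\bar P^y \DEFEQ k(\bar P)$. Hence $y \in \prep$.

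For (ii), the central point is that the ord-collapse $k : N \to M^y$ fixes both $M^x$ and $\bar P^x$. Since $x = (M^x, \bar P^x) \in N$, both $M^x$ and $\bar P^x$ lie in $N$. As $M^x$ is countable in $V$, by elementarity $N$ sees an enumeration $f : \omega \to M^x$ with $f \in N$; since $\omega \subseteq N$, the values $f(n)$ all lie in $N$, so $M^x \subseteq N$. By ord-transitivity of $M^x$, every element of the ord-transitive closure of $M^x$ lies in $M^x \subseteq N$. A straightforward induction on rank then shows $k(z) = z$ for every $z$ in this closure: ordinals are fixed by definition, and for a non-ordinal $z$ in the closure we have $z \subseteq N$, so $k(z) = \{k(w) : w \in z\} = \{w : w \in z\} = z$ by induction. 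In particular $k(M^x) = M^x$ and $k(\bar P^x) = \bar P^x$, so $M^x \in M^y$ and $\bar P^x \in M^y$.

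Finally, by assumption $H(\chi^*) \models$ ``$(M^x, \bar P^x)$ canonically embeds into $\bar P$'' (this being a statement about objects in $H(\chi^*)$ with parameters in~$N$), so by elementarity $N$ models it as well. Applying $k$ and using that $k$ fixes $M^x$ and $\bar P^x$ while sending $\bar P$ to $\bar P^y$, we obtain that $M^y$ models ``$(M^x, \bar P^x)$ canonically embeds into $\bar P^y$''. Similarly, ``$M^x$ is countable'' transfers from $N$ to $M^y$. This verifies $y \leq_\prep x$. The only real bookkeeping is showing the ord-collapse acts trivially on $M^x$ and $\bar P^x$, which is automatic once one observes that their entire ord-transitive closures sit inside~$N$.
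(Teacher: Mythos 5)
Your proof is correct, and it takes exactly the approach the paper intends: the paper labels this as a ``Fact'' and gives no proof, implicitly appealing to elementarity together with the one substantive observation you make explicit — that since $M^x$ is countable and ord-transitive, its ord-transitive closure (hence $M^x$ itself and $\bar P^x$ and everything below them) lies entirely inside $N$, so the ord-collapse $k$ fixes $x$ pointwise while sending $\bar P$ to $\bar P^y$. That observation is the only thing that actually needs to be said; the rest is routine transfer of first-order statements.
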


This fact will be used,  for example, to get from the following Lemma~\ref{lem:trivialexample}
to Corollary~\ref{cor:gurke3}.

\begin{Lem}\label{lem:trivialexample}
  Given $x\in\prep$, there is an alternating iteration $\bar P$
  such that $x$ canonically embeds into $\bar P$.
\end{Lem}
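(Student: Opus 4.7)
The construction proceeds by induction on $\alpha \le \om2$, building $\bar P$ as an almost finite support iteration over $\bar P^x$ in the sense of Definition and Claim~\ref{lem:kellnertheorem}. At each successor stage with $\alpha \in \om2 \cap M^x$ I must choose $Q_\alpha$ so that $Q^x_\alpha$ is $M^x[H^x_\alpha]$-completely embedded in $Q_\alpha$; for $\alpha \notin M^x$ I simply set $Q_\alpha$ to be trivial (which preserves the alternating structure); at limit stages in $M^x$ of countable cofinality I take the almost FS limit, and at every other limit I take the direct limit. By Definition and Claim~\ref{lem:kellnertheorem} (whose key content is Lemma~\ref{lem:afs.complete}), this automatically guarantees that the canonical embedding $i_\beta : P^x_\beta \to P_\beta$ works for every $\beta \in \om2 \cap M^x$.

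The choices at the two types of position are as follows. At an ultralaver position $\alpha \in M^x$ with $Q^x_\alpha$ nontrivial, $Q^x_\alpha = \bL_{\bar D^x}$ for some ultrafilter system $\bar D^x \in M^x[H^x_\alpha]$; working in $V[H_\alpha]$, I extend each $D^x_s$ to an ultrafilter $D_s$ on $\omega$ by Zorn's Lemma and set $Q_\alpha := \bL_{\bar D}$. Lemma~\ref{lem:LDMcomplete} yields that $\bL_{\bar D^x} \lessdot_{M^x[H^x_\alpha]} \bL_{\bar D}$. At a Janus position $\alpha \in M^x$ with $Q^x_\alpha$ nontrivial, I simply set $Q_\alpha := Q^x_\alpha$ (reinterpreted as a $P_\alpha$-name via the induction hypothesis on $i_\alpha$); by Fact~\ref{fact:janus.ctblunion}, $Q^x_\alpha$ is a Janus forcing in $V[H_\alpha]$, and it is trivially an $M^x[H^x_\alpha]$-complete subforcing of itself. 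In both cases the $\bar\ell^*$-parameter required for the Janus stage at $\alpha+1$ matches: the $V$-generic of $Q_\alpha = \bL_{\bar D}$ evaluates the ultralaver real exactly as the induced $M^x[H^x_\alpha]$-generic of $\bL_{\bar D^x}$ does, so $\bar\ell^*$ computed in $V$ agrees with $\bar\ell^{x,*}$ computed in $M^x$.

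It remains to verify that $\bar P$ is an alternating iteration. The alternation of ultralaver and Janus (or trivial) at even and odd positions is built into the construction. Each nontrivial $Q_\alpha$ is ccc -- ultralaver forcings are $\sigma$-centered by Lemma~\ref{lem:newscentered}, and Janus forcings are ccc by Definition~\ref{def:Janus}(\ref{item:janus.ccc}) -- so the ccc-preservation lemma stated right after Lemma~\ref{lem:afs.complete} shows that every $P_\alpha$ is ccc, hence proper. Each $Q_\alpha$ is also separative (ultralaver by Lemma~\ref{lem:LDMsep}, Janus by Definition~\ref{def:Janus}(\ref{item:janus.sep}), trivial trivially), so $\bar P$ is indeed a partial CS iteration in the sense of Definition~\ref{partial_CS}.

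Finally, I must check that the canonical embedding works at the top, $\beta = \om2$. Since $M^x$ is nice, absoluteness of cofinality gives $\cf^{M^x}(\om2) > \omega$, so both $P^x_{\om2}$ and $P_{\om2}$ are direct limits. Any maximal antichain $A \in M^x$ of $P^x_{\om2}$ is countable in $M^x$ by ccc, and by the direct-limit property its elements have supports bounded below $\om2$ in $M^x$, so $A \subseteq P^x_\alpha$ for some $\alpha \in \om2 \cap M^x$; the induction hypothesis then makes $i_\alpha[A]$ maximal in $P_\alpha \lessdot P_{\om2}$, so $i_{\om2}[A]$ is maximal in $P_{\om2}$. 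The only mildly delicate point in the whole argument is this bookkeeping about $\bar\ell^*$ at Janus positions, which however is immediate from the way ultralaver generics are extracted from the generic filter.
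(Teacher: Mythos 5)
Your construction is essentially the paper's own proof: build $\bar P$ as an almost FS iteration over $\bar P^x$ using Definition and Claim~\ref{lem:kellnertheorem}, extend the ultrafilters at ultralaver positions (justified by Lemma~\ref{lem:LDMcomplete}), take $Q_\alpha := Q^x_\alpha$ at Janus positions (justified by Fact~\ref{fact:janus.ctblunion}), and fill in trivial forcings elsewhere. The paper's proof is terser and offers almost CS as an alternative, but the substance is identical, and the extra checks you carry out (ccc via the lemma following Lemma~\ref{lem:afs.complete}, separativity, matching of $\bar\ell^*$) are all correct and implicit in the paper.

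One small warning about your final paragraph on $\beta=\om2$: this paragraph is redundant (Lemma~\ref{lem:kellnertheorem} already covers $\beta=\varepsilon=\om2$, via Lemma~\ref{lem:afs.complete} at limits), and it contains an unjustified step. You claim every maximal antichain $A\in M^x$ of $P^x_{\om2}$ is countable in $M^x$ ``by ccc'', but Definition~\ref{def:alternating} only requires $\bar P^x$ to be proper in $M^x$, not ccc; there is no reason an arbitrary candidate's alternating iteration should satisfy the ccc (that is a property of the \emph{generic} iteration, proved much later in Lemma~\ref{lem:halbfett}). Fortunately Lemma~\ref{lem:afs.complete} handles antichains of arbitrary size, so nothing is broken, but the ccc appeal should be dropped.
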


\begin{proof}
For the proof, we use either of the partial CS constructions 
introduced in the previous chapter (i.e., an almost CS iteration or an almost
FS iteration over $\bar P^x$). The only
thing we have to check is that we can indeed choose $Q_\alpha$ that satisfy the
definition of an alternating iteration (i.e., as ultralaver or Janus forcings) and
such that $Q^x_\alpha$ is $M^x$-complete in $Q_\alpha$.

In the ultralaver case we arbitrarily extend  $\bar D^x$ to an ultrafilter
system $\bar D$, which is justified  by Lemma~\ref{lem:LDMcomplete}.

In the Janus case, we take $Q_\alpha\DEFEQ  Q_\alpha^x$ (this works by  Fact~\ref{fact:janus.ctblunion}). Alternatively, we could 
extend $Q_\alpha^x$ to a random forcing (using  Lemma~\ref{lem:janusrandompreservation}).
\end{proof}

\begin{Cor}\label{cor:gurke3}
  Given $x\in\prep$ and an HCON object $b\in H(\chi^*)$ (e.g., a real or an ordinal), there is a
  $y\leq x$ such that $b\in M^y$.
\end{Cor}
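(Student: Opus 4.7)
The strategy is to combine Lemma~\ref{lem:trivialexample} with Fact~\ref{fact:esmV}, but taking some care so that the ord-collapse fixes $b$ rather than modifies it.

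First, apply Lemma~\ref{lem:trivialexample} to produce (in $V$) an alternating iteration $\bar P$ of length $\om2$ such that $x=(M^x,\bar P^x)$ canonically embeds into $\bar P$. Now, since $b$ is HCON, the set $\ordclos(b)$ is countable. Build a countable elementary submodel $N \esm H(\chi^*)$ with $x, \bar P, b \in N$ and, crucially, $\ordclos(b) \subseteq N$ (not merely as an element); this can be arranged by starting the usual L\"owenheim--Skolem construction from the countable seed $\{x,\bar P,b\}\cup \ordclos(b)$. Let $k=k^N$ be the ord-collapse, and set $(M^y,\bar P^y)\DEFEQ (k(N),k(\bar P))$. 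By Fact~\ref{fact:esmV}, $y\in \prep$ and $y\le_{\prep} x$.

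It remains to verify $b\in M^y$; we show that $k$ is the identity on $\{b\}\cup\ordclos(b)$, by induction on rank. Ordinals are fixed by definition of $k$. For a non-ordinal $z\in \{b\}\cup\ordclos(b)$, every element $w\in z$ is either an ordinal or lies in $\ordclos(b)\subseteq N$, so $z\subseteq N$; hence
\[
k(z) \;=\; \{k(w):w\in z\cap N\} \;=\; \{k(w):w\in z\} \;=\; \{w:w\in z\} \;=\; z,
\]
using the inductive hypothesis in the second-to-last equality. In particular $b=k(b)\in k(N)=M^y$, as required.

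The only potentially subtle point is the arrangement $\ordclos(b)\subseteq N$, but this is routine because $\ordclos(b)$ is countable and $H(\chi^*)$ has enough Skolem functions; no genuine obstacle arises.
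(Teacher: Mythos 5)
Your proof is correct and follows exactly the paper's intended route: apply Lemma~\ref{lem:trivialexample} to get an alternating iteration $\bar P$ into which $x$ embeds, take a countable $N\esm H(\chi^*)$ containing $x,\bar P,b$, and let $y$ be the ord-collapse, invoking Fact~\ref{fact:esmV}. One small remark: the step you flag as ``crucial''---arranging $\ordclos(b)\subseteq N$---is in fact automatic for any countable $N\esm H(\chi^*)$ with $b\in N$, since $\ordclos(b)$ is definable (hence in $N$) and is countable (as $b$ is HCON), and a countable element of a countable elementary submodel is a subset of it; so your explicit seed is harmless but unnecessary.
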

What we will actually need are the following three variants:
\begin{Lem}\label{lem:prep.is.sigma.preparation}
  \begin{enumerate}
  \item
  Given $x\in\prep$ there is a $\sigma$-centered
  alternating iteration $\bar P$ above $x$.
  \item\label{item:karotte2}
   Given a decreasing sequence $\bar x=(x_n)_{n\in \omega}$ in
  $\prep$, there is an alternating iteration $\bar P$ 
  such that each $x_n$ embeds into $\bar P$. Moreover,   
  we can assume that for all Janus positions $\beta$, the
  Janus \footnote{If all $Q^{x_n}_\beta$ are trivial, then we 
  may also set $Q_\beta$ to be the trivial forcing, which is formally 
  not a Janus forcing.} 
  forcing
  $Q_\beta$ is (forced to be) the union of the
  $Q^{x_n}_\beta$, and that for all limits $\alpha$, the forcing
	$P_\alpha$ is the  almost FS limit 
  over~$(x_n)_{n\in\omega}$
  (as in Corollary~\ref{cor:ctblmanycandidates}).
	\item\label{item:karotte6}
  Let $x,y\in \prep$. Let $j^x$ be  the transitive collapse of 
  $M^x$, and define $j^y$ analogously.
  Assume that $j^x{}[M^x]=j^y {}[ M^y]$, that $j^x(\bar P^x)=j^y(\bar P^y)$ and
  that there are $\alpha_0\leq\alpha_1<\om2$ such that:
  \begin{itemize}
  \item $M^x\cap \alpha_0=M^y\cap \alpha_0$ (and thus $j^x\on \alpha_0=j^y\on\alpha_0$).
  \item $M^x\cap [\alpha_0, \omega_2) \subseteq [\alpha_0, \alpha_1)$.
  \item $M^y\cap [\alpha_0, \omega_2) \subseteq [\alpha_1, \om2)$.
  \end{itemize}
  Then there is an alternating iteration $\bar P$ 
  such that both $x$ and $y$ canonically embed into it.
  \end{enumerate}
\end{Lem}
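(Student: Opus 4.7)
All three parts are proved by constructing an appropriate $\bar P$ using (variants of) the almost finite support construction over the given data, together with the fact that ultralaver and Janus forcings admit sufficiently flexible $M$-complete extensions (Lemmas~\ref{lem:LDMcomplete}, \ref{lem:janusmayberandom}, Fact~\ref{fact:janus.ctblunion}).

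\emph{Part (1).} I perform the almost FS iteration over $\bar P^x$ as in Definition and Claim~\ref{lem:kellnertheorem}. For $\alpha\notin M^x$ I let $Q_\alpha$ be trivial. For $\alpha\in M^x$ at an ultralaver position, I extend $\bar D^x_\alpha$ arbitrarily to an ultrafilter system $\bar D_\alpha$ in $V$ and set $Q_\alpha=\bL_{\bar D_\alpha}$, which is $\sigma$-centered (Lemma~\ref{lem:newscentered}) and $M^x$-completely contains $Q^x_\alpha$ (Lemma~\ref{lem:LDMcomplete}). For $\alpha\in M^x$ at a Janus position, I set $Q_\alpha\DEFEQ Q^x_\alpha$, which is a countable Janus forcing in $V$ (Fact~\ref{fact:janus.ctblunion}) and hence $\sigma$-centered. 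Since $|M^x\cap\om2|=\aleph_0$, only countably many $Q_\alpha$ are nontrivial, and Lemma~\ref{lem:4.17} gives $\sigma$-centeredness of $P_{\om2}$.

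\emph{Part (2).} Since $(x_n)_n$ is decreasing, each $M^{x_n}$ thinks $(M^{x_m},\bar P^{x_m})$ canonically embeds into $\bar P^{x_n}$ for $m<n$, which is exactly the hypothesis of Corollary~\ref{cor:ctblmanycandidates}. I carry out the construction there; the only choice to specify is $Q_\beta$. At a Janus position $\beta$, I set $Q_\beta\DEFEQ\bigcup_n Q^{x_n}_\beta$; by the countable union part of Fact~\ref{fact:janus.ctblunion} this is again a Janus forcing, and each $Q^{x_n}_\beta$ is $M^{x_n}$-complete in it. At an ultralaver position $\beta$, the filter systems $\bar D^{x_n}_\beta$ form an increasing chain (since each $x_{n+1}\le x_n$ in $\prep$), so their union is a filter system; I extend it to any ultrafilter system $\bar D_\beta$ in $V$ and set $Q_\beta\DEFEQ\bL_{\bar D_\beta}$, again applying Lemma~\ref{lem:LDMcomplete}. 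At limits of countable cofinality in $\bigcup_n M^{x_n}$ I use the almost FS limit over the sequence (Lemma~\ref{lem:418}), and direct limits elsewhere.

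\emph{Part (3).} I construct $\bar P$ by transfinite recursion on $\alpha\le\om2$. The hypothesis $j^x[M^x]=j^y[M^y]$ with $M^x\cap\alpha_0=M^y\cap\alpha_0$ and $j^x(\bar P^x)=j^y(\bar P^y)$ means that below $\alpha_0$ the two iterations $\bar P^x$ and $\bar P^y$ are the ``same iteration in two different realizations,'' which are canonically identified by $(j^y)^{-1}\circ j^x$. For $\alpha<\alpha_0$ I choose $Q_\alpha$ so as to $M^x$- and $M^y$-completely extend both $Q^x_\alpha$ and $Q^y_\alpha$ simultaneously: at ultralaver positions extend $\bar D^x_\alpha\cup\bar D^y_\alpha$ (a filter system by the identification) to an ultrafilter system and invoke Lemma~\ref{lem:LDMcomplete}; at Janus positions take the union of the two countable Janus forcings (which under the identification is an increasing union of Janus forcings, hence Janus by Fact~\ref{fact:janus.ctblunion}). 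For $\alpha\in M^x\cap[\alpha_0,\alpha_1)$ only $x$ contributes non-trivially (since $M^y\cap[\alpha_0,\alpha_1)=\emptyset$), so I choose $Q_\alpha$ to $M^x$-completely extend $Q^x_\alpha$; symmetrically for $\alpha\in M^y\cap[\alpha_1,\om2)$; and trivial elsewhere. At limits I use the almost FS limit over whichever of $\{x,y\}$ is ``active'' at that cofinality-$\omega$ stage (and the direct limit otherwise); the arguments of Lemma~\ref{lem:afs.complete} / Lemma~\ref{lem:418} apply separately to $x$ and to $y$, since the two sets $M^x\cap[\alpha_0,\om2)$ and $M^y\cap[\alpha_0,\om2)$ are disjoint.

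\emph{Main obstacle.} The only genuinely delicate point is Part (3) below $\alpha_0$: one must check that $Q_\alpha$ can be chosen to simultaneously $M^x$- and $M^y$-completely extend $Q^x_\alpha$ and $Q^y_\alpha$, and that a single partial CS limit serves both $\bar P^x$ and $\bar P^y$ at limit stages. This is where the $\Delta$-system hypothesis $j^x(\bar P^x)=j^y(\bar P^y)$ is essential: it provides the canonical identification of the two iterations below $\alpha_0$, so the ``joint extension'' step reduces to the single-model extension already treated in Parts (1) and (2).
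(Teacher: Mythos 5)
Your proof is correct and takes essentially the same route as the paper: almost FS iterations with the same choices of iterands at each position, using Lemma~\ref{lem:4.17} for $\sigma$-centeredness in part (1), Corollary~\ref{cor:ctblmanycandidates} in part (2), and in part (3) an almost FS construction over $x$ up to $\alpha_1$ followed by one over $y$. One small remark on your ``main obstacle'' for part (3): since $\om1\in M^x\cap M^y$ forces $\om1<\alpha_0$, the hypotheses $j^x[M^x]=j^y[M^y]$, $j^x(\bar P^x)=j^y(\bar P^y)$ and $M^x\cap\alpha_0=M^y\cap\alpha_0$ actually give $\bar P^x\on\alpha_0=\bar P^y\on\alpha_0$ \emph{literally} (not just up to an identification), so the ``joint extension'' step below $\alpha_0$ that you flag is a non-issue --- the paper's proof never mentions $y$ below $\alpha_1$ at all, simply building the almost FS iteration over $x$ up to $\alpha_1$ and then switching to $y$, which automatically handles $\bar P^y\on\alpha_1=\bar P^y\on\alpha_0=\bar P^x\on\alpha_0$.
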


\begin{proof}
 For (1), use an almost FS iteration. 
 We only use the coordinates in $M^x$, and use  the (countable!)
 Janus forcings $Q_\alpha\DEFEQ  Q^x_\alpha$ for all Janus positions
 $\alpha\in M^x$ (see Fact~\ref{fact:janus.ctblunion}). 
 Ultralaver forcings are $\sigma$-centered anyway, so 
 $P_\varepsilon$ will be $\sigma$-centered,  by Lemma~\ref{lem:4.17}.

 For (2), use the almost FS iteration over the sequence $(x_n)_{n\in \omega}$
  as in Corollary~\ref{cor:ctblmanycandidates}, 
  and at Janus positions $\alpha$ set $Q_\alpha$ to be the 
   union  of the   $Q^{x_n}_\alpha$.   (By Fact~\ref{fact:janus.ctblunion},
   $Q^{x_n}_\alpha$ is $M^{x_n}$-complete in $Q_\alpha$, so
   Corollary~\ref{cor:ctblmanycandidates} can be applied here.)

	For (3), we again use an almost FS  construction.  This
	time we start with an almost FS construction over $x$ up 
	to $\alpha_1$, and then continue with an almost FS
	construction over $y$. 
\end{proof}
As above, Fact~\ref{fact:esmV} gives us the following consequences:

\begin{Cor}\label{cor:bigcor}
  \begin{enumerate}
    \item\label{item:gurke0}
      $\prep$ is $\sigma$-closed.  Hence $\prep$ does not add new HCON objects (and in particular: no new reals). 
	  \item\label{item:gurke1} $\prep$ forces that the generic filter $G\subseteq \prep$ is
	  	$\sigma$-directed, i.e., for every countable subset $B$
  		of $G$ there is a $y\in G$ stronger than each element of $B$.
    \item\label{item:gurke2} 
    $\prep$ forces CH. (Since we assume CH in $V$.)
		\item\label{item:martin}  
      Given a decreasing sequence $\bar x=(x_n)_{n\in \omega}$ in 
			$\prep$ and any HCON object $b\in H(\chi^*)$, there is a
			$y\in \prep$ such that
			\begin{itemize}
			   \item $y\leq x_n$ for all $n$,
			   \item $M^y$ contains $b$ and the sequence $\bar x$,
				 \item for all Janus positions $\beta$, $M^y $ thinks that the
			Janus forcing $Q^y_\beta$ is (forced to be) the union of
			the~$Q^{x_n}_\beta$,
			   \item for all limits $\alpha$, $M^y $ thinks that $P^y_\alpha$
					 is the almost FS limit\footnote{constructed in Lemma~\ref{lem:418}}
					   over $(x_n)_{n\in\omega}$
					   (of~$(P^y_\beta)_{\beta<\alpha}$).
			\end{itemize}
  \end{enumerate}
\end{Cor}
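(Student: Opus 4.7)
My plan is to derive all four claims from Lemma~\ref{lem:prep.is.sigma.preparation}(\ref{item:karotte2}) combined with an ord-collapse argument and the standard behavior of $\sigma$-closed forcings. For (\ref{item:gurke0}), given a decreasing sequence $\bar x = (x_n)_{n\in\omega}$ in $\prep$, I would first invoke Lemma~\ref{lem:prep.is.sigma.preparation}(\ref{item:karotte2}) to obtain an alternating iteration $\bar P$ in $V$ into which each $x_n$ canonically embeds, then choose a countable $N \esm H(\chi^*)$ containing $\bar P$ and all $x_n$, and let $y = (M^y, \bar P^y)$ be the ord-collapse of $(N, \bar P)$; the proof of Fact~\ref{fact:esmV}, applied uniformly in $n$, yields $y \in \prep$ with $y \le x_n$ for every $n$. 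For the second assertion of (\ref{item:gurke0}), a $\sigma$-closed forcing adds no new $\omega$-sequence of ground-model elements, and any HCON object of $V[G]$ is coded by such a sequence (any enumeration of its ord-transitive closure together with the induced $\in$-graph and the absolute information about which elements are ordinals); hence no new HCON objects, in particular no new reals, appear in $V[G]$.

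Claim (\ref{item:gurke2}) then follows immediately: $\sigma$-closedness preserves $\aleph_1$ and no reals are added, so $2^{\aleph_0}$ is unchanged and CH transfers from $V$ to $V[G]$. For (\ref{item:gurke1}), let $\dot B$ be a $\prep$-name forced by $p$ to be a countable subset of $\dot G$. By (\ref{item:gurke0}) I may strengthen $p$ to decide $\dot B$ as a ground-model sequence $B_0 = (b_n)_{n\in\omega}$; since $p \forces b_n \in \dot G$, every $q \le p$ is compatible with each $b_n$. I would then build a decreasing chain $p \ge q_0 \ge q_1 \ge \cdots$ with $q_n \le b_n$ --- possible at each stage because $q_{n-1} \le p$ remains compatible with $b_n$ --- and apply $\sigma$-closedness to obtain a common extension $y$. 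Then $y \le b_n$ for all $n$, and since $y$ trivially forces itself into $\dot G$, it witnesses $\sigma$-directedness below $p$.

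For (\ref{item:martin}), I would first apply Lemma~\ref{lem:prep.is.sigma.preparation}(\ref{item:karotte2}) with its prescribed choices (the union $\bigcup_n Q^{x_n}_\beta$ at Janus positions $\beta$, the almost FS limit at limit ordinals) to obtain a suitable $\bar P$, and then repeat the ord-collapse construction of (\ref{item:gurke0}) with the additional requirement that $N$ contain $b$ and $\bar x$. The only real obstacle is ensuring that the ord-collapse $k = k^N$ fixes $b$, $\bar x$, and every $x_n$, so that these objects live in $M^y$ literally rather than merely as their collapses. This is handled by a standard bookkeeping device: for each HCON object $a$ in the countable list $\{b, \bar x\} \cup \{x_n, M^{x_n} : n \in \omega\}$, include in $N$ a surjection $\omega \twoheadrightarrow \ordclos(a)$; since $\omega \subseteq N$, the entire ord-transitive closure of $a$ is then pointwise in $N$, so $k$ restricts to the identity on $\ordclos(a)$ and $k(a) = a$. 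The required Janus-union and almost-FS-limit properties in $M^y$ then follow by elementarity of $N$ together with the analogous properties of $\bar P$ in $V$.
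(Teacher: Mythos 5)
Your proposal is correct and follows essentially the same route as the paper: the paper also derives everything from Lemma~\ref{lem:prep.is.sigma.preparation}(\ref{item:karotte2}) together with the ord-collapse argument of Fact~\ref{fact:esmV}, proving (\ref{item:martin}) first and then observing that (\ref{item:gurke0}) is a special case and (\ref{item:gurke1}), (\ref{item:gurke2}) are trivial consequences; you simply present (\ref{item:gurke0}) first (which already uses all the machinery of (\ref{item:martin})). One small remark: your explicit ``bookkeeping device'' of putting surjections $\omega\twoheadrightarrow\ordclos(a)$ into $N$ is not needed, since elementarity already guarantees $\ordclos(a)\in N$ and hence (being countable) $\ordclos(a)\subseteq N$, so the ord-collapse automatically fixes every HCON object that is an element of $N$.
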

\begin{proof}
Item~(\ref{item:martin}) directly follows from
Lemma~\ref{lem:prep.is.sigma.preparation}(\ref{item:karotte2})   and
Fact~\ref{fact:esmV}.
Item~(\ref{item:gurke0}) is a  special case
of~(\ref{item:martin}), and~(\ref{item:gurke1}) and~(\ref{item:gurke2}) are
trivial consequences of~(\ref{item:gurke0}).
\end{proof}

Another consequence of Lemma~\ref{lem:prep.is.sigma.preparation} is: 
\begin{Lem}\label{lem:al2cc}
   The forcing notion  $\prep$ is $\al2$-cc.  
\end{Lem}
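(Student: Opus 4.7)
The plan is to run a standard $\Delta$-system argument that feeds into Lemma~\ref{lem:prep.is.sigma.preparation}(\ref{item:karotte6}). Suppose towards a contradiction that $\{x_\alpha:\alpha<\om2\}$ is an antichain of size $\aleph_2$ in~$\prep$; I will produce $\alpha<\beta$ with $x_\alpha$ and $x_\beta$ compatible. The two ingredients that make this work are: (i) under CH the collapsed data $(j^{x_\alpha}[M^{x_\alpha}],j^{x_\alpha}(\bar P^{x_\alpha}))$ lives in $H(\al1)$, which has size $\al1$; (ii) the sets $M^{x_\alpha}\cap\om2$ are countable subsets of~$\om2$, and under CH the hypothesis $\mu^{\al0}\le \al1<\al2$ for $\mu<\al2$ lets me apply the $\Delta$-system lemma to $\aleph_2$-many of them.

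First I would thin the family to an $\aleph_2$-subfamily on which the pair $(j^{x_\alpha}[M^{x_\alpha}],j^{x_\alpha}(\bar P^{x_\alpha}))$ takes a single constant value $(M^*,\bar P^*)$; this is possible by~(i). Next, applying the $\Delta$-system lemma via~(ii) I may further thin to an $\aleph_2$-subfamily in which the sets $M^{x_\alpha}\cap\om2$ form a $\Delta$-system with some fixed countable root~$R$. Since $R$ is countable (so bounded in~$\om2$) and since the non-root parts $(M^{x_\alpha}\cap\om2)\setminus R$ are pairwise disjoint (at most countably many of them can meet the countable set $\sup R+1$), discarding countably many $\alpha$ arranges that $\sup R<\min((M^{x_\alpha}\cap\om2)\setminus R)$ for every $\alpha$ in the family. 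Finally, since each $\sigma_\alpha:=\sup(M^{x_\alpha}\cap\om2)$ is an ordinal below $\om2$, a straightforward recursive thinning yields a cofinal $\aleph_2$-subfamily with the additional stacking property that $\alpha<\beta$ implies $\sigma_\alpha<\min((M^{x_\beta}\cap\om2)\setminus R)$.

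Now I would pick any two indices $\alpha<\beta$ from the final thinned family and set $x\DEFEQ x_\alpha$, $y\DEFEQ x_\beta$, $\alpha_0\DEFEQ\sup R+1$, $\alpha_1\DEFEQ\sigma_\alpha+1$. The $\Delta$-system property plus the separation arranged in the previous paragraph give $M^x\cap\alpha_0=R=M^y\cap\alpha_0$, $M^x\cap[\alpha_0,\om2)\subseteq[\alpha_0,\alpha_1)$, and $M^y\cap[\alpha_0,\om2)\subseteq[\alpha_1,\om2)$, while the collapses $j^x[M^x]=M^*=j^y[M^y]$ and $j^x(\bar P^x)=\bar P^*=j^y(\bar P^y)$ agree by step~(i). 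All hypotheses of Lemma~\ref{lem:prep.is.sigma.preparation}(\ref{item:karotte6}) are met, so there is (in $V$) an alternating iteration $\bar P$ into which both $x$ and $y$ canonically embed. Taking $N\esm H(\chi^*)$ countable with $x,y,\bar P\in N$ and applying Fact~\ref{fact:esmV} to the ord-collapse of $(N,\bar P)$ delivers a condition $z\in\prep$ with $z\leq x$ and $z\leq y$, contradicting incompatibility.

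The only delicate point is the bookkeeping in the second paragraph: making sure that the $\Delta$-system can be refined so the non-root parts lie strictly above $\sup R$ and are linearly stacked along the thinned family, which is exactly what the three conditions of Lemma~\ref{lem:prep.is.sigma.preparation}(\ref{item:karotte6}) require. Both refinements are routine given countability of the individual sets and the regularity of $\om2$, so no obstacle of substance arises once CH has been invoked in step~(i).
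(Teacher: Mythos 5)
Your proof is correct and follows essentially the same route as the paper: fix the collapse type by CH (since $H(\al1)$ has size $\al1$), apply the $\Delta$-system lemma to the traces $M^{x_i}\cap\om2$, stack the non-root parts so the hypotheses of Lemma~\ref{lem:prep.is.sigma.preparation}(\ref{item:karotte6}) are met, and then close off with Fact~\ref{fact:esmV}. The only difference is cosmetic: the paper is terser about the stacking refinement (it simply asserts one "may assume $\sup(L_i)<\min(L_j\setminus\alpha_0)$ for $i<j$"), while you spell out the recursive thinning that justifies it.
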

\begin{proof}
      Recall that we assume that $V$ (and hence $V[G])$
      satisfies CH. 

	Assume towards a contradiction that $(x_i:i< \omega_2)$ is an antichain.
	Using CH we may without loss of generality assume that for each
	$i\in\omega_2$ the transitive collapse of $(M^{x_i},\bar P^{x_i})$ is the
	same.  Set $L_i\DEFEQ  M^{x_i}\cap\om2$.  Using the $\Delta$-lemma we 
  find some uncountable $I\subseteq \om2$ such 
	that the $L_i$ for $i\in I$ form a $\Delta$-system with root~$L$.
  Set $\alpha_0=\sup(L)+ 3$.
	Moreover, we may assume $\sup(L_i)<\min(L_j\setminus \alpha_0)$ for all
	$i<j$.

	Now take any $i,j\in I$, set $x\DEFEQ x_i$ and $y\DEFEQ x_j$, and use
	Lemma~\ref{lem:prep.is.sigma.preparation}(\ref{item:karotte6}).  
	Finally, use Fact~\ref{fact:esmV} to find $z\le x_i, x_j$.
\end{proof}

\subsection{The generic forcing $\BP'$}

Let $G$ be $\prep$-generic.  Obviously $G$ is a $\le_{\prep}$-directed system.
Using the canonical embeddings, we can construct in $V[G]$ a direct
limit $\BP'_{\om2} $ of the directed system~$G$:
Formally, we set 
\[\BP'_{\om2}\DEFEQ \{(x,p):\, x\in G \text{ and }  p\in P^x_{\om2}\},\] 
  and  we
set $(y,q)\leq (x,p)$ if $y\leq_{\prep} x$ and $q$ is (in $y$) stronger than
$i_{x,y}(p)$ (where $i_{x,y}:P^x_{\om2} \to P^y_{\om2} $ is the canonical embedding). 
Similarly, we define for each $\alpha$ 
 \[\BP'_{\alpha}\DEFEQ \{(x,p):\, x\in G,\, \alpha\in
M^x\text{ and } p\in P^x_\alpha\}\] with the same order.   

To summarize:
\begin{Def}\label{def:BPstrich} For $\alpha\leq\om2$, 
  the direct limit of the $P^x_\alpha$ with $x\in G$ is called
  $\BP'_{\alpha}$. 
\end{Def}

Formally, elements of $\BP'_{\om2}$ are defined as pairs $(x,p)$.  However, the $x$
does not really contribute any information. 
 In particular: 
\begin{Fact}\label{facts:trivial66}
  \begin{enumerate}
    \item Assume that $(x,p^x)$ and $(y,p^y)$ are in $\BP'_{\om2}$,
      that $y\leq x$, and that the canonical embedding $i_{x,y}$ witnessing~$y\le x$
       maps $p^x$ to $p^y$. Then $(x,p^x)=^*(y,p^y)$.
    \item $(y,q)$ is in $\BP'_{\om2}$ stronger than $(x,p)$ iff
      for some (or equivalently: for any)
      $z\leq x,y$ in $G$ the canonically embedded $q$
      is in $P^z_{\om2}$ stronger than  the canonically embedded $p$.
      The same holds if ``stronger than'' is replaced by ``compatible
      with'' or by ``incompatible with''. 
    \item\label{item:bla3} If $(x,p)\in\BP'_\alpha$, and if $y$ is such that
      $M^y=M^x$ and $\bar P^y\on\alpha=\bar P^x\on\alpha$,
      then $(y,p)=^*(x,p)$.
  \end{enumerate}
\end{Fact}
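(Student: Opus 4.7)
All three items follow from viewing $\BP'_\alpha$ as the directed colimit of the system $(P^x_\alpha : x \in G, \alpha \in M^x)$ along the canonical embeddings $i_{x,y}$ (for $y \le_\prep x$ in $G$), combined with the $\sigma$-directedness of $G$ provided by Corollary~\ref{cor:bigcor}(\ref{item:gurke1}). The preliminary observation to record is that the canonical embeddings compose coherently, $i_{x,z} = i_{y,z} \circ i_{x,y}$ whenever $z \le y \le x$ in $G$ (immediate from Definition~\ref{def:canonicalembedding} since at each stage the embedding is built coordinate-wise and extended at limits in a fixed way), and that each $i_{x,y}$ is incompatibility-preserving, being $M^x$-complete in $M^y$.

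For (1), I will verify $(x,p^x) =^* (y,p^y)$ by checking both $\le^*$-inequalities. Let $(z,r) \le (y,p^y)$; by $\sigma$-directedness of $G$ we may assume $z \le x,y$ in $G$. Then $r \le_{P^z_{\om2}} i_{y,z}(p^y) = i_{y,z}(i_{x,y}(p^x)) = i_{x,z}(p^x)$ by coherence, so $(z,r)$ is already stronger than $(x,p^x)$ in $\BP'_{\om2}$, in particular compatible with it. The other direction is symmetric, so $(x,p^x) =^* (y,p^y)$.

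For (2), the ``only if'' direction: given a common extension $(z_0,r_0)$ of $(x,p)$ and $(y,q)$ in $\BP'_{\om2}$, refine to $z \in G$ with $z \le z_0, x, y$ using $\sigma$-directedness; then $i_{z_0,z}(r_0) \in P^z_{\om2}$ is below both $i_{x,z}(p)$ and $i_{y,z}(q)$. The ``if'' direction is immediate from the definition of the order on $\BP'_{\om2}$. Independence of the choice of $z$: given two candidates $z, z' \in G$ below $x,y$, pick $w \in G$ with $w \le z, z'$; since $i_{z,w}$ and $i_{z',w}$ both preserve and reflect compatibility (as $M$-complete embeddings reflect compatibility to incompatibility-preserving subforcings), compatibility in $P^z_{\om2}$, $P^{z'}_{\om2}$, and $P^w_{\om2}$ all coincide. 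The statement for incompatibility is the negation of that for compatibility.

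For (3), the hypothesis $M^y = M^x$ and $\bar P^y \on \alpha = \bar P^x \on \alpha$ gives $P^x_\alpha = P^y_\alpha$ literally as quasi-ordered sets, and moreover for any $z \in G$ with $z \le x, y$ the canonical embeddings $i_{x,z}$ and $i_{y,z}$ agree on their common domain $P^x_\alpha = P^y_\alpha$ (by inductive unpacking of Definition~\ref{def:canonicalembedding}, since at stages below $\alpha$ the choice of $Q^z_\beta$ is made independently of whether we record the ``upper part'' of $\bar P^x$ or of $\bar P^y$). Therefore the compatibility test of any $(z,r)$ against $(x,p)$ is the same test as against $(y,p)$, and (1) (or its proof) then yields $(x,p) =^* (y,p)$. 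The only mildly subtle point in the whole proof is ensuring coherence and incompatibility-preservation of the $i_{x,y}$, which is where the work (such as it is) lies; everything else is bookkeeping with $\sigma$-directedness.
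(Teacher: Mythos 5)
The paper labels this as a ``Fact'' and gives no proof (per its stated convention that Facts are trivial or well known), so there is no explicit proof to compare against; your argument fills in the intended details correctly. The three ingredients you isolate --- coherence $i_{x,z}=i_{y,z}\circ i_{x,y}$ of the canonical embeddings, their incompatibility-preservation (and hence reflection of compatibility), and $\sigma$-directedness of $G$ to pass to a common refinement --- are exactly what is needed; the only tiny imprecision is in item (1), where you call the second $\le^*$-inequality ``symmetric'': for $(z,r)\le(y,p^y)$ one already has $z\le y\le x$ and directedness is not needed, whereas for $(z,r)\le(x,p^x)$ one genuinely needs to refine to some $w\le z,y$ in $G$ --- but this is a presentational asymmetry, not a gap, since you invoke the directedness anyway.
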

In the following, we will therefore  often abuse notation and just write $p$ instead of
$(x,p)$ for an element of~$\BP'_\alpha$.

We can define a natural restriction map from $\BP'_{\om2}$ to $\BP'_\alpha$, by
mapping $(x,p)$ to $(x,p\on \alpha)$.  Note that by the fact above,
we can assume without loss of generality that $\alpha\in M^x$.
More exactly: There is a $y\leq x$ in~$G$ such that $\alpha\in M^y$ (according
to Corollary~\ref{cor:gurke3}).  Then in $\BP'_{\om2}$ we have $(x,p)=^*(y,p)$.

\begin{Fact} \label{fact:5.12} 
  The following is forced by $\prep$:
  \begin{itemize}
    \item $\BP'_\beta$ is  completely embedded into $ \BP'_\alpha$
      for $\beta< \alpha\le \om2$
      (witnessed by the natural restriction map).
    \item If $x\in G$, then $P^x_\alpha$ is $M^x$-completely embedded
      into $\BP'_\alpha$ for $\alpha\leq\om2$ 
      (by the identity map
     $p\mapsto (x,p)$). 
    \item If $\cf(\alpha)>\omega$, then $\BP'_\alpha$ is the union of the
      $\BP'_\beta$ for $\beta<\alpha$. 
    \item By definition, $\BP'_{\om2}$ is a subset of $V$.
  \end{itemize}
\end{Fact}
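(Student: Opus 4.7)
The fourth bullet is immediate from the definition of $\BP'_{\om2}$: each pair $(x,p)$ with $x\in G$ and $p\in P^x_{\om2}$ lies in $V$ because $M^x\in V$ and $p\in M^x$. For the third bullet, I would use niceness of the candidate $M^x$: the property $\cf(\alpha)=\omega$ is absolute between $M^x$ and $V$, so $\cf(\alpha)>\omega$ in $V$ implies $M^x\models\cf(\alpha)>\omega$. Since $\bar P^x$ is a partial CS iteration in $M^x$, the partial CS limit $P^x_\alpha$ coincides with the direct limit, so any $p\in P^x_\alpha$ already lies in $P^x_\beta$ for some $\beta\in\alpha\cap M^x$, giving $(x,p)\in\BP'_\beta$.

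The proofs of the first two bullets both rely on $\sigma$-directedness of $G$ (Corollary~\ref{cor:bigcor}(\ref{item:gurke1})) together with the fact that whenever $z\le x$ in $G$, the canonical embedding $i_{x,z}:P^x_{\om2}\to P^z_{\om2}$ is $M^x$-complete. For the second bullet, the order- and incompatibility-preservation of the identity map $p\mapsto(x,p)$ are immediate from Fact~\ref{facts:trivial66}(2). To verify that an $M^x$-maximal antichain $A\in M^x$ of $P^x_\alpha$ remains predense in $\BP'_\alpha$, I would take an arbitrary $(y,q)\in\BP'_\alpha$, use $\sigma$-directedness to find $z\in G$ below both $x$ and~$y$, and then exploit $M^x$-completeness of $i_{x,z}$ (inside $M^z$) to conclude that $i_{x,z}[A]$ is predense in~$P^z_\alpha$. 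Some $a\in A$ will therefore have $i_{x,z}(a)$ compatible with $i_{y,z}(q)$, witnessing compatibility of $(x,a)$ and $(y,q)$ in $\BP'_\alpha$.

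For the first bullet, the inclusion $\BP'_\beta\subseteq\BP'_\alpha$ clearly preserves $\le$. Rather than verifying maximal antichain preservation directly, I would show that $(x,p\on\beta)$ is a reduction in $\BP'_\beta$ of any $(x,p)\in\BP'_\alpha$. Given $(y,r)\le_{\BP'_\beta}(x,p\on\beta)$, pick $z\le x,y$ in $G$; then $i_{y,z}(r)\le i_{x,z}(p)\on\beta$ in $P^z_\beta$. The axioms of a topless iteration guarantee $P^z_\beta\lessdot P^z_\alpha$ (with $q\land p$ witnessing the reduction), so $i_{y,z}(r)$ is compatible with $i_{x,z}(p)$ in~$P^z_\alpha$, and therefore $(y,r)$ is compatible with $(x,p)$ in~$\BP'_\alpha$. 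The same common-refinement step also yields incompatibility preservation of the inclusion.

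The main conceptual point throughout is that every statement about $\BP'$ reduces, via $\sigma$-directedness of~$G$, to the corresponding statement inside a single $P^z_{\om2}$ where all the involved conditions live coherently; so all four assertions are really statements about ordinary partial CS iterations, already verified in Section~\ref{sec:iterations}. I do not anticipate a hard step — once the $\sigma$-directedness of $G$ and the definition of $\le_\prep$ via canonical $M^x$-complete embeddings are in hand, each clause is a short unwinding.
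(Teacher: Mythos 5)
The paper labels this as a ``Fact'' and gives no proof (the convention in the paper is that Facts are either trivial or standard); your verification is precisely the routine unwinding the authors intend: use $\sigma$-directedness (actually finite directedness suffices for finding $z \le x,y$ in $G$) and Fact~\ref{facts:trivial66} to reduce each clause to a statement about a single $P^z_{\om2}$, then invoke the definition of partial CS iterations, canonical $M$-complete embeddings, and niceness of candidates. All four bullets are handled correctly, modulo the $=^*$-identifications (e.g.\ passing to $y \le x$ with $\alpha \in M^y$) that the paper itself uses informally in the surrounding discussion, so the proposal matches the paper's intended argument.
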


$G$ will always denote an $\prep$-generic filter, while the $\BP'_{\om2}$-generic
filter over $V[G]$ will be denoted by $H'_{\om2}$ (and the induced
$\BP'_\alpha$-generic by $H'_\alpha$).  Recall that for each $x\in G$, the map
$p\mapsto (x,p)$ is an $M^x$-complete embedding of $P^x_{\om2}$ into $\BP'_{\om2}$ (and of
$P^x_\alpha$ into $\BP'_\alpha$). This way $H'_\alpha\subseteq \BP'_\alpha$ induces an $M^x$-generic
filter $H^x_\alpha \subseteq P^x_\alpha$.

So $x\in \prep$ forces that $\BP'_\alpha$ is approximated by 
$P^x_\alpha$. In particular we get:
\begin{Lem}\label{lem:pathetic0}
  Assume that $x\in \prep$, that $\alpha\leq \om2$ in $M^x$, that $p\in P^x_\alpha$,  that $\varphi(t)$ is a first order formula of the language $\{\in\}$ with 
  one free variable $t$
  and that $\dot \tau$ is a $P^x_\alpha$-name in $M^x$.
  Then $M^x\models p\forces_{P^x_\alpha} \varphi(\dot\tau)$ iff 
  $x\forces_\prep (x,p)\forces_{\BP'_\alpha} M^x[H^x_\alpha]\models \varphi(\dot\tau[H^x_\alpha])$.
\end{Lem}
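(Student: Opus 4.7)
The key tool is Fact~\ref{fact:5.12}: once $x\in G$, the map $p\mapsto (x,p)$ is an $M^x$-complete embedding of $P^x_\alpha$ into $\BP'_\alpha$. By definition of $M^x$-completeness, any $\BP'_\alpha$-generic filter $H'_\alpha$ over $V[G]$ induces a filter $H^x_\alpha\DEFEQ\{p'\in P^x_\alpha:(x,p')\in H'_\alpha\}$ which is $P^x_\alpha$-generic over~$M^x$, and $(x,p)\in H'_\alpha$ precisely when $p\in H^x_\alpha$. Moreover, $M^x$ satisfies our finite fragment $\mathrm{ZFC}^*$ and hence its own internal forcing theorem for $P^x_\alpha$, so the evaluation $\dot\tau[H^x_\alpha]^{M^x}$ defined in Section~\ref{subsec:ordtrans} matches the truth value predicted by $\forces_{P^x_\alpha}$ in~$M^x$.

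For the forward direction ($\Rightarrow$), assume $M^x\models p\forces_{P^x_\alpha}\varphi(\dot\tau)$. Fix any $\prep$-generic $G$ with $x\in G$ and any $\BP'_\alpha$-generic $H'_\alpha$ over $V[G]$ with $(x,p)\in H'_\alpha$. Then $H^x_\alpha$ is $P^x_\alpha$-generic over $M^x$ and contains $p$, so by the forcing theorem applied inside $M^x$ we get $M^x[H^x_\alpha]\models\varphi(\dot\tau[H^x_\alpha])$. As $G$ and $H'_\alpha$ were arbitrary, $x\forces_\prep(x,p)\forces_{\BP'_\alpha}M^x[H^x_\alpha]\models\varphi(\dot\tau[H^x_\alpha])$.

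For the backward direction ($\Leftarrow$), I argue by contraposition. If $M^x\not\models p\forces_{P^x_\alpha}\varphi(\dot\tau)$, then (using the forcing theorem inside $M^x$) there is some $p'\le p$ in $P^x_\alpha\cap M^x$ with $M^x\models p'\forces_{P^x_\alpha}\neg\varphi(\dot\tau)$. Applying the already-established forward direction to $p'$ and $\neg\varphi$ yields $x\forces_\prep(x,p')\forces_{\BP'_\alpha}M^x[H^x_\alpha]\models\neg\varphi(\dot\tau[H^x_\alpha])$. Since $(x,p')\le(x,p)$ in $\BP'_\alpha$ (by Fact~\ref{facts:trivial66}, as $p'\le p$ in $P^x_\alpha$ is preserved under the embedding), this witnesses that $x$ cannot force $(x,p)\forces_{\BP'_\alpha}M^x[H^x_\alpha]\models\varphi(\dot\tau[H^x_\alpha])$.

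There is essentially no obstacle here: the statement is the ``local forcing theorem'' one automatically gets from an $M^x$-complete embedding, combined with the fact that candidates satisfy the forcing theorem internally (Fact~\ref{fact:hcon}(6)). The only point deserving a line of care is that the two notions of evaluation agree for $P^x_\alpha$-names in~$M^x$, which is precisely what the definition of $\dot\tau[H^x_\alpha]^{M^x}$ in Section~\ref{subsec:ordtrans} ensures.
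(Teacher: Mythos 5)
Your proof is correct and takes the same route as the paper: the forward direction is the "obvious" consequence of $M^x$-completeness of $p\mapsto(x,p)$ (Fact~\ref{fact:5.12}) plus the internal forcing theorem for candidates (Fact~\ref{fact:hcon}(6)), which the paper simply calls "clear"; the backward direction is the same contraposition, applying the forward direction to a strengthening $p'\le p$ that forces $\neg\varphi$ and noting that $(x,p')\le(x,p)$ then forces contradictory statements in $\BP'_\alpha$. Your write-up is just slightly more explicit about the forward direction and about why the evaluation notions agree.
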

\begin{proof} 
  ``$\Rightarrow$'' is clear. So assume that $\varphi(\dot\tau)$ is not forced 
  in $M^x$. Then some $q\leq_{P^x_\alpha} p$ forces the negation.
  Now $x$ forces that $(x,q)\leq (x,p)$ in $\BP'_\alpha$; but 
  the conditions $(x,p)$ and  $(x,q)$ force contradictory statements. 
\end{proof}

\subsection{The inductive proof of ccc} \label{sec:ccc}
We will now prove by induction on~$\alpha$
 that $\BP'_\alpha$ is (forced to be) ccc and
(equivalent to) an alternating iteration.  Once we know this, we can prove
Lemma~\ref{lem:elemsub}, which easily implies all the lemmas in this section. So
in particular these lemmas will only be needed to prove
ccc and not for anything else (and they will probably not
aid the understanding of the construction).

In this section, we try to stick to the following notation: $\prep$-names are
denoted with a tilde underneath (e.g., $\n \tau$), while $P^x_\alpha$-names or
$\BP'_\alpha$-names (for any $\alpha\le\om2$) are denoted with a dot accent
(e.g., $\dot\tau$).  We use both accents when we deal with $\prep$-names for
$\BP'_\alpha$-names (e.g., $\nd\tau$).

We first prove a few lemmas that are easy generalizations of the following
straightforward observation:

Assume that $x \forces_\prep(\n z,\n p)\in\BP'_\alpha$.  In particular,
$x\forces \n z\in G$.  We first strengthen $x$ to some $x_1$ that decides $\n
z$ and $\n p$ to be $z^*$ and $p^*$. Then $x_1\leq^* z^*$ (the order $\leq^*$
is defined on page~\pageref{def:starorder}), so we can further strengthen
$x_1$ to some $y\leq z^*$.  By definition, this means that $z^*$ is canonically
embedded into $\bar P^y$; so (by Fact~\ref{facts:trivial66}) 
the $P^{z^*}_\alpha$-condition $p^*$ can be interpreted
as  a $P^y_\alpha$-condition as well. So we end up with some $y\leq x$ and a
$P^y_\alpha$-condition $p^*$ such that $y\forces_\prep (\n z,\n p)=^*(y, p^*)$.

Since $\prep$ is $\sigma$-closed, we can immediately generalize this to countably 
many ($\prep$-names for) $\BP'_{\alpha}$-conditions:
\begin{Fact}\label{fact:pathetic1}
  Assume that $x\forces_{\prep} \n p_n\in \BP'_\alpha$ for all $n\in\omega$.
  Then there is a $y\leq x$ and there are $p_n^*\in P^y_\alpha$ such that
  $y\forces_{\prep} \n p_n=^*p_n^*$ for all $n\in\omega$.
\end{Fact}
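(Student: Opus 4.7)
The plan is to iterate the single-name reduction sketched in the paragraph immediately preceding Fact~\ref{fact:pathetic1} and then invoke $\sigma$-closure of $\prep$ to take a common lower bound. Throughout, I work with $\prep$-names for elements of $\BP'_\alpha$, using the fact that any such element is a pair $(z,p)$ with $z\in G$, $\alpha\in M^z$, and $p\in P^z_\alpha$.

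First, I would construct by recursion on $n$ a decreasing sequence $x=x_0\geq x_1\geq x_2\geq\cdots$ in $\prep$ together with conditions $p_n^*\in P^{x_{n+1}}_\alpha$ such that
\[
  x_{n+1}\forces_\prep\ \n p_n=^*(x_{n+1},p_n^*).
\]
At step $n$: given $x_n$, view $\n p_n$ as a name for a pair $(\n z_n,\n q_n)$ with $\n z_n\in G$ and $\n q_n\in P^{\n z_n}_\alpha$, and strengthen $x_n$ to some $x'_n\leq_\prep x_n$ deciding this pair as a concrete $(z^*,q^*)$. Then $x'_n\leq^* z^*$, so (using $\sigma$-closure once) I further strengthen to some $x_{n+1}\leq_\prep z^*$ (and, if desired, also put any needed HCON data into $M^{x_{n+1}}$). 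By definition of $\leq_\prep$, $M^{x_{n+1}}$ contains a canonical embedding $i_{z^*,x_{n+1}}\colon P^{z^*}_{\om2}\to P^{x_{n+1}}_{\om2}$; let $p_n^*\DEFEQ i_{z^*,x_{n+1}}(q^*)\in P^{x_{n+1}}_\alpha$. By Fact~\ref{facts:trivial66}(1), $(z^*,q^*)=^*(x_{n+1},p_n^*)$ in $\BP'_\alpha$, which gives the desired forcing statement.

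Second, applying Corollary~\ref{cor:bigcor}(\ref{item:martin}) to the sequence $(x_n)_{n\in\omega}$ (with $\alpha$ and the sequence itself as the HCON data), I obtain a single $y\in\prep$ with $y\leq x_n$ for all $n$ and with $\alpha\in M^y$. For each $n$, the canonical embedding $i_{x_{n+1},y}$ (part of the data witnessing $y\leq_\prep x_{n+1}$) maps $p_n^*\in P^{x_{n+1}}_\alpha$ to a condition $p_n^{**}\DEFEQ i_{x_{n+1},y}(p_n^*)\in P^y_\alpha$. By Fact~\ref{facts:trivial66}(1) again, $(x_{n+1},p_n^*)=^*(y,p_n^{**})$ in $\BP'_\alpha$, and combining this with the equality obtained in the first step yields $y\forces_\prep \n p_n=^*(y,p_n^{**})$ for every $n$, as required.

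I do not foresee a real obstacle: the two ingredients (the single-name argument already stated in the text, and $\sigma$-closure plus directedness of $\prep$) are exactly what is needed, and Fact~\ref{facts:trivial66}(1) takes care of moving between the various pair-representations $(z,p)$ of the same element of $\BP'_\alpha$. The only mild bookkeeping point is to make sure $\alpha\in M^y$ so that $p_n^{**}\in P^y_\alpha$ makes sense, which is handled by the strengthened form of $\sigma$-closure in Corollary~\ref{cor:bigcor}(\ref{item:martin}).
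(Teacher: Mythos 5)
Your proof is correct and follows the same approach the paper takes: the single-name reduction is iterated, and a lower bound for the resulting decreasing sequence is obtained from the $\sigma$-closure of $\prep$ (Corollary~\ref{cor:bigcor}(\ref{item:martin})), with Fact~\ref{facts:trivial66}(1) used to move between pair-representations. The paper leaves the iteration implicit ("Since $\prep$ is $\sigma$-closed, we can immediately generalize this to countably many\dots"); you have simply spelled out the bookkeeping.
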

Recall that more formally we should write: $x\forces_{\prep} (\n z_n,\n p_n)\in
\BP'_\alpha$; and $y\forces_{\prep} (\n z_n,\n p_n)=^*(y,p_n^*)$.

We will need a 
variant  of the previous fact:
\begin{Lem}\label{lem:pathetic2}
  Assume that $\BP'_\beta$ is forced to be ccc, and assume that 
  $x$ forces (in ${\prep}$) that $\nd r_n$ is a $\BP'_\beta$-name 
  for a real (or an HCON object) for every $n\in\omega$.
  Then there is a $y\leq x$ and there are $P^y_\beta$-names $\dot{r}^*_n$ 
  in $M^y$ such that
  $y\forces_{\prep}   (  \forces _{\BP'_\beta} \nd r_n=\dot{r}^*_n)$ for all $n$.
 
\end{Lem}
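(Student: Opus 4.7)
The plan is to use the (forced) ccc of $\BP'_\beta$ to encode each name $\nd r_n$ by countably many $\BP'_\beta$-conditions (each antichain being countable by ccc) together with countably many values in $\omega$, and then to absorb all this information into a single $y\le x$ by combining Fact~\ref{fact:pathetic1} with the $\sigma$-closure of $\prep$ and one final application of Corollary~\ref{cor:gurke3}.

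First, working in $V$, for each $n,k\in\omega$ I use that $x$ forces $\nd r_n(k)$ to be a $\BP'_\beta$-name for a natural number together with the forced ccc of $\BP'_\beta$: by strengthening as needed, I may fix $\prep$-names $\n q_{n,k,i}$ for $\BP'_\beta$-conditions and $\prep$-names $\n v_{n,k,i}$ for elements of $\omega$ (for $i\in\omega$) such that $x$ forces (in~$\prep$) that $\{\n q_{n,k,i}:i\in\omega\}$ is a maximal antichain in $\BP'_\beta$ with $\n q_{n,k,i}\forces_{\BP'_\beta}\nd r_n(k)=\n v_{n,k,i}$. (For a general HCON name, a minor elaboration of this encoding works just as well.)

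Next, applying Fact~\ref{fact:pathetic1} to the countably many names $\n q_{n,k,i}$ and the $\sigma$-closure of $\prep$ to decide the countably many ordinal names $\n v_{n,k,i}$ (these two steps can be carried out simultaneously), I obtain some $y_1\le x$, conditions $q^*_{n,k,i}\in P^{y_1}_\beta$, and natural numbers $v^*_{n,k,i}$, such that $y_1\forces_\prep(\n q_{n,k,i}=^* q^*_{n,k,i}\ \wedge\ \n v_{n,k,i}=v^*_{n,k,i})$ for all $n,k,i$. The triply indexed sequence $s\DEFEQ(q^*_{n,k,i},v^*_{n,k,i})_{n,k,i\in\omega}$ is an HCON object of $V$, so by Corollary~\ref{cor:gurke3} I strengthen once more to some $y\le y_1$ with $s\in M^y$. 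Inside the candidate $M^y$ (using the canonical embedding of $P^{y_1}_\beta$ into $P^y_\beta$, which is an internal object of $M^y$), the sequence $s$ now determines a $P^y_\beta$-name $\dot r^*_n\in M^y$ for a real in the obvious way: its $k$-th coordinate is the one forced to $v^*_{n,k,i}$ by $q^*_{n,k,i}$. Note that $M^y$ correctly sees $\{q^*_{n,k,i}:i\in\omega\}$ as a maximal antichain of $P^y_\beta$, as it is the $=^*$-image of a maximal antichain of $\BP'_\beta$.

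Finally, since $y$ forces that $P^y_\beta$ embeds $M^y$-completely into $\BP'_\beta$, the $M^y$-maximal antichains $\{q^*_{n,k,i}:i\in\omega\}$ remain maximal in $\BP'_\beta$; combined with the fact that the canonical images of the $q^*_{n,k,i}$ force $\nd r_n(k)=v^*_{n,k,i}=\dot r^*_n(k)$, this yields $y\forces_\prep(\forces_{\BP'_\beta}\nd r_n=\dot r^*_n)$ as required. The only genuine subtlety is ensuring that the entire sequence $s$ (and not merely its individual entries, which automatically lie in $M^{y_1}$) is contained in a single countable candidate; this is precisely the role of the final Corollary~\ref{cor:gurke3} application, which needs nothing more than HCON-ness of $s$.
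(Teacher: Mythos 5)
Your proof is correct and takes essentially the same approach as the paper: decompose each $\nd r_n$ into countably many maximal antichains with value functions via ccc, pull the relevant objects into a single $y$ using Fact~\ref{fact:pathetic1} and $\sigma$-closure, absorb the whole sequence into $M^y$ via Corollary~\ref{cor:gurke3}, and then read off the names inside $M^y$. The only (cosmetic) differences are that you swap the order of the two strengthening steps, and your final verification takes a slight detour through ``$M^y$ sees the antichain as maximal'' and $M^y$-completeness — the paper argues more directly by noting that $H^y_\beta = H'_\beta\cap P^y_\beta$ and each $q^*_{n,k,i}$ lies in $P^y_\beta$, so the unique antichain element in $H'_\beta$ is automatically the unique one in $H^y_\beta$; note also that your parenthetical justification ``it is the $=^*$-image of a maximal antichain of $\BP'_\beta$'' gives maximality in $\BP'_\beta$ (which you already had), whereas $M^y$-internal maximality in $P^y_\beta$ really needs the incompatibility-preservation direction of $P^y_\beta\lessdot_{M^y}\BP'_\beta$.
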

(Of course, we mean: $\nd r_n$ is evaluated by $G*H'_\beta$, while $\dot{r}^*_n$
is evaluated by $H_\beta^y$.)
\begin{proof}  
The proof is an obvious consequence of the previous fact,  
since names of reals in a
ccc forcing can be viewed as a countable sequence of conditions. 

In more detail: 
  For notational simplicity assume all $\nd r_n$ are names for
	elements
	of $2^\omega$.
  Working in $V$, we can find for each $n,m\in\omega$
   names for a  maximal antichain $\n A_{n,m}$ and for a function $\n f_{n,m}:\n A_{n,m}\to 2$
  such that $x$ forces that ($\BP'_\beta$ forces that) $\nd r_n(m)=\n f_{n,m}(a)$
  for the unique $a\in \n A_{n,m}\cap H'_\beta$.
  Since $\BP'_\beta$ is ccc, each $\n A_{n,m}$ is countable, and
	since ${\prep}$ is $\sigma$-closed, it is forced 
  that the sequence $\n\Xi=(\n A_{n,m},\n
  f_{n,m})_{n,m\in\omega}$ is in $V$.

  In $V$, we strengthen $x$ to $x_1$ to decide $\n\Xi$ 
  to be some $\Xi^*$. We can also assume that 
  $\Xi^*\in M^{x_1}$ (see Corollary~\ref{cor:gurke3}).
  Each $A^*_{n,m}$ consists of countably many $a$ such that 
  $x_1$ forces $a\in\BP'_\beta$. Using Fact~\ref{fact:pathetic1}
  iteratively (and again the fact that ${\prep}$ is $\sigma$-closed)
  we get some $y\leq x_1$ such that each such $a$ is actually 
  an element of $P^y_\beta$. So in $M^y$, we can use
  $( A^*_{n,m}, f^*_{n,m})_{n,m\in \omega}$ to construct $P^y_\beta$-names $\dot{r}^*_n$
  in the obvious way.

  Now assume that $y\in G$ and that $H'_\beta$ is $\BP'_\beta$-generic
  over $V[G]$. Fix any $a\in A^*_{n,m}=\n A_{n,m}$.
  Since $a\in P^y_\beta$, we get $a \in H^y_\beta$ iff $a\in H'_\beta$.
  So there is a unique element $a$ of $A^*_{n,m}\cap H^y_\beta$, and
  $\dot{r}^*_n(m)=f^*_{n,m}(a)=\n f_{n,m}(a)=\nd r_n(m)$.
\end{proof}

We will also need the following modification:
\begin{Lem}\label{lem:pathetic3}
  (Same assumptions as in the previous lemma.)
  In $V[G][H'_\beta]$, let $\BQ_\beta$ be the union of $Q^z_\beta[H^z_\beta]$
  for all $z\in G$. 
  In $V$, assume that $x$ forces that each $\nd r_n$ is a name for 
  an element of $\BQ_\beta$. Then there is a $y\leq x$ and there is in $M^y$
  a sequence $(\dot r^*_n)_{n\in\omega}$ of
  $P^y_\beta$-names for elements of $Q^y_\beta$ such that 
  $y$ forces $\nd r_n=\dot r^*_n$ for all $n$.
\end{Lem}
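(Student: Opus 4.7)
The plan is a two-step refinement of Lemma~\ref{lem:pathetic2}. Since every element of $\BQ_\beta=\bigcup_{z\in G}Q^z_\beta[H^z_\beta]$ is HCON (at ultralaver positions $\beta$, elements are trees in $\omega^{<\omega}$; at Janus positions, condition~\ref{def:Janus}(\ref{item:janus.hc}) gives $Q^z_\beta\subseteq H(\aleph_1)$), I first apply Lemma~\ref{lem:pathetic2} to obtain $y_1\le x$ and $P^{y_1}_\beta$-names $\dot r^\sharp_n\in M^{y_1}$ with $y_1\forces_\prep (\forces_{\BP'_\beta}\nd r_n=\dot r^\sharp_n)$. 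These names are not yet known to name elements of any single $Q^y_\beta$; securing that is the content of the second step.

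I now work in a $\prep$-generic extension $V[G]$ with $y_1\in G$ (translating back to $V$ at the end via density). Since $\dot r^\sharp_n$ is forced by $\BP'_\beta$ to lie in $\BQ_\beta=\bigcup_{z\in G,\,z\le y_1}Q^z_\beta[H^z_\beta]$, a straightforward density argument shows that below any $p\in\BP'_\beta$ with $p\le y_1$ there is $p'\le p$ and some $z\in G$, $z\le y_1$, with $p'\in P^z_\beta$ and $p'\forces_{\BP'_\beta}\dot r^\sharp_n\in Q^z_\beta$. Using the ccc of $\BP'_\beta$, I extract for each $n$ a countable maximal antichain $\{a_{n,k}:k<\omega\}\subseteq\BP'_\beta$ with witnesses $z_{n,k}\in G$, $z_{n,k}\le y_1$, such that $a_{n,k}\in P^{z_{n,k}}_\beta$ and $a_{n,k}\forces_{\BP'_\beta}\dot r^\sharp_n\in Q^{z_{n,k}}_\beta$.

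By Corollary~\ref{cor:bigcor}(\ref{item:martin}) applied to the countable set $\{y_1\}\cup\{z_{n,k}:n,k<\omega\}$ (enumerated as a decreasing sequence using $\sigma$-directedness of $G$), with HCON parameter encoding $(a_{n,k},z_{n,k},\dot r^\sharp_n)_{n,k}$, I find $y\in G$ with $y\le z_{n,k}$ for all $n,k$ and with $M^y$ containing this data. I set $\dot r^*_n\DEFEQ\dot r^\sharp_n$, viewed as a $P^y_\beta$-name in $M^y$ via the canonical embedding $P^{y_1}_\beta\lessdot P^y_\beta$. The antichain $\{a_{n,k}\}_k$ remains maximal in $P^y_\beta$ in $M^y$ (complete embeddings preserve maximal antichains downward). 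By Lemma~\ref{lem:pathetic0}, the statement ``$a_{n,k}\forces_{\BP'_\beta}\dot r^\sharp_n\in Q^{z_{n,k}}_\beta$'' transfers to $M^{z_{n,k}}\models a_{n,k}\forces_{P^{z_{n,k}}_\beta}\dot r^\sharp_n\in Q^{z_{n,k}}_\beta$; since by the definition of $y\le z_{n,k}$ in $\prep$ the name $Q^{z_{n,k}}_\beta$ is forced to be an $M^{z_{n,k}}[H^{z_{n,k}}_\beta]$-complete subforcing of $Q^y_\beta$, this further transfers to $M^y\models a_{n,k}\forces_{P^y_\beta}\dot r^*_n\in Q^y_\beta$. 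Maximality of the antichain gives $M^y\models\forces_{P^y_\beta}\dot r^*_n\in Q^y_\beta$, and since $y\le y_1$ we retain $y\forces_\prep(\forces_{\BP'_\beta}\nd r_n=\dot r^*_n)$.

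The main technical difficulty will be the bookkeeping for the two transfers \emph{$V[G]$-statement $\Rightarrow$ $M^{z_{n,k}}$-statement $\Rightarrow$ $M^y$-statement} for the forcing relation; both rely on Lemma~\ref{lem:pathetic0} and on the way canonical embeddings act on the $Q$-component at stage $\beta$ (Fact~\ref{fact:canonical}). The passage from $V[G]$ back to $V$ is routine: the produced $y$ is an element of $V$, and running the argument below an arbitrary $z\le x$ shows that the set of admissible $y$ is dense below~$x$ in~$\prep$.
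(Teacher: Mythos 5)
Your overall strategy — use the ccc of $\BP'_\beta$ to extract countable data in $V[G]$, pull it back to $V$ via $\sigma$-closedness, find a model $y$ below all witnesses via Corollary~\ref{cor:bigcor}(\ref{item:martin}), and conclude via Lemma~\ref{lem:pathetic0} — matches the paper's. The main difference is the order of operations and the concluding mechanism: you apply Lemma~\ref{lem:pathetic2} \emph{first} (producing $\dot r^\sharp_n$ over $y_1$) and then argue via a maximal antichain that the re-interpreted name lands in $Q^y_\beta$; the paper instead finds the model $z$ first, collects a \emph{countable set $X$ of candidate names} for $Q^z_\beta$-elements inside $M^z$, applies Lemma~\ref{lem:pathetic2} to get $\dot r^*_n\in M^z$, and concludes by showing via Lemma~\ref{lem:pathetic0} that $M^z$ believes $\dot r^*_n$ is forced to equal some member of $X$. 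The $X$-based argument is tidier because it never needs to transfer a per-condition forcing statement between models.

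That per-condition transfer is where your write-up has a real gap. You assert that the $V[G]$-fact ``$a_{n,k}\forces_{\BP'_\beta}\dot r^\sharp_n\in Q^{z_{n,k}}_\beta$'' transfers via Lemma~\ref{lem:pathetic0} to $M^{z_{n,k}}\models a_{n,k}\forces_{P^{z_{n,k}}_\beta}\dot r^\sharp_n\in Q^{z_{n,k}}_\beta$. But Lemma~\ref{lem:pathetic0} is an equivalence whose right-hand side is a statement \emph{forced by $z_{n,k}$ in $\prep$}, not merely true in one particular $V[G]$ with $z_{n,k}\in G$. Your density argument, as stated, only yields the $V[G]$-level fact; you cannot conclude from it that $z_{n,k}$ \emph{forces} it, so the backward direction of Lemma~\ref{lem:pathetic0} does not apply, and $M^y$ has no way to ``see'' that $a_{n,k}$ does its job inside $P^y_\beta$. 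The fix is to strengthen the density extraction: since in $V[G][H'_\beta]$ the value $\dot r^\sharp_n[H'_\beta]$ lies in $Q^z_\beta[H^z_\beta]$ for some $z\in G$, $z\le y_1$, the \emph{forcing theorem applied inside $M^z$} (Fact~\ref{fact:hcon}(6)) gives a $p'\in H^z_\beta\subseteq P^z_\beta$ with $M^z\models p'\forces_{P^z_\beta}\dot r^\sharp_n\in Q^z_\beta$. Extracting your maximal antichain from \emph{this} dense set (rather than the one you wrote down) gives $a_{n,k},z_{n,k}$ for which the $M^{z_{n,k}}$-statement holds outright in $V$, is absolute, and is therefore available to $M^y$; the rest of your argument (the second transfer using $y\le z_{n,k}$, absoluteness of maximality of $A_n\subseteq P^y_\beta$, and the final pass from $V[G]$ back to $V$ by density below $x$) then goes through. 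So the gap is local and closeable, but as written the first transfer step does not follow from the cited lemma.
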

So the difference to the previous lemma is: We additionally assume that $\nd
r_n$ is in $\bigcup_{z\in G}Q^z_\beta$, and we additionally get that $\dot r^*_n$ is
a name for an element of $Q^y_\beta$.

\begin{proof}
 Assume $x\in G$ and work in $V[G]$. Fix $n$.
 $\BP'_\beta$ forces that there is some 
 $y_n\in G$ and some 
 $P^{y_n}_\beta$-name $\tau_n\in M^{y_n}$ of an element of $Q^{y_n} _\beta$ such that
 $\nd r_n$ (evaluated by $H'_\beta$) is the same as
 $\tau_n$ (evaluated by $H^{y_n} _\beta$).
 Since we assume that $\BP'_\beta$ is ccc, we can 
 find a countable set $Y_n\subseteq G$ of the possible $y_n$,
 i.e., the empty condition of $\BP'_\beta$ forces $y_n\in Y_n$.
 (As $\prep$ is $\sigma$-closed and $Y_n\subseteq \prep \subseteq V$,
  we must have $Y_n\in V$.)

 So in $V$, there is (for each $n$) an $\prep$-name $\n Y_n$ for this countable
 set. Since $\prep$ is $\sigma$-closed, we can find some $z_0 \leq x$ deciding
 each $\n Y_n$ to be some countable set  $Y_n^* \subseteq \prep $.
 In particular,
 for each $y\in Y_n^*$ we know that $z_0 \forces_\prep y\in G$, i.e.,
 $z_0 \leq^* y$;
 so using once again that ${\prep}$ is $\sigma$-closed we can find
 some $z$ stronger
 than $z_0 $ and all the $y\in \bigcup_{n\in\omega} Y^*_n$. 
 Let $X$ contain all $\tau\in M^y$ such that for some $y\in \bigcup_{n\in\omega} Y^*_n$,
 $\tau$ is a $P^y_\beta$-name for a $Q^y_\beta$-element.
 Since $z\leq y$, 
 each $\tau\in X$ is actually\footnote{%
   Here we use two consequences of
   $z\leq y$: Every $P^y_\beta$-name in $M^y$ can be canonically interpreted
   as a  $P^z_\beta$-name in $M^z$, and $Q^y_\beta$ is (forced to be) a subset
   of $Q^z_\beta$.}
 a $P^z_\beta$-name
 for an element of $Q^z_\beta$. 

 So $X$ is a set of $P^z_\beta$-names for $Q^z_\beta$-elements;
 we can assume that $X\in M^z$.
 Also,  $z$ forces that $\nd r_n\in X$ for all~$n$.
 Using Lemma~\ref{lem:pathetic2}, we can additionally assume that there
 are names  $P^z_\beta$-name $\dot r^*_n$  in $M^z$ such that 
 $z$ forces that  $\nd r_n =  \dot r^*_n$ is forced for each $n$.
 By  Lemma~\ref{lem:pathetic0}, we know that $M^z$
 thinks that $P^z_\beta$ forces that $\dot r^*_n\in X$.  Therefore 
 $\dot r^*_n$ is a $P^z_\beta$-name for a $Q^z_\beta$-element.
\end{proof}

We now prove by induction on $\alpha$ that $\BP'_\alpha$ is equivalent to a ccc
alternating iteration:
\begin{Lem}\label{lem:halbfett}
The following holds in $V[G]$ for $\alpha<\om2$:
\begin{enumerate}
      \item\label{item:iteration}
        $\BP'_\alpha$  is equivalent to
        an alternating iteration. More formally:
        There is an 
        iteration 
        $(\BP_\beta,\BQ_\beta)_{\beta<\alpha}$ with limit $\BP_\alpha$
        that satisfies the definition of alternating iteration 
        (up to $\alpha$), and there is 
        a naturally defined
        dense embedding 
        $j_\alpha:\BP'_\alpha\to \BP_\alpha$,
				such that
        for $\beta < \alpha$ we have $j_\beta \subseteq j_\alpha$, and
        the embeddings commute with the restrictions.\footnote{I.e.,  
				$j_\beta(x,p\on \beta) = j_\alpha(x,p\on \beta) = j_\alpha(x,p) \on
        \beta$.}  
          Each $\BQ_\alpha$ is the union of  all $Q^x_\alpha$
               with~$x\in G$.
        For $x\in G$ with $\alpha\in M^x$,
				the function $i_{x,\alpha}: P^x_\alpha\to \BP_\alpha$
        that maps $p$ to $j_\alpha(x,p)$ is the canonical $M^x$-complete embedding.
     \item In particular, a $\BP'_\alpha$-generic filter $H'_\alpha$
        can be translated into a $\BP_\alpha$-generic filter which we call
        $H_\alpha$ (and vice versa).
      \item\label{item:a1} 
        $\BP_\alpha$ has a dense subset of size~$\al1$.
      \item\label{item:ccc}
        $\BP_\alpha$ is ccc.
			\item\label{item:ch}
        $\BP_\alpha$ forces CH.
\end{enumerate}
\end{Lem}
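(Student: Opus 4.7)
The plan is to prove all five items simultaneously by induction on $\alpha$. Two of the items are easy spin-offs of the others: since $V[G]$ satisfies CH by Corollary~\ref{cor:bigcor}(\ref{item:gurke2}), any ccc forcing of density $\le\aleph_1$ preserves CH, which gives item~(\ref{item:ch}) from items~(\ref{item:a1}) and~(\ref{item:ccc}); and item~(\ref{item:a1}) itself will follow from CH in $V[G]$ together with the observation that each $\BQ_\beta$ is realized as a subset of $H(\aleph_1)^{V[G]}$. Hence the real content is items~(\ref{item:iteration}), (\ref{item:ccc}), and the construction of $j_\alpha$.

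For the successor step $\alpha=\beta+1$, I would work in $V[G][H'_\beta]$ and simply define
\[
  \BQ_\beta \DEFEQ \bigcup\{Q^x_\beta[H^x_\beta] : x\in G,\ \beta\in M^x\},
\]
extending $j_\beta$ to $j_\alpha$ by sending $(x,p)$ to $(j_\beta(x,p\on\beta),p(\beta))$. By $\sigma$-directedness of $G$ (Corollary~\ref{cor:bigcor}(\ref{item:gurke1})), this union is directed in the appropriate sense: at ultralaver positions, Lemma~\ref{lem:LDMcomplete} identifies it with the ultralaver forcing $\bL_{\bar D}$ for the ultrafilter system $\bar D = \bigcup_x \bar D^x$; at Janus positions, Fact~\ref{fact:janus.ctblunion} shows the union is again a Janus forcing (and $M^x$-complete above each $Q^x_\beta$). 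Since ultralaver and Janus forcings are ccc, the two-step iteration $\BP_\beta*\BQ_\beta$ is ccc, and $j_\alpha$ is a dense embedding by construction.

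For a limit $\alpha$ with $\cf(\alpha)>\omega$, $\BP_\alpha=\bigcup_{\beta<\alpha}\BP_\beta$ is a direct limit of ccc forcings (Fact~\ref{fact:5.12}), so ccc is preserved by Solovay's lemma on direct limits. The iteration structure and the extension of the embeddings $j_\beta$ are routine at such stages. The main obstacle is the remaining case, a limit $\alpha<\omega_2$ with $\cf(\alpha)=\omega$: here each $P^x_\alpha$ is a genuine partial CS limit in $M^x$, possibly containing ``inverse-limit'' conditions whose supports cluster through a countable cofinal sequence in $\alpha$, so finite-support $\Delta$-system tricks do not directly apply.

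My plan for the hard subcase is to mimic the $\aleph_2$-cc proof of $\prep$ itself (Lemma~\ref{lem:al2cc}). Suppose $x^*\in\prep$ and $x^*\forces_{\prep}$ that $(\nd p_i)_{i<\omega_1}$ is an antichain in $\BP_\alpha$. Using Fact~\ref{fact:pathetic1} one condition at a time, one finds, for each $i<\omega_1$, a $y_i\leq x^*$ in $\prep$ and a $q_i\in P^{y_i}_\alpha$ with $y_i\forces \nd p_i=^*(y_i,q_i)$. Invoking CH in $V$ to reduce to a single isomorphism type of the transitive collapse of $(M^{y_i},\bar P^{y_i},q_i,\alpha)$, and then a $\Delta$-system argument on $M^{y_i}\cap\omega_2$ with root $L$, I can (after setting $\alpha_0=\sup(L)+3$ and thinning further) separate two indices $i<j$ so that $M^{y_i}\cap[\alpha_0,\omega_2)\subseteq[\alpha_0,\alpha_1)$ and $M^{y_j}\cap[\alpha_0,\omega_2)\subseteq[\alpha_1,\omega_2)$ for some $\alpha_1$. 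Lemma~\ref{lem:prep.is.sigma.preparation}(\ref{item:karotte6}) then yields a common extension $z\leq y_i,y_j$ with both $y_i,y_j$ canonically embedded into $\bar P^z$. The decisive geometric point will be that in this amalgamated iteration, the canonical images $i_{y_i,z}(q_i)$ and $i_{y_j,z}(q_j)$ have disjoint nontrivial supports above $\alpha_0$ (since the almost-FS amalgamation puts the $M^{y_i}$- and $M^{y_j}$-coordinates into disjoint intervals) and identical restrictions to $\alpha_0$ (via the common isomorphism type), so they are compatible in $P^z_\alpha$; pushing forward through $j_\alpha\circ i_{y_i,z}$ and $j_\alpha\circ i_{y_j,z}$ contradicts the antichain assumption, completing ccc.
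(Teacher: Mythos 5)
Your items (\ref{item:a1}), (\ref{item:ch}) and the successor/uncountable-cofinality cases of (\ref{item:iteration}), (\ref{item:ccc}) track the paper closely enough; the difficulty, as you correctly identify, is ccc at limits of countable cofinality, and there your plan has a fatal gap.

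You propose to mimic the proof of Lemma~\ref{lem:al2cc} ($\al2$-cc of $\prep$): collapse to one isomorphism type by CH, then run a $\Delta$-system on the sets $M^{y_i}\cap\om2$, then amalgamate via Lemma~\ref{lem:prep.is.sigma.preparation}(\ref{item:karotte6}). This cannot work. In Lemma~\ref{lem:al2cc} one has an $\al2$-sized family of countable sets; under CH the $\Delta$-system lemma applies and (after a further CH/pigeonhole thinning of $\al2$ objects into $\al1$ boxes) one can even arrange $L_i\cap\alpha_0$ to be constant. Here the putative antichain has size only $\al1$, and the $\Delta$-system lemma is simply \emph{false} for $\al1$-families of countable sets of ordinals (take $L_i=\omega\cdot i$ for $i<\om1$: any two indices have $L_i\cap L_j=L_{\min(i,j)}$, so no uncountable $\Delta$-subsystem exists). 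Likewise pigeonholing $\al1$ objects into the $\al1$ candidate collapses gives nothing. So the very first combinatorial step of your plan does not go through.

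Even ignoring that, the ``decisive geometric point'' you describe is miscalibrated. Since every $\nd p_i$ names an element of $\BP'_\alpha$, every deciding model has $\alpha\in M^{y_i}$; hence $\alpha$ is in \emph{every} $L_i$ and therefore in the root $L$ of any $\Delta$-system, which forces $\alpha<\alpha_0=\sup L+3$. Thus $\dom(q_i)\subseteq M^{y_i}\cap\alpha\subseteq\alpha_0$: the supports you want to push into disjoint intervals $[\alpha_0,\alpha_1)$ and $[\alpha_1,\om2)$ all sit strictly \emph{below} $\alpha_0$, where the $M^{y_i}$'s are \emph{not} disjoint but genuinely interleave (only $L\cap\alpha$ is common). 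So the compatibility of $i_{y_i,z}(q_i)$ and $i_{y_j,z}(q_j)$ in $P^z_\alpha$ cannot be read off from a separation of supports, and there is no reason for it to hold.

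The paper's proof of ccc at such $\alpha$ is of a fundamentally different shape. It assumes a name for a maximal antichain $(\n a_i)_{i<\om1}$, and for each $\beta<\alpha$ and each HCON $p$ it uses the \emph{inductive ccc hypothesis for $\BP'_\beta$} to bound, by countable sets $\n X^\iota(\beta,p)$ and $\n X^r(\beta,p)$, the index of a compatible $\n a_i$ and a witness in the quotient $\BP'_\alpha/H'_\beta$. It then constructs an $\omega$-descending chain $(x_n)$ in $\prep$ decoding these data and all $\n a_i$ for $i\in\om1\cap M^{x_n}$, and amalgamates to a single $y\le_\prep x_n$ in which $P^y_\alpha$ is \emph{the almost FS limit over the chain $(x_n)$}. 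Because conditions in such an almost FS limit have the form $q\land p$ with $q$ of bounded support and $p\in P^{x_n}_\alpha$, the pre-recorded compatibility witnesses yield predensity of a countable initial segment $A^*=\{a^*_i:i<\delta^*\}$ in $P^y_\alpha$; $M^y$-completeness of $i_{y,\alpha}$ then transfers this to $\BP'_\alpha$, contradicting maximality. This use of ``densely we choose a finite-support limit'' is the essential idea your outline does not supply, and it has no analogue in the $\Delta$-system approach.
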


\begin{proof}
$\alpha=0$ is trivial (since $\BP_0 $ and $\BP'_0$ both 
 are trivial:  $\BP_0$ is a singleton, and $\BP'_0$ consists of 
 pairwise compatible elements).

So assume that all items hold for all $\beta<\alpha$.

\proofsection{Proof of (\ref{item:iteration})}  

	\emph{\textbf{Ultralaver successor case:}} Let $\alpha=\beta+1$ with $\beta$
	an ultralaver position.
  Let $H_\beta$ be  $\BP_\beta$-generic over $V[G]$. Work in $V[G][H_\beta]$.
	By induction, for every $x\in G$ the canonical embedding $i_{x,\beta}$
  defines a $P^x_\beta$-generic filter over~$M^x$ called~$H^x_\beta$. 

  \emph{Definition of $\BQ_\beta$ (and thus of $\BP_{\alpha}$):}
	In $M^x[H^x_\beta]$, the forcing notion $Q^x_\beta$ is defined as $\bL_{\bar D^x}$
  for some system of ultrafilters $\bar D^x$ in $M^x[H^x_\beta]$.
  Fix some $s\in\omega^{{<}\omega}$.
  If $y\leq x$ in $G$, then $D_s^y$ extends $D_s^x$.
  Let $D_s$ be the union of all  $D_s^x$ with $x\in G$.
  So $D_s$ is a proper filter. It is even an ultrafilter:
  Let $r$ be a $\BP_\beta$-name for a real. 
  Using Lemma~\ref{lem:pathetic2}, we know that there is 
  some $y\in G$ and some 
  $P^y_\beta$-name $\n r^y\in M^y$ 
  such that (in $V[G][H_\beta]$)  we have
   $\n r^y[H^y_\beta]=r$.
  So $r\in M^y[H^y_\beta]$, hence 
  either $r$ or its complement is in $D_s^y$ and therefore in $D_s$.
  So all filters in the family $\bar D = (D_s)_{s\in\omega^{{<}\omega}}$ 
  are ultrafilters.

  Now work again in $V[G]$. We set $\BQ_\beta$ to be the
  $\BP_\beta$-name for $\bL_{\bar D}$.
  (Note that $\BP_\beta$ forces that $\BQ_\beta$ literally is the 
  union of the $Q^x_\beta[H^x_\beta]$ for $x\in G$,
  again by Lemma~\ref{lem:pathetic2}.) 

  \emph{Definition of $j_\alpha$:} 
  Let $(x,p)$ be in $\BP'_\alpha$. 
  If $p\in P^x_\beta$, then we set $j_\alpha(x,p)=j_\beta(x,p)$,
  i.e., $j_\alpha$ will extend $j_\beta$. If $p=(p\on\beta,p(\beta))$
  is in $P^x_\alpha$ but not in $P^x_\beta$, we set
  $j_\alpha(x,p)=(r,s)\in \BP_\beta*\BQ_\beta$ where
  $r=j_\beta(x,p\on\beta)$ and 
  $s$ is the ($\BP_\alpha$-name for) $p(\beta)$ as evaluated in
  $M^x[H^x_\beta]$.  From  $\BQ_\beta = \bigcup_{x\in G} Q^x_\beta[H^x_\beta]$
  we conclude that 
  this embedding is dense.

  \emph{The canonical embedding:}
  By induction we know that $i_{x,\beta}$ which 
  maps $p\in P^x_\beta$ to $j_\beta(x,p)$ is 
  (the restriction to $P^x_\beta$ of) the canonical 
  embedding of $x$ into $\BP_{\om2}$. So we have to extend the 
  canonical embedding to $i_{x,\alpha}:P^x_\alpha\to \BP_\alpha$.
  By definition of ``canonical embedding'', $i_{x,\alpha}$ maps 
  $p\in P^x_\alpha$  to the pair
  $(i_{x,\beta}(p\on\beta), p(\beta))$.
  This is the same as $j_\alpha(x,p)$.
	We already know that $D^x_s$ is (forced to be)
  an $M^x[H^x_\beta]$-ultrafilter that is extended by~$D_s$.

  \emph{\textbf{Janus successor case:}} This is similar, but simpler than
  the previous case: Here, $\BQ_\beta$ is just defined as the union of
  all $Q^x_\beta[H^x_\beta]$ for~$x\in G$.
  We will show below that this union satisfies the ccc; 
  just as in Fact~\ref{fact:janus.ctblunion}, 
  it is then easy to see that this union is again a Janus forcing.   

  In particular, $\BQ_\beta$ consists of hereditarily countable objects 
  (since it is the union of Janus forcings, which by definition 
  consist of hereditarily countable objects).
  So since $\BP_\beta$ forces CH, 
  $\BQ_\beta$ is forced to have size~$\al1$.
  Also note that since all Janus forcings involved are separative,
  the union (which is a limit of an in\-com\-patibility-preserving
  directed system) 
  is trivially separative as well.

  \emph{\textbf{Limit case:}} Let $\alpha$ be a limit ordinal.

  \emph{Definition of $\BP_\alpha$ and $j_\alpha$:}
  First we define $j_\alpha: \BP_\alpha' \to \BP^\CS_\alpha$:  For each $(x,p)\in \BP'_\alpha$,
  let $j_\alpha(x,p)\in \BP^\CS_\alpha$
  be the union of all $j_\beta(x,p\on \beta)$ (for $\beta\in \alpha\cap M^x$).
  (Note that $\beta_1<\beta_2$ implies that $j_{\beta_1}(x,p\on \beta_1)$ is
  a restriction of $ j_{\beta_2}(x,p\on \beta_2)$, so this union is indeed
  an element of $\BP^\CS_\alpha$.)

  $\BP_\alpha$ is the set of all $q\wedge p$, where $p\in j_\alpha[\BP'_\alpha]$,
  $q\in \BP_\beta$ for some $\beta < \alpha$, and $q \le p\on \beta$.
  
	It is easy  
	to  check that $\BP_\alpha$ actually is a partial countable
  support limit, and that $j_\alpha$ is dense.
  We will show below that $\BP_\alpha$ satisfies the ccc, so in 
  particular it is proper. 

  \emph{The canonical embedding:}
  To see that $i_{x,\alpha}$ is the (restriction of the) canonical embedding,
  we just have to check that $i_{x,\alpha}$ is $M^x$-complete.
  This is the case since
  $\BP'_\alpha$ is the direct limit of all $P^y_\alpha$
  for $y\in G$ (without loss of generality $y\le x$),
  and each $i_{x,y}$ is $M^x$-complete (see Fact~\ref{fact:5.12}). 

\proofsection{Proof of (\ref{item:a1})}

  Recall that we assume CH in the ground model. 

  The successor case, $\alpha=\beta+1$, follows easily from
  (\ref{item:a1})--(\ref{item:ch}) for $\BP_\beta$
  (since $\BP_\beta$
  forces that $\BQ_\beta$ has size $2^{\al0}=\aleph_1 = \aleph_1^V$). 

  If $\cf(\alpha)>\omega$, 
  then $\BP_\alpha=\bigcup_{\beta<\alpha} \BP_\beta$, so the proof is easy.

  So let $\cf(\alpha)= \omega $.  The following straightforward argument works for 
  any ccc partial CS iteration where all iterands $\BQ_\beta$ are of size $\le \aleph_1$. 

  For notational simplicity we assume $\forces_{\BP_\beta } \BQ_\beta \subseteq
  \omega_1$ for all $\beta<\alpha$ (this is justified by inductive assumption~(\ref{item:ch})).  
  By induction, we can assume that for all $\beta<\alpha$ there is a dense
   $\BP^*_\beta\subseteq \BP_\beta$ of size~$\al1$ and that every  $\BP^*_\beta$ is ccc.
  For each $p\in \BP_\alpha$ and 
  all $\beta\in \dom(p)$
  we can find a maximal antichain 
  $A^p_\beta\subseteq \BP_\beta^*$ such that each element $a\in A^p_\beta$
  decides the value of $p(\beta)$, say $a \forces_{\BP_\beta}
  p(\beta)=\gamma^p_\beta(a)$.   Writing\footnote{Since $\le $ is separative, $p\sim q$ iff $p=^*q$, but
  this fact is not used here.} $p\sim q$ if $p\le q $ and $q\le p$,
  the map $p\mapsto (A^p_\beta, \gamma^p_\beta)_{\beta\in\dom(p)}$ is 1-1
  modulo $\sim$. Since each $A^p_\beta$ is countable, 
  there are only $\al1$ many possible values, therefore there are  only $\al1$
  many $\sim $-equivalence classes.   Any set of representatives will be dense.

  Alternatively, we can prove~(\ref{item:a1}) directly for $\BP'_\alpha$.
  I.e.,
  we can find a $\le^*$-dense subset $\BP'' \subseteq \BP'_\alpha$ of
  cardinality~$\aleph_1$.  Note that a condition $(x,p)\in \BP'_\alpha$
  essentially depends only on $p$ (cf.~Fact~\ref{facts:trivial66}).  More
  specifically, given $(x,p)$ we can ``transitively\footnote{%
  In more detail: We define a function $f:M^x\to V$ by induction as follows: If
  $\beta\in M^x\cap \alpha+1$ or if $\beta=\om2$, then $f(\beta)=\beta$.
  Otherwise, if $\beta\in M^x\cap \ON$, then $f(\beta)$ is the smallest ordinal
  above $f[\beta]$. If $a\in M^x\setminus\ON$, then $f(a)=\{f(b):\, b\in a\cap
  M^x\}$. It is easy to see that $f$ is an isomorphism from $M^x$ to 
  $M^{x'}\DEFEQ f[M^x]$ and that $M^{x'}$ is a candidate. Moreover,
  the ordinals that occur in $M^{x'}$ are subsets of $\alpha+\om1$ together
  with the interval $[\om2,\om2+\om1]$; i.e., there are $\al1$ many
  ordinals that can possibly occur in $M^{x'}$, and therefore there are
  $2^\al0$ many possible such candidates. Moreover, setting $p'\DEFEQ f(p)$,
  it is easy to check that $(x,p)=^*(x',p')$ (similarly to Fact~\ref{facts:trivial66}).
  }
  collapse $x$ above
  $\alpha$'', resulting in a $=^*$-equivalent condition $(x',p')$.  Since
  $|\alpha|=\al1$, there are only $\al1^{\al0}=2^{\al0}$ many such candidates
  $x'$ and since each $x'$ is countable and $p'\in x'$, there are only
  $2^{\al0}$ many pairs $(x',p')$.

\proofsection{Proof of (\ref{item:ccc})}

  \emph{\textbf{Ultralaver successor case:}} Let $\alpha=\beta+1$ with $\beta$ an ultralaver 
  position. 
  We already know that $\BP_\alpha=\BP_\beta*\BQ_\beta$ where $\BQ_\beta$ is an  ultralaver
  forcing, which in particular is ccc, so by induction $\BP_\alpha$
  is ccc.

  {\bf\em Janus successor case:}   
  As above it suffices to show that $\BQ_\beta$,
  the union of the Janus forcings 
  $Q^x_\beta[H^x_\beta]$ for $x\in G$, is (forced to be) ccc.

	Assume towards a contradiction that this is not the case, i.e., that we have
	an uncountable antichain in~$\BQ_\beta$.  We already know that $\BQ_\beta$
  has size $\al1$ and therefore the uncountable antichain has size $\al1$. So,
	working in $V$, we assume towards a contradiction that 
  \begin{equation}\label{eq:ijqprjqr0999}
    x_0\forces_{\prep} p_0\forces_{\BP_\beta} 
       \{ \nd a_i:i\in \omega_1\}\text{ is a maximal (uncountable) antichain in }\BQ_\beta.
  \end{equation}

	We construct by induction on $n\in\omega$ a
	decreasing sequence of conditions such that $x_{n+1}$ satisfies the
  following:
  \begin{enumerate}
    \item[(i)] For all $i\in\om1\cap M^{x_n}$ there is (in $M^{x_{n+1}}$)
      a $P^{x_{n+1}}_\beta$-name $\dot{a}_i^*$ for 
      a $Q^{x_{n+1}}_\beta$-condition 
			such that 
			\[
			  x_{n+1}\forces_{\prep} p_0\forces_{\BP_\beta}\nd a_i=\dot{a}_i^*.
		  \]
			Why can we get that? Just use Lemma~\ref{lem:pathetic3}.
    \item[(ii)] If $\tau$ is in $M^{x_n}$ a $P^{x_n}_\beta$-name for 
      an element of $Q^{x_n}_\beta$, then there is $k^*(\tau)\in\om1$
      such that 
			\[ 
			  x_{n+1}\forces_{\prep} p_0 \forces_{\BP_\beta}\,
			  (\exists i<k^*(\tau))\ \nd a_i \comp_{\BQ_\beta} \tau.
			\]
			Also, all these $k^*(\tau)$ are in $M^{x_{n+1}}$.
      \\
      Why can we get that?
			First note that $x_n\forces p_0\forces (\exists i\in\om1) \ \nd a_i
			\comp \tau $.  Since
      $\BP_\beta$ is ccc, $x_n$ forces that there is some bound $\n k(\tau)$
      for $i$. So it suffices that $x_{n+1}$ determines $\n k(\tau)$ to be 
      $k^*(\tau)$ (for all the countably many $\tau$).
  \end{enumerate}
  Set $\delta^*\DEFEQ \om1\cap\bigcup_{n\in\omega} M^{x_n}$.
  By Corollary~\ref{cor:bigcor}(\ref{item:martin}), there is some $y$ such that
  \begin{itemize}
    \item $y \le x_n$ for all $n\in\omega$,
    \item $(x_n)_{n\in\omega}$ and $(\dot{a}_i^*)_{i\in\delta^*}$ are in $M^y$,
    \item ($M^y$ thinks that) $P^y_\beta$ forces that 
      $Q^y_\beta$ is the union of $Q^{x_n}_\beta$, i.e., as a formula:
      $M^y\models P^y_\beta\forces Q^y_\beta=\bigcup_{n\in\omega} Q^{x_n}_\beta$.
  \end{itemize}
  Let $G$ be ${\prep}$-generic (over $V$) containing $y$, and let $H_\beta$
  be $\BP_\beta$-generic (over $V[G]$) containing $p_0$.

  Set $A^*\DEFEQ \{\dot{a}^*_i[H^y_\beta]:\, i<\delta^*\}$.
  Note that $A^*$ is in $M^y[H^y_\beta]$. We claim 
  \begin{equation}\label{eq:pijqr9}
    A^*\subseteq Q^y_\beta[H^y_\beta]\text{ is predense.}
  \end{equation}
  Pick any $q_0\in Q^y_\beta$.
	So there is some $n\in\omega$ and 
	some $\tau$ which is in
  $M^{x_n}$ a $P^{x_n}_\beta$-name of a $Q^{x_n}_\beta$-condition,
  such that $q_0=\tau[H^{x_n}_\beta]$. By (ii) above,
  $x_{n+1}$ and therefore $y$ forces (in $\prep$)  that for some $i<k^*(\tau)$
	(and therefore some $i< \delta^*$) the condition $p_0$ forces
	the following (in $\BP_\beta$): 
	\begin{quote}
	The conditions 
	$\nd a_i$ and $\tau$ are compatible in~$\BQ_\beta$. Also, 
	$\nd a_i=\dot a^*_i$ and  $\tau $
	both are in $Q^y_\beta$,
        and 	
	$Q^y_\beta$ is an incompatibility-preserving subforcing of $\BQ_\beta$.
	 Therefore $M^y[H^y_\beta]$
	thinks that $\dot a^*_i$ and  $\tau$  are compatible. 
	\end{quote}
	This proves~\eqref{eq:pijqr9}.

  Since $Q^y_\beta[H^y_\beta]$ is $M^y[H^y_\beta]$-complete in $\BQ_\beta[H_\beta]$,
  and since $A^*\in M^y[H^y_\beta]$, 
  this implies  
  (as $\dot{a}^*_i[H^y_\beta]=\nd a_i[G*H_\beta]$ for all
  $i<\delta^*$)
  that $\{\nd a_i[G*H_\beta]:\, i<\delta^*\}$
  already is predense, a contradiction to~\eqref{eq:ijqprjqr0999}.

  {\bf\em Limit case:}
  We work with $\BP'_\alpha$, which by definition 
  only contains HCON objects.

  Assume towards a contradiction that $\BP'_\alpha$ has an
	uncountable antichain. We already know that $\BP'_\alpha$ has 
	a dense subset of size~$\al1$ (modulo $=^*$), so the antichain has size $\al1$.

  Again, work in $V$.  We assume towards a contradiction that 
	\begin{equation}\label{eq:lkjwtoi}
		x_0\forces_{\prep} \{\n a_i:\, i\in\om1\} \text{ is a maximal (uncountable)
		antichain in }\BP'_\alpha.
  \end{equation}
  So each $\n a_i$ is an ${\prep}$-name for an HCON object $(x,p)$ in $V$. 

  To lighten the notation we will abbreviate elements $(x,p)\in \BP'_\alpha$
by~$p$; this is justified by Fact~\ref{facts:trivial66}. 

  Fix any HCON object $p$ and $\beta<\alpha$. 
  We will now define 
	the $({\prep}*\BP' _\beta)$-names $\nd\iota(\beta,p)$ and $\nd r(\beta,p)$:
	Let $G$ be ${\prep}$-generic and containing $x_0$, and 
	$H'_\beta$ be $\BP'_\beta$-generic.
	Let $R$ be the quotient $\BP'_\alpha /  H'_\beta $.
	If $p$ is not in $R$, set $\nd\iota(\beta, p)=\nd r(\beta,p)=0$.
	Otherwise, let $\nd\iota(\beta, p)$ be the minimal $i$ such that 
	$\n a_i\in R$ and $\n a_i$ and $p$ are compatible (in $R$),
	and set $\nd r(\beta, p)\in R$ to be a witness of this compatibility.
	Since $\BP'_\beta$ is (forced to be) ccc, we can find 
	(in~$V[G]$) a countable set $\n X^\iota(\beta, p)\subseteq \omega_1$ 
	containing all possibilities for $\nd\iota(\beta, p)$
	and similarly $\n X^r(\beta, p)$ consisting of HCON objects for $\nd r(\beta, p)$.

	To summarize: 
       For every $\beta<\alpha$
	and every HCON object $p$, we can define (in~$V$)  the ${\prep}$-names
        $\n X^\iota(\beta, p)$ and $\n X^r(\beta, p)$ such that
	\begin{equation} 
    x_0\forces_{\prep} \  \forces_{\BP'_\beta} \biggl(p\in \BP'_\alpha/H'_\beta \ \rightarrow
		 \ (\exists i\in \n X^\iota(\beta,p))\, 
		 (\exists r\in \n X^r(\beta, p))\  r\leq_{\BP'_\alpha/H'_\beta}
		  p,\n a_i\biggr).
	\end{equation}

  Similarly to the Janus successor case, we define by induction on
	$n\in\omega$ a decreasing sequence of conditions such that 
	$x_{n+1}$ satisfies the following:
	For all $\beta\in\alpha\cap M^{x_n}$ and $p\in P^{x_n}_\alpha$,
		  $x_{n+1}$ decides $\n X^{\iota}(\beta,p)$ 
			and $\n X^{r}(\beta,p)$ to be some $X^{\iota*}(\beta,p)$ and $X^{r*}(\beta,p)$.
	    For all $i\in\om1\cap M^{x_{n}}$, 
		  $x_{n+1}$ decides $\n a_i$ to be some $a^*_i\in P^{x_{n+1}}_\alpha$.
			Moreover, each 
			such $X^{\iota*}$  and $X^{r*}$ is in $M^{x_{n+1}}$,
			and every $r\in X^{r*}(\beta,p)$ is in
			$P^{x_{n+1}}_\alpha$.
			(For this, we just use Fact~\ref{fact:pathetic1}
			and Lemma~\ref{lem:pathetic2}.)

	Set $\delta^*\DEFEQ \om1\cap\bigcup_{n\in\omega} M^{x_{n}}$,
	and set $A^*\DEFEQ \{a^*_i:\, i\in\delta^*\}$.
	By  Corollary~\ref{cor:bigcor}(\ref{item:martin}), there is some $y$ such that  
  \begin{gather}
	  \text{$y\leq x_n$ for all $n\in \omega$},\\
	  \text{$\bar x\DEFEQ (x_n)_{n\in\omega}$ and $A^*$ are in $M^y$},\\
	  \label{eq:gqetwet}\text{($M^y$ thinks that) $P^y_\alpha$ is defined as 
	    the almost FS limit  over $\bar x$}.
  \end{gather}
  We claim  that $y$ forces 
	\begin{equation}\label{eq:khweqt}
	  A^*\text{ is predense in } P^y_\alpha.
	\end{equation}
  Since
            $P^y_\alpha$ is $M^y$-completely embedded into $\BP'_\alpha$,
         and  since $A^*\in M^y $ (and since 
	 $\n a_i=a^*_i$ for all $i\in\delta^*$) we get that
	 $\{\n a_i:\, i\in\delta^*\}$ is predense, a contradiction
   to~\eqref{eq:lkjwtoi}.

        So it remains to show~\eqref{eq:khweqt}.  Let $G$ be ${\prep}$-generic containing $y$.
        Let 
	 $r$ be a condition  in $P^y_\alpha$; we will find $i<\delta^*$ such that $r$ 
         is compatible with $a^*_i$.
	Since $P^y_\alpha$ is the almost FS limit over $\bar x$, there 
	is some $n\in\omega$ and $\beta\in \alpha\cap
	M^{x_n}$ such that $r$
	has the form $q\land p$ with 
	$p$ in $P^{x_n}_\alpha$,  $q\in P^y_\beta$ and $q\le p\on \beta$.

        Now  let $H'_\beta$
	be $\BP'_\beta$-generic containing $q$.
	Work in $V[G][H'_\beta]$. Since $q\leq p\on\beta$,
	we get $p\in \BP'_\alpha/H'_\beta$.  
         Let $\iota^*$ be the evaluation by $G*H'_\beta$ of $\nd\iota(\beta,p)$, 
        and let $r^*$ be the evaluation of $\nd r(\beta,p)$.  
        Note that $\iota^*  < \delta^*$ and $r^*\in P^y_\alpha$. 
       So we know that
	$a^*_{\iota^*}$ and $p$ are compatible in $\BP'_\alpha/H'_\beta$
	witnessed by $r^*$. 
        Find $q'\in H'_\beta$ forcing
	$r^*\le_{\BP'_\alpha/H'_\beta} p, a^*_{\iota^*}$.  We may find $q'\le q$.
         Now
	$q'\land r^*$ witnesses that $q\land p$ and $
	a^*_{\iota^*}$ are compatible in~$P^y_\alpha$.
	
  To summarize: The crucial point in proving the ccc is that ``densely'' we
  choose (a variant of) a finite support iteration, see~\eqref{eq:gqetwet}.
  Still, it is a bit surprising that we get the ccc, since we can also argue
  that densely we use (a variant of) a countable support iteration.
  But this does not prevent the ccc, it only prevents 
  the generic iteration from having direct limits in 
  stages
   of countable cofinality.%
   \footnote{Assume that $x$ forces that $\BP'_\alpha$ is the union
   of the $\BP'_\beta$ for $\beta<\alpha$; then we can find a stronger
   $y$ that uses an almost CS iteration over~$x$.  This almost CS      
    iteration contains a  condition $p$ with unbounded support. (Take 
    any condition in the generic part of the almost CS limit; if this
    condition has bounded domain, we can extend it to have unbounded domain, see
    Definition~\ref{def:almost_CS_iteration_wolfgang}.)  Now $p$ will be
    in $\BP'_\alpha$ and have unbounded domain.}

\proofsection{Proof of (\ref{item:ch})}

  This follows from  (\ref{item:a1}) and  (\ref{item:ccc}).
\end{proof}

\subsection{The generic alternating iteration $\mathaccent "7016{\BP}$}
In Lemma~\ref{lem:halbfett} we have seen:
\begin{Cor}\label{cor:summary} 
  Let $G$ be ${\prep}$-generic. Then we can construct\footnote{in an ``absolute 
  way'': Given $G$, we first define $\BP'_\om2$ to be the direct limit of $G$,
  and then inductively construct the $\BP_\alpha$'s from $\BP'_\om2$.}
  (in $V[G]$)
	an alternating iteration $\bar \BP$ such that the following holds:
  \begin{itemize}
    \item $\bar \BP$ is ccc.
		\item If $x\in G$, then $x$ canonically
      embeds into $\bar \BP$. 
      (In particular, a  $\BP_\om2$-generic filter $H_\om2$
      induces a $P^x_\om2$-generic filter over $M^x$, called $H^x_\om2$.)
    \item Each $\BQ_\alpha$ is the union of  all $Q^x_\alpha[H^x_\alpha]$
      with~$x\in G$.
    \item $ \BP_\om2$ is equivalent to the direct limit $\BP'_\om2$ of $G$:
			There is a dense embedding $j:\BP'_\om2\to \BP_\om2$, and
			for each $x\in G$ the function 
      $p\mapsto j(x,p)$ is the canonical embedding.  
  \end{itemize}
\end{Cor}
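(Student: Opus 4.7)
The plan is to observe that Lemma~\ref{lem:halbfett} has already done essentially all of the work by induction on $\alpha<\om2$, and that the corollary is obtained by taking the (trivial) limit at stage $\om2$. First I would set $\BP_{\om2}\DEFEQ \bigcup_{\alpha<\om2}\BP_\alpha$ and $j\DEFEQ\bigcup_{\alpha<\om2}j_\alpha$, where the $\BP_\alpha$ and $j_\alpha:\BP'_\alpha\to\BP_\alpha$ are the ones produced by Lemma~\ref{lem:halbfett}(\ref{item:iteration}); this makes sense because the $j_\alpha$ are coherent with the restriction maps, and because $\BP'_{\om2}=\bigcup_{\alpha<\om2}\BP'_\alpha$ by Fact~\ref{fact:5.12} (using $\cf(\om2)>\omega$).

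The properties demanded in the corollary then fall out one by one. The statement that each $\BQ_\alpha$ is the union of the $Q^x_\alpha[H^x_\alpha]$ for $x\in G$ is part of Lemma~\ref{lem:halbfett}(\ref{item:iteration}). For the canonical embedding of a given $x\in G$ into $\bar\BP$, I would verify the three clauses of Fact~\ref{fact:canonical}: the successor clauses at $\alpha\in M^x$ follow from what was already shown (ultrafilters extend at ultralaver positions, and Janus forcings embed $M^x$-completely at Janus positions, both encoded in Lemma~\ref{lem:halbfett}(\ref{item:iteration})); at limits $\alpha\in M^x\cap\om2$ and at $\alpha=\om2$ itself, the canonical extension is exactly $i_{x,\alpha}$, which again is the map $p\mapsto j_\alpha(x,p)$ and is $M^x$-complete by induction (at limits $\alpha<\om2$), respectively by the fact that $\BP_{\om2}$ is the direct limit of the $\BP_\alpha$ and each $P^x_\alpha$ for $\alpha\in M^x\cap\om2$ is $M^x$-completely embedded.

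It remains to check that $\BP_{\om2}$ is ccc. Since each $\BP_\alpha$ for $\alpha<\om2$ is ccc by Lemma~\ref{lem:halbfett}(\ref{item:ccc}), and since $\BP_{\om2}=\bigcup_{\alpha<\om2}\BP_\alpha$ is a direct limit of length $\om2$ in which cofinally many stages (namely all stages $\alpha$ of uncountable cofinality, for which $\BP_\alpha=\bigcup_{\beta<\alpha}\BP_\beta$) are direct limits, Solovay's classical preservation result yields the ccc at $\om2$.

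There is essentially no hard step here; the only thing to watch out for is making sure that the coherence of the $j_\alpha$ does give a well-defined dense embedding $j:\BP'_{\om2}\to\BP_{\om2}$, and that the equations ``$i_{x,\alpha}(p)=j_\alpha(x,p)$'' pass to $\alpha=\om2$ by taking unions. Both are immediate from the inductive clauses in Lemma~\ref{lem:halbfett}(\ref{item:iteration}).
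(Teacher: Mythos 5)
Your overall plan (take $\BP_{\om2}=\bigcup_{\alpha<\om2}\BP_\alpha$, $j=\bigcup j_\alpha$, and collect what Lemma~\ref{lem:halbfett} and Fact~\ref{fact:5.12} give at $\om2$) is the right one, and the ccc argument at $\om2$ is fine --- in fact, since $\cf(\om2)>\om1$ and $\BP_{\om2}$ is a direct limit, any antichain of size $\al1$ already lives in some $\BP_\alpha$ with $\alpha<\om2$, so Solovay is more than you need.

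However, the reason you give for $M^x$-completeness of $i_{x,\om2}$ is wrong. You claim it follows ``by the fact that $\BP_{\om2}$ is the direct limit of the $\BP_\alpha$ and each $P^x_\alpha$ for $\alpha\in M^x\cap\om2$ is $M^x$-completely embedded.'' That is exactly the false principle the paper warns against: Lemma~\ref{lem:wolfgang}(\ref{item:pathetic099}) only guarantees that the canonical extension lands in $P_{\varepsilon'}$ and preserves incompatibility --- it explicitly ``might not work'' --- and footnote~\ref{fn:too.late} gives a counterexample. The reason your premises are insufficient here is that the range of $i_{x,\om2}$ sits inside $\BP_{\alpha^*}$ with $\alpha^*:=\sup(M^x\cap\om2)$, an ordinal of countable cofinality where $\BP_{\alpha^*}$ is \emph{not} a direct limit, so a condition $p\in\BP_{\alpha^*}$ need not belong to any $\BP_\beta$ with $\beta\in M^x\cap\om2$, and you have no way to push a maximal antichain $A\in M^x$ of $P^x_{\om2}$ down to some $P^x_\beta$. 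The correct argument is the one used in the limit case of Lemma~\ref{lem:halbfett} and packaged in Fact~\ref{fact:5.12}: $\BP'_{\om2}$ is the direct limit of the \emph{directed system} $\{P^y_{\om2}:y\in G\}$, and for $y\le x$ in $G$ the connecting embedding $i_{x,y}:P^x_{\om2}\to P^y_{\om2}$ is $M^x$-complete by definition of $\le_\prep$. Given $(y,p)\in\BP'_{\om2}$ and $A\in M^x$, strengthen to $z\le x,y$ in $G$, observe that $i_{x,z}[A]$ is predense in $P^z_{\om2}$, and transfer compatibility. That is what gives $M^x$-completeness at $\om2$; the order-type direct limit over $\alpha$ is not what does the work.
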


\begin{Lem}\label{lem:weiothowet}
  Let $x\in {\prep}$. Then ${\prep}$ forces the following: 
   $x\in G$ iff $x$ canonically embeds into $\bar \BP$.
\end{Lem}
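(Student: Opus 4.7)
The forward direction is immediate from Corollary~\ref{cor:summary}. For the converse, I plan to show that if $x \notin G$, then $x$ does not canonically embed into $\bar\BP$. So I will suppose towards a contradiction that in $V[G]$ we have $x\notin G$ but $x$ canonically embeds into $\bar\BP$, and I will derive a contradiction by producing a common $\prep$-extension of $x$ and some $y\in G$ that is incompatible with it.

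First, I extract a $y\in G$ with $y\perp_\prep x$. By Corollary~\ref{cor:bigcor}(\ref{item:gurke0}), $\prep$ is $\sigma$-closed, so for any $p\in \prep$ we can either strengthen $p$ below $x$ or strengthen it to be incompatible with $x$; hence $\{q\in\prep:\, q\le_\prep x\text{ or }q\incomp_\prep x\}$ is dense. By genericity some $y\in G$ lies in this set, and since $x\notin G$ while $G$ is upward closed, the case $y\le x$ is impossible, so $y\incomp_\prep x$.

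Next, I construct $z\le_\prep x,y$ via an elementary submodel argument in $V[G]$. By the forward direction, $y\in G$ also canonically embeds into $\bar\BP$. Pick a countable $N\esm H(\chi^*)^{V[G]}$ containing $x$, $y$, $\bar\BP$, and the witnessing data for both embeddings; let $k:N\to M^z$ be the ord-collapse, and set $\bar P^z\DEFEQ k(\bar \BP)$, $z\DEFEQ (M^z,\bar P^z)$. Since $M^x,M^y\in N$ are countable (in $V[G]$) with bijections in $N$, elementarity forces $M^x,M^y\subseteq N$, and an induction on rank shows $k$ is the identity on $M^x$ and on $M^y$. Thus $M^x,M^y\in M^z$, and by applying Fact~\ref{fact:esmV} simultaneously to $x$ and $y$ (i.e., using elementarity to transfer the two canonical embeddings of $x$ and $y$ into $\bar\BP$ down to embeddings into $\bar P^z$ inside $M^z$), we obtain $z\in \prep^{V[G]}$ with $z\le_\prep x$ and $z\le_\prep y$.

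Finally, $z$ is an HCON object in $V[G]$, so by Corollary~\ref{cor:bigcor}(\ref{item:gurke0}) we have $z\in V$. The relations $z\le_\prep x$ and $z\le_\prep y$ refer only to the ord-transitive structure of $z$ (that $M^z$ thinks $x$, resp.\ $y$, canonically embeds into $\bar P^z$, and that $M^x,M^y$ are countable inside $M^z$), so they are absolute between $V$ and $V[G]$. Hence $z$ witnesses compatibility of $x$ and $y$ in $\prep^V$, contradicting $y\incomp_\prep x$. The main technical point to verify carefully is that the ord-collapse indeed gives $z\le_\prep x$ and $z\le_\prep y$ — specifically, checking that $k$ acts as the identity on $M^x$ and $M^y$ so that their canonical embeddings transfer correctly; everything else is bookkeeping and absoluteness.
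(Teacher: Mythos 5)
Your proof is correct and takes essentially the same approach as the paper's: the forward direction is Corollary~\ref{cor:summary}, and for the converse you ord-collapse a countable elementary submodel of $H(\chi^*)^{V[G]}$ containing $x$, $y$, and $\bar\BP$ to get $z\in V$ with $z\le_\prep x,y$, contradicting incompatibility. One small remark: the density of $\{q\in\prep:\, q\le_\prep x\text{ or }q\incomp_\prep x\}$ holds in any quasi-order and has nothing to do with $\sigma$-closure; $\sigma$-closure is what you need for the (later) step $z\in V$, not here.
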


\begin{proof}
	If $x\in G$, then we already know  that $x$ canonically embeds into
	$\bar \BP$. 

  So assume (towards a contradiction)
  that $y$ forces that $x$ embeds, but  $y\forces x\notin G$.
  Work in $V[G]$ where $y\in G$.
  Both  $x$ (by assumption) and $y\in G$ canonically embed into $\bar \BP$.
  Let $N$ be an elementary submodel 
  of $H^{V[G]}(\chi^*)$ containing $x,y,\bar \BP$; let $z = (M^z, \bar P^z)$ be the 
  ord-collapse of $(N, \bar \BP)$. Then $z\in V$ (as $\prep$ is $\sigma$-closed)
  and $z\in \prep$,
  and (by elementarity) $z\leq x,y$. This shows that $x\comp_\prep y$,
  i.e., $y$ cannot force $x\notin G$, a contradiction.
\end{proof}

	Using ccc, we can now prove a lemma that is in fact stronger than
the lemmas in the previous Section~\ref{sec:ccc}:
\begin{Lem}\label{lem:elemsub}
  The following is forced by ${\prep}$: Let $N \esm H^{V[G]}(\chi^*)$ be
  countable, and let  $y$ be the ord-collapse of $(N,\bar\BP)$.
	Then $y\in G$. Moreover, if $x\in G\cap N$, then $y \le x$.
\end{Lem}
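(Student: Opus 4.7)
The plan is to execute the argument of Fact~\ref{fact:esmV} inside $V[G]$, then use $\sigma$-closure of $\prep$ to pull the ord-collapse back into $V$ (hence into $\prep$), so that Lemma~\ref{lem:weiothowet} applies. In $V[G]$, the candidate $M^y$ is countable and ord-transitive, and $\bar P^y$ is a countable sequence of HCON objects inside $M^y$, so the pair $y=(M^y,\bar P^y)$ is HCON in $V[G]$; by Corollary~\ref{cor:bigcor}(\ref{item:gurke0}), $\prep$ is $\sigma$-closed and adds no HCON objects, so $y\in V$. The standard ord-collapse facts (Fact~\ref{fact:hcon}) give that $M^y$ is a nice candidate, and by elementarity of $N$ together with the fact that $\bar\BP$ is an alternating iteration, $M^y$ thinks $\bar P^y$ is an alternating iteration; thus $y\in\prep$.

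Next, I would verify that $y$ canonically embeds into $\bar\BP$. Write $k:N\to M^y$ for the ord-collapse. A routine induction along the iteration, checking the three cases of Fact~\ref{fact:canonical} (ultralaver successor, Janus successor, limit), shows that $k^{-1}$ is the canonical $M^y$-complete embedding: every maximal antichain $A\in M^y$ of $P^y_\alpha$ pulls back via $k$ to a set $k^{-1}[A]\in N$, which by elementarity remains maximal in $\BP_\alpha$. At ultralaver and Janus successor stages, one additionally uses that $\BQ_\alpha=\bigcup_{x\in G}Q^x_\alpha[H^x_\alpha]$ (from Corollary~\ref{cor:summary}) to confirm the extension/subforcing conditions required by Fact~\ref{fact:canonical}. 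By Lemma~\ref{lem:weiothowet}, $y$ canonically embedding into $\bar\BP$ implies $y\in G$.

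For the ``moreover'' clause, fix $x\in G\cap N$. Since $x\in V$ is HCON and $N\ni x$ is an elementary submodel containing $\omega$, a standard argument using a bijection $\omega\to\ordclos(x)$ lying in $N$ shows that $\ordclos(x)\subseteq N$, and hence $k$ restricts to the identity on $x$; in particular $M^x\in M^y$ with $k(M^x)=M^x$ and $k(\bar P^x)=\bar P^x$. Applying elementarity of $N$ to the statement ``$x$ canonically embeds into $\bar\BP$'' (true in $H^{V[G]}(\chi^*)$ since $x\in G$, by Lemma~\ref{lem:weiothowet}) and then ord-collapsing to $M^y$ yields that $M^y$ sees $(M^x,\bar P^x)$ canonically embed into $\bar P^y$, which is precisely $y\le_\prep x$.

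The main obstacle is verifying $M^y$-completeness of $k^{-1}$ at limit stages, since one must check both that predensity is preserved and that the canonical extension (Definition~\ref{def:canonicalextension}) actually lands inside the partial CS limit $\BP_\alpha$ rather than merely the full CS limit; however, this follows from the inductive construction of $\bar\BP$ in Lemma~\ref{lem:halbfett}(\ref{item:iteration}) together with elementarity applied to the definition of $\BP_\alpha$ as the closure of the direct-limit part under meets with compatible conditions from below.
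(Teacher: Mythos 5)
Your surrounding steps are fine: the argument that $y\in V$ (via $\sigma$-closure, since $y$ is HCON), that $y\in\prep$, and the ``moreover'' clause using that $\ordclos(x)\subseteq N$ and hence $k\on\ordclos(x)$ is the identity, all match what the paper needs. But the central step --- showing that $k^{-1}$ is $M^y$-complete --- has a genuine gap, and it is exactly the step the paper singles out as ``the crucial point''.

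You assert that a maximal antichain $A\in M^y$ of $P^y_\alpha$ ``pulls back via $k$ to a set $k^{-1}[A]\in N$, which by elementarity remains maximal in $\BP_\alpha$.'' Elementarity gives you that $k^{-1}(A)$ (the element of $N$ whose $k$-image is $A$) is a maximal antichain of $\BP_\alpha$ in $V[G]$. It does \emph{not} give you that the pointwise preimage $k^{-1}[A]=\{k^{-1}(a):a\in A\}$ is an element of $N$, nor that it equals $k^{-1}(A)$; in general $k^{-1}[A]\subsetneq k^{-1}(A)$. What you need, and what the paper explicitly invokes, is that $\BP_{\om2}$ (hence each $\BP_\alpha$) is ccc, so that $k^{-1}(A)\in N$ is countable, hence $k^{-1}(A)\subseteq N$ (by elementarity plus $\omega\subseteq N$), hence $k^{-1}(A)=k^{-1}[A]$. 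Without ccc the conclusion simply fails: for an uncountable maximal antichain in $N$, its trace $k^{-1}[A]$ on $N$ is not maximal. The paper even adds a remark that this use of ccc is essential, and that Lemma~\ref{lem:elemsub} conversely implies ccc --- so there is no way to circumvent it.

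Relatedly, you locate the ``main obstacle'' at limit stages (verifying that the canonical extension lands inside $\BP_\alpha$ and that predensity is preserved). In the paper's approach there is no stage-by-stage induction at all: once you have ccc, the preservation of maximal antichains is a one-step argument that works uniformly for every $\alpha\le\om2$, and $M^y$-completeness then follows from Lemma~\ref{lem:wolfgang}. The successor-case checks you describe (extension of ultrafilters, Janus subforcing) and the limit-case worry are either subsumed by the predensity argument or are the wrong place to look for difficulty; the real content of the lemma is ccc $\Rightarrow$ countable maximal antichains $\Rightarrow$ these antichains live inside $N$.
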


\begin{proof}
  Work in $V[G]$ with $x\in G$. Pick an elementary
  submodel $N$ containing $x$ and $\bar \BP$. Let $y$ be the 
  ord-collapse of $(N,\bar \BP)$ via a collapsing map $k$. 
  As above, it is clear that $y\in{\prep}$
  and $y\leq x$.  
  To show $y\in G$, it is (by the previous lemma) 
  enough to show that $y$ canonically embeds. 
  We claim that $k^{-1}$ is the canonical embedding of $y$ into $\bar\BP$. 
  The crucial point is to show $M^y$-completeness.  Let $B\in M^y$ be 
  a maximal antichain of $P^y_{\om2}$, say $B=k(A)$ where $A\in N$ is a maximal
  antichain of $\BP_{\om2}$.   So (by ccc) $A$ is countable, hence $A\subseteq N$. 
  So not only $A=k^{-1}(B)$ but even 
           $A=k^{-1}[B]$. 
  Hence $k^{-1}$ is an $M^y$-complete embedding. 
\end{proof}

\begin{Rem} We used the ccc of $\BP_{\om2}$ to prove Lemma~\ref{lem:elemsub};
this use was essential in the sense that we can in turn easily prove the ccc of
$\BP_{\om2}$ if we assume that Lemma~\ref{lem:elemsub} holds. In fact
Lemma~\ref{lem:elemsub} easily implies all other lemmas in
Section~\ref{sec:ccc} as well.
\end{Rem}

\section{The proof of \textup{BC+dBC}}\label{sec:proof}
We first%
\footnote{Note that for this weak version, it would be enough to
produce a generic iteration of length 2 only, i.e., $\BQ_0*\BQ_1$, where
$\BQ_0$ is an ultralaver forcing and $\BQ_1$ a corresponding Janus forcing.}
 prove that no uncountable $X$ in $V$ will be smz or sm in the
final extension $V[G*H]$.
Then we show how to modify the argument to work for all  uncountable sets 
in $V[G*H]$.

\subsection{\textup{BC+dBC} for ground model sets.}\label{sec:groundmodel}


\begin{Lem}\label{lem:6.1}
  Let $X \in V$ be an uncountable set of reals. 
  Then $\prep*\BP_\om2$ forces that $X$ is not smz. 
\end{Lem}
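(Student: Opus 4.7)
Plan:

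The strategy is to apply Pawlikowski's Theorem~\ref{thm:pawlikowski} in the form~\eqref{eq:notsmz}: it suffices to exhibit, in $V[G*H_{\om2}]$, a closed null set $F$ such that $X+F$ has positive outer Lebesgue measure. Since $\prep$ is $\sigma$-closed (Corollary~\ref{cor:bigcor}(\ref{item:gurke0})), $X$ remains uncountable in $V[G]$. By a density argument in $\prep$ (any condition $x$ can be strengthened to one whose iteration has a nontrivial ultralaver at some designated even position), fix an ultralaver position $\alpha_0<\om2$ at which $\BQ_{\alpha_0}$ is forced to be a nontrivial ultralaver forcing, and let $\n F$ be the canonical name for the closed null set derived from the ultralaver real $\bar\ell_{\alpha_0}$ via Corollary~\ref{cor:absolutepositive}. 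Then in $V[G*H_{\alpha_0+1}]$ the set $X+\n F$ has positive outer measure; the remaining task is to preserve this positivity through the tail of the iteration.

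Suppose for contradiction that some $(x_0,p)\in\prep*\BP_{\om2}$ forces $X+\n F\subseteq\nullset(\n Z)$ for a $\BP_{\om2}$-name $\n Z$ of a code for a null set. Working in $V[G]$ with $x_0\in G$, take a countable elementary submodel $N\esm H^{V[G]}(\chi^*)$ containing $\{X,p,\n Z,\bar\BP,\alpha_0,\n F\}$ and let $y$ be its ord-collapse via $k:N\to M^y$. By Lemma~\ref{lem:elemsub}, $y\in G$, so $\bar P^y:=k(\bar\BP)$ canonically $M^y$-completely embeds into $\bar\BP$ via $k^{-1}$; setting $\nd Z^y:=k(\n Z)$, the evaluation of $\nd Z^y$ under the induced $M^y$-generic $H^y_{\om2}$ yields the same Borel code as $\n Z[H_{\om2}]$. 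Working in $V[G*H_{\alpha_0+1}]$, the set $X+\n F$ has positive outer measure while the set of reals random over the countable model $M^y[H^y_{\alpha_0+1}]$ is co-null, so the two intersect; pick $r\in X+\n F$ random over $M^y[H^y_{\alpha_0+1}]$, together with a quick interpretation $Z^*$ of $\nd Z^y$ (whose initial code $r$ avoids automatically).

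The final step invokes the preservation theorem Lemma~\ref{lem:preservation.variant}: each iterand $Q^y_\beta$ locally preserves randomness over $M^y$ by Corollaries~\ref{cor:ultralaverlocalpreserving} and~\ref{cor:januslocallypreserves}, so there is $q\le p$ in $\BP_{\om2}$ forcing both that $r$ remains random over $M^y[H^y_{\om2}]$ and that $\nd Z^y\sqsubset r$ --- in particular $r\notin\nullset(\n Z[H_{\om2}])$. But $r\in X+\n F$ and $q\le p$ forces $X+\n F\subseteq\nullset(\n Z)$, a contradiction. The principal obstacle is the invocation of Lemma~\ref{lem:preservation.variant}: $\bar\BP$ as constructed in $V[G]$ (Lemma~\ref{lem:halbfett}) uses an almost-FS-type saturation at limits rather than being literally an almost CS iteration over $\bar P^y$, which is the hypothesis of the preservation theorem. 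To bridge this gap I would use that, by Definition~\ref{def:almost_CS_iteration_wolfgang} and the flexibility of $\prep$, the set of $z\le y$ in $\prep$ for which $\bar P^z\in M^z$ is an almost CS iteration over $\bar P^y$ from stage $\alpha_0+1$ onward is dense below $y$; by $\sigma$-directedness (Corollary~\ref{cor:bigcor}(\ref{item:gurke1})) some such $z$ lies in $G$, and then Lemma~\ref{lem:preservation.variant} applied to the almost CS iteration $\bar P^z$ over $\bar P^y$ (inside $M^z$) transfers via the canonical $M^z$-complete embedding $\bar P^z\to\bar\BP$ to give the required $q$.
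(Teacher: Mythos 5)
Your strategy diverges from the paper's in a way that introduces a genuine gap, and the obstacle you flag at the end is real but misdiagnosed. You work in $V[G]$ with the generically constructed iteration $\bar\BP$, take an elementary submodel of $H^{V[G]}(\chi^*)$ to get $y\in G$, pick $r$ in $V[G*H_{\alpha_0+1}]$, and then hope to push the randomness of $r$ through the tail of $\bar\BP$ via Lemma~\ref{lem:preservation.variant}. The problem is not (as you suggest) that $\bar\BP$ uses almost-FS-type limits rather than almost-CS: even if the shape of the limits matched, the hypothesis of Lemma~\ref{lem:preservation.variant} requires each iterand $Q_\beta$ to \emph{locally preserve the randomness of $r$} over $M^y[H^y_\beta]$, and the generic iterands $\BQ_\beta$ (being unions of $Q^z_\beta$ over $z\in G$) were never chosen with $r$ in view. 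Recall that ultralaver forcing in general \emph{destroys} randomness (Section~\ref{ss:ultralaverpositivity}); Corollary~\ref{cor:ultralaverlocalpreserving} only asserts that for a given $r$ one can choose \emph{some} extension that locally preserves it, not that every extension does. Your proposed bridge --- find $z\le y$ in $G$ whose $\bar P^z$ is almost CS over $\bar P^y$ and invoke the preservation theorem for $\bar P^z$ --- is circular: $z$ is a $V$-object already determined by $G$, while $r$ is only determined in $V[G*H_{\alpha_0+1}]$, so no density argument in $\prep$ can arrange that $Q^z_\beta$ preserves the randomness of a real that $z$ cannot yet ``see''. Moreover $r\notin M^z$, so applying the preservation lemma ``inside $M^z$'' is not meaningful.

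The paper's proof circumvents this by never fixing $G$: given $(x,p)$ forcing $X+\dot F\subseteq\nd Z$, it constructs \emph{in $V$} a fresh alternating iteration $\bar P$ over $\bar P^x$, choosing the iterands past $\alpha+1$ (as names) specifically so that a $P_{\alpha+1}$-name $\dot r$ for a random-over-$M^x[H^x_{\alpha+1}]$ element of $X+\dot F$ has its randomness locally preserved at every step (Lemma~\ref{lem:4.28}), so Lemma~\ref{lem:preservation.variant} applies to $\bar P$ and yields $q\le i_x(p^x)$ in $P_{\om2}$ forcing $\dot r\notin\dot Z^x$. Ord-collapsing $(N,\bar P)$ produces $y\le x$ with $k(q)\in P^y_{\om2}$, and the pair $(y,k(q))\le(x,p')$ is an honest condition of $\prep*\BP_{\om2}$ that forces $\dot r\in(X+\dot F)\setminus\nd Z$, contradicting~(2) by a pure density argument with no reference to a particular generic filter. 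The essential idea you are missing is this ``construct-then-collapse'' step, which lets one tailor the iterands to $\dot r$ before committing to any $G$. If you want to salvage your framing, you would still need to produce, below $(x_0,p)$, a $\prep*\BP_{\om2}$-condition forcing some element of $X+\dot F$ to escape $\nd Z$; the only known route is to build the preserving iteration yourself rather than hoping the generic one does it.
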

\begin{proof}\ 
\begin{enumerate}
\item
	Fix any even $\alpha< \om2$ (i.e., an
	ultralaver position) in our iteration. 
	The ultralaver forcing $\BQ_\alpha$ adds a
	(canonically defined code for a) 
	closed null set $\dot F$ constructed from the 
	ultralaver real $\bar \ell_\alpha$.
	(Recall Corollary~\ref{cor:absolutepositive}.)
	In the following, when 
	 we consider various ultralaver forcings $\BQ_\alpha$, $Q_\alpha$, $Q^x_\alpha$,
	 we treat
	 $\dot F$ not as an actual name, but rather as a definition 
	 which depends on the forcing used. 
\item
	According to Theorem~\ref{thm:pawlikowski}, it is enough to show that $X+\dot
	F$ is non-null in the $\prep*\BP_{\om2}$-extension, or equivalently, in every
	$\prep*\BP_{\beta}$-extension ($\alpha < \beta<\om2$).
	So assume towards a contradiction  that there is a  $\beta > \alpha$
	and an $\prep*\BP_{\beta}$-name
	$\nd Z$ of a (code for a) Borel null set such that some
	$(x,p)\in \prep*\BP_\om2$ forces that 
	$X + \dot F \subseteq \nd Z$.   
\item   Using the dense embedding  $j_\om2:\BP'_\om2\to \BP_\om2$, we may
        replace $(x,p)$ by a condition $(x,p')\in \prep*\BP'_\om2$. 
 According to
	Fact~\ref{fact:pathetic1} (recall that we now know that $\BP_\om2$ 
        satisfies ccc)
         and Lemma~\ref{lem:pathetic2}
	we can assume 
       that $p'$ is already a $P^x_\beta$-condition $p^x$
       and that
       $\nd Z $ is (forced by $x$ to be the same as)
	a $P^x_\beta$-name $\dot Z^x$ in $M^x$.
\item
	We construct (in $V$) an iteration $\bar P$ in the following way: 
	\begin{enumerate}
  \item[(a)]
  	Up to $\alpha$, we take an arbitrary alternating iteration
    into which $x$ embeds.
	  In particular, $P_\alpha$ will be proper and hence 
	   force that $X$ is still uncountable.
	   \item[(b)] Let $Q_\alpha$ be any ultralaver forcing (over $Q^x_\alpha$
       in case $\alpha\in M^x$). 
       So according 
		   to Corollary~\ref{cor:absolutepositive}, we know that 
       $Q_\alpha$ forces that $X+\dot F$ is not null.

		   Therefore  we can pick (in $V[H_{\alpha+1}]$) 
		   some $\dot r$ in $X+\dot F$ which is random over
       (the countable model) 
			 $M^x[H^x_{\alpha+1}]$, where $H^x_{\alpha+1}$ is induced
			 by $H_{\alpha+1}$. 
		 \item[(c)] In the rest of the construction, we preserve
		   randomness of $\dot r$ over $M^x[H^x_{\zeta}]$ for each $\zeta\le \om2$.
			 We can do this using an almost CS iteration
			 over~$x$
       where at each Janus position we use a random version of Janus forcing and
       at each ultralaver position we use a suitable ultralaver forcing;
       this is possible by Lemma~\ref{lem:4.28}. 
       By Lemma~\ref{lem:preservation.variant}, this iteration
       will preserve the randomness of $\dot r$.
	 	 \item[(d)] So we get $\bar P$ over $x$ 
		  (with canonical embedding $i_x$) and $q\leq_{P_\om2} i_x(p^x)$ 
       such that  $q\on\beta$ forces (in $P_\beta$)
       that $\dot r$ is random over $M^x[H^x_{\beta}]$, in particular that
       $\dot r\notin \dot Z^x$.
	\end{enumerate}
	We now pick a countable $N\esm H(\chi^*)$ containing
  everything and ord-collapse  $(N,\bar P)$ to $y\leq x$. (See Fact~\ref{fact:esmV}.)
	Set $X^y\DEFEQ X\cap M^y$ (the image of $X$ under the collapse).
	By elementarity, $M^y$ thinks that (a)--(d) above holds for $\bar P^y$ 
  and that $X^y$ is uncountable.  Note that  $X^y\subseteq X$.
	\item
  This gives a contradiction in the obvious way:
  Let $G$ be $\prep$-generic over $V$ and contain $y$,
	and let $H_\beta$ be $\BP_\beta$-generic over $V[G]$ and contain $q\on\beta$.
  So $M^y[H^y_\beta]$ thinks that $r\notin \dot Z^x$ (which is 
	absolute) and that $r=x+f$ for some $x\in X^y\subseteq X$
	and $f\in F$ (actually even in $F$ as evaluated in $M^y[H^y_{\alpha+1}]$).
	So in $V[G][H_\beta]$, $r$ is the sum of an element of $X$
	and an element of $F$. So $(y,q)\leq (x,p')$ forces that $\dot r\in (X+\dot
	F)\setminus \nd Z$, a contradiction to~(2).
	\qedhere
\end{enumerate}
\end{proof}

Of course, we need this result not just for ground model sets $X$, but for
$\prep*\BP_{\om2}$-names $\nd X=(\nd x_i:i\in\om1)$ of uncountable sets.  It is
easy to see that it is enough to deal with $\prep*\BP_{\beta}$-names for (all)
$\beta<\om2$. So given $\nd X$, we can (in the proof) pick $\alpha$ such that
$\nd X$ is actually an $\prep*\BP_{\alpha}$-name.  We can try to repeat the same
proof; however, the problem is the following: When constructing $\bar P$
in~(4), it is not clear how to simultaneously make all the uncountably many
names $(\nd x_i)$ into $\bar P$-names in a sufficiently ``absolute'' way.  In
other words: It is not clear how to end up with some $M^y$ and $\dot X^y$
uncountable in $M^y$ such that it is guaranteed that $\dot X^y$ (evaluated in
$M^y[H^y_{\alpha}]$) will be a subset of $\nd X$ (evaluated in
$V[G][H_\alpha]$). We will solve this problem in the next section by factoring
$\prep$.

Let us now give the proof of the corresponding weak version of dBC:

\begin{Lem}\label{lem:6.2}
  Let $X \in V$ be an uncountable set of reals. 
  Then $\prep*\BP_\om2$ forces that $X$ is not strongly meager. 
\end{Lem}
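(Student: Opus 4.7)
The plan is to mirror the proof of Lemma~\ref{lem:6.1}, but using the combination of Corollary~\ref{cor:ultraplusjanus} and Lemma~\ref{lem:janusnotmeager} in place of the random-preservation machinery. The key switch is that the \emph{tail} of the iteration we build in $V$ will be $\sigma$-centered (an almost FS construction) rather than random-preserving (an almost CS construction).

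Fix an even $\alpha<\om2$.  At positions $\alpha$ and $\alpha+1$ in the generic iteration $\bar\BP$ we have an ultralaver forcing $\BQ_\alpha$ followed by a Janus forcing $\BQ_{\alpha+1}$ based on the subsequence $\bar\ell^*_\alpha$, and $\BQ_{\alpha+1}$ adds the canonical null-set code $\n Z_\nabla$ of Definition~\ref{def:Znabla}.  By \eqref{eq:notsm} it suffices to show that $\prep*\BP_\om2$ forces $X+\nullset(\n Z_\nabla)=2^\omega$.  Assume towards contradiction that some $(x,p)\in\prep*\BP_\om2$ forces $\nd t\notin X+\nullset(\n Z_\nabla)$ for some name $\nd t$ of a real.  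Using the ccc of $\BP_\om2$ together with Fact~\ref{fact:pathetic1} and Lemma~\ref{lem:pathetic2}, I strengthen $(x,p)$ to assume that $p=p^x\in P^x_\beta$ for some $\beta\in(\alpha+1,\om2)\cap M^x$ and that $\nd t=\dot t^x$ is a $P^x_\beta$-name in $M^x$.

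Now I build, in $V$, an alternating iteration $\bar P$ into which $x$ canonically embeds. Below $\alpha$ I extend $\bar P^x$ arbitrarily via Lemma~\ref{lem:trivialexample}; the resulting $P_\alpha$ is ccc and proper so $X$ stays uncountable in $V[H_\alpha]$.  At position $\alpha$ I pick any nontrivial ultralaver $Q_\alpha$ extending $Q^x_\alpha$; at $\alpha+1$ I take the countable Janus forcing $Q_{\alpha+1}\DEFEQ Q^x_{\alpha+1}$ (if nontrivial; otherwise any countable Janus based on the ultralaver real $\bar\ell_\alpha$ from stage $\alpha$), which is legitimate by Fact~\ref{fact:janus.ctblunion}.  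From stage $\alpha+2$ onwards I use an almost FS iteration over $x$ as in Lemma~\ref{lem:almostfsstartatalpha}, setting $Q_\gamma=Q^x_\gamma$ for $\gamma\in M^x$ (both ultralaver and Janus forcings in $M^x$ are countable, hence $\sigma$-centered, using Lemma~\ref{lem:newscentered} on the ultralaver side) and $Q_\gamma$ trivial otherwise. By Lemma~\ref{lem:almostfsstartatalpha}, $P_{\alpha+2}$ then forces that $P_\om2/H_{\alpha+2}$ is $\sigma$-centered.  Hence in $V[H_\alpha]$, the three-factor forcing $Q_\alpha*Q_{\alpha+1}*(P_\om2/H_{\alpha+2})$ has exactly the shape of Corollary~\ref{cor:ultraplusjanus} (ultralaver, then countable Janus based on the ultralaver real, then $\sigma$-centered), so that corollary together with Lemma~\ref{lem:janusnotmeager} yields that $P_\om2$ forces $X+\nullset(\n Z_\nabla)=2^\omega$.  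In particular I may find $q\leq_{P_\om2} i_x(p^x)$, some $x'\in X$, and a name $\dot z$ with $q\forces \dot t^x=\check x'+\dot z$ and $\dot z\in\nullset(\n Z_\nabla)$.

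Finally I pick a countable $N\esm H(\chi^*)$ containing $x,\bar P,q,\dot t^x$, and ord-collapse $(N,\bar P)$ to produce $y\leq x$ in $\prep$ via Fact~\ref{fact:esmV}, with $X^y\DEFEQ X\cap M^y\subseteq X$ uncountable in $M^y$ (and $\dot t^y=\dot t^x$, since the ord-collapse fixes $M^x$ pointwise). By elementarity, $M^y$ thinks that $q^y\forces_{\bar P^y}\dot t^y=\check{x'}+\dot z\in X^y+\nullset(\n Z_\nabla^y)$. Let $G\ni y$ and $H_\om2\ni q^y$ (identifying $q^y$ with its image in $\BP_\om2$). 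Then $\dot t^y[H^y_\om2]=\nd t[G*H_\om2]$, and by Fact~\ref{fact:Znablaabsolute} the Borel codes $\n Z_\nabla^y[H^y_\om2]$ and $\n Z_\nabla[G*H_\om2]$ coincide, so $\dot z[H^y_\om2]\in\nullset(\n Z_\nabla)$ evaluated in $V[G][H_\om2]$. Since also $x'\in X$, we get $\nd t[G*H_\om2]\in X+\nullset(\n Z_\nabla)$, contradicting the assumption on $(x,p)$. The main delicacy is arranging the tail of $\bar P$ to be $\sigma$-centered while still having $x$ canonically embed: this is precisely what the almost FS construction of Lemma~\ref{lem:almostfsstartatalpha} delivers, and it is the structural reason we need the Janus forcings in countable models to be countable (so the tail is built from forcings of size $\aleph_0$ at each coordinate). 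The absoluteness of $\n Z_\nabla$ across the collapse, guaranteed by Fact~\ref{fact:Znablaabsolute}, is the other essential ingredient and plays the role that absoluteness of $F$ (from the ultralaver real) plays in the proof of Lemma~\ref{lem:6.1}.
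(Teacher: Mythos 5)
Your proposal is correct and follows essentially the same route as the paper's proof: an arbitrary (proper) alternating iteration up to $\alpha$, a nontrivial ultralaver at $\alpha$ (so $X$ becomes non-thin), a countable Janus at $\alpha+1$, and an almost FS ($\sigma$-centered) continuation, combined via Lemma~\ref{lem:janusnotmeager} and finished off by an elementary-submodel ord-collapse and the absoluteness of $\n Z_\nabla$ from Fact~\ref{fact:Znablaabsolute}. The only cosmetic difference is that you decide the witness $x'$ and the name $\dot z$ by passing to a stronger condition $q$ before collapsing, whereas the paper simply transfers the forced statement ``$\dot r^x\in X+\nullset(\n Z_\nabla)$'' by elementarity and reads off the witnesses inside $M^y[H^y_\beta]$ afterwards.
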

\begin{proof}
The proof is parallel to the previous one:
\begin{enumerate}
\item
	Fix any even $\alpha< \om2$ (i.e., an
	ultralaver position) in our iteration. 
	The Janus forcing $\BQ_{\alpha+1}$ 
	adds a (canonically defined code for a) 
	null set $\dot Z_\nabla$.
	(See Definition~\ref{def:Znabla} and Fact~\ref{fact:Znablaabsolute}.)
\item
	According to~\eqref{eq:notsm}, it is enough to show that
	$X+\dot Z_\nabla=2^\omega$ in the $\prep*\BP_{\om2}$-extension, or equivalently, in every
	$\prep*\BP_{\beta}$-extension  ($\alpha<\beta<\om2$).
	(For every real $r$, the statement
	$r\in X+\dot Z_\nabla$, i.e., $(\exists x\in X)\ x+r\in\dot Z_\nabla$, is absolute.)
	So assume towards a contradiction  that there is a  $\beta > \alpha$
	and an $\prep*\BP_{\beta}$-name
	$\nd r$ of a real such that some
	$(x,p)\in \prep*\BP_\om2$ forces that 
	$\nd r\notin X + \dot Z_\nabla$.
\item Again, we can assume that $\nd r $ is a $P^x_\beta$-name $\dot r^x$ in $M^x$. 
\item
  We construct (in $V$) an iteration $\bar P$  in the following way: 
	\begin{enumerate}
  \item[(a)]
	Up to $\alpha$, we take an arbitrary alternating
	iteration into which $x$ embeds.
	In particular, $P_\alpha$ again forces that $X$ is still uncountable.
	   \item[(b1)] Let $Q_\alpha$ be any ultralaver forcing (over $Q_\alpha^x$). Then
       $Q_\alpha$ forces that $X$ is not thin
		   (see Corollary~\ref{cor:LDnotthin}).
		 \item[(b2)] Let $Q_{\alpha+1}$ be a countable Janus forcing. 
       So $Q_{\alpha+1}$ forces $X+\dot Z_\nabla=2^\omega$. (See  Lemma~\ref{lem:janusnotmeager}.)
     \item[(c)]  We continue the iteration in a $\sigma$-centered way.
       I.e., we use an almost FS iteration over $x$ of
       ultralaver forcings and countable Janus forcings, 
       using trivial $Q_\zeta$ for all $\zeta\notin M^x$; see
       Lemma~\ref{lem:4.17}.
	 	 \item[(d)] So $P_\beta$ still
			 forces that $X+\dot Z_\nabla=2^\omega$, and in particular that $\dot
       r^x\in X+\dot Z_\nabla$.
    	 (Again by  Lemma~\ref{lem:janusnotmeager}.)
	\end{enumerate}
	Again, by collapsing some $N$ as in the previous proof,
	we get $y\le x $ and $X^y\subseteq X$.
	\item
  This again gives the obvious contradiction:
  Let $G$ be $\prep$-generic over $V$ and contain $y$,
	and let $H_\beta$ be $\BP_\beta$-generic over $V[G]$ and contain $p$.
  So $M^y[H^y_\beta]$ thinks that
	$r=x+ z$ for some $x\in X^y\subseteq X$
	and $z \in Z_\nabla$ (this time, $\dot Z_\nabla$ is evaluated in $M^y[H^y_{\beta}]$),
	contradicting~(2).
	\qedhere
\end{enumerate}
\end{proof}

\subsection{A factor lemma}\label{sec:factor}

We can restrict $\prep$ to any ${\alpha^*}<\om2$ 
in the obvious way: Conditions are pairs
$x=(M^x,\bar P^x)$ of nice candidates $M^x$ (containing ${\alpha^*}$) and alternating
iterations $\bar P^x$, but now $M^x$ thinks that $\bar P^x$ has length ${\alpha^*}$
(and not $\om2$).  We call this variant $\prep\on{\alpha^*}$.

Note that all results of Section~\ref{sec:construction} about $\prep$ are still true for
$\prep\on{\alpha^*}$. In particular, whenever $G\subseteq \prep\on\alpha^*$ is generic, 
it will define a direct limit (which we call $\BP^{\prime*}$), and an
alternating iteration of length $\alpha^*$ (called  $\bar \BP^*$);  again
we will have that $x\in G$ iff $x$ canonically embeds into $\bar
\BP^*$.

There is a natural projection map from $\prep$ (more exactly:  from the dense
subset of those $x$ which satisfy ${\alpha^*}\in M^x$) into $\prep\on\alpha^*$,
 mapping $x=(M^x,\bar P^x)$ to
$x\on{\alpha^*}\DEFEQ (M^x,\bar P^x\on{\alpha^*})$.  (It is obvious that this projection is
dense and preserves $\leq$.)

There is also a natural embedding $\varphi$ from $\prep\on{\alpha^*}$ to $\prep$: We
can just continue an alternating iteration of length ${\alpha^*}$ by appending
trivial forcings.

$\varphi$ is complete: It preserves $\leq$ and $\incomp$. (Assume that $z\leq
\varphi(x),\varphi(y)$. Then $z\on{\alpha^*}\leq x,y$.) Also, the projection is a
reduction: If $y\leq x\on{\alpha^*}$ in $\prep\on{\alpha^*}$, then let $M^z$ be a model 
containing both $x$ and $y$.  In $M^z$, we can first  construct 
 an alternating iteration of length $\alpha^*$ over $y$
(using almost FS over $y$, or almost CS ---
 this does not matter here).   We then continue this iteration  $\bar P^z$
using almost FS or almost CS  over $x$. 
So $x$ and $y$ both  embed into $\bar P^z$, hence 
  $z=(M^z,\bar P^z)\leq x,y$.

So according to the general factor lemma of forcing theory, we know that
$\prep$ is forcing equivalent to $\prep\on{\alpha^*} * (\prep/\prep\on{\alpha^*})$,
where $\prep/\prep\on{\alpha^*}$ is the quotient of $\prep$ and $\prep\on{\alpha^*}$,
i.e.,
the ($\prep\on{\alpha^*}$-name for the) 
set of $x\in\prep$ which  are compatible (in $\prep$) with all $\varphi(y)$ for
$y\in G\on{\alpha^*}$ (the generic filter for $\prep\on{\alpha^*}$),
or equivalently, the set of $x\in\prep$ such that $x\on{\alpha^*}\in
G\on{\alpha^*}$. So  Lemma~\ref{lem:weiothowet} (relativized to
$\prep\on\alpha^*$) implies:
\proofclaim{eq:oetwji}{$\prep/\prep\on{\alpha^*}$ is the set of $x\in\prep$ that
canonically embed (up to ${\alpha^*}$) into $\BP_{\alpha^*}$.}

\begin{Setup}
Fix some $\alpha^*<\om2$ of uncountable cofinality.\footnote{Probably the cofinality is completely irrelevant, but the picture is clearer this way.}
Let $G\on{\alpha^*}$ be 
$\prep\on{\alpha^*}$-generic over $V$ and work in $V^*\DEFEQ V[G\on{\alpha^*}]$.
Set $\bar\BP^*=(\BP^*_\beta)_{\beta<\alpha^*}$, the generic alternating
iteration added by $\prep\on{\alpha^*}$. 
Let $\prep^*$ be the quotient
$\prep/\prep\on\alpha^*$.
\end{Setup}

We claim that $\prep^*$ satisfies (in $V^*$) all the properties
that we proved in Section~\ref{sec:construction}  for $\prep$ (in $V$), with
the obvious modifications.  In particular:

\begin{enumerate}[(A)$_{\alpha^*}$]
  \item $\prep^*$ is $ \al2$-cc, since it is
    the quotient of an  $\al2$-cc forcing. 
  \item $\prep^*$ does not add new reals (and more generally, no new
    HCON objects), since it is the quotient of a $\sigma$-closed forcing.\footnote{It is 
    easy to see that $\prep^*$ is even $\sigma$-closed, by ``relativizing'' the
    proof for $\prep$, but we will not need this.} 
  \item 
    Let $G^*$ be $\prep^*$-generic over $V^*$. Then $G^*$ is $\prep$-generic
    over $V$, and therefore
		Corollary~\ref{cor:summary} holds for~$G^*$. 
    (Note that $\BP'_\om2$ and then $\BP_\om2$ is constructed from~$G^*$.)
   Moreover, it is easy to see%
 \footnote{%
     For $\beta \le  \alpha^*$, let 
      $\BP^{\prime*}_\beta$ be the direct limit of $(G\on\alpha^*)\on \beta$
      and $\BP^{\prime}_\beta$ the direct limit of $G^*\on\beta$.
      The function  $k_\beta: \BP^{\prime*}_\beta\to \BP^{\prime}_\beta$
      that maps $(x,p)$ to $(\varphi(x),p)$ 
      preserves $\leq$ and $\incomp$ and is surjective
      modulo $=^*$,
      see Fact~\ref{facts:trivial66}(\ref{item:bla3}).
      So it is clear that 
      defining $\bar\BP^*\on\beta$ by induction
      from $\BP^{\prime*}_\beta$ yields the same result as 
      defining $\bar\BP\on\beta$ from $\BP_{\beta}'$.
    }  
    that $\bar\BP$ starts with $\bar\BP^*$.
 \item  In particular,  we get a variant of Lemma~\ref{lem:elemsub}:
     The following is forced by ${\prep^*}$: Let $N \esm H^{V[G^*]}(\chi^*)$ be
  countable, and let  $y$ be the ord-collapse of $(N,\bar\BP)$.
  Then $y\in G^*$. Moreover:  If $x\in G^*\cap N$, then $y \le x$.
\end{enumerate}

We can use the last item to prove the $\prep^*$-version of
Fact~\ref{fact:pathetic1}: 
\begin{Cor}\label{cor:slkjte}
In $V^*$, the following holds: 
\begin{enumerate}
\item \label{item:fangen.a}
  Assume that $x\in\prep^*$ forces that 
  $p\in \BP_\om2$. Then there is a $y\leq x$
  and a $p^y\in P^y_\om2$ such that $y$ forces
  $p^y=^*p$. 
\item \label{item:fangen.b}
  Assume that $x\in\prep^*$ forces that 
  $\nd r$ is a $\BP_{\om2}$-name of a real.   Then there is a $y\leq x$
  and a $P^y_\om2$-name $\dot r^y$  such that $y$ forces
  that $\dot r^y $ and $\nd r $ are equivalent as  $\BP_{\om2}$-names. 
\end{enumerate}
\end{Cor}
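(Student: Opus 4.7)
Both parts are close analogs of Fact~\ref{fact:pathetic1} and Lemma~\ref{lem:pathetic2} for $\prep$, with the role of $\sigma$-closure now played by property~(B)$_{\alpha^*}$ (no new HCON objects) and by the elementary submodel statement~(D)$_{\alpha^*}$. The latter immediately yields $\sigma$-directedness of the $\prep^*$-generic filter~$G^*$: given countable $\{y_n\}_n\subseteq G^*$, one collapses a countable $N\esm H^{V^*[G^*]}(\chi^*)$ containing all the $y_n$ to get some $y\in G^*$ below each of them, exactly as Corollary~\ref{cor:bigcor}(\ref{item:gurke1}) was derived from Lemma~\ref{lem:elemsub}.

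For~(\ref{item:fangen.a}): since $j_\om2:\BP'_\om2\to \BP_\om2$ is a dense embedding, after strengthening we may assume $\nd p$ names a pair $(\nd z,\nd q)\in\BP'_\om2$, with $\nd z$ naming an element of~$G^*$ and $\nd q$ naming a $P^{\nd z}_\om2$-condition. Both are HCON-valued, so by~(B)$_{\alpha^*}$ we can strengthen $x$ to $x_1$ deciding $\nd z=z^*$ and $\nd q=q^*$ for concrete $z^*\in\prep^*$ and $q^*\in P^{z^*}_\om2$ in~$V^*$. Then $x_1\leq^* z^*$, so $x_1$ and $z^*$ are compatible and admit a common extension~$y$, automatically with $y\le x$. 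Setting $p^y\DEFEQ i_{z^*,y}(q^*)\in P^y_\om2$, Fact~\ref{facts:trivial66}(1) gives $(y,p^y)=^*(z^*,q^*)$ in $\BP'_\om2$; since $y$ forces $\nd p=(z^*,q^*)$, we conclude $y\forces_{\prep^*} p^y=^*\nd p$ in $\BP_\om2$.

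For~(\ref{item:fangen.b}): use the ccc of~$\BP_\om2$ (Lemma~\ref{lem:halbfett}(\ref{item:ccc})) to encode $\nd r$ as a countable HCON-valued name $\n\Xi=(\n A_{n,m},\n f_{n,m})_{n,m\in\omega}$, where each $\n A_{n,m}$ is a countable maximal antichain in $\BP_\om2$ (which by density of $j_\om2$ may be taken to lie inside~$\BP'_\om2$) determining $\nd r(m)$ through~$\n f_{n,m}$. Strengthen $x$ to $x_1$ deciding $\n\Xi=\Xi^*\in V^*$ using~(B)$_{\alpha^*}$. Each element of each $A^*_{n,m}$ is then a pair $(z,q)\in\BP'_\om2$ with $x_1\leq^* z$; by the $\sigma$-directedness of~$G^*$ from the first paragraph, applied to the countable collection of all these~$z$'s, we find $y\le x_1$ below every one of them. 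Via Fact~\ref{facts:trivial66}(1) and the canonical embeddings~$i_{z,y}$, every element of $\Xi^*$ can be reinterpreted as an element of $P^y_\om2$; the resulting system of $P^y_\om2$-antichains and functions defines a $P^y_\om2$-name $\dot r^y$ satisfying $y\forces_{\prep^*}\forces_{\BP_\om2}\dot r^y=\nd r$. The only mildly nontrivial step is the $\sigma$-directedness step just used, which~(D)$_{\alpha^*}$ handles exactly as described in the first paragraph.
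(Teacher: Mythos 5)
The paper proves part~(\ref{item:fangen.a}) by a direct ord-collapse argument: working in $V^*[G^*]$ with $x\in G^*$, it takes a countable $N\esm H(\chi^*)$ containing $x$, $p$, $\bar\BP$, ord-collapses $(N,\bar\BP,p)$ to $(M^z,\bar P^z, p^z)$, and uses the $\prep^*$-version of Lemma~\ref{lem:elemsub} (your~(D)$_{\alpha^*}$) to conclude $z\in G^*$, $z\le x$; since the inverse collapse \emph{is} the canonical embedding, $i_{z,\om2}(p^z)=p$ literally, and then any $y\le z$ forcing this situation does the job. Your route for~(\ref{item:fangen.a}) is genuinely different: you first reduce to $\BP'_\om2$ via density of $j_\om2$, then use~(B)$_{\alpha^*}$ to decide the resulting name, and then a compatibility argument to find~$y$.

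The difficulty is in the very first step of your part~(\ref{item:fangen.a}): \qemph{since $j_\om2$ is a dense embedding, after strengthening we may assume $\nd p$ names a pair $(\nd z,\nd q)\in\BP'_\om2$.} Density gives you, for every $p\in\BP_\om2$, some $p'\le p$ in the range of $j_\om2$; it does \emph{not} give $p'=^*p$. (Lemma~\ref{lem:halbfett} constructs $\BP_\alpha$ at limit stages as a set of $q\wedge p$'s that is strictly larger than $j_\alpha[\BP'_\alpha]$, and Corollary~\ref{cor:summary} only asserts a \emph{dense} embedding.) Carrying that $p'$ through the remainder of your argument yields a $y$ and $p^y$ with $y\forces p^y=^*p'\le^*p$, i.e., the weaker conclusion with $\le^*$ in place of $=^*$. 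What would justify your reduction is $=^*$-surjectivity of $j_\om2$ onto $\BP_\om2$ (equivalently, since the limits are separative, actual surjectivity) --- but that fact is precisely what the paper's ord-collapse argument proves en route, and is essentially the content of the corollary itself, so invoking \qemph{density} here is circular/insufficient. (By contrast, in your part~(\ref{item:fangen.b}) the same density step is harmless: refining the maximal antichains $\n A_{n,m}$ to ones inside $j_\om2[\BP'_\om2]$ does not change the real $\nd r$ they encode.)

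Once you pass the first step correctly, the rest of your argument for~(\ref{item:fangen.a}) (deciding $\nd z,\nd q$ via~(B)$_{\alpha^*}$, using $x_1\leq^* z^*$ and a common lower bound, and Fact~\ref{facts:trivial66}(1)) is sound, and your derivation of $\sigma$-directedness of $G^*$ from~(D)$_{\alpha^*}$ by ord-collapsing is also correct (although, as a small point of bookkeeping, Corollary~\ref{cor:bigcor}(\ref{item:gurke1}) in the paper was derived from $\sigma$-closure of $\prep$, not from Lemma~\ref{lem:elemsub}). The cleanest fix is to do what the paper does: rather than quoting density of $j_\om2$, apply~(D)$_{\alpha^*}$ directly to an $N\ni p$ to produce the $z^*$ and $q^*$ with $i_{z^*,\om2}(q^*)=p$ \emph{exactly}; then your compatibility step is no longer needed, since the \qemph{Moreover} clause of~(D)$_{\alpha^*}$ already gives $z^*\le x$.
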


\begin{proof} We only prove (\ref{item:fangen.a}), the proof of (\ref{item:fangen.b})
is similar.

  Let $G^*$ contain $x$. In $V[G^*]$, pick an elementary
  submodel $N$ containing $x,p,\bar\BP$ and let $(M^{z},\bar P^{z},p^{z})$
  be the ord-collapse of $(N,\bar\BP,p)$.
  Then $z\in G^*$. 
  This whole situation is forced by some $y\leq z\leq x\in G^*$.
  So $y$ and $p^y$ is as required, where 
  $p^y\in P^y_\om2$ is the canonical image of $p^z$.
\end{proof}

We also get the following analogue of Fact~\ref{fact:esmV}: 
\proofclaim{claim:44}{
In $V^*$ we have:    Let $x\in \prep^*$.  
		Assume that $\bar P$ is an alternating iteration that extends
    $ \bar \BP\on \alpha^*$ and 
      that $x=(M^x,\bar P^x) \in
		\prep$ canonically embeds into $\bar P$, and that $N \esm H( \chi^*)$
		contains $x$ and $\bar P$.  Let $y=(M^y,  \bar P^y)$ be the ord-collapse of
		$(N, \bar P)$.  Then $y\in\prep^*$ and  $y\le x$. 
}

We now claim that $\prep*\BP_\om2$ forces BC+dBC.
We know that $\prep$ is forcing equivalent to $\prep\on{\alpha^*} *
\prep^*$.  Obviously we have
\[
  \prep*\BP_\om2=\prep\on{\alpha^*}* \prep^**\BP_{\alpha^*} * \BP_{{\alpha^*},\,\om2}
\] (where $\BP_{{\alpha^*},\,\om2}$ is the quotient of
$\BP_\om2$ and $\BP_{\alpha^*}$).
Note that $\BP_{\alpha^*}$ is already determined by $\prep\on{\alpha^*}$,
so $\prep^**\BP_{\alpha^*}$ is (forced by $\prep\on{\alpha^*}$ to be) 
a product $\prep^*\times \BP_{\alpha^*}=\BP_{\alpha^*}\times \prep^*$.

But note that this is not the same as $\BP_{\alpha^*} * \prep^*$, where we
evaluate the definition of~$\prep^*$ in the $\BP_{\alpha^*}$-extension of
$V[G\on{\alpha^*}]$: We would get new candidates and therefore new conditions
in~$\prep^*$ 
after forcing with~$\BP_{\alpha^*}$.  In other words, we can 
\emph{not} just argue as follows:
\begin{wrongproof}
   $\prep*\BP_\om2$ is the same as
   $(\prep\on{\alpha^*}* \BP_{\alpha^*})* (\prep^**\BP_{{\alpha^*},\om2})$;
   so given an $\prep*\BP_\om2$-name $X$ of a set of reals of size~$\al1$,
   we can choose $\alpha^*$ large enough so that 
   $X$ is an $(\prep\on{\alpha^*}* \BP_{\alpha^*})$-name. Then,
   working in the $(\prep\on{\alpha^*}* \BP_{\alpha^*})$-extension,
   we just apply Lemmas~\ref{lem:6.1} and~\ref{lem:6.2}.
\end{wrongproof}

So what do we do instead?
Assume that $\nd X=\{\nd\xi_i:\, i\in\om1\}$ is an $\prep*\BP_{\om2}$-name for
a set of reals of size~$\aleph_1$. So there is a $\beta<\om2$ such that 
$\nd X$ is added by $\prep*\BP_\beta$.  
In the $\prep$-extension, $\BP_{\beta}$ is ccc, therefore we can assume
that each $\nd \xi_i$ is a system of countably many countable
antichains $\n A^m_i$ of~$\BP_\beta$,
together with functions $\n f^m_i:\n A^m_i\to\{0,1\}$. 
For the following argument,
we prefer to work with the equivalent $\BP_\beta'$ instead of~$\BP_\beta$. 
  We can assume that each of the sequences $B_i\DEFEQ (\n
A^m_i,\n f^m_i)_{m\in\omega}$ is an element of~$V$ (since $\BP'_\beta$ is a
subset of~$V$ and since $\prep$ is $\sigma$-closed).
So each $B_i$ is decided by a maximal antichain~$Z_i$ of~$\prep$.
Since $\prep$ is $\al2$-cc, these $\al1$ many antichains all
are contained in some $\prep\on {\alpha^*}$ with  ${\alpha^*}\geq \beta$.

 So in the $\prep\on {\alpha^*}$-extension $V^*$ we 
have the following situation:
Each $\xi_i$ is a very ``absolute\footnote{or: ``nice'' in the sense of~\cite[5.11]{MR597342}}''
$\prep^* * \BP_{\alpha^*}$-name
(or equivalently, $\prep^* \times \BP_{\alpha^*}$-name),
in fact they are already determined by antichains that are in $\BP_{\alpha^*}$
and do not depend on $\prep^*$. So we can interpret them as
$\BP_{\alpha^*}$-names.

Note that: 
\proofclaim{claim:xi.i}{ The $\xi_i$ are forced
 (by $\prep^**\BP_{\alpha^*}$)
 to be pairwise different,
and therefore already by $\BP_{\alpha^*}$.}

Now we are finally ready to prove that $\prep* \BP_{\om2}$ forces that every uncountable $X$ 
is neither smz nor sm.  It is enough to show that for every name $\nd X$
of an uncountable set of reals of size $\al1$ the forcing  $\prep* \BP_{\om2}$
forces that $\nd X$ is neither smz nor sm.   For the rest of the proof
we fix such a name $\nd X$, the corresponding $\nd \xi_i$'s (for $i\in \omega_1$),
and the appropriate $\alpha^*$ as above.  From now on, we 
 work in the $\prep\on\alpha^*$-extension~$V^*$.  

So we have to show that $\prep^* * \BP_{\om2}$ forces that $\nd X$ 
is neither smz nor sm.

After all our preparations, we can just repeat the proofs of BC (Lemma~\ref{lem:6.1}) 
and dBC (Lemma~\ref{lem:6.2}) 
 of Section~\ref{sec:groundmodel}, with the
following modifications.   The modifications are the same for both proofs; 
for better readability we describe the results of the change only for the proof of dBC.

\begin{enumerate}
  \item Change:  
Instead of an arbitrary ultralaver position $\alpha<\om2$, we obviously 
have to choose $\alpha\geq \alpha^*$.
\\ For the dBC: We choose $\alpha\ge \alpha^*$ an arbitrary ultralaver position.   
 The Janus forcing $\BQ_{\alpha+1}$
        adds a (canonically defined code for a)
        null set $\dot Z_\nabla$.
  \item
   Change: No change here.
(Of course we now have an $\prep^**\BP_{\alpha^*}$-name $\nd X$ instead 
of a ground model set.)\\
 For the dBC: 
        It is enough to show that
        $\nd X+\dot Z_\nabla=2^\omega$ in the $\prep^**\BP_{\om2}$-extension of~$V^*$,
         or equivalently, in every
        $\prep^**\BP_{\beta}$-extension  ($\alpha<\beta<\om2$).
        So assume towards a contradiction  that there is a  $\beta > \alpha$
        and an $\prep^**\BP_{\beta}$-name
        $\nd r$ of a real such that some
        $(x,p)\in \prep^**\BP_\om2$ forces that
        $\nd r\notin \nd X + \dot Z_\nabla$.
  \item Change: no change.  (But we  use Corollary~\ref{cor:slkjte}
      instead of Lemma~\ref{lem:pathetic2}.)\\
      For dBC:  
        Using  Corollary~\ref{cor:slkjte}(\ref{item:fangen.b}), without loss of generality $x$ forces $p^x=^* p $
           and  there is a $P^x_\beta$-name $\dot  r^x$ in $M^x$ such that $\dot r^x=\nd r$ is forced. 
  \item Change: 
The  iteration obviously has to start with the $\prep\on\alpha^*$-generic iteration
$\bar\BP^*$  (which is ccc),
the rest is the same.  
\\ For dBC: 
  In $V^*$ we construct an iteration $\bar P$ in the following way:
	\begin{enumerate}
  \item[(a1)]
  	Up to $\alpha^*$, we use the iteration $\bar\BP^* $ (which  
        already lives in our  current universe $V^*$).  As explained
       above in the paragraph preceding~\eqref{claim:xi.i}, 
	  $\nd X$ can be interpreted as a $\BP_{\alpha^*}$-name $\dot X$, and by
	  \eqref{claim:xi.i}, $\dot X $ is forced to be uncountable. 
  \item[(a2)]
  	We continue the iteration from $\alpha^*$ to $\alpha$ in a way that
	 embeds $x$ and such that $P_\alpha$ is proper.   
	  So  $P_{\alpha}$ will  force 
	   that $\dot X$ is still uncountable.
	   \item[(b1)] Let $Q_\alpha$ be any ultralaver forcing (over $Q_\alpha^x$). Then
       $Q_\alpha$ forces that $\dot X$ is not thin.
		 \item[(b2)] Let $Q_{\alpha+1}$ be a countable Janus forcing. 
       So $Q_{\alpha+1}$ forces $\dot X+\dot Z_\nabla=2^\omega$. 
     \item[(c)]  We continue the iteration in a $\sigma$-centered way.
       I.e., we use an almost FS iteration over $x$ of
       ultralaver forcings and countable Janus forcings, 
       using trivial $Q_\zeta$ for all $\zeta\notin M^x$. 
	 	 \item[(d)] So $P_\beta$ still
			 forces that $\dot X+\dot Z_\nabla=2^\omega$, and in particular that $\dot
       r^x\in \dot  X+\dot Z_\nabla$.
	\end{enumerate}  
	  We now pick (in $V^*$)  a countable $N\esm H(\chi^*)$ containing
  everything and ord-collapse  $(N,\bar P)$ to $y\leq x$, by \eqref{claim:44}. 
  The HCON object $y$ is of course in $V$ (and even in $\prep$), but we can say more:  Since the iteration $\bar P$ starts with the $(\prep\on\alpha^*)$-generic iteration $\bar\BP^*$, the condition $y$ will be in the quotient forcing $\prep^*$.\\
	Set $\dot X^y\DEFEQ \dot X\cap M^y$ (which is 
the image of $\dot X$ under the collapse, since we view $\dot X$ as a set of 
  HCON-names).
	By elementarity, $M^y$ thinks that (a)--(d) above holds for $\bar P^y$ 
  and that $\dot X^y$ is forced to be  uncountable.  
  Note that  $\dot X^y\subseteq \dot X$ in the following sense:  
   Whenever   $G^**H$ is $\prep^**\BP_{\om2}$-generic over $V^*$, and $y\in G^*$, then the evaluation of  $\dot X^y $ in $M^y[H^y]$ is a subset of the evaluation of $\dot X$ in $V^*[G^**H]$.

  \item Change:  No change here.\\
    For dBC: We get our desired contradiction as follows:\\
  Let $G^*$ be $\prep^*$-generic over $V^*$ and contain $y$.
	Let $H_\beta$ be $\BP_\beta$-generic over $V^*[G^*]$ and contain $p$.
  So $M^y[H^y_\beta]$ thinks that
	$ r=x+ z$ for some $x\in   X^y\subseteq  X$
	and\footnote{Note
 that we get the same Borel code, whether we evaluate $\dot Z_\nabla$ in $M^y[H^y_{\beta}]$ or in $V^*[G^**H_\beta]$. Accordingly, 
the actual Borel set of reals coded by $Z_\nabla$ in the smaller universe
is a subset of the corresponding Borel set in the larger universe.} 
 $z \in  Z_\nabla$, contradicting~(2).
\end{enumerate}

\section{A word on variants of the definitions}\label{sec:alternativedefs}
The following is not needed for understanding the paper, we just
briefly comment on alternative ways some notions could be defined.

\subsection{Regarding \qemph{alternating iterations}}  \label{sec:7a}

  We  call the set of $\alpha\in\om2$ such that $Q_\alpha$ is (forced to be) nontrivial 
	the \qemph{true domain} of $\bar P$ (we use this notation in this 
	remark only). Obviously $\bar P$ is naturally isomorphic
  to an iteration whose length is the order type of its true domain.
	In  Definitions~\ref{def:alternating} and~\ref{def:prep}, we could have
 	imposed 
	the following additional requirements. All these variants lead
	to equivalent forcing notions. 
  \begin{enumerate}
    \item $M^x$ is (an ord-collapse of) an
      {\em elementary} submodel of $H(\chi^*)$.
			\\
			This is equivalent, as conditions coming from
			elementary submodels are dense in our $\prep$, by 
			Fact~\ref{fact:esmV}. 
			\\
      While this definition looks much simpler and therefore 
			nicer (we could replace ord-transitive models
			by the better understood elementary models), it would not make 
      things easier and just ``hides'' the point of the 
      construction: For example, we use models $M^x$ that
      are (an ord-collapse of) an
      elementary submodel of~$H^{V'}(\chi^*)$ for some forcing extension
      $V'$ of~$V$.
    \item Require that ($M^x$ thinks that) the true domain of $\bar P^x$
		  is $\om2$.
			\\
		This is equivalent for the same reason as (1) (and this
		requirement is compatible with (1)).
			\\
		This definition would allow to drop the ``trivial'' option
		from the definition.  The whole proof would still work with 
                minor modifications --- in particular, because of the following
                fact:
                \footnote{We are grateful to Stefan
                Geschke and Andreas Blass for pointing out
                this fact. The only references we are aware
                of are \cite[proof of Lemma 2]{MR1179593} and \cite{MO84129}.}
                \proofclaim{eq:blass}{The finite support iteration of
                  $\sigma$-centered forcing notions of
                  length $<(2^{\aleph_0})^+$ is again
                  $\sigma$-centered.}
  We chose our version for two reasons: first, it seems
  more flexible, and second, we were initially not aware of 
  \eqref{eq:blass}.  
	  \item Alternatively, require that ($M^x$ thinks that) the true  
  		domain of $\bar P^x$ is countable.
			\\
			Again, equivalence can be seen as in~(1), again (3) is compatible with~(1)
			but obviously not with~(2).
			\\
			This requirement would not make the definition easier, 
			so there is no reason to adopt it. It would have the slight 
			inconvenience 
			that instead of using ord-collapses as in Fact~\ref{fact:esmV},
			we would have to put another model on top to make the iteration
			countable. Also, it would have the (purely aesthetic) disadvantage that
			the generic iteration itself does not satisfy this
			requirement. 
    \item \label{item:nonproper}
      Also, we could have dropped the requirement that the iteration 
      is proper. It is never directly used, and ``densely'' 
      $\bar P$ is proper anyway. (E.g., in Lemma~\ref{lem:6.1}(4)(a), 
      we would just construct $\bar P$ up to $\alpha$ to
      be proper or even ccc, so that $X$ remains uncountable.)
  \end{enumerate}

\subsection{Regarding \qemph{almost CS iterations and separative iterands}}
\label{sec:7b}

  Recall that in Definition~\ref{partial_CS} we 
  required that each  iterand $Q_\alpha$ in a partial CS iteration
  is separative. This implies the property (actually: the three equivalent
  properties) from  Fact~\ref{fact:suitable.equivalent}.
  Let us call this property \qemph{suitability} for now.
  Suitability is a property of the limit
  $P_\varepsilon$ of $\bar P$.  Suitability always holds for finite support
  iterations and for countable support iterations.  However, if we do not assume
  that each $Q_\alpha$ is separative, then  suitability may fail 
	for partial CS iterations. 
  We could drop the separativity assumption, and instead
  add suitability as an additional natural requirement to the definition of partial CS limit.

  The disadvantage of this approach is that we would have to check in all 
  constructions of partial CS iterations that suitability is 
  indeed satisfied
	(which we found to be straightforward but rather cumbersome, in
  particular in the case of the almost CS iteration).

  In contrast, the disadvantage of assuming that $Q_\alpha$ is separative
  is minimal and purely cosmetic: It is well known that every quasiorder $Q$
  can be made into a  separative one
  which is  forcing equivalent to the original~$Q$   (e.g., by just 
  redefining the order to be $\leq^*_Q$).

\subsection{Regarding \qemph{preservation of random and quick sequences}}
  Recall Definition~\ref{def:locally.random}  of local preservation of random reals
  and Lemma~\ref{lem:4.28}.

    In some respect the dense sets $D_n$ are unnecessary.  For
    ultralaver forcing $\bL_{\bar D}$,  the notion of a ``quick'' sequence
    refers to the sets $D_n$ of conditions with stem of length at
    least~$n$. 

    We could define a new partial order on $\bL_{\bar D}$ as
    follows: 
     \[ q\le' p \ \Leftrightarrow \ (q=p) \ \text{or} \ (q\le p \text{ and the
     stem of $q$ is strictly  longer than the stem of~$p$}).
     \]
    Then $(\bL_{\bar D}, \le)$ and 
         $(\bL_{\bar D}, \le')$ are forcing equivalent, and any
	 $\le'$-interpretation of a new real will automatically be quick.

    Note however that  $(\bL_{\bar D}, \le')$ is now not separative any more.  
    Therefore we chose not to take this approach, since losing separativity
    causes technical inconvenience, as described in \ref{sec:7b}. 

\bibliographystyle{alpha}     
\bibliography{BCdBC}

\end{document}